\newcommand{\spn}[1]{\mathrm{span}\left\{#1\right\}}
\newcommand{\support}[1]{\mathrm{supp}\left(#1\right)}
\pgfplotsset{compat=1.15}
\newtheoremstyle{droit}
{}%Space above
{}%Space below
{\upshape}%Body font
{}%Indent amount (empty = no indent, \parindent = para indent)
{\bfseries}%theoreme head font
{}%Punctuation after theoreme head
{ }%{\newline}%Space after theoreme head: " " = normal interword space; \newline = linebreak
{}%theoreme head spec (can be left empty, meaning `normal')
\newtheoremstyle{italique}
{}%Space above
{}%Space below
{\itshape}%Body font
{}%Indent amount (empty = no indent, \parindent = para indent)
{\bfseries}%theoreme head font
{}%Punctuation after theoreme head
{ }%{\newline}%Space after theoreme head: " " = normal interword space; \newline = linebreak
{}%theoreme head spec (can be left empty, meaning `normal')
\theoremstyle{italique}
\newtheorem{theorem}{Theorem}[section]
\newtheorem{lemma}[theorem]{Lemma}
\theoremstyle{droit}
\newtheorem{remark}[theorem]{Remark}
\newtheorem{definition}[theorem]{Definition}
\newtheorem{assumption}[theorem]{Assumption}
\newcommand{\RN}[1]{%
	\textup{\uppercase\expandafter{\romannumeral#1}}%
}
\newcommand{\vast}{\bBigg@{4}}
\newcommand{\Vast}{\bBigg@{5}}
\newcommand{\setminussign}{{\mathrm r}}
\newcommand{\intersign}{{\mathrm s}}
\newcommand{\trim}{\text{cut}}
\newcommand{\untr}{\text{act}}
\newcommand{\hQFmin}{h_{F}^\mathrm{min}}
\newcommand{\hQSmin}{h_{S}^\mathrm{min}}
\newcommand{\deltaQSmin}{\delta_{S}^\mathrm{min}}
\algnewcommand{\lst}{\texttt{lst}}
\algnewcommand{\slst}{\texttt{slst}}
\algnewcommand{\SEND}{\textbf{send}}
\newsavebox{\algleft}
\newsavebox{\algright}
\definecolor{darkgreen}{rgb}{0,0.4,0} 
\definecolor{darkbrown}{rgb}{0.5, 0.396, 0.09}
\definecolor{c1}{rgb}{0.0, 0.4196078431372549, 0.6431372549019608}
\definecolor{c2}{rgb}{1.0, 0.5019607843137255, 0.054901960784313725}
\definecolor{c3}{rgb}{0.6705882352941176, 0.6705882352941176,
	0.6705882352941176} \definecolor{c}{rgb}{0.34901960784313724, 0.34901960784313724, 0.34901960784313724}
\definecolor{c4}{rgb}{0.37254901960784315, 0.6196078431372549,
	0.8196078431372549} %\definecolor{c}{rgb}{0.7843137254901961, 0.3215686274509804, 0.0}
\definecolor{c5}{rgb}{0.5372549019607843, 0.5372549019607843,
	0.5372549019607843} %\definecolor{c}{rgb}{0.6352941176470588, 0.7843137254901961, 0.9254901960784314}
\definecolor{c6}{rgb}{1.0, 0.7372549019607844, 0.4745098039215686}
\definecolor{c7}{rgb}{0.8117647058823529, 0.8117647058823529,
	0.8117647058823529}
\newsavebox{\imagebox}
\tikzset{declare function={
		vcrity(\ph,\th)=atan2(sin(\th)*sin(\ph),min(cos(\ph),-1/sqrt(2))*cos(\th));% critical t value y cylinder
		vcritz(\ph,\th)=\ph;% critical t value y cylinder
	},pics/ycylinder/.style={code={
			\tikzset{3d/cylinder/.cd,#1}
			\def\pv##1{\pgfkeysvalueof{/tikz/3d/cylinder/##1}}
			\pgfmathsetmacro{\vmin}{vcrity(\tdplotmainphi,\tdplotmaintheta)}
			\pgfmathsetmacro{\vmax}{\vmin-180}
			\path[3d/cylinder/mantle]
			let \p1=($(0,1,0)-(0,0,0)$),\n1={atan2(\y1,\x1)} in
			[shading angle=\n1]
			plot[variable=\t,domain=\vmin:\vmax,smooth]
			({\pv{r}*cos(\t)},0,{\pv{r}*sin(\t)})
			-- 
			plot[variable=\t,domain=\vmax:\vmin,smooth]
			({\pv{r}*cos(\t)},\pv{h},{\pv{r}*sin(\t)})
			--cycle;
			\pgfmathtruncatemacro{\itest}{sign(cos(\tdplotmainphi))}
			\ifnum\itest=-1
			\path[3d/cylinder/top] plot[variable=\t,domain=0:360,smooth cycle]
			({\pv{r}*cos(\t)},\pv{h},{\pv{r}*sin(\t)}) ;
			\fi
			\ifnum\itest=1
			\path[3d/cylinder/top] plot[variable=\t,domain=0:360,smooth cycle]
			({\pv{r}*cos(\t)},0,{\pv{r}*sin(\t)}) ;
			\fi
	}},3d/.cd,cylinder/.cd,r/.initial=1,h/.initial=1,
	mantle/.style={draw},top/.style={draw}}
\def\customrevertcolormap#1{%
	\pgfplotsarraycopy{pgfpl@cm@#1}\to{custom@COPY}%
	\c@pgf@counta=0
	\c@pgf@countb=\pgfplotsarraysizeof{custom@COPY}\relax
	\c@pgf@countd=\c@pgf@countb
	\advance\c@pgf@countd by-1 %
	\pgfutil@loop
	\ifnum\c@pgf@counta<\c@pgf@countb
	\pgfplotsarrayselect{\c@pgf@counta}\of{custom@COPY}\to\pgfplots@loc@TMPa
	\pgfplotsarrayletentry\c@pgf@countd\of{pgfpl@cm@#1}=\pgfplots@loc@TMPa
	\advance\c@pgf@counta by1 %
	\advance\c@pgf@countd by-1 %
	\pgfutil@repeat
	%\pgfplots@colormap@showdebuginfofor{#1}%
}%
\newcommand{\logLogSlopeReverseTriangle}[5]
{
	% #1. Relative offset in x direction.
	% #2. Width in x direction, so xA-xB.
	% #3. Relative offset in y direction.
	% #4. Slope d(y)/d(log10(x)).
	% #5. Plot options.
	
	\pgfplotsextra
	{
		\pgfkeysgetvalue{/pgfplots/xmin}{\xmin}
		\pgfkeysgetvalue{/pgfplots/xmax}{\xmax}
		\pgfkeysgetvalue{/pgfplots/ymin}{\ymin}
		\pgfkeysgetvalue{/pgfplots/ymax}{\ymax}
		
		% Calculate auxilliary quantities, in relative sense.
		\pgfmathsetmacro{\xArel}{#1}
		\pgfmathsetmacro{\yArel}{#3}
		\pgfmathsetmacro{\xBrel}{#1+#2}
		\pgfmathsetmacro{\yBrel}{\yArel}
		\pgfmathsetmacro{\xCrel}{\xArel}
		%\pgfmathsetmacro{\yCrel}{ln(\yC/exp(\ymin))/ln(exp(\ymax)/exp(\ymin))} % REPLACE THIS EXPRESSION WITH AN EXPRESSION INDEPENDENT OF \yC TO PREVENT THE 'DIMENSION TOO LARGE' ERROR.
		
		\pgfmathsetmacro{\lnxB}{\xmin*(1-(#1-#2))+\xmax*(#1-#2)} % in [xmin,xmax].
		\pgfmathsetmacro{\lnxA}{\xmin*(1-#1)+\xmax*#1} % in [xmin,xmax].
		\pgfmathsetmacro{\lnyA}{\ymin*(1-#3)+\ymax*#3} % in [ymin,ymax].
		\pgfmathsetmacro{\lnyC}{\lnyA+#4*(\lnxA-\lnxB)}
		\pgfmathsetmacro{\yCrel}{\lnyC-\ymin)/(\ymax-\ymin)}
		
		% Define coordinates for \draw. MIND THE 'rel axis cs' as opposed to the 'axis cs'.
		\coordinate (A) at (rel axis cs:\xArel,\yArel);
		\coordinate (B) at (rel axis cs:\xBrel,\yBrel);
		\coordinate (C) at (rel axis cs:\xCrel,\yCrel);
		
		% Draw slope triangle.
		\draw[#5]   (A)-- node[pos=0.5,anchor=north] {\scriptsize 1}
		(B)-- 
		(C)-- node[pos=0.5,anchor=east] {\scriptsize #4}
		cycle;
	}
}
\newtoks\pgf@ps@toks
\def\pgf@ps@sp{ }
\def\pgf@ps@esettoks#1{\edef\pgf@ps@tmp{#1}\pgf@ps@toks\expandafter{\pgf@ps@tmp}}
\def\pgf@ps@repop#1#2{%
	\c@pgf@countb=#2\relax%
	\def\pgf@ps@op{#1}%
	\def\pgf@ps@ops{}\pgf@ps@@repop}
\def\pgf@ps@@repop{%
	\ifnum\c@pgf@countb<1\relax%
	\else%
	\edef\pgf@ps@ops{\pgf@ps@op\pgf@ps@ops}%
	\advance\c@pgf@countb by-1\relax%
	\expandafter\pgf@ps@@repop%
	\fi%
}
\def\pgf@ps@generate@ps{%
	\c@pgf@counta=\pgf@ps@ncol\relax%
	\c@pgf@countb=\c@pgf@counta%
	\advance\c@pgf@countb by-1\relax%
	\pgf@ps@esettoks{ \noexpand\pgf@ps@interp{col@\the\c@pgf@counta}{col@\the\c@pgf@countb} }%
	\pgfmathloop
	\ifnum\c@pgf@counta<2\relax%
	\else%
	\c@pgf@countb=-\c@pgf@counta%
	\advance\c@pgf@countb by\pgf@ps@ncol\relax%
	\advance\c@pgf@counta by-1\relax%
	\pgf@ps@repop{pop\pgf@ps@sp}{\c@pgf@countb}%
	\c@pgf@countb=\c@pgf@counta%
	\advance\c@pgf@countb by-1\relax%
	\ifnum\c@pgf@countb=0\relax%
	\c@pgf@countb=\pgf@ps@ncol\relax%
	\fi%
	\pgf@ps@esettoks{ \the\c@pgf@counta\pgf@ps@sp eq { \pgf@ps@ops \noexpand\pgf@ps@interp{col@\the\c@pgf@counta}{col@\the\c@pgf@countb} }{ \the\pgf@ps@toks } ifelse}%
	\repeatpgfmathloop%
	\c@pgf@counta=\pgf@ps@ncol\relax%
	\advance\c@pgf@counta by-2\relax%
	\pgf@ps@repop{dup\pgf@ps@sp}{\c@pgf@counta}%
	\pgf@ps@esettoks{ \pgf@ps@ops \the\pgf@ps@toks }%
}
\def\pgf@ps@colorstorgb#1{%
	\c@pgf@ps=1\relax%
	\pgfutil@for\pgf@ps@:={#1}\do{%
		\pgf@ps@coltorgb{\pgf@ps@}{col@\the\c@pgf@ps}%
		\advance\c@pgf@ps by1\relax}%
}
\def\pgf@ps@coltorgb#1#2{%
	\edef\pgf@ps@marshal{\noexpand\pgfshadecolortorgb{#1}}%
	\expandafter\pgf@ps@marshal\expandafter{\csname#2\endcsname}%
}
\def\pgf@ps@rgb#1{\csname#1\endcsname}
\def\pgf@ps@interp#1#2{%
	\pgf@ps@rgb{#1red} mul exch \pgf@ps@rgb{#2red} mul add
	5 1 roll
	\pgf@ps@rgb{#1green} mul exch \pgf@ps@rgb{#2green} mul add
	3 1 roll
	\pgf@ps@rgb{#1blue} mul exch \pgf@ps@rgb{#2blue} mul add
}
\def\pgfdeclarecolorwheelshading#1#2#3{%
	\pgf@ps@getcols{#3}%
	\pgfmathparse{mod(#2+360/\pgf@ps@ncol-90,360)}%
	\pgf@x=\pgfmathresult pt\relax%
	\ifdim\pgf@x<0pt\relax%
	\advance\pgf@x by360pt\relax%
	\fi%
	\edef\pgf@ps@rot{\pgfmath@tonumber{\pgf@x}}%
	\pgf@ps@generate@ps%
	\pgf@ps@esettoks{%
		\noexpand\pgfdeclarefunctionalshading[#3]{#1}%
		{\noexpand\pgfpoint{-50bp}{-50bp}}{\noexpand\pgfpoint{50bp}{50bp}}%
		{\noexpand\pgf@ps@colorstorgb{#3}}%
		{%
			2 copy abs exch abs add 0.0001 ge { atan } { pop } ifelse
			\pgf@ps@rot\pgf@ps@sp add dup 360 ge { -360 add } { } ifelse
			360 \pgf@ps@ncol\pgf@ps@sp 
			div div dup floor dup 3 1 roll neg add dup neg 1 add exch
			2 copy 2 copy 7 -1 roll 1 add
			\the\pgf@ps@toks}}%
	\edef\pgf@ps@marshal{\the\pgf@ps@toks}%
	\pgf@ps@marshal}
\def\pgf@ps@getcols#1{%
	\c@pgf@ps=0\relax%
	\pgfutil@for\pgf@ps@:={#1}\do{\advance\c@pgf@ps by1}%
	\edef\pgf@ps@ncol{\the\c@pgf@ps}%
}
\newcommand{\changes}[1]{{#1}}
\begin{document}

%\begin{frontmatter}

%% Title, authors and addresses

\title{Adaptive analysis-aware defeaturing: the case of Neumann boundary conditions}
%% use the tnoteref command within \title for footnotes;
%% use the tnotetext command for the associated footnote;
%% use the fnref command within \author or \address for footnotes;
%% use the fntext command for the associated footnote;
%% use the corref command within \author for corresponding author footnotes;
%% use the cortext command for the associated footnote;
%% use the ead command for the email address,
%% and the form \ead[url] for the home page:
%%
%% \title{Title\tnoteref{label1}}
%% \tnotetext[label1]{}
%% \author{Name\corref{cor1}\fnref{label2}}
%% \ead{email address}
%% \ead[url]{home page}
%% \fntext[label2]{}
%% \cortext[cor1]{}
%% \address{Address\fnref{label3}}
%% \fntext[label3]{}

%% use optional labels to link authors explicitly to addresses:
%% \author[label1,label2]{<author name>}
%% \address[label1]{<address>}
%% \address[label2]{<address>}

\author{{A. Buffa$^{1}$, O. Chanon$^2$, R. V\'azquez$^{3}$}\\ \\\vspace{-0.2cm}
	\footnotesize{$^1$ MNS, Institute of Mathematics, \'Ecole Polytechnique F\'ed\'erale de Lausanne, Switzerland}\\
	\footnotesize{Istituto di Matematica Applicata e Tecnologie Informatiche `E. Magenes' (CNR), Pavia, Italy}\\
	\footnotesize{$^2$ ABB Corporate Research Center, ABB Schweiz AG, Baden-D\"attwil, Switzerland; ondine.chanon@ch.abb.com}\\\vspace{-0.2cm}
	\footnotesize{$^3$ Departamento de Matemática Aplicada and Centro de de Investigaci\'on y Tecnolog\'ia Matem\'atica de Galicia (CITMAga)}\\\vspace{-0.2cm}
	\footnotesize{Universidade de Santiago de Compostela, Santiago de Compostela, Spain}
}
% \date{September 7, 2018}

\maketitle
\vspace{-0.8cm}
\noindent\rule{\linewidth}{0.4pt}
\thispagestyle{fancy}
\begin{abstract}
	Removing geometrical details from a complex domain is a classical operation in computer aided design for simulation and manufacturing. This procedure simplifies the meshing process, and it enables faster simulations with less memory requirements. However, depending on the partial differential equation that one wants to solve, removing some important geometrical features may greatly impact the solution accuracy. % For instance, in solid mechanics simulations, such features can be holes or fillets near stress concentration regions. 
	Unfortunately, the effect of geometrical simplification on the accuracy of the problem solution is often neglected or its evaluation is based on engineering expertise, only due to the lack of reliable tools. It is therefore important to have a better understanding of the effect of geometrical model simplification, also called defeaturing, to improve our control on the simulation accuracy along the design and analysis phases.
%In computer aided engineering, it is crucial to understand the impact of geometrical model simplification, also called defeaturing, on the solution accuracy of a partial differential equation at hand. Indeed, removing features from a complex geometry is a classical operation that enables faster simulations in computer aided design for manufacturing, and it is often performed adaptively depending on the numerical requirements. 
	In this work, we consider as a model problem the Poisson equation \changes{on a geometry with Neumann features, we consider some finite element discretization of it,} and we build an adaptive strategy that is twofold. Firstly, it is able to perform geometrical refinements, that is, to choose at each iteration step which geometrical feature is important to obtain an accurate solution. \changes{Secondly, it performs standard mesh refinements; since the geometry changes at each iteration, the algorithm is designed to be used with an immersed method}. To drive this adaptive strategy, we introduce an \textit{a posteriori} estimator of the energy error between the exact solution defined in the exact fully-featured geometry, and the numerical approximation of the solution defined in the defeatured geometry. The reliability of the estimator is proven for very general \changes{(potentially trimmed multipatch) geometric configurations, and in particular for isogeometric analysis with hierarchical B-splines}. Finally, numerical experiments are performed to validate the presented theory and to illustrate the capabilities of the proposed adaptive strategy. 
\end{abstract}

\textit{Keywords}: Geometric defeaturing, \textit{a posteriori} error estimation, adaptivity, isogeometric analysis.

%% INTRODUCTION ----------------------------------------------------------------------------------------------------------------------------------------
\section{Introduction} %\label{s:intro}
The interoperability between the design of complex objects and the numerical resolution of partial differential equations (PDEs) on those objects has been a major challenge since the creation of finite element (FE) methods. Isogeometric analysis (IGA) was introduced in the seminal work \cite{igabasis} to simplify this issue: the main idea of IGA consists in employing the basis functions used to describe geometries in computer aided design (CAD) software for the numerical analysis of the problem, namely B-splines or variants thereof. This new paradigm has open the road to an extensive amount of research; the interested reader is referred to \cite{igabook} for a review of the method, and to \cite{igaappli} for a review of a wide range of real world applications on which the method has shown its strong capabilities. The mathematical foundations of IGA have been developed in \cite{igahref,igaanalysis}, and numerous implementations have now been developed, see \cite{nguyen2015isogeometric,geopdes} for instance. 

Moreover, different extensions of the original IGA method have been developed in order to deal with geometries of increased complexity: the main developments include for instance multipatch domains \cite{multipatch,bracco2020isogeometric} or geometries obtained by Boolean operations such as intersections (trimming) \cite{trimming,trimmingbletzinger,antolinvreps,weimarrusigantolin,antolin2021quadrature} or unions \cite{antolinwei,unionkargaran}. The related engineering literature includes for instance the FE cell method with IGA on complex geometries \cite{rankcomplexgeom,rank2012geometric}, and the analysis of shell structures \cite{bletzinger1,bletzinger2,coradello}. 
However, B-spline basis functions have a tensor-product structure that hinders the capability of efficiently capturing localized properties of the PDE solution in small areas of the computational domain. In order to overcome this issue, the construction of locally refined splines and their use in adaptivity has been a very active area of research. The developments in this direction have recently been reviewed in \cite{reviewadaptiveiga}. In particular, we are interested here in using hierarchical B-splines (HB-splines) \cite{forsey1988hierarchical,kraft1997adaptive,vuong} and their variant called truncated hierarchical B-splines (THB-splines) \cite{giannelli2016thb,thbgiannelli,giannelli2014strongly}, in an adaptive IGA framework \cite{buffagiannelli1,trimmingest,trimmedshells}.

However, dealing with very complex geometries remains challenging, and even the most recent and most efficient methods may come at a prohibitive cost. The only geometric description of complex domains may already require a very large number of degrees of freedom, but not all of them are necessary to perform an accurate analysis. This is where analysis-aware defeaturing handily comes into play: if one is able to determine the geometrical features of a complex geometry that have the least influence on the accuracy of the PDE solution at hand, then one can simplify the geometric description of the domain by defeaturing, leading to an easier, cheaper, and still accurate analysis. By doing so, the meshing step is also simplified, as it is otherwise often unfeasible. Moreover, adding and removing geometrical features to a design in order to meet engineering requirements is a typical process in simulation-based design for manufacturing. It is thus important to consider the impact of such geometrical changes during the analysis phase of the problem. 

Defeaturing has been approached first and foremost using subjective \textit{a priori} criteria relying on the expertise of engineers, on geometrical considerations, or on some knowledge about the mechanical problem at hand (laws of conservation, constitutive equations, etc.), see \cite{surveymodelsimpl, fineremondinileon2000, foucault2004mechanical}. However, the need of \textit{a posteriori} estimators soon appeared to be essential for the automation of simulation-based design processes in order to evaluate the error induced by defeaturing, as they only use the results of the analysis in the defeatured geometry. Therefore, many different \textit{a posteriori} criteria can be found in the literature: the interested reader is referred to \cite{ferrandesgiannini2009,tsa1,tsa2,gopalakrishnan2007formal,gopalakrishnansuresh2008,turevsky2009efficient,TANG2013413,ligaomartin2011,ligao2011,ligaozhang2012,ligaomartin2013}. 
%the one introduced in \cite{ferrandesgiannini2009} uses an approximation of the error in energy norm; in \cite{tsa1,tsa2}, an estimator is found using topological sensitivity analysis; adjoint theory is used in the series of works \cite{gopalakrishnan2007formal,gopalakrishnansuresh2008,turevsky2009efficient} to describe the first order defeaturing error on a quantity of interest; an estimator is introduced in \cite{TANG2013413} based on the reciprocal theorem stating the conservation of solution flux in the features; and in the series of works \cite{ligaomartin2011,ligao2011,ligaozhang2012,ligaomartin2013}, defeaturing error is expressed as a modeling error directly on the differential problem, both for negative features (holes) and positive ones (protrusions). In these latter papers, the modeling error is then estimated with the dual weighted residual method \cite{odenprudhomme}. 
Nevertheless, very few of those works come with a sound theoretical theory, and most of them rely on some heuristics. \changes{A noteworthy early defeaturing and coarsening approach, known as composite finite elements, was devised by Hackbusch and Sauter in \cite{hackbusch1997composite,hackbusch1997composite2}. This method operates under the assumption that defeaturing occurs primarily due to the inherent limitations of the mesh in accurately representing geometric features.} Very close works include the study of heterogeneous and perforated material \cite{odenvemaganti2000,vegamanti2004,carstensensauter2004}, and the study of the error induced by boundary approximation in numerical methods \cite{repinsautersmolianski,dorfler1998adaptive}. \changes{However and in contrast, our approach aims to address defeaturing as a distinct process, independent of any discretization technique. By doing so, we are able to separate the error contributions coming from defeaturing and from the numerical approximation error.}

The present work strongly relies on the previous paper \cite{paper1defeaturing} by the authors, where a precise mathematical framework for analysis-aware defeaturing is introduced in the context of Poisson's equation. In that paper, an efficient and reliable \textit{a posteriori} estimator of the defeaturing error in energy norm is derived, based on the variational formulations of the exact and defeatured problems. It has then been generalized in \cite{paper3multifeature} to geometries with multiple features, in the context of linear elasticity and Stokes equations. 

In the following, we are interested in the numerical approximation of the defeatured problem presented in \cite{paper1defeaturing}. We first introduce an \textit{a posteriori} estimator of the overall error between the exact solution of the PDE defined in the exact domain, and the numerical approximation of the solution of the corresponding PDE defined in the defeatured domain. The proposed estimator \changes{can be adapted to any FE method used for the discretization of the given problem, and it} is able to control both the defeaturing error and the numerical error contributions of the overall numerical defeatured error. This allows us to design an adaptive algorithm which is twofold. On the one hand, the algorithm is able to perform some standard mesh refinement on the hierarchical mesh to reduce the discretization error. On the other hand, starting from a fully defeatured geometry on which the solution can be computed, the adaptive strategy is also able to perform some geometric refinement to reduce the error due to defeaturing. That is, at each iteration step, the features that most affect the solution accuracy are marked to be added to the geometrical model at the next iteration. We then concentrate \changes{in particular} on the THB-spline based IGA numerical approximation of the elliptic problem at hand. We detail the proposed adaptive strategy in this framework, and give a mathematical proof of the reliability of the error estimator when IGA is used.

\changes{Our approach is similar to the one of Burman et al. in \cite{burman2019posteriori}, but we aim at setting a framework in which defeaturing is considered independently from the numerical setting. That is, the adaptive method that we propose does not depend on the chosen numerical method, and it allows to locally determine where the simplification of the geometry, which does not necessarily come from the meshing process, affects the solution accuracy.}

In the present article, we first define the considered defeatured problem and its numerical approximation \changes{with a FE method of choice,} in Section~\ref{sec:pbstatement}. Then in Section~\ref{sec:adaptive}, we design a combined mesh and geometric adaptive strategy in the context of analysis-aware defeaturing. Subsequently, in Section~\ref{sec:iga}, we review the basic concepts of IGA with THB-splines, \changes{as a natural discretization method to deal with geometric defeaturing}. We then prove in Section~\ref{sec:combinedest} the reliability of the \textit{a posteriori} estimator of the discrete defeaturing error that drives the proposed adaptive strategy, under some reasonable assumptions. \changes{All the details are given in the case in which the chosen numerical method is IGA, followed by a discussion on the straight-forward adaptation of the proof to the standard $C^0$-FE method.} We then present in Section~\ref{sec:gen} the extension of IGA to trimmed and multipatch geometries as this allows to describe complex geometrical models, and we use these spline technologies to explain in details the proposed adaptive strategy when IGA is used. 
This strategy is illustrated in Section~\ref{sec:numexp} by various numerical experiments that simultaneously validate and extend the presented theory. To finish, conclusions are drawn in Section~\ref{sec:ccl}, and some technical lemmas used throughout the paper are given in Appendix~\ref{sec:appendix}. \\

\textit{Notation}\\
Let $n\in\{2,3\}$, let $\omega$ be an open $k$-dimensional manifold in $\mathbb{R}^n$, $k\leq n$, and let $\varphi\subset \partial \omega$. Moreover, let $y$ and $z$ be any functions, let $A$ be a functional subspace of the Sobolev space $H^1(\omega)$, let $1\leq p \leq \infty$, and let $s\in\mathbb R$. In this article, we will use the notation summarized in Table~\ref{tbl:notation}, \changes{and the definitions of Sobolev norms and semi-norms correspond to the ones in~\cite[Definition~1.3.2.1]{grisvard}.}
\begin{table}[h!]
\begin{tabularx}{\textwidth}{l|X}
	Notation & Definition \\ \hline
	$|\omega|$ & $k$-dimensional measure of $\omega$. \\
	$\overline{\omega}$ & Closure of $\omega$.\\
	$\mathrm{int}(\omega)$ & Interior of $\omega$.\\
	$\mathrm{conn}(\omega)$ & Set of connected components of $\omega$.\\
	$\mathrm{diam}(\varphi)$ & Manifold diameter $=\max_{\xi, \eta \in \varphi} \rho(\xi, \eta)$, where $\rho(\xi, \eta)$ is the infimum of lengths of continuous piecewise $C^1$ paths between $\xi$ and $\eta$ in $\partial \omega \changes{\, \supset \varphi}$.  \\
	$\overline{z}^\omega$ & Average of $z$ over $\omega$. \\
	$\support{z}$ & Open support of $z$.\\
	$\|z\|_{0,\omega}$ & Norm of $z$ in $L^2(\omega)$.\\
	$\|z\|_{s,\omega}$ & Norm of $z$ in the Sobolev space $H^s(\omega)$ of order $s$.\\
	$|z|_{s,\omega}$ & Semi-norm of $z$ in $H^s(\omega)$.\\
	$\mathrm{tr}_\varphi(z)$ & Trace of $z$ on $\varphi\subset\partial \omega$.\\
	$\mathrm{tr}_\varphi(A)$ & Trace space on $\varphi$ of the functions in $A$, i.e. $\left\{ \mathrm{tr}_\varphi(z) : z\in A \right\}$.\\
	$H^1_{y,\varphi}(\omega)$ & Set of functions $z\in H^1(\omega)$ such that $\mathrm{tr}_\varphi(z) = y$.\\
	$\#S$ & Cardinality of the set $S$.
\end{tabularx}
\caption{Notation used throughout the article.}\label{tbl:notation}
\end{table}

In the remaining part of this article, the symbol $\lesssim$ will be used to mean any inequality which is independent of the size of the features, of the mesh size $h$, and of the number of hierarchical levels (see Section \ref{ss:hbs}). However, those inequalities may depend \changes{on the dimension $n$, on the problem data (geometry, boundary conditions and functional data), on the degree (and regularity) of the discretization method}, and on the shape of the features and of the mesh elements. Moreover, we will write A $\simeq B$ whenever $A \lesssim B$ and $B \lesssim A$. 

\section{Defeaturing problem statement} \label{sec:pbstatement}
In this section, we precisely define the defeatured problem, following \cite{paper1defeaturing} and \cite{paper3multifeature}. We first define it in Sobolev spaces for a geometry containing a single feature, then we discretize the problem using a FE space, and we finally generalize this setting to a geometry containing an arbitrary number of distinct features. 

\subsection{Defeaturing model problem} \label{ss:continuouspbstatement}
Let $\Omega\subset \mathbb R^n$ be an open Lipschitz domain, let $\mathbf n$ be the unitary outward normal to $\Omega$, and let $\partial \Omega = \overline{\Gamma_D}\cup \overline{\Gamma_N}$ with $\Gamma_D\cap\Gamma_N = \emptyset$, $|\Gamma_D| > 0$. Then, let $g_D\in H^{\frac{3}{2}}(\Gamma_D)$, $g\in H^{\frac{1}{2}}(\Gamma_N)$ and $f\in L^2\left(\Omega\right)$, and assume we aim at solving the following Poisson's problem in the exact geometry $\Omega$: find $u\in H^1(\Omega)$, the weak solution of % $h\in H^{\frac{3}{2}}(\Gamma_D)$, 
\begin{align} \label{eq:originalpb}
\begin{cases}
-\Delta u = f &\text{ in } \Omega \\
u = g_D &\text{ on } \Gamma_D \\
\displaystyle\frac{\partial u}{\partial \mathbf{n}} = g &\text{ on } \Gamma_N,\vspace{0.1cm}
\end{cases}
\end{align}
that is, $u\in H^1_{g_D,\Gamma_D}(\Omega)$ satisfies for all $v\in H^1_{0,\Gamma_D}(\Omega)$, 
\begin{equation} \label{eq:weakoriginalpb}
\int_\Omega \nabla u \cdot \nabla v \,\mathrm dx = \int_\Omega fv \,\mathrm dx + \int_{\Gamma_N} g v \,\mathrm ds.
\end{equation}

Moreover, we assume that $\Omega$ is a complicated domain, meaning that it contains a geometrical detail of smaller scale called feature and denoted by $F$. More precisely, we suppose that $F= \text{int}\left(\overline{F_\mathrm n} \cup \overline{F_\mathrm p}\right)$, where, if we let $$\Omega_\star := \Omega\setminus\overline{F_\mathrm p},$$
then $F_\mathrm p\subset \Omega$ and $\overline{F_\mathrm n}\cap \overline{\Omega_\star} \subset \partial \Omega_\star$, as illustrated in Figure \ref{fig:G0ppty1}. That is, we say that $F$ contains a negative component $F_\mathrm n$ in which material is removed, and a positive component $F_\mathrm p$ in which material is in excess. 
In particular, if $F_\mathrm p = \emptyset$ and $F = F_\mathrm n$, then $F$ is said to be negative, while if $F_\mathrm n = \emptyset$ and $F = F_\mathrm p$, then $F$ is said to be positive. We also assume that $F_\mathrm n$, $F_\mathrm p$ and $\Omega_\star$ are open Lipschitz domains. 

The defeatured problem is formulated on a defeatured domain $\Omega_0$ where features are removed: holes are filled with material, and protrusions are cut out of the computational domain. More precisely, the defeatured geometry $\Omega_0 \subset \mathbb R^n$ reads:
\begin{equation} \label{eq:defomega0}
\Omega_0 :=\text{int}\left(\overline{\Omega_\star}\cup \overline{F_\mathrm n}\right).
\end{equation}
We assume $\Omega_0$ to be an open Lipschitz domain. Note that in general, $\Omega_\star \subset \left(\Omega \cap \Omega_0\right)$, but if $F$ is completely negative or positive, then the two sets are equal.

\begin{figure}
	\centering
	\begin{subfigure}[t]{0.4\textwidth}
		\begin{center}
			\begin{tikzpicture}[scale=4.5]
			\fill[c3,opacity=0.2] (2.7,1) -- (3,1) -- (3,0.5) -- (2,0.5) -- (2,1) -- (2.3,1) -- (2.3,1.3) -- (2.4,1.3) -- (2.6,1.1) -- (2.6,1) -- (2.4,1) -- (2.4,0.8) -- (2.7,0.8) --cycle;
			\draw[thick] (2.7,1) -- (3,1) -- (3,0.5) -- (2,0.5) -- (2,1) -- (2.3,1) ;
			\draw[c1,thick,dashed] (2.4,1) -- (2.4,0.8) -- (2.7,0.8) -- (2.7,1);
			\draw[c1,thick] (2.4,1) -- (2.6,1) -- (2.6,1.1) -- (2.4,1.3) -- (2.3,1.3) -- (2.3,1);
			\draw (2.23,0.7) node{$\Omega$} ;
			\draw[c1,thick] (2.3,1.15) node[left]{$\gamma_\mathrm p$} ;
			\draw[c1,thick] (2.55,0.8) node[below]{$\gamma_\mathrm n$} ;
			\end{tikzpicture}
			\caption{Domain $\Omega$ with a complex feature.}
			%		\label{fig:exaa}
		\end{center}
	\end{subfigure}
	~
	\begin{subfigure}[t]{0.4\textwidth}
		\begin{center}
			\begin{tikzpicture}[scale=4.5]
			\fill[c3, opacity=0.4] (2.3,1) rectangle (2.6,1.3);
			\fill[c3, opacity=0.1] (2,0.5) rectangle (3,1);
			\draw[thick] (2,0.5) -- (2,1) ;
			\draw[thick] (3,0.5) -- (3,1) ;
			\draw[thick] (2,0.5) -- (3,0.5);
			\draw[thick] (2,1) -- (2.3,1) ;
			\draw[thick] (2.7,1) -- (3,1) ;
			\draw[c2] (2.6,1.2) node[right]{$\tilde \gamma$};
			\draw (2.46,1.16)node{$\tilde F_\mathrm p$};
			\draw[c2,thick] (2.6,1.1) -- (2.6,1.3) -- (2.4,1.3);
			\draw[c1,dashed] (2.4,1) -- (2.4,0.8) ;
			\draw[c1,dashed] (2.7,1) -- (2.7,0.8) ;
			\draw[c1,dashed] (2.4,0.8) -- (2.7,0.8) ;
			\draw[c1,dashed] (2.6,1.1) -- (2.4,1.3);
			\draw (2.23,0.7) node{$\Omega_0$} ;
			\draw[c2,thick,densely dotted] (2.3,1) -- (2.4,1);
			\draw[c2] (2.33,1) node[below]{$\gamma_{0,\mathrm p}$};
			\draw[c2] (2.55,1) node[below]{$\gamma_{0,\mathrm n}$};
			\draw[c2, thick] (2.4,1) -- (2.6,1);
			\draw[c2, thick] (2.6,1) -- (2.7,1);
			\draw[c4,thick] (2.4,1.0025) -- (2.6,1.0025) -- (2.6,1.1);
			\draw[c4,thick] (2.4,1.3) -- (2.3,1.3) -- (2.3,1);
			\draw[c2,thick] (2.4,0.9975) -- (2.6,0.9975);
			\draw[c2,thick] (2.6,0.9975) -- (2.7,1);
			\draw[c4] (2.65,1.05) node{$\gamma_\intersign$};
			\end{tikzpicture}
			\caption{Defeatured domain $\Omega_0$ and simplified positive component $\tilde F_\mathrm p$.} \label{fig:boundingbox}
		\end{center}
	\end{subfigure}
	~
	\begin{subfigure}[t]{0.4\textwidth}
		\begin{center}
			\begin{tikzpicture}[scale=4.5]
			\draw[white] (2,1.3)--(2,1.35);
			\fill[c3, opacity=0.1] (2,0.5) rectangle (3,1);
			\fill[c3, opacity=0.55] (2.4,0.8) rectangle (2.7,1.00);
			\fill[c3, opacity=0.4] (2.3,1) -- (2.6,1) -- (2.6,1.1) -- (2.4,1.3) -- (2.3,1.3); 
			\fill[c3, opacity=0.8] (2.6,1.1) -- (2.6,1.3) -- (2.4,1.3);
			\draw[c2,thick] (2.6,1.1) -- (2.6,1.3) -- (2.4,1.3);
			\draw[c1] (2.5,1.14) node{$\gamma_\setminussign$};
			\draw (2.54,1.24)node{$G_\mathrm p$};
			\draw (2.55,0.9)node{$F_\mathrm n$};
			\draw (2.4,1.1)node{$F_\mathrm p$};
			\draw[c2,thick,densely dotted] (2.3,1) -- (2.4,1);
			\draw[thick] (2,0.5) -- (2,1) ;
			\draw[thick] (3,0.5) -- (3,1) ;
			\draw[thick] (2,0.5) -- (3,0.5);
			\draw[thick] (2,1) -- (2.3,1) ;
			\draw[thick] (2.7,1) -- (3,1) ;
			\draw[c1,thick,dashed] (2.4,1) -- (2.4,0.8) ;
			\draw[c1,thick,dashed] (2.7,1) -- (2.7,0.8) ;
			\draw[c1,thick,dashed] (2.4,0.8) -- (2.7,0.8) ;
			\draw[c1,thick,dashed] (2.6,1.1) -- (2.4,1.3);
			\draw[c4,thick] (2.4,1.0025) -- (2.6,1.0025) -- (2.6,1.1);
			\draw[c4,thick] (2.4,1.3) -- (2.3,1.3) -- (2.3,1);
			\draw[c4] (2.25,1.15) node{$\gamma_\intersign$};
			\draw (2.23,0.7) node{$\Omega_\star$} ;
			\draw[c2,thick] (2.4,0.9975) -- (2.6,0.9975);
			\draw[c2,thick] (2.6,0.9975) -- (2.7,1);
			\end{tikzpicture}
			\caption{Domains $\Omega_\star$, $F_\mathrm n$, $F_\mathrm p$ and $G_\mathrm p$.}
			%			\label{fig:exa}
		\end{center}
	\end{subfigure}
	~
	\begin{subfigure}[t]{0.4\textwidth}
		\begin{center}
			\begin{tikzpicture}[scale=4.5]
			\fill[c3,opacity=0.25] (2.7,1) -- (3,1) -- (3,0.5) -- (2,0.5) -- (2,1) -- (2.3,1) -- (2.3,1.3) -- (2.6,1.3) -- (2.6,1) -- (2.4,1) -- (2.4,0.8) -- (2.7,0.8) --cycle;
			\draw[c3,thick] (2.6,1) -- (2.6,1.3) -- (2.3,1.3) -- (2.3,1.25);
			\draw[c2,densely dotted] (2.3,1) -- (2.4,1);
			\draw[thick] (2,0.5) -- (2,1) ;
			\draw[thick] (3,0.5) -- (3,1) ;
			\draw[thick] (2,0.5) -- (3,0.5);
			\draw[thick] (2,1) -- (2.3,1) ;
			\draw[thick] (2.7,1) -- (3,1) ;
			\draw[c1,dashed] (2.4,1) -- (2.4,0.8) ;
			\draw[c1,dashed] (2.7,1) -- (2.7,0.8) ;
			\draw[c1,dashed] (2.4,0.8) -- (2.7,0.8) ;
			\draw[c1,dashed] (2.6,1.1) -- (2.4,1.3);
			\draw[c4,thick] (2.4,1.0025) -- (2.6,1.0025) -- (2.6,1.1);
			\draw[c4,thick] (2.4,1.3) -- (2.3,1.3) -- (2.3,1);
			\draw[c2,thick] (2.4,0.9975) -- (2.6,0.9975);
			\draw[c2,thick] (2.6,0.9975) -- (2.7,1);
			\draw (2.23,0.7) node{$\tilde\Omega$} ;
			\draw[c2,thick] (2.6,1.1) -- (2.6,1.3) -- (2.4,1.3);
			\end{tikzpicture}
			\caption{Domain $\tilde \Omega$.}
		\end{center}
	\end{subfigure}
\caption{Illustration of the notation on a domain with a complex feature. \changes{The different domains are distinguished by different gray intensities.}} \label{fig:G0ppty1}
\end{figure}
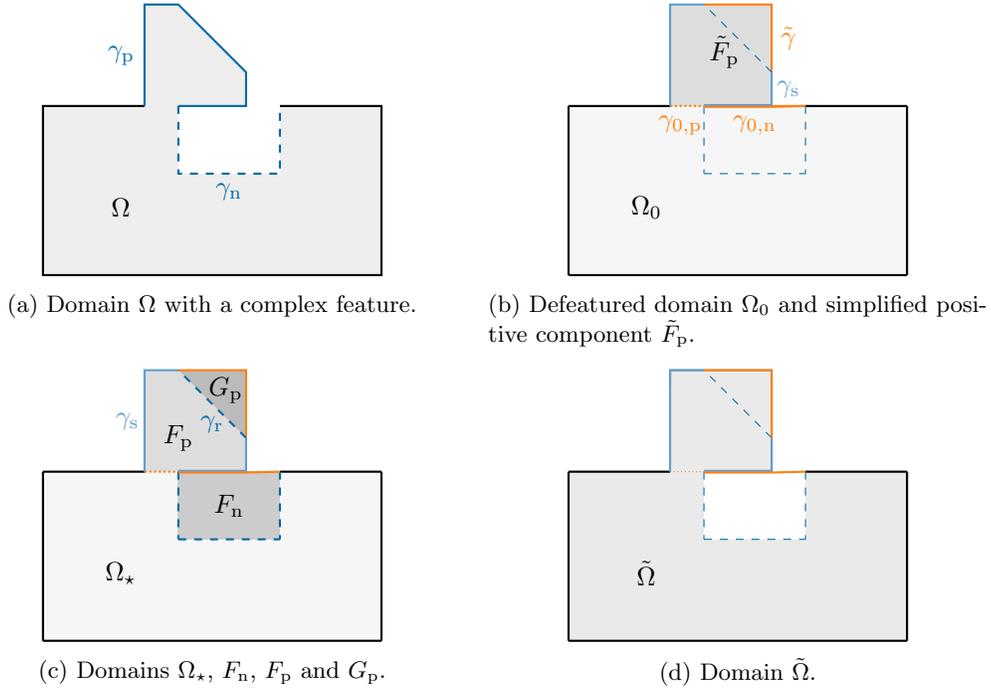

%
%\begin{remark}
%	Note that the homogeneous Dirichlet boundary condition on $\Gamma_D$ is taken for simplicity, but the present work is readily generalizable to non-homogeneous Dirichlet boundary conditions, as soon as the assumption $\Gamma_D \cap \left(\partial F_\mathrm n \cup\partial F_\mathrm p\right)= \emptyset$ remains verified. 
%\end{remark}
In this work, the analysis is performed under the following assumption: 
\begin{assumption}\label{as:neumann}
	A Neumann boundary condition is imposed on the boundary of the features, that is,
	$$\Gamma_D \cap \left(\partial F_\mathrm n \cup\partial F_\mathrm p\right)= \emptyset.$$
\end{assumption}
\begin{remark}
	\changes{Dealing with Dirichlet boundary conditions on the features' boundaries is a problem which goes beyond the scope of this paper, as difficulties arise from the necessity to work with negative Sobolev norms in the computation of a defeaturing error estimator. The interested reader is however referred for instance to \cite{repinsautersmolianski,carstensensauter2004,repinsauterbook} for some different insight on this subject.}
\end{remark}

To introduce the corresponding defeatured problem, let $\mathbf n_0$ and $\mathbf n_F$ be the unitary outward normal vectors to $\Omega_0$ and \changes{to either $F_\mathrm p$ or $F_\mathrm n$}, respectively. 
We remark that $\mathbf n_F$ may not be uniquely defined if the outward normal to $F_\mathrm n$ is of opposite sign of the outward normal to $F_\mathrm p$, but we allow this abuse of notation since the context will always make it clear. Furthermore, and as illustrated in Figure \ref{fig:G0ppty1}, let
\begin{align*}
\gamma_0 := \text{int}\left(\overline{\gamma_{0,\mathrm n}} \cup \overline{\gamma_{0,\mathrm p}}\right) \subset \partial \Omega_0 &\quad \text{with} \quad \gamma_{0,\mathrm n} := \partial F_\mathrm n \setminus \partial \Omega_\star, \quad \gamma_{0,\mathrm p}:= \partial F_\mathrm p \setminus \partial \Omega, \\
\gamma :=\text{int}\left(\overline{\gamma_{\mathrm n}} \cup \overline{\gamma_{\mathrm p}}\right) \subset \partial \Omega &\quad \text{with} \quad \gamma_\mathrm n:= \partial F_\mathrm n\setminus \overline{\gamma_{0,\mathrm n}}, \quad \gamma_\mathrm p:= \partial F_\mathrm p\setminus \overline{\gamma_{0,\mathrm p}}.
\end{align*} 
so that $\partial F_\mathrm n = \overline{\gamma_\mathrm n} \cup \overline{\gamma_{0,\mathrm n}}$ with $\gamma_\mathrm n\cap \gamma_{0,\mathrm n}=\emptyset$, and $\partial F_\mathrm p = \overline{\gamma_\mathrm p} \cup \overline{\gamma_{0,\mathrm p}}$ with $\gamma_\mathrm p\cap \gamma_{0,\mathrm p}=\emptyset$. 
Since from Assumption~\ref{as:neumann}, $\Gamma_D \cap \left(\partial F_\mathrm n \cup\partial F_\mathrm p\right)= \emptyset$, then note that $\gamma \subset \Gamma_N$. 

Moreover, consider any $L^2$-extension of the restriction $f\vert_{\Omega_\star}$ in the negative component $F_\mathrm n$ of $F$, that we still write $f\in L^2(\Omega_0)$ by abuse of notation. Then instead of (\ref{eq:originalpb}), we solve the following defeatured (or simplified) problem: after choosing $g_{0}\in H^{\frac{1}{2}}(\gamma_0)$, find the weak solution $u_0\in H^1(\Omega_0)$ of 
\begin{align} \label{eq:simplpb}
\begin{cases}
-\Delta u_0 = f &\text{ in } \Omega_0 \\
u_0 = g_D &\text{ on } \Gamma_D \\ \vspace{1mm}
\displaystyle\frac{\partial u_0}{\partial \mathbf{n}_0}  = g &\text{ on } \Gamma_N\setminus {\gamma}\\ 
\displaystyle\frac{\partial u_0}{\partial \mathbf{n}_0}  = g_{0} &\text{ on } \gamma_0,\vspace{0.1cm}
\end{cases}
\end{align}
that is, $u_0\in H^1_{g_D,\Gamma_D}(\Omega_0)$ satisfies for all $v\in H^1_{0,\Gamma_D}(\Omega_0)$, 
\begin{equation} \label{eq:weaksimplpb}
\int_{\Omega_0} \nabla u_0 \cdot \nabla v \,\mathrm dx = \int_{\Omega_0} fv \,\mathrm dx + \int_{\Gamma_N\setminus \gamma} g v \,\mathrm ds + \int_{\gamma_0} g_{0} v \,\mathrm ds.
\end{equation}

Because of the simplification of the domain $\Omega$ into $\Omega_0$ by defeaturing, a geometric error called defeaturing error is introduced, and it is important to be able to control this error. That is, in this work, we are interested in controlling the energy norm of the defeaturing error ``$u-u_0$" in $\Omega$.\\

However, since $u_0$ is not defined everywhere in $\Omega$ (it is not defined in $F_\mathrm p\subset \Omega$), then as in \cite{paper1defeaturing}, we need to solve an extension problem in a Lipschitz domain $\tilde{F}_\mathrm p\subset \mathbb{R}^n$ that contains $F_\mathrm p$ and such that $\gamma_{0,\mathrm p} \subset \left(\partial \tilde{F}_\mathrm p \cap \partial F_\mathrm p\right)$. That is, let $\tilde F_\mathrm p$ be a suitable (simple) domain extension of $F_\mathrm p$, which can be for instance the bounding box of $F_\mathrm p$ if the boundary of the latter contains $\gamma_{0, \mathrm p}$ (see Figure \ref{fig:boundingbox}). To simplify the following exposition, let us assume that $\tilde F_\mathrm p \cap \Omega_\star = \emptyset$, even if this hypothesis could easily be removed. Then, let 
\begin{equation} \label{eq:Gp}
G_\mathrm p := \tilde F_\mathrm p \setminus \overline{F_\mathrm p}, \qquad 
\tilde\Omega :=\text{int}\left(\overline{\Omega}\cup \overline{G_\mathrm p}\right) = \text{int}\left(\overline{\Omega_\star}\cup \overline{\tilde F_\mathrm p}\right),
\end{equation}
and let us also assume that $G_\mathrm p$ is Lipschitz. Note that $G_\mathrm p$ can be seen as a negative feature of $F_\mathrm p$ whose simplified geometry is $\tilde F_\mathrm p$. 

Now, let us define the extension problem that we solve in $\tilde F_\mathrm p$. To do so, consider any $L^2$-extension of $f$ in $\tilde F_\mathrm p$, that we still write $f$ by abuse of notation, let $\tilde{\mathbf{n}}$ be the unitary outward normal of $\tilde F_\mathrm p$, let $\tilde \gamma := \partial \tilde F_\mathrm p \setminus \partial F_\mathrm p$, and let $\gamma_\mathrm p$ be decomposed as $\gamma_\mathrm p = \text{int}(\overline{\gamma_\intersign}\cup\overline{\gamma_\setminussign})$, where $\gamma_\intersign$ and $\gamma_\setminussign$ are open, $\gamma_\intersign$ is the part of $\gamma_\mathrm p$ that is shared with $\partial \tilde F_\mathrm p$ while $\gamma_\setminussign$ is the remaining part of $\gamma_\mathrm p$, that is, the part that does not belong to $\partial \tilde F_\mathrm p$. 
Then we solve the following extension problem in $\tilde F_\mathrm p$: after choosing $\tilde g\in H^{\frac{1}{2}}(\tilde \gamma)$, find $\tilde{u}_0\in H^1\left(\tilde{F}_\mathrm p\right)$, the weak solution of 
\begin{align} \label{eq:featurepb}
\begin{cases}
-\Delta \tilde{u}_0 = f &\text{ in } \tilde{F}_\mathrm p \\ \vspace{1mm}
\tilde{u}_0 = u_0 & \text{ on } \gamma_{0,\mathrm p} \\ \vspace{1mm}
\displaystyle\frac{\partial \tilde u_0}{\partial \tilde{\mathbf{n}}}  = \tilde g & \text{ on } \tilde \gamma \\
\displaystyle\frac{\partial \tilde u_0}{\partial \tilde{\mathbf{n}}}  = g & \text{ on } \gamma_\intersign.
\end{cases}
\end{align}
That is, $\tilde{u}_0\in H^1_{u_0,\gamma_{0,\mathrm p}}\left(\tilde F_\mathrm p\right)$ satisfies for all $v\in H^1_{0,\gamma_{0,\mathrm p}}\left(\tilde{F}_\mathrm p\right)$, 
\begin{equation} \label{eq:weakfeaturepb}
\int_{\tilde{F}_\mathrm p} \nabla \tilde{u}_0 \cdot \nabla v \,\mathrm dx = \int_{\tilde{F}_\mathrm p} fv \,\mathrm dx + \int_{\tilde \gamma} \tilde{g} v \,\mathrm ds + \int_{\gamma_\intersign} {g} v \,\mathrm ds.
\end{equation}

Finally, let $u_\mathrm d\in H^1_{g_D,\Gamma_D}\left(\Omega\right)$ be the extended defeatured solution, that is,
\begin{equation}\label{eq:defud}
u_\mathrm d = u_0\vert_{\Omega_\star} \text{ in } \Omega_\star=\Omega\setminus\overline{F_\mathrm p} \quad \text{ and } \quad u_\mathrm d = \tilde{u}_0\vert_{F_\mathrm p} \text{ in } F_\mathrm p.
\end{equation}
Now, both the exact solution $u$ and the defeatured solution $u_\mathrm d$ are defined in the same exact domain $\Omega$, and thus we can precisely define the defeaturing error as $\left\|\nabla\left(u-u_\mathrm d\right)\right\|_{0,\Omega}$. \\

To ease the notation in the remaining part of the article, let
\begin{equation} \label{eq:neumannbd}
\Gamma_N^0 := \left(\Gamma_N\setminus\gamma\right)\cup\gamma_0 \quad \text{ and } \quad \tilde \Gamma_N := \gamma_\intersign \cup \tilde \gamma
\end{equation}
be the Neumann boundaries of $\Omega_0$ and of $\tilde F_\mathrm p$, respectively, and let 
\begin{equation} \label{eq:defSigma}
\Sigma := \left\{\gamma_\mathrm n, \gamma_{0,\mathrm p}, \gamma_\setminussign\right\}.
\end{equation}
%Moreover, let us make the following assumption:
%\begin{assumption} \label{as:isotropy} Each $\sigma\in\Sigma$ is isotropic, that is, for all $\sigma \in \Sigma$, $$\mathrm{diam}\left(\sigma\right) \lesssim \displaystyle\max_{\sigma_c\in\text{conn}(\sigma)}\big(\mathrm{diam}(\sigma_c)\big),$$ and each connected component $\sigma_c\in\text{conn}(\sigma)$ satisfies diam$\left(\sigma_c\right)^{n-1} \lesssim \left|\sigma_c\right|$. 
%\end{assumption}
%An additional rather technical but reasonable assumption is required on $F$ to demonstrate the reliability of the proposed discrete defeaturing error estimator; the reader interested by the proof is referred to Section~\ref{sec:combinedest}. 

\subsection{Finite element formulation of the defeatured problem} \label{ss:fepbstatement}
In practical applications, the differential problems are not solved analytically but numerically. In this work, we employ the Galerkin method to discretize the defeatured problem. More precisely, let $\mathcal Q_0$ be a finite element mesh defined on $\Omega_0$, and let $V^h(\Omega_0)\subset H^1(\Omega_0)$ be a FE space built from $\mathcal Q_0$. 
We suppose for simplicity that the Dirichlet boundary $\Gamma_D$ is the union of full element edges (if $n=2$) or faces (if $n=3$), that the Dirichlet data $g_D$ is the trace of a discrete function in $V^h(\Omega)$ which we will still write $g_D$ by abuse of notation, and that $g_D \in \text{tr}_{\Gamma_D}\left(V^h(\Omega_0)\right)$, where $\text{tr}_{\Gamma_D}\left(V^h(\Omega_0)\right)$ is the trace space of the discrete functions of $V^h(\Omega_0)$ on $\Gamma_D$. 
%Moreover, let $$V^h_0(\Omega_0) := V^h(\Omega_0)\cap H_{0,\Gamma_D}^1(\Omega_0), \quad V^h_{g_D}(\Omega_0) := V^h(\Omega_0)\cap H_{g_D,\Gamma_D}^1(\Omega_0).$$

Then, let $\tilde{\mathcal Q}$ be a FE mesh defined on $\tilde F_\mathrm p$, and let $V^h\big(\tilde F_\mathrm p\big)\subset H^1\big(\tilde F_\mathrm p\big)$ be a FE space built from $\tilde{\mathcal Q}$ that satisfies the following compatibility assumption. 
\begin{assumption} \label{as:discrspacescompat}
	Using the notation introduced in Table \ref{tbl:notation}, $V^h(\Omega_0)$ and $V^h\big(\tilde F_\mathrm p\big)$ satisfy
	\begin{equation*}
		\text{tr}_{\gamma_{0,\mathrm p}}\big(V^h\left(\Omega_0\right)\big) \subset \text{tr}_{\gamma_{0,\mathrm p}}\Big(V^h\big(\tilde F_\mathrm p\big)\Big).
	\end{equation*}
	\changes{That is, the trace of the discrete solution, computed in $\Omega_0$, can be exactly represented in the discrete space in the feature, and therefore the solutions in $\Omega_0$ and $\tilde{F}_p$ can be glued conformingly in $H^1$.}
\end{assumption}
%To take into account Dirichlet boundary conditions, we also introduce the finite dimensional spaces $$V^h_0\big(\tilde F_\mathrm p\big) := V^h\big(\tilde F_\mathrm p\big)\cap H_{0,\gamma_{0,\mathrm p}}^1\big(\tilde F_\mathrm p\big) \quad \text{ and } \quad V^h_{u_0^h}\big(\tilde F_\mathrm p\big) := V^h\big(\tilde F_\mathrm p\big)\cap H_{u_0^h,\gamma_{0,\mathrm p}}^1\big(\tilde F_\mathrm p\big).$$

We are now able to define $u_\mathrm d^h$ as the discretized counter-part of $u_\mathrm d$ defined in~(\ref{eq:defud}). More precisely, it is the continuous function corresponding to the Galerkin approximation of problem~(\ref{eq:weaksimplpb}) using the discrete space $V^h(\Omega_0)$, and of problems~(\ref{eq:weakfeaturepb}) using the discrete spaces $V^h\big(\tilde F_\mathrm p^k\big)$ for all $k=1,\ldots,N_f$. 
Then, we define the discrete defeaturing error (or overall error) as $\left\|\nabla\left(u-u_\mathrm d^h\right)\right\|_{0,\Omega}$. 
In the following Section \ref{sec:adaptive}, we introduce an \textit{a posteriori} estimator of this error. 

\begin{remark}
	The only required compatibility between $V^h(\Omega_0)$ and $V^h\big(\tilde F_\mathrm p\big)$ is Assumption \ref{as:discrspacescompat}. \changes{This assumption gives the guarantee that tr$_{\gamma_{0,\mathrm p}}\left(u_0^h\right) \in \text{tr}_{\gamma_{0,\mathrm p}}\Big(V^h\big(\tilde F_\mathrm p\big)\Big)$, where $u_0^h$ is the discretized counter-part of $u_0$.} Moreover, note that $\mathcal Q_0$ and $\tilde{\mathcal Q}$ are possibly overlapping meshes, but both should have $\gamma_{0,\mathrm p}$ as part of their boundary. 
\end{remark}

\begin{remark}
	Note that the aim is never to solve the original problem (\ref{eq:originalpb}) in the exact geometry $\Omega$. Indeed, we assume that one needs to remove the features of $\Omega$ since solving a PDE in $\Omega$ is either too costly or even unfeasible (for instance, it could be impossible to mesh $\Omega$). Therefore in principle, the original problem (\ref{eq:originalpb}) is never solved in the discrete setting. 
\end{remark}

\subsection{Generalization to multiple features} \label{ss:multifeature}
In this section, we generalize the previous setting to an exact geometry $\Omega$ containing $N_f\geq1$ distinct features, following \cite{paper3multifeature}. Note that the notation introduced here is compatible and generalizing the one used in the previous sections. So let the exact geometry $\Omega\subset\mathbb{R}^n$ be an open Lipschitz domain with $N_f\in \mathbb{N}$ distinct complex Lipschitz geometrical features $\mathcal F:= \left\{F^k\right\}_{k=1}^{N_f}$. That is, for all $k=1,\ldots,N_f$, $F^k$ is an open domain which is composed of a (not necessarily connected) negative component $F_\mathrm n^k$ and a (not necessarily connected) positive component $F^k_\mathrm p$ that can have a non-empty intersection. More precisely, $F^k = \text{int}\left(\overline{F_\mathrm p^k} \cup \overline{F_\mathrm n^k}\right)$, where $F_\mathrm n^k$ and $F_\mathrm p^k$ are open Lipschitz domains such that if we let 
$$F_\mathrm p := \text{int}\left(\bigcup_{k=1}^{N_f} \overline{F_\mathrm p^k}\right), \quad F_\mathrm n := \text{int}\left(\bigcup_{k=1}^{N_f} \overline{F_\mathrm n^k}\right), \quad \Omega_\star := \Omega \setminus \overline{F_\mathrm p}, $$
then $F_\mathrm p \subset \Omega, \quad \left(\overline{F_\mathrm n} \cap \overline{\Omega_\star} \right) \subset \partial \Omega_\star.$
In this setting, the defeatured geometry is defined as in (\ref{eq:defomega0}) by 
\begin{equation} \label{eq:defomega0multi}
\Omega_0 := \text{int}\left( \overline{\Omega_\star} \cup \overline{F_\mathrm n} \right) \subset \mathbb{R}^n,
\end{equation}
and we assume that $\Omega_0$ is an open Lipschitz domain. 
Let us make the following separability assumption on the features, already used in \cite{paper3multifeature}:
\begin{assumption} \label{def:separated}
	The features $\mathcal{F}$ are separated, that is, 
	\begin{itemize}
		\item for every $k,\ell=1,\ldots,N_f$, $k\neq\ell$, $\overline{F^k}\cap \overline{F^\ell} = \emptyset$, 
		\item there exist sub-domains $\Omega^k\subset \Omega$, $k=1,\ldots,N_f$ such that \begin{itemize}[$\diamond$]
			\item $F_\mathrm p^k \subset \Omega^k,\quad \left(\gamma_\mathrm n^k \cup \gamma_\setminussign^k\right) \subset \partial \Omega^k, \quad \gamma_{0,\mathrm p}^k \subset \partial (\Omega^k \cap \Omega_0)$, 
			\item $\left|\Omega^k\right| \simeq |\Omega|$, i.e., the measure of $\Omega^k$ is comparable with the measure of $\Omega$, not with the measure of the feature $F^k$, 
			\item \changes{let $N_s$ be the maximum number of superposed sub-domains $\Omega^k$, i.e., $$N_s := \displaystyle\max_{J\subset \left\{1,\ldots,N_f\right\}} \left(\#J : \bigcap_{k\in J}\Omega^k \neq \emptyset \right).$$ Then $N_s$ is bounded independently from the number of features $N_f$, and if $N_f$ is large, $N_s \ll N_f$.}
		\end{itemize}
	\end{itemize}
\end{assumption}
\begin{remark}\begin{itemize}
	\item It is always possible to satisfy the first condition of Assumption \ref{def:separated} by changing the numbering of the features. Indeed, if there are $k,\ell = 1,\ldots,N_f$ such that $\overline{F^k}\cap\overline{F^\ell}\neq \emptyset$, then \hbox{$F^{k,\ell}:= \textrm{int}\left( \overline{F^k} \cup \overline{F^\ell}\right)$} can be considered as a single feature that replaces the two features \hbox{$F^k$ and $F^\ell$}. 
	\item The second condition of Assumption \ref{def:separated} means that one cannot have an increasingly large number of features that are arbitrarily close to one another. \changes{That is, one cannot have fractal-like structures or geometries with boundaries which are complicated everywhere. The interested reader is referred to \cite{hiptmairshapeapprox,HEYDAROV2022102157,shapederpaper} for first results in this different framework.}
	\item If $N_f = 1$, one can take $\Omega^1 := \Omega$. 
\end{itemize}
\end{remark}

%\begin{figure}
%	\begin{center}
%	\begin{tikzpicture}[scale=1.5]
%		\draw (-1,-1) -- (1,-1) -- (1,1) -- (-1,1) -- cycle;
%		\draw (0,0) circle(0.1);		
%%		\draw (0.1,0) circle(0.1);
%		\foreach \x in {0,1,2,3,4,5,6,7,8,9}{
%			\pgfmathsetmacro\xx{{cos(\x * 2 * pi / 10)}};
%			\pgfmathsetmacro\yy{{sin(\x * 2 * pi / 10)}};
%%			 \pgfmathsetmacro\yy{0.1 * sin(\th)};
%			\draw(\xx,\yy) circle(0.1);
%		}
%	\end{tikzpicture}
%	\caption{Multi-feature geometry which does not satisfy Assumption \ref{def:separated}} \label{fig:separability}
%	\end{center}
%\end{figure}

Then, let us use the same notations as in Sections \ref{ss:continuouspbstatement} and \ref{ss:fepbstatement}, where an upper index $k$ is added to a quantity referring to feature $F^k$. For instance, for all $k=1,\ldots,N_f$, we write 
\vspace{-0.1cm}\begin{align}
&\gamma_\mathrm n^k := \partial F_\mathrm n^k \setminus \partial \Omega_\star, \qquad \gamma_\mathrm n := \bigcup_{k=1}^{N_f} \gamma_\mathrm n^k, \qquad \tilde \Gamma_N^k := \gamma_\intersign^k \cup \tilde \gamma^k, \qquad\tilde \Gamma_N := \bigcup_{k=1}^{N_f} \tilde \Gamma_N^k, \label{eq:GammaNktilde}
\end{align}
and similarly for $\gamma_{0,\mathrm p}$ and $\gamma_\setminussign$. In particular, we generalize the definition of $\Sigma$ from (\ref{eq:defSigma}) as follows:
\begin{align}
\Sigma^k &:= \left\{\gamma_\mathrm n^k, \gamma_{0,\mathrm p}^k, \gamma_\setminussign^k\right\}, \quad \text{ for } k=1,\ldots,N_f, \nonumber \\
\Sigma &:= \{\sigma \in \Sigma^k : k=1,\ldots,N_f\}. \label{eq:sigmamulti}
\end{align}
%and we assume that $\Sigma$ satisfies the isotropy Assumption \ref{as:isotropy}.
Let $\tilde{\mathbf{n}}^k$ be the unitary outward normal of $\tilde F_\mathrm p^k$, and to ease the notation, let us write $\mathbf n^k := \mathbf n_{F^k}$ the unitary outward normal of $F^k$, for all $k=1,\ldots,N_f$. Let us also write $\tilde F_\mathrm p^k$ the domain extension of the positive component $F_\mathrm p^k$ of feature $F^k$ and 
\vspace{-0.2cm}\begin{equation} \label{eq:defGpk}
G_\mathrm p^k := \tilde F_\mathrm p^k \setminus \overline{F_\mathrm p^k}.
\end{equation}
As in the single feature case, note that $G_\mathrm p^k$ can be seen as a negative feature of $F_\mathrm p^k$ whose simplified geometry is $\tilde F_\mathrm p^k$. 
To simplify the following exposition and even if this hypothesis could easily be removed, let us make the following assumption. 
\begin{assumption}
	$\tilde F_\mathrm p^k \cap \tilde F_\mathrm p^\ell = \emptyset$ for all $k,\ell = 1,\ldots,N_f$ such that $k\neq \ell$, and $\tilde F_\mathrm p\cap \Omega_\star = \emptyset$ with $\tilde F_\mathrm p := \displaystyle\bigcup_{k=1}^{N_f} \tilde F_\mathrm p^k$. 
\end{assumption}
\noindent Then define 
$\tilde\Omega :=\text{int}\left(\overline{\Omega_\star}\cup \overline{\tilde F_\mathrm p}\right)$. 
Moreover, let $\tilde{\mathcal Q}^k$ be a FE mesh defined on $\tilde F_\mathrm p^k$ for all $k=1,\ldots,N_f$, and let
\begin{align}
\vspace{-0.2cm}\mathcal{Q} := \mathcal Q_0 \cup \tilde{\mathcal Q}\quad \text{ with }\quad \tilde{\mathcal Q}:=\displaystyle\bigcup_{k=1}^{N_f} \tilde{\mathcal Q}^k. \label{eq:defQhmulti}
\end{align}
In this multi-feature setting, the FE spaces $V^h(\Omega_0)$ and $V^h\big(\tilde F_\mathrm p^k\big)$ respectively built from $\mathcal Q_0$ and $\tilde{\mathcal Q}^k$, need to satisfy Assumption \ref{as:discrspacescompat} for each $k=1,\ldots,N_f$. 

Moreover, we first solve the Galerkin formulation of the defeatured problem~(\ref{eq:weaksimplpb}) in $\Omega_0$ to obtain $u_0^h$. Then, for each feature $F^k$ whose positive component is non-empty, we solve the Galerkin formulation of the extension problem~(\ref{eq:weakfeaturepb}) in $\tilde F_\mathrm p^k$ to obtain $\tilde u_0^{k,h} \in V^h\big(\tilde F_\mathrm p^k\big)\cap H_{u_0^h,\gamma_{0,\mathrm p}^k}^1\big(\tilde F_\mathrm p^k\big).$
To reduce the notation in the multi-feature context, we will write
\begin{equation} \label{eq:notationukh}
	u_k^h \equiv \tilde u_0^{k,h}.
\end{equation}
Note that these extensions $u_k^h$ can be computed separately (and in parallel) for each feature $F^k$.
% Note that in this context, we use the respective notations $u_k$ and $u_k^h$ instead of $\tilde u_0$ and $\tilde u_0^h$, as
Finally, the discrete extended defeatured solution $u_\mathrm d^h$ is defined as:
%$$u_\mathrm d^h\in W^h:=V_{g_D}^h(\Omega_0)\big\vert_{\Omega_\star} \oplus \left[ \bigoplus_{k=1}^{N_f} V_{u_0^h}^h\big(\tilde F_\mathrm p^k\big)\big\vert_{F_\mathrm p^k} \right] \subset H_{g_D,\Gamma_D}^1(\Omega)$$
%is defined by
\begin{equation}\label{eq:defudhmultifeat}
u_\mathrm d^h = u_0^h\big\vert_{\Omega_\star} \text{ in } \Omega_\star=\Omega\setminus\overline{F_\mathrm p} \quad \text{ and } \quad u_\mathrm d^h = {u}_k^h\big\vert_{F_\mathrm p^k} \text{ in } F_\mathrm p^k \,\,\text{ for } k=1,\ldots,N_f,
\end{equation}
and the discrete defeaturing error (or overall error) is defined as $\left\|\nabla\left(u-u_\mathrm d^h\right)\right\|_{0,\Omega}$. 

\section{Adaptive analysis-aware defeaturing strategy} \label{sec:adaptive}
\begin{figure}
	\begin{center}
		\begin{tikzpicture}
		\draw node{\includegraphics[scale=0.135,trim=450 0 400 20, clip]{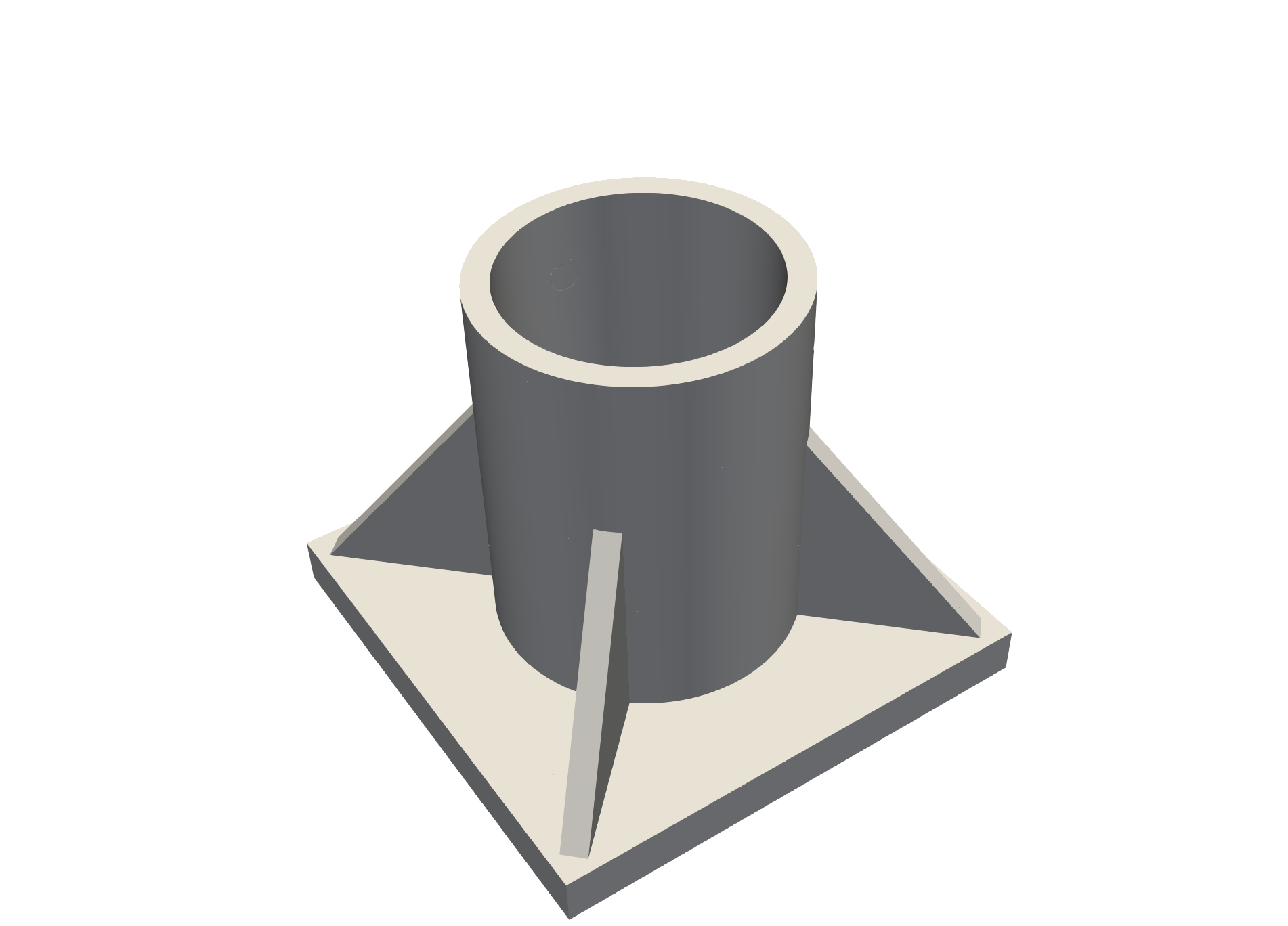}\hspace{5mm} % nofeat.pdf}\hspace{5mm}
			\includegraphics[scale=0.135,trim=450 0 400 20, clip]{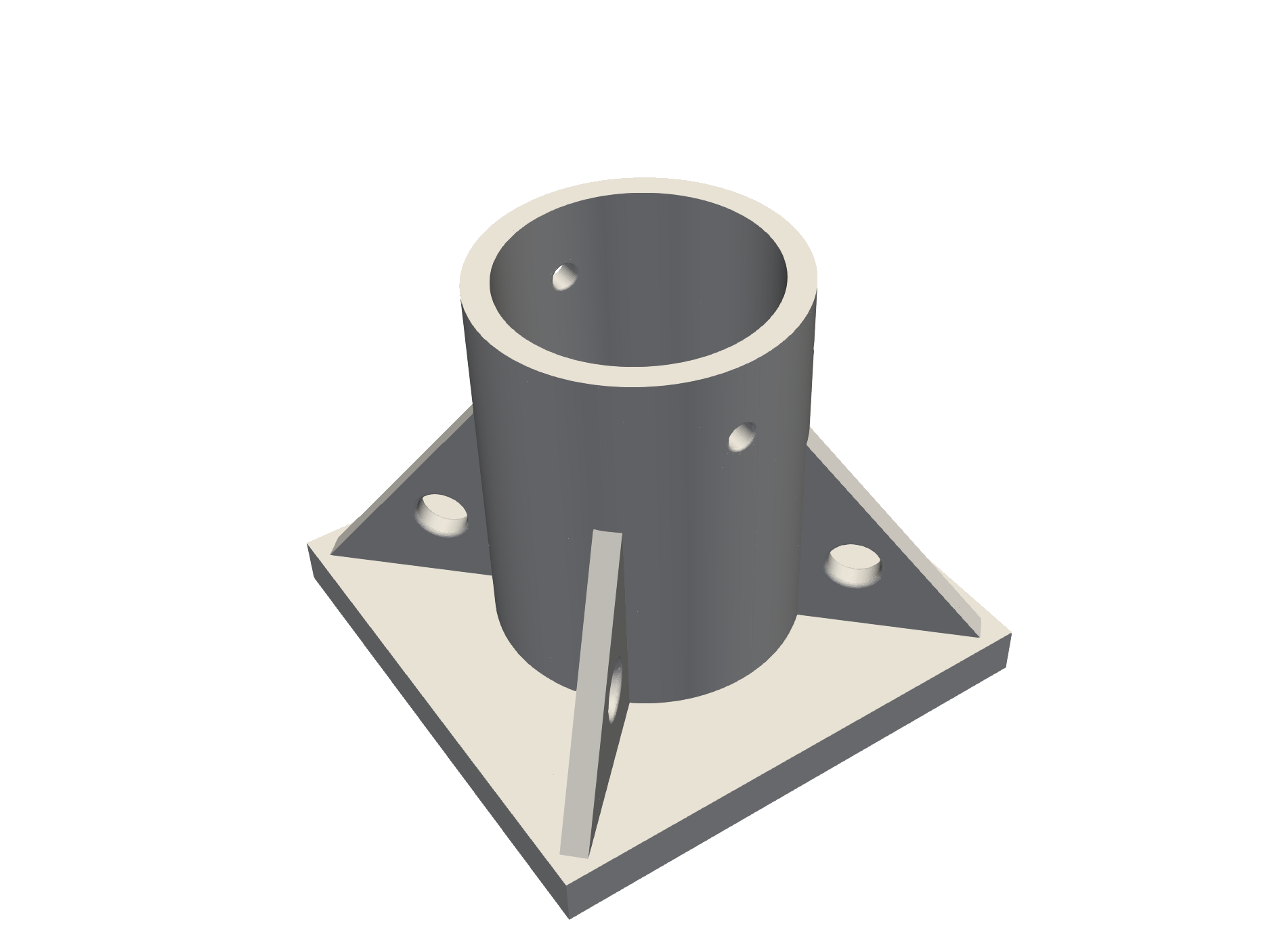}\hspace{5mm} % somefeat.pdf}\hspace{5mm}
			\includegraphics[scale=0.135,trim=450 0 400 20, clip]{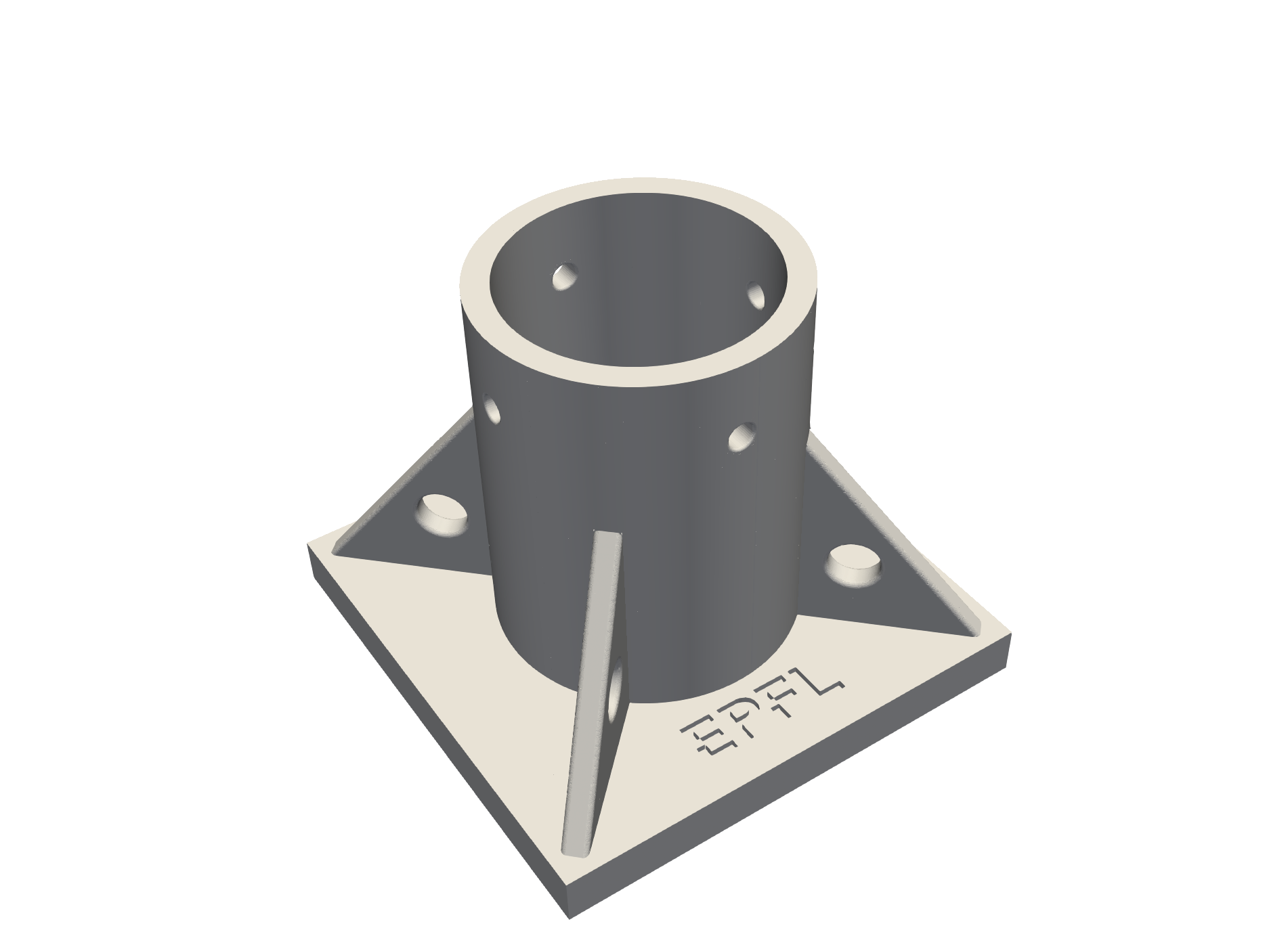}}; % allfeat.pdf}};
		\draw[->,
		>=stealth',
		auto,thick] (-3.5,-2.25) -- (-2.1,-2.25) ;
		\draw (-2.9,-2.6) node{{geometric} adaptivity} ;
		\draw[->,
		>=stealth',
		auto,thick] (3.2,1) -- (1.8,1) ;
		\draw (2.5,1.3) node{defeaturing} ;
		\draw (7,-2.5) node{$\Omega$};
		\end{tikzpicture}
		\caption{Illustration of defeaturing and {geometric} adaptivity.} \label{fig:defeatadaptive}
	\end{center}
\end{figure}

In this section, we aim at defining an adaptive analysis-aware defeaturing method for the Poisson problem defined in Section \ref{sec:pbstatement}, in a geometry $\Omega$ containing $N_f\geq 1$ distinct complex features. In particular, starting from a very coarse mesh $\mathcal Q$ and from a fully defeatured geometry $\Omega_0$, we want to precisely define a strategy that determines:
\begin{itemize}
	\item when and where the mesh $\mathcal Q$ needs to be refined (standard $h$-refinement, or numerical adaptivity), 
	\item when and which geometrical features that have been removed by defeaturing need to be reinserted in the geometrical model (geometric adaptivity, see also \cite{paper3multifeature}).
\end{itemize}
Note that the word \textit{defeaturing} may be misleading when thinking of an adaptive strategy: the geometry $\Omega_0$ on which the problem is actually solved is (partially) defeatured, but the adaptive algorithm chooses which features need to be \textit{added} to be able to solve the differential problem up to a given accuracy. This is illustrated in Figure \ref{fig:defeatadaptive}. \\

{Closely following the framework of adaptive finite elements \cite{nochettoprimer} for elliptic PDEs, we recall the four main building blocks of adaptivity, composing one iteration of the iterative process:
	\FloatBarrier
	\begin{figure}[h!]
		\begin{center}
			\begin{tikzpicture}[
			start chain = going right,
			node distance=7mm,
			block/.style={shape=rectangle, draw,
				inner sep=1mm, align=center,
				minimum height=5mm, minimum width=15mm, on chain}] 
			%placing the blocks
			\node[block] (n1) {SOLVE};
			\node[block] (n2) {ESTIMATE};
			\node[block] (n3) {MARK};
			\node[block] (n4) {REFINE};
			
			\draw[->] (n1.east) --  + (0,0mm) -> (n2.west);
			\draw[->] (n2.east) --  + (0,0mm) -> (n3.west);
			\draw[->] (n3.east) --  + (0,0mm) -> (n4.west);
			\draw[<-] (n1.south) --  + (0,-5mm) -| (n4.south);
			\end{tikzpicture}
		\end{center}
	\end{figure}
	\FloatBarrier}
\noindent In the following, we elaborate on each of these blocks in the context of analysis-aware defeaturing, letting $i\in\mathbb N$ be the current iteration index. 
In particular, in Section \ref{sec:estimate}, we propose an \textit{a posteriori} estimator of the discrete defeaturing error $\left\|\nabla\left(u-u_\mathrm d^h\right)\right\|_{0,\Omega}$. Its reliability will be proven in Section \ref{sec:combinedest} in the context of IGA with THB-splines under reasonable assumptions. 

To begin the adaptive process, let $\Omega_0^{(0)}$ be the fully defeatured geometry defined as in (\ref{eq:defomega0multi}), i.e., the domain in which all features of $\Omega$ are removed. Since some features will be reinserted during the adaptive process, we denote $\Omega_0^{(i)}$ the simplified geometry at the $i$-th iteration, and in general, we use the super index $(i)$ to refer to objects at the same iteration. However, to alleviate the notation, we will drop the super index when it is clear from the context. In particular, we will often write $\Omega_0\equiv \Omega_0^{(i)}$. 

\subsection{Solve} \label{sec:solve}
Using suitable FE spaces, we first solve the Galerkin formulation of problem~(\ref{eq:weaksimplpb}) defined in the (partially) defeatured geometry $\Omega_0$. Then, we solve the Galerkin formulation of the local extension problem~(\ref{eq:weakfeaturepb}) for each feature having a non-empty positive component. We thus obtain the discrete defeatured solution $u_\mathrm d^h\equiv u_\mathrm d^{h,(i)}$ defined in (\ref{eq:defudhmultifeat}), as an approximation of the exact solution $u$ of (\ref{eq:originalpb}) at iteration $i$.

\subsection{Estimate} \label{sec:estimate}
In order to define the proposed \textit{a posteriori} estimator $\mathscr{E}\big(u_\mathrm d^h\big)$ of the discrete defeaturing error $\left\|\nabla\left(u-u_\mathrm d^h\right)\right\|_{0,\Omega}$, let us first introduce some further notation. In the following and by abuse of notation, we use $u_\mathrm d^h$ instead of $u_0^h$ or $u_k^h$ for some $k=1,\ldots,N_f$ where $N_f \equiv N_f^{(i)}$, whenever the context is clear. For each $\sigma\in\Sigma$, let $d_\sigma$ be the continuous defeaturing error term, and let $d_\sigma^h$ be its discrete counterpart. That is, for all $k=1,\ldots,N_f$ and all $\sigma \in \Sigma^k$, 
\begin{equation} \label{eq:defdsigmah}
d_\sigma := \begin{cases}\vspace{1mm}
g + \displaystyle\frac{\partial u_\mathrm d}{\partial \mathbf n^k} & \text{if } \sigma = \gamma_\mathrm n^k \\\vspace{1mm}
g - \displaystyle\frac{\partial u_\mathrm d}{\partial \mathbf n^k} & \text{if } \sigma = \gamma_\setminussign^k \\
-g_0 - \displaystyle\frac{\partial u_\mathrm d}{\partial \mathbf n^k} & \text{if } \sigma = \gamma_{0,\mathrm p}^k.
\end{cases}
\text{ and } \quad d_\sigma^h := \begin{cases}\vspace{1mm}
g + \displaystyle\frac{\partial u^h_\mathrm d}{\partial \mathbf n^k} & \text{if } \sigma = \gamma_\mathrm n^k \\\vspace{1mm}
g - \displaystyle\frac{\partial u^h_\mathrm d}{\partial \mathbf n^k} & \text{if } \sigma = \gamma_\setminussign^k \\
-g_0 - \displaystyle\frac{\partial u^h_\mathrm d}{\partial \mathbf n^k} & \text{if } \sigma = \gamma_{0,\mathrm p}^k.
\end{cases}
\end{equation}

\begin{remark}
	\changes{In the problem statement of Section~\ref{ss:continuouspbstatement}, we have considered $g_D\in H^{\frac{3}{2}}(\Gamma_D)$, $g\in H^{\frac{1}{2}}(\Gamma_N)$ and $f\in L^2\left(\Omega\right)$, which is a higher regularity than what is required for Poisson's problem to be well defined. This is justified by the fact that we need the quantities in~(\ref{eq:defdsigmah}) to be in $L^2(\sigma)$ to define the defeaturing error estimator from \cite{paper1defeaturing}.}
\end{remark}
\begin{remark}\label{rmk:compatcond}
	Note that for all $\sigma \in \Sigma$, the average value of $d_\sigma$ over $\sigma$ is a computable quantity as it is independent of (the unknown function) $u_\mathrm d$. Indeed, using the single feature notation for simplicity, recalling the definition of $G_\mathrm p$ from (\ref{eq:Gp}) and as already observed in \cite[Remark~5.3]{paper1defeaturing}, 
	\begin{align}
	\overline{d_{\gamma_\mathrm n}}^{\gamma_\mathrm n} &= \frac{1}{\left|\gamma_\mathrm n\right|} \left( \int_{\gamma_\mathrm n} g\,\mathrm ds - \int_{\gamma_{0,\mathrm n}} g_0\,\mathrm ds - \int_{F_\mathrm n} f\,\mathrm dx\right), \nonumber \nonumber \\
	\overline{d_{\gamma_\setminussign}}^{\gamma_\setminussign} &= \frac{1}{\left|\gamma_\setminussign\right|} \left( \int_{\gamma_\setminussign} g\,\mathrm ds - \int_{\tilde \gamma} \tilde g\,\mathrm ds - \int_{G_\mathrm p} f\,\mathrm dx\right), \nonumber \\ 
	\overline{d_{\gamma_{0,\mathrm p}}}^{\gamma_{0,\mathrm p}} &= \frac{1}{\left|\gamma_{0,\mathrm p}\right|} \left( \int_{\gamma_{0,\mathrm p}} g_0\,\mathrm ds - \int_{\gamma_\mathrm p} g\,\mathrm ds - \int_{F_\mathrm p} f\,\mathrm dx \right).\label{eq:compatcond}
	\end{align}
	That is, for all $\sigma\in\Sigma$, $\overline{d_\sigma}^\sigma$ accounts for and only depends on the choice of defeaturing data, namely, the Neumann boundary data $g_0$ on $\gamma_0$ and $\tilde g$ on $\tilde \gamma$, and the right hand side extension $f$ in $F_\mathrm n$ and in $G_\mathrm p$. In other words, these quantities evaluate the (lack of) conservation of the continuous solution flux going through $F_\mathrm n$, $G_\mathrm p$ and $F_\mathrm p$. \changes{Note that one cannot always choose the data so that these quantities equal to zero: $f$ could be the gravity force for instance so that in practice, one usually naturally extends the data of the exact problem to the defeatured problem. This is also done to avoid some local meshing of the defeatured geometry to implement specific right hand side or boundary conditions.}
\end{remark}

If we define $\eta\in\mathbb{R}$, $\eta>0$ as the unique solution of $\eta = -\log(\eta)$, then for all $\sigma\in\Sigma$, let 
\begin{align} \label{eq:cgamma}
c_{\sigma} := \begin{cases}
\max\big(\hspace{-0.05cm}-\log\left(|\sigma|\right), \eta \big)^\frac{1}{2} & \text{ if } n = 2 \\
1 & \text{ if } n = 3.
\end{cases}
\end{align}
We can now define the total error estimator as follows:
\begin{align}
\mathscr{E}\big(u_\mathrm d^h\big) &= \left[\alpha_D^2\mathscr{E}_D\big(u_\mathrm d^h\big)^2 + \alpha_N^2\mathscr{E}_N\big(u_\mathrm d^h\big)^2 \right]^\frac{1}{2}, \label{eq:overallestimator}
\end{align}
where $\alpha_D>0$ and $\alpha_N>0$ are parameters to be tuned, 
\begin{equation*}
\mathscr{E}_D\big(u_\mathrm d^h\big)^2 :=\sum_{\sigma \in \Sigma} |\sigma|^{\frac{1}{n-1}}\left\| d_\sigma^h - \overline{d_\sigma^h}^\sigma \right\|^2_{0,\sigma} + \mathscr{E}_C^{\,\,2}
\end{equation*}
accounts for the defeaturing error as in \cite{paper1defeaturing}, with
\begin{equation*}
\mathscr{E}_C^{\,\,2} :=\sum_{\sigma \in \Sigma} c_\sigma^2 |\sigma|^{\frac{n}{n-1}}\left| \overline{d_\sigma}^\sigma \right|^2
\end{equation*}
that accounts for the data compatibility conditions (see Remark \ref{rmk:compatcond}), and $\mathscr{E}_N\big(u_\mathrm d^h\big)$ accounts for the numerical error that depends on the chosen FE method. More precisely,
$$\mathscr{E}_N\big(u_\mathrm d^h\big)^2 := \mathscr{E}_{N}^0\big(u_0^h\big)^2 + \sum_{k=1}^{N_f} \mathscr{E}_N^{k}\big(u_k^h\big)^2,$$
where $\mathscr{E}_N^{0}\big(u_0^h\big)$ is the estimator for the numerical error of $u_0^h$ in $\Omega_0$ corresponding to the discretized problem (\ref{eq:weaksimplpb}), and for all $k=1,\ldots,N_f$, $ \mathscr{E}_N^{k}\big(u_k^h\big)$ is the estimator for the numerical error of $u_k^h$ in $\tilde F_\mathrm p^k$ corresponding to the discretized problem (\ref{eq:weakfeaturepb}). 

\begin{remark} \label{rmk:nodecouple}
	\begin{itemize}
		\item The numerical error contribution $\mathscr E_N\big(u_0^h\big)$ to the estimator is computed on the {same} meshes used for the discretization, in $\Omega_0$ and in $\tilde F_\mathrm p$. Instead, the defeaturing error contribution $\mathscr E_D\big(u_0^h\big)$ to the estimator considers boundary integrals which are not necessarily union of edges or faces of the mesh, i.e., the pieces of boundaries $\sigma\in\Sigma$ are {in general not} fitted by the mesh.
		\item If $u_\mathrm d = u$ or in other words, without defeaturing error, the proposed estimator corresponds to the standard numerical residual error estimator between $u_0$ and $u_0^h$ in $\Omega_0$, and between $u_k$ and $u_k^h$ in $\tilde F_\mathrm p^k$ for all $k=1,\ldots,N_f$, that is,
		$\mathscr{E}\big(u_\mathrm d^h\big) = \mathscr{E}_N\big(u_\mathrm d^h\big)$. In this case, the proposed adaptive strategy coincides with what can be found in the standard adaptive FE literature, see e.g. \cite{nochetto2009theory}. % {\color{red}Add a comment saying that this is sub-optimal?}
		\item If $u_0^h \equiv u_0$ in $\Omega_0$ and $u_k\equiv u_k^h$ in $\tilde F_\mathrm p^k$ for all $k=1,\ldots,N_f$, i.e., without numerical error, then \hbox{$\mathscr{E}_N\big(u_\mathrm d^h\big)=0$}, and thus we recover the defeaturing error estimator introduced in \cite{paper1defeaturing} and generalized in \cite{paper3multifeature} to \hbox{geometries} with multiple features.
%		 under the same separability Assumption \ref{def:separated}. That is, 
%		\begin{equation*}
%		\mathscr{E}\big(u_0^h\big) = \mathscr{E}_D\big(u_0^h\big) = \sum_{\sigma\in\Sigma} \left( |\sigma|^{\frac{1}{n-1}} \left\| d_\sigma - \overline{d_\sigma}^\sigma \right\|^2_{0,\sigma} + c_{\sigma}^2 |\sigma|^\frac{n}{n-1}\left| \overline{d_\sigma}^\sigma \right|^2 \, \right)^\frac{1}{2}.
%		\end{equation*}
		\item Under the exact data compatibility condition $\overline{d_\sigma}^\sigma = 0$ for all $\sigma \in \Sigma$ (see (\ref{eq:compatcond})), the term $\mathscr{E}_C$ vanishes. If $\mathscr E_C$ is large with respect to the other terms of $\mathscr{E}\big(u_\mathrm d^h\big)$, the defeaturing data should be chosen more carefully, i.e., the Neumann boundary data $g_0$ on $\gamma_0$, $\tilde g$ on $\tilde \gamma$, and the right hand side extension $f$ in $F_\mathrm n$ and in each $G_\mathrm p^k$, $k=1,\ldots,N_f$.
		\item \changes{The exact expression of $\mathcal E_N\big(u_\mathrm d^h\big)$ depends on the chosen FE method. The details are given in Sections~\ref{sec:combinedest} and~\ref{sec:gen} for the special case in which the numerical method of choice is IGA.}
		\item \changes{Optimal values for the parameters $\alpha_N$ and $\alpha_D$ in~(\ref{eq:overallestimator}) require the (potentially heuristic) knowledge of the relative value of the efficiency indices corresponding to the estimator $\mathcal E_N\big(u_\mathrm d^h\big)$ of the numerical error, and to the estimator $\mathcal E_D(u_\mathrm d)$ of the defeaturing error. Based on various numerical experiments (see also Section~\ref{sec:numexp}), a good rule of thumb when choosing the estimator $\mathcal{E}_N$ as the residual estimator consists in taking $\alpha_N = 1$ and $\alpha_D = 4$.}
	\end{itemize}
\end{remark}

The discrete defeaturing error estimator $\mathscr E\big(u_\mathrm d^h\big)$ introduced in (\ref{eq:overallestimator}) can easily be decomposed into local contributions. Indeed, $\mathscr{E}_D\big(u_\mathrm d^h\big)$ (and $\mathscr{E}_C$) can readily be decomposed into single feature contributions as follows:
\begin{align}
\mathscr{E}_C^{\,\,2}&=  \sum_{k=1}^{N_f} \left(\mathscr{E}_C^k\right)^2, \quad 
\text{with } \quad \left(\mathscr{E}_C^k\right)^2:=\sum_{\sigma \in \Sigma^k} c_\sigma^2 |\sigma|^{\frac{n}{n-1}}\left| \overline{d_\sigma}^\sigma \right|^2, \quad k=1,\ldots,N_f, \nonumber \\
\mathscr{E}_D\big(u_\mathrm d^h\big)^2 &= \sum_{k=1}^{N_f} \mathscr{E}_D^k\big(u_\mathrm d^h\big)^2, \quad
\text{with } \quad \mathscr{E}_D^k\big(u_\mathrm d^h\big)^2:=\sum_{\sigma \in \Sigma^k} |\sigma|^{\frac{1}{n-1}}\left\| d_\sigma^h - \overline{d_\sigma^h}^\sigma \right\|^2_{0,\sigma} + \left(\mathscr{E}_C^k\right)^2, \quad k=1,\ldots,N_f. \label{eq:localED}
\end{align}
Similarly, the numerical error estimator $\mathscr{E}_N\big(u_\mathrm d^h\big)$ can in general be decomposed into single mesh element contributions $\mathscr{E}_N^K\big(u_\mathrm d^h\big)$ for $K\in\mathcal Q$, in the form
\begin{equation*}
	\mathscr{E}_N\big(u_\mathrm d^h\big)^2 = \sum_{K\in \mathcal Q} \mathscr{E}_N^K\big(u_\mathrm d^h\big)^2.
\end{equation*}
These local decomposition are necessary to guide the adaptive refinement, both in terms of the mesh and in terms of the geometrical model.

\subsection{Mark} \label{sec:mark}
Using a maximum strategy and recalling that $N_f \equiv N_f^{(i)}$ at the current iteration $i$, we select and mark elements $\mathcal M\subset \mathcal Q$ to be refined, and features 
$$\left\{ F^{k} \right\}_{k\in I_\mathrm m}\subset \mathcal F \quad \text{ with } I_\mathrm m \subset \left\{1,\ldots,N_f^{(i)}\right\}$$
to be added to the (partially) defeatured geometry $\Omega_0\equiv\Omega_0^{(i)}$.
That is, after choosing a marking parameter $0<\theta\leq 1$, the marked elements $K_\mathrm m\in \mathcal M$ and the marked features $F^{k_\mathrm m}$ for $k_\mathrm m \in I_\mathrm m$ verify
\begin{align}
\alpha_N\, \mathscr{E}_N^{K_\mathrm m}\big(u_\mathrm d^h\big) &\geq \theta \max\left( \alpha_N \max_{K\in \mathcal Q} \mathscr{E}_N^K\big(u_\mathrm d^h\big), \,\alpha_D\max_{k=1,\ldots,N_f} \mathscr{E}_D^k\big(u_\mathrm d^h\big) \right), \label{eq:markelements}\\
\alpha_D\, \mathscr{E}_D^{k_\mathrm m}\big(u_\mathrm d^h\big)  &\geq \theta \max\left( \alpha_N \max_{K\in \mathcal Q} \mathscr{E}_N^K\big(u_\mathrm d^h\big), \,\alpha_D\max_{k=1,\ldots,N_f} \mathscr{E}_D^k\big(u_\mathrm d^h\big) \right). \label{eq:markfeatures}
\end{align}
In other words, the set of marked elements and the set of selected features are the ones giving the most substantial contribution to the overall error estimator. The smaller is $\theta$, the more elements and features are selected. Note also that the larger is $\alpha_N$ with respect to $\alpha_D$ in (\ref{eq:overallestimator}), the more importance is given to $\mathscr E_N\big(u_\mathrm d^h\big)$ with respect to $\mathscr E_D\big(u_\mathrm d^h\big)$, and vice versa. 

\begin{remark}
	\changes{A Dörfler strategy, also called bulk-chasing strategy, could also be used in a similar manner. The maximum marking strategy has been used here for simplicity. }
\end{remark}

%\begin{remark}
%	Note that without numerical error, or more precisely if $\mathscr{E}_N^K\big(u_\mathrm d^h\big)=0$ for all $K \in\mathcal Q$, then the proposed marking strategy leads to the same marked features as the marking strategy introduced in \cite{paper3multifeature}. Similarly, without defeaturing error, or more precisely if $\mathscr{E}_D^k\big(u_\mathrm d^h\big)=0$ for all $k = 1,\ldots,N_f$, then the proposed marking strategy leads to the same marked elements as the standard maximum marking strategy found in the adaptive finite element literature, see e.g. \cite{nochetto2009theory}. % [Nochetto et al., 2009].
%\end{remark}

\subsection{Refine} \label{sec:refine}
On the one hand, based on the set $\mathcal M$ of marked elements, the mesh $\mathcal Q$ is refined thanks to an $h$-refinement procedure corresponding to the chosen FE method. During this refinement step, we need to make sure that Assumption \ref{as:discrspacescompat} remains satisfied. 

On the other hand, the defeatured geometry $\Omega_0^{(i)}$ is refined, meaning that the marked features $\left\{ F^{k} \right\}_{k\in I_\mathrm m}$ are inserted in the geometrical model. That is, the new partially defeatured geometrical model $\Omega_0^{(i+1)}$ at the next iteration is built as follows:
\begin{align}
\Omega_0^{\left(i+\frac{1}{2}\right)} &= \Omega_0^{(i)} \setminus \overline{\bigcup_{k\in I_\mathrm m} F_\mathrm  n^k}, \label{eq:Omega0ip12abs}\\
\Omega_0^{(i+1)} &= \mathrm{int}\left( \overline{\Omega_0^{\left(i+\frac{1}{2}\right)}} \cup \overline{\bigcup_{k\in I_\mathrm m} F_\mathrm p^k} \right). \label{eq:Omega0p1abs}
\end{align}
And thus in particular,
\begin{align*} % \label{eq:Fitp1}
F_\mathrm n^{(i+1)} &:= F_\mathrm n^{(i)} \setminus \overline{\bigcup_{k\in I_\mathrm m} {F_\mathrm n^k}}, \quad F_\mathrm p^{(i+1)} := F_\mathrm p^{(i)} \setminus \overline{\bigcup_{k\in I_\mathrm m} {F_\mathrm p^k}}, \\
\tilde F_\mathrm p^{(i+1)} &:= \tilde F_\mathrm p^{(i)} \setminus \overline{\bigcup_{k\in I_\mathrm m} {\tilde F_\mathrm p^k}}, \quad \Omega_\star^{(i+1)} := \Omega \setminus \overline{F_\mathrm p^{(i+1)}},
\end{align*}
and as in definition (\ref{eq:defomega0multi}),
\begin{equation*} % \label{eq:Omega0itp1}
\Omega_0^{(i+1)} = \text{int}\left( \overline{\Omega_\star^{(i+1)}} \cup \overline{F_\mathrm n^{(i+1)}} \right).
\end{equation*}
\changes{Note that in some iterations, it may happen that either the mesh or the geometry are refined, but not both.}

Once the mesh and the defeatured geometry have been refined, the modules SOLVE and ESTIMATE respectively presented in Section \ref{sec:solve} and Section \ref{sec:estimate} can be called again. To do so, we update $\Omega_0$ to be $\Omega_0^{(i+1)}$, we define $N_f^{(i+1)} := N_f^{(i)} - \#I_\mathrm m$, we update the set of features $\mathcal F$ to be $\mathcal F \setminus \left\{ F^{k} \right\}_{k\in I_\mathrm m}$, and we renumber the features from $1$ to \changes{$N_f^{(i+1)}$.} 
The adaptive loop is continued until a certain given tolerance on the error estimator $\mathscr{E}\big(u_\mathrm d^h\big)$ is reached.

\begin{remark} \label{rmk:noremesh}
	One does not want to remesh the geometrical model when features are added to it, as this would cancel the efforts made by standard $h$-refinement in the previous iterations. Therefore, in order to avoid remeshing when some features are added to the geometrical model, we design here a strategy to be used with mesh-preserving methods such as fictitious domain approaches or immersed methods, for which the computational domain is immersed in a background mesh \cite{hansbo2002unfitted,haslinger2009new,rank2012geometric,burman2015cutfem}. 
	This refinement step will be made clearer in the particular context of IGA in Section \ref{ss:refineiga}. %We anticipate here that in general in IGA, the negative component of the added features can be introduced to the geometrical model by trimming \cite{antolinvreps, weimarrusigantolin}, while their positive component can be introduced using multipatch geometry techniques \cite{multipatch}.
\end{remark}

\section{A short review of isogeometric analysis and spline technologies} \label{sec:iga}
In this section, we shortly review the IGA method, following \cite{igaanalysis}. We refer to \cite{igabook} for more details about the method and its applications. We then quickly review the notion of HB-splines and their extension to THB-splines \cite{vuong,thb2}, that allow for local mesh refinement in IGA.

\subsection{Standard B-splines}\label{ss:sbs}
Consider two strictly positive integers, $p$ denoting the degree of the B-spline basis functions, and $N$ denoting their number also called number of {degrees of freedom}. Then, let $\Xi:=\left\{\xi_i\right\}_{i=1}^{N+p+1}$
be a knot vector, that is, a non-decreasing sequence of $N+p+1$ real values in the parametric domain $(0,1)$. Using Cox-de Boor formula \cite{coxdeboor}, it is possible to recursively define the corresponding univariate B-spline basis $$\hat{\mathcal{B}}_p = \left\{\hat B_{i,p}:(0,1)\to\mathbb{R}, \,i=1,\ldots,N\right\}.$$ 
Let $k_\xi\in \mathbb{N}\setminus\{0\}$ be the multiplicity of every $\xi\in\Xi$, then $\hat{\mathcal{B}}_p$ is $C^{p-k_\xi}$-continuous in every $\xi\in\Xi$ and $C^\infty$-continuous everywhere else. In the following, we will only use open knot vectors as used in standard CAD, i.e., the first and last knots of $\Xi$ have multiplicity $p+1$. This leads to an interpolatory B-spline basis at both ends of the parametric domain $(0,1)$. 

Multivariate B-spline basis functions are then easily defined as tensor-products of the previously introduced univariate B-splines. More precisely, let $\mathbf p = \left(p_1, \ldots, p_{n}\right)$ be a vector of polynomial degrees, let $\boldsymbol N = \left({N}_1, \ldots, {N}_{n}\right)$ be a vector of number of degrees of freedom in each space direction, let $\Xi^j:=\left\{\xi_i^j\right\}_{i=1}^{N_j+p_j+1}$ be the knot vector corresponding to the parametric direction $j$, and let $\hat B^j_{i,p_j}$ be the $i$-th basis function in the $j$-th direction, $j\in\{1,\ldots,n\}$.
%and for all $j=1,\ldots,\hat n$, let $\Xi^j$ be the knot vector corresponding to the parametric direction $j$. 
Then the multivariate B-spline basis of degree $\mathbf p$ is given by $$\hat{\mathcal{B}}_\mathbf p := \left\{\hat B_{\mathbf i,\mathbf p} : \mathbf i\in \mathbf I \text{ and } \hat B_{\mathbf i,\mathbf p} := \prod_{j=1}^{n} \hat B_{i_j, p_j}^j : (0,1)^{n}\to\mathbb{R} \right\},$$
where $\mathbf i = \left(i_1, \ldots, i_{n}\right)$ is a multi-index denoting a position in the tensor-product structure, and $\mathbf I$ is the set of such indices.

For $j = 1,\ldots,n$, let us consider the set $Z^j\subset \Xi^j$ of non-repeated knots in the $j$-th direction, written $Z^j := \left\{\zeta_1^j, \ldots, \zeta^j_{M_j}\right\}$ with $M_j\in \mathbb{N}\setminus\{0,1\}$. The values of $Z^j$ are called {breakpoints}, and they form a rectangular grid in the parametric domain $(0,1)^n$,
$$\hat{\mathcal Q} := \left\{\hat K_{\boldsymbol m} := \displaystyle\bigtimes_{j=1}^{n} \left(\zeta_{m_j}^j, \zeta_{m_j+1}^j\right) : \boldsymbol m=\{m_1,\ldots,m_{n}\}, 1\leq m_j\leq M_j-1 \text{ for } j=1,\ldots,n\right\}.$$ 
$\hat{\mathcal Q}$ is called {parametric B\'ezier mesh}, and each $\hat K_{\boldsymbol m}$ is a {parametric element}.
Finally, the support extension of a parametric element $\hat K_{\boldsymbol m}\in\hat{\mathcal Q}$ is defined as
\begin{equation}\label{eq:suppext}
S_\text{ext}\big(\hat K_\mathbf m\big) := \bigtimes_{j=1}^{n}S_\text{ext}\left(\zeta_{m_j}^j, \zeta_{m_j+1}^j\right),
\end{equation}
where if $i$ is the index verifying $p_j+1\leq i \leq {N}_j$ and such that we can uniquely rewrite the interval $\left(\zeta_{m_j}^j, \zeta_{m_j+1}^j\right) = \left(\xi_i^j,\xi^j_{i+1}\right)$, then
$$S_\text{ext}\left(\zeta_{m_j}^j, \zeta_{m_j+1}^j\right) := \left(\xi_{i-p_j}^j, \xi_{i+p_j+1}^j\right).$$

For simplicity, we restrict ourselves to B-splines, but it is possible to generalize the previously introduced concepts to non-uniform B-splines (NURBS). We refer the interested reader to \cite{igabook} and \cite{nurbsbook}.

\subsection{Hierarchical B-splines}\label{ss:hbs}
The tensor-product structure of the multivariate basis functions introduced in the previous section does not allow for local refinement. To overcome this limitation, one of the most successful extensions is given by HB-splines \cite{vuong} and their truncated extension \cite{thbgiannelli}, that we briefly review in this section. \\

Let $\hat D=(0,1)^{n}$ and $L\in\mathbb N$, and consider a sequence $\hat{\mathcal{B}}^0, \hat{\mathcal{B}}^1, \ldots, \hat{\mathcal{B}}^L$ of B-spline bases defined on $\hat D$ such that 
\begin{equation}\label{eq:nestedbsp}
\spn{\hat{\mathcal{B}}^0}\subset \spn{\hat{\mathcal{B}}^1}\subset \ldots\subset \spn{\hat{\mathcal{B}}^L}.
\end{equation}
Moreover, let $\hat{\mathcal Q^\ell}$ be the mesh corresponding to $\mathcal B^\ell$, for all $\ell=0,\ldots,L$.
Then, let $\boldsymbol{\hat{\boldsymbol D}}^L = \left\{\hat D^0,\hat D^1,\ldots,\hat D^L\right\}$ be a hierarchy of nested sub-domains of $\hat D$ of depth $L$, that is, such that $$\hat D =: \hat D^0 \supseteq \hat D^1 \supseteq \ldots \supseteq \hat D^L := \emptyset,$$
and assume that $\hat D^\ell$ is a union of lower level elements, i.e.,
$$\hat D^\ell = \text{int}\left(\bigcup_{\hat K\in \hat{\mathcal K}^{\ell-1}} \overline{\hat K}\right), \quad \hat{\mathcal K}^{\ell-1} \subset \hat{\mathcal Q}^{\ell-1}, \quad \forall \ell = 1,\ldots,L.$$
We are now able to recursively define the HB-spline basis $\hat{\mathcal{H}}=\hat{\mathcal{H}}\left(\boldsymbol{\hat D}^L\right)$ as follows:
\begin{equation} \label{eq:defHBsplines}
\begin{cases}
\hat{\mathcal{H}}^0 := \hat{\mathcal{B}}^0; \\
\hat{\mathcal{H}}^{\ell+1} := \hat{\mathcal{H}}^{\ell+1}_\ell \cup \hat{\mathcal{H}}^{\ell+1}_{\ell+1}, \hspace{2mm} \ell = 0,\ldots,L-2;\\
\hat{\mathcal{H}} := \hat{\mathcal{H}}^{L-1},
\end{cases}
\end{equation}
where for all $\ell = 0,\ldots,L-1$,
\begin{equation} \label{eq:HllHllp1}
\hat{\mathcal{H}}^{\ell+1}_\ell := \left\{\hat B\in \hat{\mathcal{H}}^\ell : \hspace{1mm} \mathrm{supp}\big(\hat B\big) \not\subset \hat D^{\ell+1}\right\}, \quad \hat{\mathcal{H}}_{\ell+1}^{\ell+1} := \left\{\hat B\in\hat{\mathcal{B}}^{\ell+1} : \hspace{1mm} \mathrm{supp}\big(\hat B\big) \subset \hat D^{\ell+1} \right\},
\end{equation}
and as usual for HB-splines, the supports are considered to be open. 
In other words, HB-spline basis functions of level $\ell$ are the B-splines of level $\ell$ whose support is only constituted of elements of level equal or higher than $\ell$, and of at least one element of level $\ell$. 

Furthermore, the {parametric hierarchical mesh} associated to the hierarchy $\boldsymbol{\hat D}^L$ is defined as the union of the active elements of each level, that is,
\begin{equation} \label{eq:paramhiermesh}
\hat{\mathcal Q} := \bigcup_{\ell=0}^L \hat{\mathcal Q}^\ell_A, \quad \text{ with } \quad \hat{\mathcal Q}^\ell_A := \left\{ \hat K\in\hat{\mathcal Q}^\ell : \hat K\subseteq \hat{D}^\ell, \,\hat K\not\subseteq \hat D^{\ell+1}\right\}.
\end{equation}
For all $\hat K \in \hat{\mathcal Q}$ such that $\hat K \in \hat{\mathcal Q}^\ell$ for some $\ell=0,\ldots,L$, we write lev$\big(\hat K\big) = \ell$, and we call $\ell$ the level of $\hat{K}$. 

\subsection{Truncated hierarchical B-splines} \label{ss:thb}
Thanks to the nested property (\ref{eq:nestedbsp}) of B-spline spaces forming a hierarchical space, for all $\ell=1,\ldots,L$, it is possible to write $\hat{B}\in \hat{\mathcal B}^{\ell-1}$ with respect to the B-spline basis functions of level $\ell$ as follows:
$$ \hat{B} = \sum_{\hat{B}_i^{\ell}\in \hat{\mathcal B}^\ell} c_i \hat{B}_i^\ell.$$
Then, let us define the truncation operator $\text{trunc}^\ell$ with respect to level $\ell$ as
$$ \text{trunc}^\ell\hat{B} := \sum_{\hat{B}_i^{\ell}\in \hat{\mathcal B}^\ell\setminus \hat{\mathcal H}_\ell^\ell} c_i \hat{B}_i^\ell, \quad \forall \hat{B}\in \hat{\mathcal B}^{\ell-1},$$
where $\hat{\mathcal H}_\ell^\ell$ is defined in (\ref{eq:HllHllp1}). Note that $ \text{trunc}^L\hat{B} = \hat B$ for all $\hat B\in \hat{\mathcal B}^{L-1}$. 
If we recursively apply this truncation operator to the HB-splines of $\hat{\mathcal H}$ from (\ref{eq:defHBsplines}), we obtain a different basis, the THB-spline basis, spanning the same space as $\hat{\mathcal H}$ while extending its range of interesting properties: in particular, THB-spline basis functions have reduced support and satisfy the partition of unity property (see \cite{buffagiannelli1} for instance). More precisely, the THB-spline basis is defined by
\begin{equation*} 
\hat{\mathcal T} := \left\{ \text{trunc}^{L}\left( \ldots \left(\text{trunc}^{\ell+1}\hat{B}\right)\cdots\right) : \hat{B}\in \hat{\mathcal{B}}^{\ell}\cap\hat{\mathcal H}, \,\ell = 0,\ldots,L-1 \right\}.
\end{equation*}

Moreover, recall definition (\ref{eq:suppext}) of a parametric element support extension, and let us extend it to the hierarchical context. The multilevel support extension of a parametric element $\hat K\in\hat{\mathcal Q}^\ell$ with respect to level $k$, with $0\leq k\leq \ell\leq L$, is defined as follows:
$$S_\text{ext}\big(\hat K, k\big) := S_\text{ext}\big(\hat{K}'\big), \quad \text{with } \hat K'\in \hat{\mathcal Q}^k \text{ and } \hat{K} \subseteq \hat{K}'.
$$

We are now able to define the notion of $\mathcal T$-admissibility for the hierarchical mesh $\hat{\mathcal Q}$ defined in~(\ref{eq:paramhiermesh}), following \cite{reviewadaptiveiga} and \cite{buffagiannelli1}. 
To do so, let us consider the auxiliary domains $\hat \omega^0 := \hat D^0 = \hat D$, and for $\ell = 1,\ldots,L$, 
$$\hat \omega^\ell := \bigcup \left\{ \overline{\hat K} : \hat K\in \hat{\mathcal Q}^\ell, \,S_\text{ext}\big(\hat K, \ell\big)\subseteq \hat{D}^\ell \right\}.$$
In other words, domains $\hat \omega^\ell $ are the regions of $\hat D^\ell$ where all the active basis functions of level $\ell-1$ truncated with respect to level $\ell$ equal to zero. 
With these notions in hand, let us introduce the following definition.

\begin{definition} \label{def:admissibilityparam}
	The mesh $\hat{\mathcal Q}$ is said to be $\mathcal T$-admissible of class $\mu\in\{2,\ldots,L-1\}$ if 
	$$\hat D^\ell \subseteq \hat \omega^{\ell-\mu+1} \quad \text{ for }\ell = \mu, \mu+1, \ldots, L-1.$$
	From \cite[Proposition~9]{buffagiannelli1}, this \changes{implies} that the THB-spline basis functions in $\hat{\mathcal T}$ which take nonzero values over any element $\hat K\in \hat{\mathcal Q}$ belong to at most $\mu$ successive levels.
\end{definition}

%Finally, for all $\ell=0,\ldots,L$ and all $\hat K \in \hat{\mathcal Q}\cap\hat{\mathcal Q}_A^\ell$, we define the $\mathcal T$-neighborhood of $\hat K$ with respect to $\mu\in\{2,\ldots,L-1\}$ as the set 
%\begin{equation}\label{eq:neighborparam}
%	\mathcal N\Big(\hat{\mathcal Q}, \hat K, \mu\Big) := \begin{cases} \left\{ \hat K'\in \hat{\mathcal Q}_A^{\ell-\mu+1} : \hat K'\cap S_\text{ext}\left(\hat K,\ell-\mu+2\right)\neq\emptyset \right\} &\text{ if }\ell-\mu+1\geq 0, \\
%	\,\,\emptyset &\text{ otherwise. }
%	\end{cases}
%\end{equation}
%We will see in Section~\ref{ss:refineiga} that this set will be used to define a mesh refinement algorithm able to preserve the class of $\mathcal T$-admissibility of the mesh, as in \cite{buffagiannelli1}.

\subsection{Isogeometric analysis: physical domain, mesh and discrete space} \label{ss:igarest}
In Sections \ref{ss:sbs}--\ref{ss:thb}, B-splines and their hierarchical variants have only been introduced in the parametric domain $\hat{D} := (0,1)^{n}$. More general spline domains $D$ can then be defined as linear combination of the (truncated hierarchical) B-spline basis functions $\hat B_{\mathbf i,\mathbf p}$ with some {control points} $\{\mathbf{P}_\mathbf i\}_{\mathbf i\in\mathbf I}\subset \mathbb{R}^n$ of the physical domain. That is, $D$ is the image of a mapping $\mathbf F:\hat{D}\to\mathbb R^n$ defined by $\mathbf{F}(\boldsymbol\xi) = \sum_{\mathbf i\in\mathbf I} \hat B_{\mathbf i,\mathbf p}(\boldsymbol\xi)\mathbf{P}_\mathbf i$ for all $\boldsymbol\xi \in \hat{D}$.

If $D$ is a (TH)B-spline geometry determined by (the refinement of) a mapping $\mathbf F:\hat D \to D$, we define the physical hierarchical mesh $\mathcal Q(D)$ corresponding to $\hat{\mathcal Q}$ as
\begin{equation} \label{eq:mesh}
\mathcal Q(D) := \left\{ K := \mathbf F\big(\hat K\big) : \hat K\in \hat{\mathcal Q} \right\},
\end{equation}
on which the following classical assumption is made (see e.g.,\cite{igaanalysis}). 
\begin{assumption}\label{as:isomap}
	The isogeometric mapping $\mathbf F : \hat{D} \to D$ is bi-Lipschitz, $\mathbf F\vert_{\overline{\hat K}} \in C^\infty\Big(\overline{\hat K}\Big)$ for every $\hat K \in \hat{\mathcal Q}$, and $\mathbf F^{-1}\vert_{\overline{K}} \in C^\infty\left(\overline{K}\right)$ for every $K \in \mathcal Q(D)$.
\end{assumption}
All definitions that were previously introduced in the parametric domain $\hat D$ are readily transferred to the physical domain $D$, thanks to the isogeometric mapping $\mathbf F$. In particular, Definition \ref{def:admissibilityparam} is extended to the physical mesh $\mathcal Q(D)$ as follows.
\begin{definition} \label{def:admissibilityphysical}
	The mesh $\mathcal Q(D)$ defined by (\ref{eq:mesh}) is said to be $\mathcal T$-admissible of class $\mu\in\{2,\ldots,L-1\}$ if the underlying parametric mesh $\hat{\mathcal Q}$ is $\mathcal T$-admissible of class $\mu$. 
\end{definition}

%A mesh refinement algorithm that preserves the class of $\mathcal T$-admissibility of a given hierarchical mesh is introduced in \cite{buffagiannelli1}, using the following notion of $\mathcal T$-neighborhood. More precisely, the $\mathcal T$-neighborhood of a physical element $K\in \mathcal Q(D)$ with respect to $\mu\in\{2,\ldots,L-1\}$ is the set of push-forward elements of the $\mathcal T$-neighborhood of the pull-back $\mathbf F^{-1}(K)\in\hat{\mathcal Q}$ (defined in (\ref{eq:neighborparam})), that is, 
%\begin{equation}\label{eq:neighborphys}
%\mathcal N\Big({\mathcal Q}(D), K, \mu\Big) := \left\{ K' := \mathbf F\Big(\hat K'\Big) : \hat K'\in \mathcal N\Big(\hat{\mathcal Q}, \mathbf F^{-1}(K), \mu\Big) \right\}.
%\end{equation}

%\subsection{Isogeometric paradigm} \label{ss:igaparadigm}
Finally, the isogeometric paradigm consists in using the same basis functions for the description of the computational domain and for the finite dimensional space on which one seeks the Galerkin solution of a PDE. That is, the numerical solution of a PDE defined in the THB-spline domain $D$, image of the isogeometric mapping $\mathbf F$, is sought in the finite dimensional space $V^h(D) := \spn{\mathcal T(D)}$, where
\begin{equation} \label{eq:Hdiscrspace}
\mathcal{T}(D) := \left\{B:=\hat B\circ \mathbf{F}^{-1} :  \hat B\in\hat{\mathcal{T}}\right\}.
\end{equation}

In the remaining part of this work, we consider the previously introduced THB-splines basis functions, while noting that everything could equivalently be done with their non-truncated counterpart instead.

\section{Reliability of the discrete defeaturing error estimator}\label{sec:combinedest}
%\section{The adaptive isogeometric analysis-aware defeaturing strategy}\label{sec:combinedest}
In this section, we analyze the \textit{a posteriori} discrete defeaturing error estimator $\mathscr E\big(u_\mathrm d^h\big)$ introduced in~(\ref{eq:overallestimator}) on a given (fixed) defeatured geometrical model $\Omega_0$, from which $N_f\geq 1$ features are missing. 

\changes{We first make the complete analysis in the special case of IGA with THB-splines.} For simplicity in the analysis, and even if it is not strictly needed, we assume that the considered (TH)B-splines are at least $C^1$-continuous. The general case could be treated in a similar way following the classical theory of the standard adaptive FE method. \changes{In the last part of this section, we then discuss into details how to adapt the proof to demonstrate the reliability of the error estimator if a standard $C^0$-FE method is used instead of IGA.\\}

To define the component $\mathscr E_N\big(u_\mathrm d^h\big)$ estimating the numerical error in the IGA context, we first introduce the interior residuals $r$ and the boundary residuals $j$ as follows:
\begin{equation} \label{eq:notationcomplex}
	r:=\begin{cases}f+\Delta u_0^h \,\, \text{ in } \Omega_0\\
		f+\Delta u_k^h \,\, \text{ in } \tilde F_\mathrm p^k, \,\, \forall k=1,\ldots,N_f,
	\end{cases} \text{ and } \quad j:=\begin{cases}
		g-\displaystyle\frac{\partial u_\mathrm d^h}{\partial \mathbf n} & \text{ on } \Gamma_N\setminus\left(\gamma_\mathrm n\cup\gamma_\setminussign\right)\\
		g_0-\displaystyle\frac{\partial u_0^h}{\partial \mathbf n_0} & \text{ on } \gamma_0\\
		\tilde g-\displaystyle\frac{\partial u_k^h}{\partial \tilde{\mathbf n}^k} & \text{ on } \tilde\gamma^k, \,\,\forall k=1,\ldots,N_f.
	\end{cases}
\end{equation}
Furthermore, recalling definitions (\ref{eq:neumannbd}) of $\Gamma_N^0$ and $\tilde \Gamma_N$, let $\mathcal E_0$ be the set of edges (if $n=2$) or faces (if $n=3$) of $\mathcal Q_0$ that are part of $\Gamma_N^0$, let $\tilde{\mathcal E}^k$ be the set of edges or faces of $\tilde{\mathcal Q}^k$ that are part of $\tilde\Gamma_N^k$ for all $k=1,\ldots,N_f$, and let 
\begin{equation}
	\mathcal E := \mathcal E_0 \cup \tilde{\mathcal E}, \quad \tilde{\mathcal E} := \bigcup_{k=1}^{N_f} \tilde{\mathcal E}^k. \label{eq:edgesdef}
\end{equation}
In the sequel, edges are called faces even when $n = 2$.
For all $K\in \mathcal Q$, we denote $h_K:=\text{diam}(K)$ and $h:=\displaystyle\max_{K\in \mathcal Q} h_K$, and we denote $h_E := \text{diam}(E)$ for all $E\in \mathcal E$. Then, assuming that the mesh $\mathcal Q$ fits the boundary of the simplified domain $\Omega_0$, the numerical error estimator is given by 
\begin{align} \label{eq:ENiga}
	\mathscr{E}_N\big(u_\mathrm d^h\big)^2 & := \sum_{K\in \mathcal Q} h_K^2\|r\|^2_{0,K} + \sum_{E\in \mathcal E} h_{E}\|j\|^2_{0,E}.
\end{align}
Moreover, if we let $\mathcal E_K := \left\{ E\in\mathcal E : E\subset \partial K \right\}$ for all $K\in \mathcal Q$, then the corresponding local contribution is 
\begin{align}
	\mathscr{E}_N^K\big(u_\mathrm d^h\big)^2&:=h_K^2\left\| r\right\|_{0,K}^2 + \sum_{E\in \mathcal{E}_K} h_E \left\| j \right\|_{0,E}^2, \quad \forall K\in \mathcal Q. \label{eq:localEN}
\end{align}

In the following, we show that the proposed overall error estimator $\mathscr E\big(u_\mathrm d^h\big)$ is reliable in the case in which the mesh is fitted to the simplified geometry, and under reasonable assumptions. That is, we show that it is an upper bound for the discrete defeaturing error between the analytic solution $u$ of the exact problem~(\ref{eq:weakoriginalpb}) and the discrete numerical solution $u^{h}_{\mathrm d}$ of the defeatured problem introduced in (\ref{eq:defudhmultifeat}), in the energy norm:
\begin{equation}\label{eq:reliability}
\big\| \nabla\left(u-u_\mathrm d^h \right)\big\|_{0,\Omega} \lesssim \mathscr{E}\big(u_\mathrm d^h\big),
\end{equation}
where the hidden constant is independent of the mesh size $h$, the number $L$ of hierarchical levels of the mesh, the number $N_f$ of features, and their size. 
We first demonstrate it in the simplest single feature case ($N_f=1$) in which the only feature $F$ is negative, and then we use this result to derive and prove the reliability of the estimator when $F$ is a generic complex feature. Finally, we generalize this result to multi-feature geometries for which $N_f\geq1$. 

\begin{remark}
	\changes{Equation~(\ref{eq:reliability}) states the reliability of the proposed error estimator, but it does not state its efficiency. The main challenge to obtain an efficient estimator relies on the fact that the quantity of interest is the energy norm of the total error in the exact geometry $\Omega$, while the differential problem is discretized in the defeatured geometry $\Omega_0$ and in the simplified positive components of the features $\tilde F_\mathrm p$. Therefore, to have a reliable estimator of the total error in $\Omega$, one needs to be able to estimate the energy error due the numerical approximation of the problem in the subdomains $\Omega_\star\subset \Omega_0$ and $F_\mathrm p \subset \tilde F_\mathrm p$. Instead, $\mathcal{E}_N\big(u_\mathrm d^h\big)$ may overestimate the numerical error in $\Omega$ as it estimates the numerical error in the energy norm in the whole domain in which the problem is discretized, that is, in $\Omega_0$ and in $\tilde F_\mathrm p$. This problem is strongly related to goal-oriented error estimation, which however mostly considers linear quantities of interest.}
\end{remark}

\subsection{Isogeometric defeaturing problem setting} \label{ss:igadefeat}
Let us assume that the defeatured geometry $\Omega_0$ defined in (\ref{eq:defomega0multi}) is a THB-spline domain generated by a THB-spline basis $\mathcal{T}(\Omega_0)$ (see Section \ref{ss:thb}). Let $\mathcal Q_0 := \mathcal Q(\Omega_0)$ be the hierarchical mesh as defined in (\ref{eq:mesh}) on which the basis $\mathcal T(\Omega_0)$ is built, and let $V^h(\Omega_0)$ be the finite dimensional subspace of $H^1(\Omega_0)$ defined by
$$V^h(\Omega_0) := \spn{\mathcal T(\Omega_0)}.$$
Similarly, assume that for all $k=1,\ldots,N_f$, the positive component extension $\tilde F_\mathrm p^k$ of feature $F^k$ is a THB-spline domain generated by a THB-spline basis $\mathcal{T}\big(\tilde F_\mathrm p^k\big)$. Let $\tilde{\mathcal Q}^k := \mathcal Q\big(\tilde F_\mathrm p^k\big)$ be the hierarchical mesh as defined in (\ref{eq:mesh}) on which the basis $\mathcal{T}\big(\tilde F_\mathrm p^k\big)$ is built, and let $V^h\big(\tilde F_\mathrm p^k\big)$ be the finite dimensional subspace of $H^1\big(\tilde F_\mathrm p^k\big)$ defined by
$$V^h\big(\tilde F_\mathrm p^k\big) := \spn{\mathcal T\big(\tilde F_\mathrm p^k\big)}.$$
Note that in this section, we assume that $\mathcal Q_0$ and $\tilde{\mathcal Q}^k$ are fitted to the simplified geometries $\Omega_0$ and $\tilde F_\mathrm p^k$, respectively, and recall the definition of the global mesh from (\ref{eq:defQhmulti}):
\begin{align*}
\mathcal{Q} := \mathcal Q_0 \cup \tilde{\mathcal Q}\quad \text{ with }\quad \tilde{\mathcal Q}:=\displaystyle\bigcup_{k=1}^{N_f} \tilde{\mathcal Q}^k.
\end{align*}
Moreover, let us make the following shape regularity assumption. 
\begin{assumption}\label{as:shapereg}
	For all $k=1,\ldots,N_f$, the meshes $\mathcal Q_0$ and $\tilde{\mathcal Q}^k$ are shape regular, that is, for all $K\in \mathcal Q_0$ and all $K\in\tilde {\mathcal Q}^k$, $\displaystyle\frac{h_K}{\rho_K} \lesssim 1$, where $\rho_K$ denotes the radius of the largest ball inscribed in $K$. 
\end{assumption}
Under Assumption~\ref{as:shapereg}, we say that $\mathcal Q$ defined in (\ref{eq:defQhmulti}) is shape regular, by abuse of terminology. 
As a consequence of Assumption \ref{as:shapereg}, $|K|^\frac{1}{n} \simeq h_K\simeq h_E$ for all $K\in \mathcal Q$ and all $E\in \mathcal E$ with $E\subset \partial K$, where $\mathcal E$ is the set of Neumann boundary faces as defined in (\ref{eq:edgesdef}). Moreover, specific to IGA with THB-splines, we also make the following assumption on $\mathcal Q$:
\begin{assumption} \label{as:admissibility}
	$\mathcal Q$ is $\mathcal T$-admissible of class $\mu$ for some $\mu\in\mathbb{N}$, $\mu\geq 2$. That is, $\mathcal Q_0$ and $\tilde{\mathcal Q}^k$ are $\mathcal T$-admissible of class $\mu$ for all $k=1,\ldots,N_f$, according to Definition \ref{def:admissibilityphysical}. 
\end{assumption}
Finally, we recall that for all $k=1,\ldots,N_f$, the discrete spaces $V^h(\Omega_0)$ and $V^h\big(\tilde F_\mathrm p^k\big)$ should have compatible traces on $\gamma_{0,\mathrm p}^k$, following Assumption \ref{as:discrspacescompat}.
Then, referring to Sections \ref{ss:fepbstatement} and \ref{ss:multifeature} for the notation, we solve the Galerkin formulation of (\ref{eq:weaksimplpb}) in $\Omega_0$ to obtain the discretized defeatured solution $u_0^h$, followed by the Galerkin formulation of (\ref{eq:weakfeaturepb}) in $\tilde F_\mathrm p^k$ for all $k=1,\ldots,N_f$ to obtain the discretized defeatured solution extensions $u_k^h$. This allows us to define $u_\mathrm d^h$ as in (\ref{eq:defudhmultifeat}), $u_\mathrm d^h$ being the discrete defeatured solution approximating the exact solution $u\in H_{g_D,\Gamma_D}^1(\Omega)$. \\

In the subsequent analysis, we denote by
$$V^h_0(\Omega_0) := V^h(\Omega_0)\cap H_{0,\Gamma_D}^1(\Omega_0)\qquad \text{ and } \qquad V^h_0\big(\tilde F_\mathrm p\big) := V^h\big(\tilde F_\mathrm p\big)\cap H_{0,\gamma_{0,\mathrm p}}^1\big(\tilde F_\mathrm p\big),$$
and as previously discussed, we make the following assumption for simplicity.
\begin{assumption} \label{as:C1}
	$V^h(\Omega_0)\subset C^1(\Omega_0)$ and $V^h\big(\tilde F_\mathrm p^k\big)\subset C^1\big(\tilde F_\mathrm p^k\big)$ for all $k=1,\ldots,N_f$. 
\end{assumption}
Furthermore, as a consequence of Assumption \ref{as:admissibility}, it is possible to build Scott-Zhang-type operators 
\begin{align}
&I_0^h:H_{0,\Gamma_D}^1(\Omega_0)\to V^h_0(\Omega_0) \label{eq:scottzhang}\\
\text{ and } \quad &\tilde I^h_k:H_{0,\gamma_{0,\mathrm p}^k}^1\big(\tilde F_\mathrm p^k\big)\to V^h_0\big(\tilde F_\mathrm p^k\big), \quad \forall k=1,\ldots,N_f,\nonumber
\end{align}
having the following properties (see \cite{corrigendum},\cite[Section~6.1.3]{reviewadaptiveiga}): for all $v\in H_{0,\Gamma_D}^1(\Omega_0)$, for all $k=1,\ldots,N_f$ and all \hbox{$w\in H_{0,\gamma_{0,\mathrm p}^k}^1\big(\tilde F_\mathrm p^k\big)$}, 
\begin{align}
\sum_{K\in\mathcal Q_0} h_K^{-2} \left\|v-I_0^h(v)\right\|^2_{0,K} \lesssim \|\nabla v\|^2_{0,\Omega_0} \quad \text{ and }\quad & \sum_{K\in\mathcal Q_0} \left\|\nabla I_0^h(v)\right\|^2_{0,K} \lesssim \|\nabla v\|^2_{0,\Omega_0},  \label{eq:L2H1scottzhang}  \\
\sum_{K\in\tilde{\mathcal Q}^k} h_K^{-2} \left\|w-\tilde I_k^h(w)\right\|^2_{0,K} \lesssim \|\nabla w\|^2_{0,\tilde F_\mathrm p^k} \quad \text{ and }\quad & \sum_{K\in\tilde{\mathcal Q}^k} \left\|\nabla \tilde I_k^h(w)\right\|^2_{0,K} \lesssim \|\nabla w\|^2_{0,\tilde F_\mathrm p^k}. \label{eq:L2H1scottzhangFp} 
\end{align}
Note that the right equations imply that for all $v\in H_{0,\Gamma_D}^1(\Omega_0)$, for all $k=1,\ldots,N_f$ and all \hbox{$w\in H_{0,\gamma_{0,\mathrm p}^k}^1\big(\tilde F_\mathrm p^k\big)$}, 
\begin{align}
\sum_{K\in\mathcal Q_0} \left\|\nabla \big( v-I_0^h(v) \big)\right\|^2_{0,K} &\lesssim \|\nabla v\|^2_{0,\Omega_0}, \label{eq:H1H1scottzhang}\\
\sum_{K\in\tilde{\mathcal Q}^k} \left\|\nabla \big( w-\tilde I_k^h(w) \big)\right\|^2_{0,K} &\lesssim \|\nabla w\|^2_{0,\tilde F_\mathrm p^k}. \label{eq:H1H1scottzhangFp}
\end{align}

Finally, %we recall the isotropy Assumption \ref{as:isotropy} on all $\sigma \in \Sigma$ defined in (\ref{eq:sigmamulti}), and 
recalling the definition of the sub-domains $\Omega^k$ given by the separability Assumption \ref{def:separated}, let us make the following assumption on the geometry of the features.
\begin{assumption} \label{as:controlsize}
	For all $k=1,\ldots,N_f$, let $\Omega_\star^k$, $\Omega_0^k$ and $ \tilde \Omega^k$ be the sub-domains of, respectively, $\Omega_\star$, $\Omega_0$ and $\tilde \Omega$, relative to $\Omega^k$. More precisely,
	\begin{equation}
	\Omega_\star^k := \Omega^k\cap\Omega_\star, \quad \Omega_0^k := \text{int}\left(\overline{\Omega_\star^k} \cup \overline{F_\mathrm n^k}\right) \quad \text{and} \quad \tilde \Omega^k := \text{int}\left(\overline{\Omega^k}\cup \overline{G_\mathrm p^k}\right) = \text{int}\left(\overline{\Omega^k_\star} \cup \overline{\tilde F_\mathrm p^k}\right).
	\end{equation}
	Then for all $k=1,\ldots,N_f$, there exist generalized Stein extension operators 
	\begin{align} 
	&\mathsf{E}_{\Omega^k_\star\to\Omega^k_0}: H_{0,\Gamma_D\cap\partial \Omega^k}^1\big(\Omega^k_\star\big)\to H_{0,\Gamma_D\cap\partial \Omega^k}^1\big(\Omega^k_0\big), \label{eq:defSteinstar0} \\
	&\mathsf{E}_{\Omega^k\to \tilde \Omega^k}: H_{0,\Gamma_D\cap\partial \Omega^k}^1\big(\Omega^k\big)\to H_{0,\Gamma_D\cap\partial \Omega^k}^1\big(\tilde \Omega^k\big), \label{eq:defSteintostar} \\
	&\mathsf{E}_{\Omega^k_\star\to\tilde\Omega^k}: H^1_{0,\Gamma_D\cap\partial \Omega^k}(\Omega^k_\star)\to H^1_{0,\Gamma_{D}\cap\partial \Omega^k}\big(\tilde\Omega^k\big) \label{eq:defSteinstartilde}
	\end{align}
	which are bounded, that is, they satisfy the following properties: for all $w\in H_{0,\Gamma_D\cap\partial \Omega^k}^1\big(\Omega^k_\star\big)$ and all \hbox{\changes{$v\in H^1_{0,\Gamma_D\cap\partial \Omega^k}\big(\Omega^k\big)$}}, 
	\begin{align} 
	&\left\|\nabla \mathsf{E}_{\Omega^k_\star\to\Omega_0^k}(w)\right\|_{0,\Omega_0^k} \lesssim \left\|\nabla w\right\|_{0,\Omega^k_\star}, \label{eq:extensionppty0} \\
	&\left\|\nabla \mathsf{E}_{\Omega^k\to\tilde\Omega^k}(v)\right\|_{0,\tilde\Omega^k} \lesssim \left\|\nabla v\right\|_{0,\Omega^k}, \label{eq:extensionpptytotilde} \\
	&\left\|\nabla \mathsf{E}_{\Omega^k_\star\to\tilde\Omega^k}(w)\right\|_{0,\tilde\Omega^k} \lesssim \left\|\nabla w\right\|_{0,\Omega^k_\star}.\label{eq:extensionpptystartilde}
	\end{align}
\end{assumption}

Note that such operators are built for a large class of domains in \cite{sauterwarnke}, based on the Stein operator introduced in \cite{stein}. 

\subsection{Reliability of the discrete defeaturing error estimator: negative feature} \label{sec:estneg}
In this section, we analyze the proposed estimator in the case of a single negative feature $F$ of $\Omega$, meaning that $F_\mathrm p = \emptyset$. Since we concentrate on the single feature case, we drop the upper index $k$ everywhere, and since the feature is negative, $\gamma = \gamma_\mathrm n$, $\gamma_0 = \gamma_{0,\mathrm n}$, $\mathcal Q = \mathcal Q_0$, and $u_\mathrm d^h = u_0^h\big\vert_\Omega$. Let us recall the definitions of the continuous and discrete defeaturing error terms $d_\gamma\in L^2(\gamma)$ and $d_{\gamma}^h\in L^2(\gamma)$ from (\ref{eq:defdsigmah}), and of the interior and boundary residuals $r\in L^2(\Omega_0)$ and $j\in L^2\big(\Gamma_N^0\big)$ from (\ref{eq:notationcomplex}). 

Then in this context, the discrete defeaturing error estimator defined in (\ref{eq:overallestimator}) writes as follows:
\begin{align}
\mathscr{E}\big(u_0^h\big) &:= \left[\alpha_D^2\mathscr{E}_D\big(u_0^h\big)^2 + \alpha_N^2 \mathscr{E}_N\big(u_0^h\big)^2\right]^\frac{1}{2}, \label{eq:totalerrestneg}
\end{align}
where
\begin{align}
\mathscr{E}_D\big(u_0^h\big)^2 & := \left|\gamma\right|^\frac{1}{n-1} \left\|d_\gamma^h - \overline{d_\gamma^h}^\gamma\right\|_{0,\gamma}^2 + \mathscr{E}_C^2 \qquad \text{ with } \quad \mathscr{E}_C^2 := c_\gamma^2 |\gamma|^\frac{n}{n-1} \left| \overline{d_\gamma}^\gamma \right|^2, \nonumber \\
\mathscr{E}_N\big(u_0^h\big)^2 & := \sum_{K\in \mathcal Q_0} h_K^2\|r\|^2_{0,K} + \sum_{E\in \mathcal E_0} h_{E}\|j\|^2_{0,E}, \nonumber
\end{align}
and $\alpha_D$ and $\alpha_N$ are parameters to be tuned. 

Let us now state and prove the main theorem of this section under the following technical hypothesis.
\begin{assumption} \label{as:respectivesizes}
Let $h_F := \mathrm{diam}(F)$ and $\hQFmin := \min\left\{h_K: K\in \mathcal Q_0, \, K\cap F \neq \emptyset\right\}$. Then we assume that 
%\begin{itemize}
%\item 
$h_F\lesssim \hQFmin$, that is, $F$ is either smaller or about the same size as the mesh that covers it.
%\item {\color{green!50!black}$F$ is isotropic and $h_F \simeq |\gamma|^\frac{1}{n-1}$, that is, the measures of $F$ and $\gamma$ are comparable. }
%\end{itemize}
\end{assumption}

\begin{remark}\label{rmk:discussionassumption}
This assumption means that asymptotically, the number of elements of $\mathcal Q_0$ intersecting the feature cannot grow indefinitely. In the context of adaptivity with defeaturing, this hypothesis is quite natural. Indeed, if the number of feature elements grows, it means that the error is concentrated in the feature. More precisely, it either means that the defeaturing data $f$ in $F$ and $g_0$ in $\gamma_0$ are badly chosen, or that the feature is important to correctly approximate the exact solution $u$ in $\Omega$. In the first case, $\mathcal E_C$ will be large and the defeaturing data needs to be more accurately chosen. In the second case, the feature $F$ will be added by the adaptive algorithm, and thus no refinement will be needed anymore in $F$. \changes{This means that in practice, the algorithm automatically takes care of satisfying Assumption~\ref{as:respectivesizes}, c.f. for instance the numerical experiments in Section~\ref{sec:numexp}.}
\end{remark}

\begin{theorem}\label{thm:uppernegtoterror}
	In the framework presented in Section \ref{ss:igadefeat}, let $u$ and $u_0^h$ be the solutions of problem~(\ref{eq:weakoriginalpb}) and of the Galerkin formulation of problem~(\ref{eq:weaksimplpb}), respectively, where $\Omega$ is a geometry containing one negative feature $F$. Then under Assumption~\ref{as:respectivesizes}, the energy norm of the discrete defeaturing error is bounded in terms of the estimator $\mathscr E\big(u_0^h\big)$ introduced in (\ref{eq:totalerrestneg}) as follows:
	$$\left\| \nabla\left(u-u_0^h\right) \right\|_{0,\Omega} \lesssim \mathscr E\big(u_0^h\big).$$
\end{theorem}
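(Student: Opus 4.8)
The plan is to bound the energy error by duality and a residual argument that cleanly separates a numerical contribution (controlled by $\mathscr{E}_N$) from a defeaturing contribution (controlled by $\mathscr{E}_D$). Since $F$ is negative we have $\Omega_\star=\Omega$, $\Omega\subset\Omega_0$ with $\Omega_0=\mathrm{int}(\overline\Omega\cup\overline F)$, and $u_\mathrm d^h=u_0^h\vert_\Omega$. Writing $|u-u_0^h|_{1,\Omega}=\sup\{\int_\Omega\nabla(u-u_0^h)\cdot\nabla v\,\mathrm dx : v\in H^1_{0,\Gamma_D}(\Omega),\ |v|_{1,\Omega}=1\}$, I would first subtract the weak form (\ref{eq:weakoriginalpb}) and integrate by parts elementwise over $\Omega$. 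Because $u_0^h$ is $C^1$ (Assumption \ref{as:C1}) and smooth inside each element, and since the exact Neumann datum $g$ is imposed on all of $\Gamma_N\supset\gamma$ while $u_0^h$ ``does not see'' the interface $\gamma=\gamma_\mathrm n$, this yields the error representation
\begin{equation*}
\int_\Omega\nabla(u-u_0^h)\cdot\nabla v\,\mathrm dx = \int_\Omega r\,v\,\mathrm dx + \int_{\Gamma_N\setminus\gamma} j\,v\,\mathrm ds + \int_\gamma d_\gamma^h\,v\,\mathrm ds,
\end{equation*}
with $r$, $j$ as in (\ref{eq:notationcomplex}) and $d_\gamma^h$ as in (\ref{eq:defdsigmah}), using that the outward normal to $\Omega$ on $\gamma$ equals $-\mathbf n_F$ (which is what produces the $+$ sign in $d_\gamma^h$).

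For the numerical part, I would extend $v$ to $\tilde v\in H^1_{0,\Gamma_D}(\Omega_0)$ with $\|\nabla\tilde v\|_{0,\Omega_0}\lesssim|v|_{1,\Omega}$ using the bounded Stein extension $\mathsf{E}_{\Omega_\star\to\Omega_0}$ of Assumption \ref{as:controlsize}, and invoke Galerkin orthogonality on $\Omega_0$ against the Scott--Zhang quasi-interpolant $I_0^h\tilde v\in V^h_0(\Omega_0)$ (Assumption \ref{as:admissibility} and (\ref{eq:scottzhang})). Splitting $\Omega_0=\Omega\sqcup F$ and $\Gamma_N^0=(\Gamma_N\setminus\gamma)\sqcup\gamma_0$ and using $\tilde v=v$ on $\overline\Omega$, this rewrites $\int_\Omega r v+\int_{\Gamma_N\setminus\gamma}j v$ as the full $\Omega_0$-residual tested against $\tilde v-I_0^h\tilde v$ (bounded by $\mathscr{E}_N(u_0^h)\,|v|_{1,\Omega}$ via the interpolation estimates (\ref{eq:L2H1scottzhang})) minus the feature-region terms $\int_F r\tilde v+\int_{\gamma_0} j\tilde v$, which I carry along to be combined with the defeaturing term.

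The heart of the argument is the treatment of $\int_\gamma d_\gamma^h v-\int_F r\tilde v-\int_{\gamma_0}j\tilde v$. The key computable identity, obtained from the divergence theorem applied to $u_0^h$ on $F$, is $\overline{d_\gamma^h}^\gamma=\overline{d_\gamma}^\gamma+|\gamma|^{-1}\big(\int_F r+\int_{\gamma_0}j\big)$, where $\overline{d_\gamma}^\gamma$ is the data-only compatibility mean of Remark \ref{rmk:compatcond}. Substituting this and writing $\overline v^\gamma$ for the mean of $v$ over $\gamma$, the three terms reorganize into (I) an oscillation integral $\int_\gamma(d_\gamma^h-\overline{d_\gamma^h}^\gamma)(v-\overline v^\gamma)$, (II) the compatibility term $\overline{d_\gamma}^\gamma\int_\gamma v$, and (III)--(IV) the remainders $\int_F r(\tilde v-\overline v^\gamma)$ and $\int_{\gamma_0}j(\tilde v-\overline v^\gamma)$. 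For (I) I apply Cauchy--Schwarz and a trace/Poincar\'e inequality $\|v-\overline v^\gamma\|_{0,\gamma}\lesssim|\gamma|^{\frac{1}{2(n-1)}}|v|_{1,\Omega}$ to recover the first term of $\mathscr{E}_D$; for (III)--(IV) I use Poincar\'e on the feature together with $h_F\lesssim\hQFmin$ (Assumption \ref{as:respectivesizes}), which is exactly what makes $h_F\|r\|_{0,F}$ and $h_F^{1/2}\|j\|_{0,\gamma_0}$ comparable to the residual weights of the $O(1)$ elements covering $F$, so that these remainders are absorbed into $\mathscr{E}_N(u_0^h)$.

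The main obstacle is term (II): bounding $|\overline v^\gamma|$ --- the mean value of an $H^1$ function over the small manifold $\gamma$ --- by $|v|_{1,\Omega}$ with the sharp scaling. In three dimensions a standard trace inequality gives $|\gamma|\,|\overline v^\gamma|\lesssim|\gamma|^{\frac{n}{2(n-1)}}|v|_{1,\Omega}$ directly ($c_\gamma=1$), but in two dimensions the trace of $H^1$ into the mean over a short curve is borderline and holds only with a logarithmic factor, which is precisely the role of $c_\gamma=\max(-\log|\gamma|,\eta)^{\frac12}$; establishing this with a constant independent of $|\gamma|$, of $h$ and of the number of levels is the delicate point, and I expect to invoke the corresponding technical trace lemmas of Appendix~\ref{sec:appendix}. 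Once (I)--(IV) are in place, collecting the bounds gives $\int_\Omega\nabla(u-u_0^h)\cdot\nabla v\,\mathrm dx\lesssim\big(\mathscr{E}_D(u_0^h)+\mathscr{E}_N(u_0^h)\big)|v|_{1,\Omega}$, and taking the supremum over $v$ together with the definition (\ref{eq:overallestimator}) of $\mathscr{E}$ (the elementary equivalence $a+b\lesssim(\alpha_D^2 a^2+\alpha_N^2 b^2)^{1/2}$ up to the tuned weights $\alpha_D,\alpha_N$) yields the claim.
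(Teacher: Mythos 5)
Your proposal is correct and follows essentially the same route as the paper's proof: the same error representation, the same Stein-extension/Scott--Zhang/Galerkin-orthogonality treatment of the numerical residual, and the same splitting of the feature contribution into an oscillation term, a compatibility term, and remainders that are absorbed into $\mathscr{E}_N\big(u_0^h\big)$ via Assumption~\ref{as:respectivesizes}. Your mean-value identity $\overline{d_\gamma^h}^\gamma=\overline{d_\gamma}^\gamma+|\gamma|^{-1}\big(\int_F r\,\mathrm dx+\int_{\gamma_0} j\,\mathrm ds\big)$ is algebraically equivalent to the paper's conservation identity (\ref{eq:conservationu0F}) tested with $c=\overline{v}^\gamma$, and the delicate bound on $\overline{d_\gamma}^\gamma\int_\gamma v\,\mathrm ds$ that you propose to defer to technical lemmas is precisely the step the paper delegates to \cite[Theorem~4.1]{paper1defeaturing} rather than to Appendix~\ref{sec:appendix}.
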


\begin{proof}
	For all $v\in H_{0,\Gamma_D}^1(\Omega)$, let us first use integration by parts in $\Omega$ for $u_0^h\in V_0^h(\Omega_0)\subset C^1(\Omega_0)$, i.e., 
	\begin{align} \label{eq:integbypartu0h}
		\int_\Omega \nabla u_0^h \cdot \nabla v\, \mathrm dx = - \int_{\Omega} \Delta u_0^h v \,\mathrm dx + \int_{\Gamma_N} \frac{\partial u_0^h}{\partial \mathbf n}v \,\mathrm ds.
	\end{align}
	Then, let $e:=u-u_0^h$. Using (\ref{eq:weakoriginalpb}) for $u$ and using the notation introduced in (\ref{eq:notationcomplex}), we obtain
	\begin{align} 
		\int_\Omega \nabla e \cdot \nabla v \,\mathrm dx = &\int_{\Omega} \left(f + \Delta u_0^h\right) v \,\mathrm dx + \int_{\Gamma_N} \left(g-\frac{\partial u_0^h}{\partial \mathbf n}\right)v \,\mathrm ds \nonumber \\
		= &\int_{\Omega} r v \,\mathrm dx + \int_{\Gamma_N\setminus \gamma} jv \,\mathrm ds + \int_\gamma d_\gamma^h v\,\mathrm ds. \label{eq:errorehv}
	\end{align}
	
	The idea is to suitably extend $v$ to $\Omega_0$ to be able to correctly treat the elements and faces that are only partially in $\Omega$. So by choosing $\Omega^1 := \Omega$ as we are treating the single feature case, and since $\Omega_\star = \Omega$ as we are considering the negative feature case, let $v_0 := \mathsf{E}_{\Omega\to\Omega_0}(v) \in H_{0,\Gamma_D}^1(\Omega_0)$ be the generalized Stein extension of $v$ as defined in (\ref{eq:defSteinstar0}), and recall that $\Gamma_N^0 := \left(\Gamma_N\setminus \gamma\right) \cup \gamma_0$. 
	Then to deal with the elements and faces that are only partially in $\Omega$, and in view of using the Scott-Zhang-type operator properties (\ref{eq:L2H1scottzhang}) and (\ref{eq:H1H1scottzhang}) in $\Omega_0$, we add and subtract terms to (\ref{eq:errorehv}) as follows: 
	\begin{align}
		\int_\Omega \nabla e \cdot \nabla v \,\mathrm dx = \, \RN{1} + \RN{2}, 
		\quad \text{ with } \quad \RN{1} = &\int_{\Omega_0} r v_0\,\mathrm dx + \int_{\Gamma_N^0} jv_0\,\mathrm ds, \nonumber \\
		\RN{2} = &- \int_{F} r v_0\,\mathrm dx - \int_{\gamma_0} jv_0\,\mathrm ds + \int_\gamma d_\gamma^h v\,\mathrm ds. \label{eq:threeterms}
	\end{align}
	As term $\RN{1}$ is defined in $\Omega_0$ and since $\mathcal Q_0$ is fitted to $\Omega_0$, then term $\RN{1}$, which accounts for the numerical error, is defined in a union of full elements. Term $\RN{2}$ accounts for the discrete defeaturing error and the corresponding compatibility condition (see Remark \ref{rmk:compatcond}), and its contributions come from the presence of feature $F$.
	
	Let us first consider $\RN{1}$ and treat it using the Scott-Zhang-type operator $I_0^h$ introduced in (\ref{eq:L2H1scottzhang}). To do so, let $v_0^h = I_0^h(v_0)\in V^h_0(\Omega_0)$, and by adding and substracting $v_0^h$, we can rewrite
	\begin{align}
		\RN{1} = &\int_{\Omega_0} r (v_0-v_0^h)\,\mathrm dx + \int_{\Gamma_N^0} j(v_0-v_0^h)\,\mathrm ds + \int_{\Omega_0} r v_0^h\,\mathrm dx + \int_{\Gamma_N^0} jv_0^h\,\mathrm ds. \label{eq:termI}
	\end{align}
	Using (\ref{eq:weaksimplpb}) and the Galerkin formulation of~(\ref{eq:weaksimplpb}), integrating by parts, and since $v_0^h\in V^h_0(\Omega_0)$, then by Galerkin orthogonality,
	\begin{align}
	\int_{\Omega_0}r v_0^h\,\mathrm dx + \int_{\Gamma_N^0} j v_0^h\,\mathrm ds = \int_{\Omega_0} \nabla (u_0-u_0^h)\cdot\nabla v_0^h = 0. \label{eq:galerkinorthog}
	\end{align}
	Thus from (\ref{eq:galerkinorthog}), using \changes{H\"older's} inequality and the discrete Cauchy-Schwarz
	inequality, (\ref{eq:termI}) can be estimated as follows:
		\begin{align}
	\RN{1} = &\sum_{K\in \mathcal Q_0} \int_{K} r (v_0-v_0^h)\,\mathrm dx + \sum_{E\in\mathcal E_0} \int_{E} j(v_0-v_0^h)\,\mathrm ds \nonumber \\
	\leq &\sum_{K\in\mathcal Q_{0}} h_K \|r\|_{0,K} \,h_K^{-1} \left\| v_0 - v_0^h\right\|_{0,K} + \sum_{E\in \mathcal{E}_0} h_E^\frac{1}{2} \|j\|_{0,E} \,h_E^{-\frac{1}{2}} \left\| v_0 - v_0^h\right\|_{0,E} \nonumber\\
	\leq & \left( \sum_{K\in \mathcal Q_{0}} h_K^2 \|r\|^2_{0,K}\right)^\frac{1}{2} \left( \sum_{K\in \mathcal Q_{0}} h_K^{-2} \left\| v_0 - v_0^h\right\|^2_{0,K}\right)^\frac{1}{2} + \left(\sum_{E\in \mathcal{E}_0} h_E \|j\|_{0,E}^2\right)^\frac{1}{2} \left( \sum_{E\in \mathcal{E}_0} h_E^{-1} \left\| v_0 - v_0^h\right\|_{0,E}^2\right)^\frac{1}{2}. \nonumber % \label{eq:parta1}
	\end{align}
	Then, using property (\ref{eq:L2H1scottzhang}) of the Scott-Zhang-type operator since $\mathcal Q_0$ is $\mathcal T$-admissible, and property (\ref{eq:extensionppty0}) of the generalized Stein extension $v_0$ of $v$, we get
	\begin{equation*} % \label{eq:part11}
		\sum_{K\in\mathcal Q_{0}} h_K^{-2} \left\| v_0 - v_0^h \right\|^2_{0,K} \lesssim \|\nabla v_0\|^2_{0,\Omega_0} \lesssim \|\nabla v\|^2_{0,\Omega}.
	\end{equation*}
	Moreover, for every $E\in \mathcal E_0$, let $K_E\in \mathcal Q_0$ be the element such that $E\subset \partial K_E$, and note that by the shape regularity of $\mathcal Q_0$, $h_{K_E} \simeq h_{E}$. Then using the scaled trace inequality of Lemma \ref{lemma:traceineq}, properties (\ref{eq:L2H1scottzhang}) and (\ref{eq:H1H1scottzhang}) of the Scott-Zhang-type operator, and property (\ref{eq:extensionppty0}) of the generalized Stein extension $v_0$ of $v$, we obtain
	\begin{align*}
	\sum_{E\in \mathcal{E}_0} h_E^{-1} \left\| v_0 - v_0^h\right\|_{0,E}^2 &\lesssim \sum_{E\in \mathcal{E}_0} \left( h_{K_E}^{-2} \left\| v_0 - v_0^h \right\|^2_{0,K_E} + \left\| \nabla \left( v_0 - v_0^h\right)\right\|^2_{0,K_E} \right) \lesssim \left\| \nabla v_0\right\|^2_{0,\Omega_0} \lesssim \|\nabla v\|^2_{0,\Omega}. % \label{eq:part2}
	\end{align*}
	Therefore, from the last three inequalities,
	\begin{align}
	\RN{1} \lesssim &\left( \sum_{K\in \mathcal Q_0} h_K^2\|r\|^2_{0,K} + \sum_{E\in \mathcal{E}_0} h_E\|j\|^2_{0,E} \right)^\frac{1}{2}\left\| \nabla v \right\|_{0,\Omega} = \mathscr{E}_N\big(u_0^h\big)\left\| \nabla v \right\|_{0,\Omega}. \label{eq:Iest}
	\end{align}

	Now, let us consider term $\RN{2}$ of (\ref{eq:threeterms}). First, note that by integration by parts, for any constant $c\in\mathbb R$,
	\begin{equation} \label{eq:conservationu0F}
		\int_F rc\,\mathrm dx + \int_{\gamma_0} jc\,\mathrm ds + \int_\gamma \frac{\partial \left(u_0-u_0^h\right)}{\partial \mathbf n_F}c\,\mathrm ds = \int_F \nabla \left(u_0-u_0^h\right)\cdot \nabla c \,\mathrm dx = 0.
	\end{equation}
	Thus, adding (\ref{eq:conservationu0F}) to $\RN{2}$ with the choice of constant $c=\overline{v}^\gamma$, we obtain
	\begin{align}
		\RN{2} &= - \int_{F} r v_0\,\mathrm dx - \int_{\gamma_0} jv_0\,\mathrm ds + \int_\gamma d_\gamma^h v\,\mathrm ds \nonumber \\
		& = - \int_{F} r \left(v_0-\overline{v}^\gamma\right)\,\mathrm dx - \int_{\gamma_0} j\left(v_0-\overline{v}^\gamma\right)\,\mathrm ds + \int_\gamma d_\gamma^h v\,\mathrm ds + \int_\gamma \frac{\partial \left(u_0-u_0^h\right)}{\partial \mathbf n_F} \overline{v}^\gamma \,\mathrm ds \nonumber \\
		&= - \RN{2}_1 + \RN{2}_2,  \label{eq:IIvovoh}\\
		\quad \text{ with } \quad \RN{2}_1 &:= \int_{F} r \left(v_0-\overline{v}^\gamma\right)\,\mathrm dx + \int_{\gamma_0} j\left(v_0-\overline{v}^\gamma\right)\,\mathrm ds, \nonumber \\
		\RN{2}_2 &:= \int_\gamma d_\gamma^h v\,\mathrm ds + \int_\gamma \frac{\partial \left(u_0-u_0^h\right)}{\partial \mathbf n_F} \overline{v}^\gamma \,\mathrm ds. \nonumber
	\end{align}	
	Moreover, recalling the definition of $d_\gamma$ from (\ref{eq:defdsigmah}), we note that 
	$$d_\gamma^h + \frac{\partial \left(u_0-u_0^h\right)}{\partial \mathbf n_F} = d_\gamma.$$
	Using this, we can rewrite $\RN{2}_2$ of (\ref{eq:IIvovoh}) as
	\begin{align*}
		\RN{2}_2 &= \int_\gamma d_\gamma^h \left( v-\overline{v}^\gamma \right)\,\mathrm ds + \int_\gamma d_\gamma \overline{v}^\gamma \,\mathrm ds %\int_\gamma \left(d_\gamma^h - \overline{d_\gamma^h}^\gamma\right) v\,\mathrm ds + \overline{d_\gamma^h}^\gamma \int_\gamma v\,\mathrm ds + \frac{1}{|\gamma|}\int_\gamma \frac{\partial \left(u_0-u_0^h\right)}{\partial \mathbf n_F}\,\mathrm ds \int_\gamma v \,\mathrm ds \nonumber\\
		%& = \int_\gamma \left(d_\gamma^h - \overline{d_\gamma^h}^\gamma\right) \left(v-\overline{v}^\gamma\right) \,\mathrm ds + \frac{1}{|\gamma|}\int_\gamma\left(d_\gamma^h + \frac{\partial \left(u_0-u_0^h\right)}{\partial \mathbf n_F}\right) \,\mathrm ds \int_\gamma v\,\mathrm ds \nonumber \\
		= \int_\gamma \left(d_\gamma^h - \overline{d_\gamma^h}^\gamma\right) \left(v-\overline{v}^\gamma\right) \,\mathrm ds + \overline{d_\gamma}^\gamma \int_\gamma v \,\mathrm ds.
	\end{align*}
	These terms can be estimated exactly as in \cite[Theorem~4.1]{paper1defeaturing}, that is, 
	\begin{align}
	\RN{2}_2 \lesssim \left( \left|\gamma\right|^\frac{1}{n-1} \left\|d_\gamma^h - \overline{d_\gamma^h}^\gamma\right\|_{0,\gamma}^2 + c_\gamma^2 |\gamma|^\frac{n}{n-1} \left| \overline{d_\gamma}^\gamma \right|^2\right)^\frac{1}{2}\|\nabla v\|_{0,\Omega} = \mathscr{E}_D\big(u_0^h\big) \left\| \nabla v \right\|_{0,\Omega}. \label{eq:IIest}
	\end{align}
	Finally, let us consider term $\RN{2}_1$ of (\ref{eq:IIvovoh}). Remark first that $\overline{v}^\gamma = \overline{v_0}^\gamma$ since $v_0=v$ on $\gamma$ by definition. Thus, 
	\begin{align}
		\RN{2}_1 &= \int_F r\left(v_0-\overline{v_0}^\gamma\right)\,\mathrm dx + \int_{\gamma_0} j\left(v_0-\overline{v_0}^\gamma\right)\,\mathrm ds \nonumber \\
		&\leq \|r\|_{0,F}\left\|v_0-\overline{v_0}^\gamma\right\|_{0,F} + \|j\|_{0,\gamma_0}\left\|v_0-\overline{v_0}^\gamma\right\|_{0,\gamma_0}. \nonumber % \label{eq:II1}
	\end{align}
	Furthermore, by Friedrichs' inequality of Lemma \ref{lemma:friedrichs}, since $\gamma \subset \partial F$ %{\color{green!50!black}
	and since $|\partial F| \simeq |\gamma|$, % and since $F$ is isotropic}, 
	\begin{equation*} % \label{eq:friedrichs}
		\left\|v_0-\overline{v_0}^\gamma\right\|_{0,F} \lesssim h_F \|\nabla v_0\|_{0,F},
	\end{equation*}
	and thus by the trace inequality of Lemma \ref{lemma:traceineq}, 
	\begin{equation*}
		\left\|v_0-\overline{v_0}^\gamma\right\|_{0,\gamma_0} \leq \left\|v_0-\overline{v_0}^\gamma\right\|_{0,\partial F} \lesssim \left(h_F^{-1} \left\|v_0-\overline{v_0}^\gamma\right\|_{0,F}^2 + h_F \|\nabla v_0\|^2_{0,F}\right)^\frac{1}{2} \lesssim h_F^\frac{1}{2} \|\nabla v_0\|_{0,F}. % \label{eq:tracefriedrichs}
	\end{equation*}
	Therefore, combining the last three inequalities and using property (\ref{eq:extensionppty0}) of the generalized Stein extension $v_0$ of $v$, we obtain
	\begin{equation}
		\RN{2}_1 \lesssim \left(h_F\|r\|_{0,F} + h_F^\frac{1}{2} \|j\|_{0,\gamma_0}\right) \|\nabla v_0\|_{0,F} \lesssim \left(h_F^2\|r\|^2_{0,F} + h_F \|j\|^2_{0,\gamma_0}\right)^\frac{1}{2} \|\nabla v\|_{0,\Omega}. \label{eq:subopt}
	\end{equation}
	From Assumption \ref{as:respectivesizes}, $h_F\lesssim \hQFmin$, that is, for all $K\in \mathcal Q_0$ such that $K\cap F \neq \emptyset$, $h_F\lesssim h_K$, and thus it holds
	\begin{align}
		\RN{2}_1 
%		&\leq \left(h_F^2\sum_{\substack{K\in\mathcal Q_0\\K\cap F\neq \emptyset}}\|r\|^2_{0,K} + h_F \sum_{\substack{E\in\mathcal E_0\\E\cap \gamma_0\neq \emptyset}} \|j\|^2_{0,E}\right)^\frac{1}{2} \|\nabla v\|_{0,\Omega} \nonumber \\
%		&\lesssim \left(\sum_{\substack{K\in\mathcal Q_0\\K\cap F\neq \emptyset}} h_K^2\|r\|^2_{0,K} + \sum_{\substack{E\in\mathcal E_0\\E\cap \gamma_0\neq \emptyset}} h_E \|j\|^2_{0,E}\right)^\frac{1}{2} \|\nabla v\|_{0,\Omega} 
		\lesssim \mathscr E_N\big(u_0^h\big) \|\nabla v\|_{0,\Omega}. \label{eq:II1opt}
	\end{align}
	
	To conclude, we plug (\ref{eq:Iest}), (\ref{eq:IIest}) and (\ref{eq:II1opt}) into (\ref{eq:IIvovoh}) and (\ref{eq:threeterms}), and thus for all $v\in H_{0,\Gamma_D}^1(\Omega)$, 
	\begin{align}
		\int_\Omega \nabla e \cdot \nabla v \,\mathrm dx \lesssim  \left[ \mathscr{E}_D\big(u_0^h\big)^2 + \mathscr{E}_N\big(u_0^h\big)^2 \right]^\frac{1}{2} \|\nabla v\|_{0,\Omega}= \mathscr{E}\big(u_0^h\big)\|\nabla v\|_{0,\Omega}. \label{eq:finalstep}
	\end{align}
	We conclude by choosing $v=e\in H_{0,\Gamma_D}^1(\Omega)$ in (\ref{eq:finalstep}), and by simplifying $\|\nabla e\|_{0,\Omega}$ on both sides. 
\end{proof}

\begin{remark}\label{rmk:whereAssumptionAppears}
	Without 
%	{\color{green!50!black}the first point of} 
Assumption \ref{as:respectivesizes}, i.e., in the case in which the size of the feature is greater than the mesh size on it, $h_F \gg \hQFmin$, then the term $\RN{2}_1$ estimated by (\ref{eq:subopt}) is sub-optimal as the scaling of $F$ is present in front of the residual terms instead of the mesh size. \changes{See also Remark~\ref{rmk:discussionassumption} for the effect of this Assumption in practice.} % However, it will be seen in Section \ref{sec:numexp} that this situation is extremely rare in the context of adaptive defeaturing. Indeed, if the mesh $\mathcal Q_0$ needs to be refined in $F$, this usually means that either the defeaturing data $f$ in $F$ and $g_0$ in $\gamma_0$ are badly chosen, or feature $F$ is important to correctly approximate the exact solution $u$ in $\Omega$. In the first case, $\mathscr{E}_C$ will be large and the defeaturing data needs to be more accurately chosen. In the second case case, feature $F$ will be added by the adaptive algorithm of Section \ref{sec:adaptive}, and thus no refinement will be needed anymore in~$F$. 
\end{remark}

\begin{remark}
	Note that the estimation of term $\RN{1}$ in (\ref{eq:termI}) gives an alternative proof to the one of \cite[Theorem~11]{buffagiannelli1} in the case of mixed boundary conditions. 
\end{remark}

\subsection{Reliability of the discrete defeaturing error estimator: complex feature} \label{sec:estcompl}
In this section, we extend the result of Section \ref{sec:estneg} by proving the reliability of the proposed \textit{a posteriori} estimator of the discrete defeaturing error in a geometry with one complex feature. So let $F$ be the only complex feature of $\Omega$, i.e., a feature containing both a negative component $F_\mathrm n$ and a positive component $F_\mathrm p$, and let us recall the notation introduced in Section \ref{sec:pbstatement}. In particular, for an illustration of the notation for a geometry with a complex feature, we refer again to Figure~\ref{fig:G0ppty1}. Furthermore, we recall that in the single feature framework, $\tilde u_0^h$ is the numerical approximation of the Dirichlet extension of the defeatured solution $u_0^h$ in $\tilde F_\mathrm p$, where $\tilde F_\mathrm p$ is a simple extension of the positive component of the feature (such as its bounding box), see~(\ref{eq:featurepb}). Moreover, recall definition (\ref{eq:defdsigmah}) of the continuous and discrete defeaturing error terms $d_\sigma\in L^2(\sigma)$ and $d_{\sigma}^h\in L^2(\sigma)$ for all $\sigma \in \Sigma := \left\{\gamma_\mathrm n, \gamma_{0,\mathrm p}, \gamma_\setminussign\right\}$, and definition (\ref{eq:notationcomplex}) of the interior and boundary residuals of $u_0^h$, $r\in L^2(\Omega_0)$ and $j\in L^2\big(\Gamma_N^0\big)$, and of the interior and boundary residuals of $\tilde u_0^h$, $r\in L^2\big(\tilde F_\mathrm p\big)$ and $j\in L^2\big(\tilde \Gamma_N\big)$.

In this context, recalling definition~(\ref{eq:defudhmultifeat}) of $u_\mathrm d^h$ from $u_0^h$ and $\tilde u_0^h$, then the discrete defeaturing error estimator defined in (\ref{eq:overallestimator}) writes as follows:
\begin{align}
\mathscr{E}\big(u_\mathrm d^h\big) &:= \left[\alpha_D^2\mathscr{E}_D\big(u_\mathrm d^h\big)^2 + \alpha_N^2 \mathscr{E}_N\big(u_\mathrm d^h\big)^2\right]^\frac{1}{2}, \label{eq:totalerrestcomplex}
\end{align}
where
\begin{align}
\mathscr{E}_D\big(u_\mathrm d^h\big)^2 & := \sum_{\sigma \in \Sigma} \left|\sigma\right|^\frac{1}{n-1} \left\|d_\sigma^h - \overline{d_\sigma^h}^\sigma\right\|_{0,\sigma}^2 + \mathscr{E}_C^2 \qquad \text{ with } \quad \mathscr{E}_C^2 := \sum_{\sigma \in \Sigma} c_\sigma^2 |\sigma|^\frac{n}{n-1} \left| \overline{d_\sigma}^\sigma \right|^2 , \nonumber \\
\mathscr{E}_N\big(u_\mathrm d^h\big)^2 & := \sum_{K\in \mathcal Q} h_K^2\|r\|^2_{0,K} + \sum_{E\in \mathcal E} h_{E}\|j\|^2_{0,E}, \label{eq:defEDENcompl}
\end{align}
and $\alpha_D$ and $\alpha_N$ are parameters to be tuned.

Let us now state and prove the main theorem of this section under the following hypothesis, generalizing Assumption~\ref{as:respectivesizes} \changes{(see also Remarks~\ref{rmk:discussionassumption} and~\ref{rmk:whereAssumptionAppears})}.
\begin{assumption} \label{as:respectivesizescomp}
	For $S\in\left\{F_\mathrm n, G_\mathrm p, \tilde F_\mathrm p\right\}$, let $h_S := \mathrm{diam}(S)$, let $$\mathcal Q_S = \begin{cases} \mathcal Q_0 &\text{if } S = F_\mathrm n, \\
	\tilde{\mathcal Q} & \text{otherwise},\end{cases}$$
	and let $\hQSmin := \min\left\{h_K: K\in \mathcal Q_S, \, K\cap S \neq \emptyset\right\}$.
%	 {\color{green!50!black}and let
%	$$\gamma_S := \begin{cases}
%		\gamma_\mathrm n & \text{if } S = F_\mathrm n, \\
%		\gamma_\setminussign & \text{if } S = G_\mathrm p, \\
%		\gamma_{0,\mathrm p} & \text{if } S = \tilde F_\mathrm p.
%	\end{cases}$$}
	Then we assume that
%	\begin{itemize}
%	\item 
	$h_S\lesssim \hQSmin$, that is, the feature \changes{and its extension are either smaller or about the same size as the mesh that covers them.}
%	\item {\color{green!50!black}$S$ is isotropic and $h_S \simeq |\gamma_S|^\frac{1}{n-1}$, that is, the measures of $S$ and $\gamma_S$ are comparable. }
%	\end{itemize}
\end{assumption}
As already discussed in the negative feature case, and as suggested by some numerical experiments presented in Section \ref{sec:numexp}, we will see that 
%{\color{green!50!black}the first point of} 
this assumption can be removed in practice. 

\begin{theorem}\label{thm:uppercomplextoterror}
	In the framework presented in Section \ref{ss:igadefeat}, let $u$ be the weak solution of problem (\ref{eq:weakoriginalpb}), and let $u_\mathrm d^h$  be the discrete defeatured solution defined in~(\ref{eq:defudhmultifeat}), where $\Omega$ is a geometry containing one complex feature $F$. Then under Assumption \ref{as:respectivesizescomp}, the energy norm of the discrete defeaturing error is bounded in terms of the estimator $\mathscr E\big(u_\mathrm d^h\big)$ introduced in (\ref{eq:totalerrestcomplex}) as follows:
	$$\left\| \nabla\left(u-u_\mathrm d^h\right) \right\|_{0,\Omega} \lesssim \mathscr E\big(u_\mathrm d^h\big).$$
\end{theorem}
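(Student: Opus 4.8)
The plan is to mirror the structure of the proof of Theorem~\ref{thm:uppernegtoterror}, but to now place the positive component on the same footing as the negative one by viewing $G_\mathrm p$ as a negative feature of the extension domain $\tilde F_\mathrm p$, as already observed after~(\ref{eq:Gp}). First I would set $e := u - u_\mathrm d^h$ and, for an arbitrary $v\in H^1_{0,\Gamma_D}(\Omega)$, compute $\int_\Omega \nabla e\cdot\nabla v\,\mathrm dx$ by splitting $\Omega$ into $\Omega_\star$ and $F_\mathrm p$. On $\Omega_\star$ the discrete solution equals $u_0^h$ and on $F_\mathrm p$ it equals $\tilde u_0^h$, both being $C^1$ on their respective domains by Assumption~\ref{as:C1}, so integration by parts on each piece is licit. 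Using the weak formulation~(\ref{eq:weakoriginalpb}) for $u$ and the notation of~(\ref{eq:notationcomplex}), this produces the interior residual $r$ on $\Omega_\star$ and on $F_\mathrm p$, the boundary residual $j$ on the part of $\Gamma_N$ away from the feature, and three feature-boundary contributions carrying the discrete defeaturing terms $d_{\gamma_\mathrm n}^h$, $d_{\gamma_{0,\mathrm p}}^h$ and $d_{\gamma_\setminussign}^h$ associated with $\Sigma = \{\gamma_\mathrm n,\gamma_{0,\mathrm p},\gamma_\setminussign\}$.

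Next I would extend the test function off $\Omega$. Using the generalized Stein operators of Assumption~\ref{as:controlsize}, I extend $v$ from $\Omega_\star$ across $\gamma_{0,\mathrm n}$ into a function on $\Omega_0$ and from $\Omega$ across into a function on $\tilde F_\mathrm p$, with gradient norms controlled by $\|\nabla v\|_{0,\Omega}$ through~(\ref{eq:extensionppty0})--(\ref{eq:extensionpptystartilde}). This lets me rewrite the residual integrals over the \emph{full} domains $\Omega_0$ and $\tilde F_\mathrm p$, whose fitted meshes $\mathcal Q_0$ and $\tilde{\mathcal Q}$ consist of full elements, at the price of subtracting the corresponding integrals over $F_\mathrm n$ and over $G_\mathrm p$. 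I would then group the result as a term $\RN{1}$ (a numerical part living on $\Omega_0$ together with an analogous numerical part on $\tilde F_\mathrm p$) and a term $\RN{2}$ (the feature-localized defeaturing part).

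For the numerical part I would proceed exactly as for $\RN{1}$ in Theorem~\ref{thm:uppernegtoterror}, now applied twice: insert the Scott--Zhang-type interpolants $I_0^h$ on $\Omega_0$ and $\tilde I^h$ on $\tilde F_\mathrm p$ of the extended test functions, invoke Galerkin orthogonality for both the $\Omega_0$-problem~(\ref{eq:weaksimplpb}) and the $\tilde F_\mathrm p$-problem~(\ref{eq:weakfeaturepb}), and combine H\"older and discrete Cauchy--Schwarz with the scaled trace inequality of Lemma~\ref{lemma:traceineq}, the interpolation estimates~(\ref{eq:L2H1scottzhang})--(\ref{eq:H1H1scottzhangFp}), and the Stein bounds. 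This yields $\RN{1}\lesssim \mathscr E_N\big(u_\mathrm d^h\big)\|\nabla v\|_{0,\Omega}$ with $\mathscr E_N$ as in~(\ref{eq:defEDENcompl}). For the defeaturing part I would treat each $\sigma\in\Sigma$ separately: from the local conservation identities obtained by integrating $\nabla(u_0-u_0^h)$ over $F_\mathrm n$ and $\nabla(\tilde u_0-\tilde u_0^h)$ over $G_\mathrm p$ against the constant $\overline{v}^\sigma$, I split each contribution into a zero-average defeaturing piece $\int_\sigma (d_\sigma^h-\overline{d_\sigma^h}^\sigma)(v-\overline{v}^\sigma)\,\mathrm ds$ plus the average piece $\overline{d_\sigma}^\sigma\int_\sigma v\,\mathrm ds$, bounded as in~\cite[Theorem~4.1]{paper1defeaturing} by $\mathscr E_D\big(u_\mathrm d^h\big)\|\nabla v\|_{0,\Omega}$, and a residual remainder over $F_\mathrm n$ or $G_\mathrm p$, controlled by Friedrichs (Lemma~\ref{lemma:friedrichs}) and the trace inequality and then absorbed into $\mathscr E_N$ through $h_S\lesssim \hQSmin$ from Assumption~\ref{as:respectivesizescomp}. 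Collecting the bounds, choosing $v=e$ and dividing by $\|\nabla e\|_{0,\Omega}$ would conclude.

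The main obstacle I anticipate is the bookkeeping of the interface $\gamma_{0,\mathrm p}$, where the trace of $u_0^h$ on the $\Omega_\star$ side must match the Dirichlet datum of $\tilde u_0^h$ on the $\tilde F_\mathrm p$ side, which is precisely what Assumption~\ref{as:discrspacescompat} guarantees, together with keeping the several Stein extensions and the two Galerkin orthogonalities consistent so that every boundary integral is accounted for exactly once. In particular, handling the positive component requires that the residual remainder over the fictitious region $G_\mathrm p$ be measured against the mesh $\tilde{\mathcal Q}$ on $\tilde F_\mathrm p$ rather than against the scale of $F_\mathrm p$, which is exactly why Assumption~\ref{as:respectivesizescomp} is imposed for all of $S\in\{F_\mathrm n,G_\mathrm p,\tilde F_\mathrm p\}$ and not only for the feature itself.
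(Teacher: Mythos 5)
Your plan is correct and follows essentially the same route as the paper: split the error equation over $\Omega_\star$ and $F_\mathrm p$, transfer the residual integrals to the fitted domains $\Omega_0$ and $\tilde F_\mathrm p$ via the Stein extensions, apply Scott--Zhang plus Galerkin orthogonality twice for the numerical part, and treat each $\sigma\in\Sigma$ with a conservation identity, \cite[Theorem~5.1]{paper1defeaturing}, Friedrichs/trace inequalities, and Assumption~\ref{as:respectivesizescomp} to absorb the remainders. The one step you leave implicit is that Galerkin orthogonality in $\tilde F_\mathrm p$ only holds for discrete functions vanishing on $\gamma_{0,\mathrm p}$, so the extension $\tilde v$ must be split as $(\tilde v-\tilde v_\star)+\tilde v_\star$ with $\tilde v_\star=\mathsf{E}_{\Omega_\star\to\tilde\Omega}(v\vert_{\Omega_\star})$; the difference feeds the orthogonality argument, while the $\tilde v_\star$ part generates the $\gamma_{0,\mathrm p}$ defeaturing term together with a residual remainder over all of $\tilde F_\mathrm p$ — which is exactly why, as you correctly note, the size assumption is also needed for $S=\tilde F_\mathrm p$.
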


\begin{proof} 
Let $e:=u-u_\mathrm d^h$. We are looking for an equation for the error similar to (\ref{eq:errorehv}).
To do so, let us consider the exact problem (\ref{eq:originalpb}) restricted to $\Omega_\star$ with the natural Neumann boundary condition on $\gamma_{0,\mathrm p}$, that is, the restriction $u\vert_{\Omega_\star}\in H_{g_D,\Gamma_D}^1(\Omega_\star)$ is the weak solution of 
\begin{equation} \label{eq:uonOmegastarstrong}
\begin{cases}
-\Delta \left(u\vert_{\Omega_\star}\right) = f & \text{in } \Omega_\star \\
u\vert_{\Omega_\star} = g_D & \text{on } \Gamma_D \\
\displaystyle\frac{\partial \left( u\vert_{\Omega_\star}\right)}{\partial \mathbf n} = g & \text{on } \Gamma_N \setminus \gamma_\mathrm p \\
\displaystyle\frac{\partial \left( u\vert_{\Omega_\star}\right)}{\partial \mathbf n_0} = \frac{\partial u}{\partial \mathbf n_0} &\text{on } \gamma_{0,\mathrm p}. 
\end{cases}
\end{equation}
By abuse of notation, we omit the explicit restriction of $u$ to $\Omega_\star$. Then, for all $v_\mathrm n\in H_{0,\Gamma_D}^1(\Omega_\star)$, 
\begin{equation} \label{eq:uonOmegastar}
\int_{\Omega_\star} \nabla u \cdot \nabla v_\mathrm n \,\mathrm dx = \int_{\Omega_\star} f v_\mathrm n \,\mathrm dx + \int_{\Gamma_N\setminus\gamma_\mathrm p} gv_\mathrm n \,\mathrm ds + \int_{\gamma_{0,\mathrm p}} \frac{\partial u}{\partial \mathbf n_0} v_\mathrm n \,\mathrm ds.
\end{equation}
Since the error $e = u-u_0^h$ in $\Omega_\star$, then if we use~(\ref{eq:uonOmegastar}) for $u$, integrate $u_0^h$ by parts as in~(\ref{eq:integbypartu0h}), and use the notations in~(\ref{eq:notationcomplex}), for all $v_\mathrm n\in H_{0,\Gamma_D}^1(\Omega_\star)$, \changes{we get}
\begin{align} 
\int_{\Omega_\star} \nabla e \cdot \nabla v_\mathrm n \,\mathrm dx &= \int_{\Omega_\star} (f+\Delta u_0^h) v_\mathrm n \,\mathrm dx + \int_{\Gamma_N\setminus\gamma_\mathrm p} \left(g-\frac{\partial u_0^h}{\partial \mathbf n}\right) v_\mathrm n \,\mathrm ds + \int_{\gamma_{0,\mathrm p}} \frac{\partial \left(u-u_0^h\right)}{\partial \mathbf n_0} v_\mathrm n \,\mathrm ds \nonumber\\
&= \int_{\Omega_\star} r v_\mathrm n \,\mathrm dx + \int_{\Gamma_N\setminus\gamma} j v_\mathrm n \,\mathrm ds + \int_{\gamma_\mathrm n} d_{\gamma_\mathrm n}^h v_\mathrm n \,\mathrm ds + \int_{\gamma_{0,\mathrm p}} \frac{\partial \left(u-u_0^h\right)}{\partial \mathbf n_0} v_\mathrm n \,\mathrm ds. \label{eq:ev0Omegastar}
\end{align}

Moreover, in a similar fashion as in (\ref{eq:uonOmegastarstrong}), consider the solution of~(\ref{eq:originalpb}), which verifies
\begin{align} \label{eq:uFp}
	\int_{F_\mathrm p} \nabla u \cdot \nabla v_\mathrm p \,\mathrm dx = \int_{F_\mathrm p} f v_\mathrm p \,\mathrm dx + \int_{\gamma_\mathrm p} gv_\mathrm p \,\mathrm ds + \int_{\gamma_{0,\mathrm p}} \frac{\partial u}{\partial \mathbf n_F} v_\mathrm p \,\mathrm ds, \quad  \forall v_\mathrm p\in H^1(F_\mathrm p).
\end{align}
%From (\ref{eq:uFp}) for $u$ and by integration by parts for $\tilde u_0^h$, recalling that $\gamma_\intersign$ is the part of $\gamma_\mathrm p$ that is shared with $\partial \tilde F_\mathrm p$ while $\gamma_\setminussign$ is the remaining part of $\gamma_\mathrm p$, and using the notations in (\ref{eq:notationcomplex}), then for all $v_\mathrm p\in H^1(F_\mathrm p)$, 
Recall that $\partial F_\mathrm p = \overline{\gamma_\mathrm p} \cup \overline{\gamma_{0,\mathrm p}}$ and $\gamma_\mathrm p = \mathrm{int}\big(\overline{\gamma_\intersign} \cup \overline{\gamma_\setminussign})$, where $\gamma_\intersign$ is the part of $\gamma_\mathrm p$ that is shared with $\partial \tilde F_\mathrm p$ while $\gamma_\setminussign$ is the remaining part of $\gamma_\mathrm p$, see Figure~\ref{fig:G0ppty1}. 
Moreover, since $\mathbf n_0 = -\mathbf n_F$ on $\gamma_{0,\mathrm p}$ and since the error $e = u-\tilde u_0^h$ in $F_\mathrm p$, then if we use~(\ref{eq:uFp}) for $u$, and if we integrate $\tilde u_0^h$ by parts, for all $v_\mathrm p\in H^1(F_\mathrm p)$, 
\begin{align} 
	\int_{F_\mathrm p} \nabla e \cdot \nabla v_\mathrm p \,\mathrm dx &= \int_{F_\mathrm p} (f+\Delta \tilde u_0^h) v_\mathrm p \,\mathrm dx + \int_{\gamma_\mathrm p} \left(g-\frac{\partial \tilde u_0^h}{\partial \mathbf n}\right) v_\mathrm p \,\mathrm ds + \int_{\gamma_{0,\mathrm p}} \frac{\partial \left(u-\tilde u_0^h\right)}{\partial \mathbf n_F} v_\mathrm p \,\mathrm ds \nonumber \\
	&= \int_{F_\mathrm p} r v_\mathrm p \,\mathrm dx + \int_{\gamma_\intersign}j v_\mathrm p \,\mathrm ds + \int_{\gamma_\setminussign}d_{\gamma_\setminussign}^h v_\mathrm p \,\mathrm ds + \int_{\gamma_{0,\mathrm p}} \frac{\partial \left(u-\tilde u_0^h\right)}{\partial \mathbf n_F} v_\mathrm p \,\mathrm ds. \label{eq:evpFp}
\end{align} \\
Before combining (\ref{eq:ev0Omegastar}) and (\ref{eq:evpFp}), let us first consider the terms on $\gamma_{0, \mathrm p}$. That is, since $\mathbf n_0 = -\mathbf n_F$ on $\gamma_{0,\mathrm p}$, then for all $v\in H^\frac{1}{2}(\gamma_{0,\mathrm p})$, 
\begin{align}
	\int_{\gamma_{0,\mathrm p}} \frac{\partial (u-u_0^h)}{\partial \mathbf n_0} v \,\mathrm ds + \int_{\gamma_{0,\mathrm p}} \frac{\partial (u-\tilde u_0^h)}{\partial \mathbf n_F} v \,\mathrm ds &= \int_{\gamma_{0,\mathrm p}} \left( g_0 - \frac{\partial u_0^h}{\partial \mathbf n_0} \right) v \,\mathrm ds + \int_{\gamma_{0,\mathrm p}} \left( -g_0 - \frac{\partial \tilde u_0^h}{\partial \mathbf n_F} \right)v \,\mathrm ds \nonumber \\
	&= \int_{\gamma_{0,\mathrm p}} j v \,\mathrm ds + \int_{\gamma_{0,\mathrm p}} d_{\gamma_{0,\mathrm p}}^h v \,\mathrm ds. \label{eq:g0terms}
\end{align}
Therefore, recalling the definition of $\Gamma_N^0 := \left(\Gamma_N\setminus\gamma\right) \cup \gamma_0$ where $\gamma_0 = \mathrm{int}\big(\overline{\gamma_{0,\mathrm n} \cup \gamma_{0, \mathrm p}}\big)$ and the definition of $\Sigma := \left\{ \gamma_\mathrm n, \gamma_\setminussign, \gamma_{0,\mathrm p} \right\}$, let us combine (\ref{eq:ev0Omegastar}) and (\ref{eq:evpFp}), using (\ref{eq:g0terms}). That is, for all $v\in H^1_{0,\Gamma_D}(\Omega)$, taking $v_\mathrm n := v\vert_{\Omega^\star}$ in~(\ref{eq:ev0Omegastar}) and $v_\mathrm p:= v\vert_{F_\mathrm p}$ in~(\ref{eq:evpFp}), we obtain the following error equation:
\begin{align}
	\int_\Omega \nabla e \cdot \nabla v \,\mathrm dx &= \int_{\Omega_\star} \nabla e \cdot \nabla v \,\mathrm dx + \int_{F_\mathrm p} \nabla e \cdot \nabla v \,\mathrm dx \nonumber \\
	&= \int_{\Omega_\star} r v \,\mathrm dx + \int_{F_\mathrm p} r v \,\mathrm dx + \int_{\Gamma_N^0\setminus \gamma_{0,\mathrm n}} j v \,\mathrm ds + \int_{\gamma_\intersign} j v \,\mathrm ds + \sum_{\sigma\in\Sigma} \int_{\sigma} d_{\sigma}^h v \,\mathrm ds. \label{eq:errrepcomplex}
\end{align}

Let us now fix $v\in H^1_{0,\Gamma_D}(\Omega)$. As for Theorem \ref{thm:uppernegtoterror}, the idea is to suitably extend $v$ to $\Omega_0$ and to $\tilde F_\mathrm p$ in order to correctly treat the elements and faces that are only partially in $\Omega$, and to be able to use Galerkin orthogonality in the simplified domains $\Omega_0$ and $\tilde F_\mathrm p$. However, Galerkin orthogonality in $\tilde F_\mathrm p$ is only valid for discrete functions that vanish on $\gamma_{0,\mathrm p}$. Therefore, using the generalized Stein extensions of Assumption \ref{as:controlsize} with $\Omega^1 := \Omega$ as we are considering the single feature case, let
\begin{align*}
	v_0 := \mathsf{E}_{\Omega_\star\to\Omega_0}\left(v\vert_{\Omega_\star}\right) \in &H_{0,\Gamma_D}^1(\Omega_0),\qquad 
	\tilde v := \mathsf{E}_{\Omega\to\tilde\Omega}(v) \in H^1_{0,\Gamma_D}\Big(\tilde\Omega\Big),\\
	\text{ and } \quad \tilde v_\star &:= \mathsf{E}_{\Omega_\star\to\tilde \Omega}\left(v\vert_{\Omega_\star}\right) \in H_{0,\Gamma_D}^1\Big(\tilde\Omega\Big).
\end{align*}
In particular, we note that 
\begin{align}
v_0 = v \text{ on } \gamma_\mathrm n, \quad \tilde v = v \text{ on } \gamma_\setminussign, \quad \tilde v = \tilde v_\star = v \text{ on } \gamma_{0,\mathrm p}. \label{eq:traces}
\end{align}
Thus if we define $w:=\tilde v\vert_{\tilde F_\mathrm p}-\tilde v_\star\big\vert_{\tilde F_\mathrm p}$, then from (\ref{eq:traces}), $w\in H_{0,\gamma_{0,\mathrm p}}^1(\tilde F_\mathrm p)$.
Moreover, using properties (\ref{eq:extensionppty0})--(\ref{eq:extensionpptystartilde}) of the extension operators, then
\begin{equation}\label{eq:liftingH1}
	\left\|\nabla v_0\right\|_{0,\Omega_0} \lesssim \|\nabla v\|_{0,\Omega_\star}, \quad \left\|\nabla \tilde v\right\|_{0,\tilde\Omega} \lesssim \|\nabla v\|_{0,\Omega} \quad \text{ and } \quad \left\|\nabla \tilde v_\star\right\|_{0,\tilde\Omega} \lesssim \|\nabla v\|_{0,\Omega_\star}.
\end{equation}
And since $\tilde F_\mathrm p \subset \tilde\Omega$ and $\Omega_\star \subset \Omega$, then using (\ref{eq:liftingH1}),
\begin{equation} \label{eq:liftingerr}
	\left\|\nabla w\right\|_{0,\tilde F_\mathrm p} \lesssim \|\nabla v \|_{0,\Omega}. % \leq \|\nabla \tilde v\|_{0,\tilde F_\mathrm p} + \left\|\nabla \tilde v_\star\right\|_{0,\tilde F_\mathrm p} \leq \|\nabla \tilde v\|_{0,\tilde \Omega} + \left\|\nabla \tilde v_\star\right\|_{0,\tilde\Omega} \lesssim \|\nabla v \|_{0,\Omega} + \|\nabla v \|_{0,\Omega_\star}
\end{equation}

Consequently, since $v = v_0$ on $\Omega_\star$ and $v=\tilde v$ on $F_\mathrm p$, then (\ref{eq:errrepcomplex}) can be rewritten as
\begin{equation} \label{eq:errrepcomplexstein}
\int_\Omega \nabla e \cdot \nabla v \,\mathrm dx = \int_{\Omega_\star} r v_0 \,\mathrm dx + \int_{\Gamma_N^0\setminus \gamma_{0,\mathrm n}} j v_0 \,\mathrm ds + \int_{F_\mathrm p} r \tilde v \,\mathrm dx + \int_{\gamma_\intersign} j \tilde v \,\mathrm ds + \sum_{\sigma\in\Sigma} \int_{\sigma} d_{\sigma}^h v \,\mathrm ds.
\end{equation}
Then, similarly to (\ref{eq:threeterms}) and since $\Omega^\star = \Omega_0\setminus F_\mathrm n$ and $F_\mathrm p = \tilde F_\mathrm p \setminus G_\mathrm p$, we add and subtract terms to~(\ref{eq:errrepcomplexstein}), and then we rearrange them, as follows:
%in view of using the Scott-Zhang-type operators $I^h_0$ and $\tilde I^h:=\tilde I^h_1$ with properties (\ref{eq:L2H1scottzhang}) and (\ref{eq:H1H1scottzhang}) in $\Omega_0$, and (\ref{eq:L2H1scottzhangFp}) and (\ref{eq:H1H1scottzhangFp}) in $\tilde F_\mathrm p$, we add and subtract terms to (\ref{eq:errrepcomplexstein}), and then we rearrange them, as follows:
\begin{align}
\int_\Omega \nabla e \cdot \nabla v \,\mathrm dx
= &\left( \int_{\Omega_0} r v_0 \,\mathrm dx - \int_{F_\mathrm n} r v_0 \,\mathrm dx\right) + \left(\int_{\Gamma_N^0} j v_0 \,\mathrm ds - \int_{\gamma_{0,\mathrm n}} j v_0 \,\mathrm ds\right) \nonumber \\
& + \left(\int_{\tilde F_\mathrm p} r \tilde v \,\mathrm dx - \int_{G_\mathrm p} r\tilde v \,\mathrm ds\right) +\left(- \int_{\tilde F_\mathrm p} r \tilde v_\star \,\mathrm dx + \int_{\tilde F_\mathrm p} r\tilde v_\star \,\mathrm ds\right) \nonumber \\
& + \left(\int_{\gamma_\intersign\cup\tilde \gamma} j \tilde v \,\mathrm ds - \int_{\tilde \gamma} j\tilde v\,\mathrm ds\right) + \left(- \int_{\gamma_\intersign\cup\tilde \gamma} j\tilde v_\star \,\mathrm ds + \int_{\gamma_\intersign\cup\tilde \gamma} j\tilde v_\star \,\mathrm ds\right) + \sum_{\sigma\in\Sigma} \int_{\sigma} d_{\sigma}^h v \,\mathrm ds \nonumber \\
= &\int_{\Omega_0} r v_0 \,\mathrm dx + \int_{\Gamma_N^0} j v_0 \,\mathrm ds - \int_{F_\mathrm n} r v_0 \,\mathrm dx - \int_{\gamma_{0,\mathrm n}} j v_0 \,\mathrm ds \nonumber \\
& + \int_{\tilde F_\mathrm p} r \left(\tilde v - \tilde v_\star \right) \,\mathrm dx - \int_{G_\mathrm p} r\tilde v \,\mathrm ds + \int_{\tilde F_\mathrm p} r\tilde v_\star \,\mathrm ds \nonumber \\
& + \int_{\gamma_\intersign\cup\tilde \gamma} j \left(\tilde v - \tilde v_\star \right) \,\mathrm ds - \int_{\tilde \gamma} j\tilde v\,\mathrm ds + \int_{\gamma_\intersign\cup\tilde \gamma} j\tilde v_\star \,\mathrm ds + \sum_{\sigma\in\Sigma} \int_{\sigma} d_{\sigma}^h v \,\mathrm ds. \nonumber \\
= &\,\RN{1}_0 + \tilde{\RN{1}} + \RN{2}_\mathrm n + \tilde{\RN{2}}_\mathrm p + \RN{2}_\mathrm p, \label{eq:threetermscomplex}
\end{align}
where, recalling that $\tilde \Gamma_N := \gamma_\intersign \cup \tilde\gamma$ and $w:=\tilde v\vert_{\tilde F_\mathrm p}-\tilde v_\star\big\vert_{\tilde F_\mathrm p}$, and by simple rearrangement of the terms,
\begin{align*}
\RN{1}_0 = &\int_{\Omega_0} r v_0\,\mathrm dx + \int_{\Gamma_N^0} jv_0\,\mathrm ds, \\
\tilde{\RN{1}} = &\int_{\tilde F_\mathrm p} r w\,\mathrm dx + \int_{\tilde{\Gamma}_N} jw\,\mathrm ds, \\
\RN{2}_\mathrm n = &-\int_{F_\mathrm n} r v_0\,\mathrm dx - \int_{\gamma_{0,\mathrm n}} jv_0\,\mathrm ds + \int_{\gamma_{\mathrm n}} d_{\gamma_\mathrm n}^h v\,\mathrm ds, \nonumber \\
\tilde{\RN{2}}_\mathrm p = &-\int_{G_\mathrm p} r \tilde v\,\mathrm dx - \int_{\tilde\gamma} j\tilde v\,\mathrm ds + \int_{\gamma_\setminussign} d_{\gamma_\setminussign}^h v\,\mathrm ds, \nonumber \\
\RN{2}_\mathrm p = &\int_{\tilde F_\mathrm p} r \tilde v_\star\,\mathrm dx + \int_{\tilde \Gamma_N} j \tilde v_\star\,\mathrm ds + \int_{\gamma_{0,\mathrm p}} d_{\gamma_{0,\mathrm p}}^h v\,\mathrm ds.
\end{align*}
As terms $\RN{1}_0$ and $\tilde{\RN{1}}$ are defined in $\Omega_0$ and $\tilde F_\mathrm p$, respectively, and since $\mathcal Q_0$ is fitted to $\Omega_0$ and $\tilde{\mathcal Q}$ is fitted to $\tilde F_\mathrm p$, then terms $\RN{1}_0$ and $\tilde{\RN{1}}$, which account for the numerical error, are defined in unions of full elements. 
Moreover, terms $\RN{2}_\mathrm n$, $\RN{2}_\mathrm p$ and $\tilde{\RN{2}}_\mathrm p$ account for the discrete defeaturing error and the corresponding compatibility conditions (see Remark \ref{rmk:compatcond}), and their contributions come from the presence of feature $F$; more specifically, they come from the presence of the negative component $F_\mathrm n$, the positive component $F_\mathrm p$, and the extension $\tilde F_\mathrm p$ of the latter.\\

Term $\RN{1}_0$ can be estimated exactly as term $\RN{1}$ of (\ref{eq:threeterms}), using the Galerkin orthogonality coming from the Galerkin approximation of~(\ref{eq:weaksimplpb}), properties (\ref{eq:L2H1scottzhang}) and (\ref{eq:H1H1scottzhang}) of the Scott-Zhang-type operator $I_0^h$, and property~(\ref{eq:liftingH1}) of the generalized Stein extension $v_0$ of $v$, leading to 
\begin{equation*} %\label{eq:estI0}
	\RN{1}_0 \lesssim \left( \sum_{K\in \mathcal Q_0} h_K^2\|r\|^2_{0,K} + \sum_{E\in \mathcal{E}_0} h_E\|j\|^2_{0,E} \right)^\frac{1}{2}\left\| \nabla v \right\|_{0,\Omega_\star}.
\end{equation*}
Since $w\in H^1_{0,\gamma_{0,\mathrm p}}\left(\tilde F_\mathrm p\right)$ from (\ref{eq:traces}), then term $\tilde{\RN{1}}$ can be estimated in the same manner. That is, let us use the Galerkin orthogonality coming from the Galerkin approximation of~(\ref{eq:weakfeaturepb}), properties (\ref{eq:L2H1scottzhangFp}) and (\ref{eq:H1H1scottzhangFp}) of the Scott-Zhang-type operator $\tilde I^h$, and the generalized Stein extension property (\ref{eq:liftingerr}) of $w$, to obtain the following estimate:
\begin{equation*} % \label{eq:estItilde}
\tilde{\RN{1}} \lesssim \left( \sum_{K\in \tilde{\mathcal Q}} h_K^2\|r\|^2_{0,K} + \sum_{E\in \tilde{\mathcal{E}}} h_E\|j\|^2_{0,E} \right)^\frac{1}{2}\left\| \nabla w \right\|_{0,\tilde F_\mathrm p}.
\end{equation*}

Now, let us consider term $\RN{2}_\mathrm n$ of (\ref{eq:threetermscomplex}). It can be estimated exactly as term $\RN{2}$ of (\ref{eq:threeterms}) using the decomposition given in (\ref{eq:IIvovoh}), replacing $F$, $\Omega$, $\gamma$ and $\gamma_{0}$ by $F_\mathrm n$, $\Omega_\star$, $\gamma_{\mathrm n}$ and $\gamma_{0,\mathrm n}$ respectively. Therefore, using integration by parts in $F_\mathrm n$, using \cite[Theorem~5.1]{paper1defeaturing}, Friedrichs' inequality of Lemma \ref{lemma:friedrichs}, the trace inequality of Lemma \ref{lemma:traceineq}, and since $v_0 = v$ on $\gamma_\mathrm n$, we obtain
\begin{align*}
	\RN{2}_\mathrm n \lesssim &\left( \left|\gamma_{\mathrm n}\right|^\frac{1}{n-1} \left\|d_{\gamma_{\mathrm n}}^h - \overline{d_{\gamma_{\mathrm n}}^h}^{\gamma_{\mathrm n}}\right\|_{0,\gamma_{\mathrm n}}^2 + c_{\gamma_{\mathrm n}}^2 \left|\gamma_{\mathrm n}\right|^\frac{n}{n-1} \left| \overline{d_{\gamma_{\mathrm n}}}^{\gamma_{\mathrm n}} \right|^2\right)^\frac{1}{2}\|\nabla v\|_{0,\Omega_\star} \nonumber \\
	&+ \left( h^2_{F_\mathrm n} \|r\|^2_{0,F_\mathrm n} + h_{F_\mathrm n}\|j\|^2_{0,\gamma_{0,\mathrm n}} \right)^\frac{1}{2}\|\nabla v\|_{0,\Omega_\star}. % \label{eq:est2n}
\end{align*}
After observing that $G_\mathrm p := \tilde F_\mathrm p \setminus \overline{F_\mathrm p}$ can be seen as a negative feature of the geometry $F_\mathrm p$ for which $\tilde \gamma$ is the simplified boundary replacing $\gamma_\setminussign$, and for which $\gamma_{0,\mathrm p}$ is the Dirichlet boundary, then term $\tilde{\RN{2}}_\mathrm p$ can be estimated in the same manner as term $\RN{2}_\mathrm n$. That is, using integration by parts in $G_\mathrm p$, using \cite[Theorem~5.1]{paper1defeaturing}, Friedrichs' inequality of Lemma \ref{lemma:friedrichs}, the trace inequality of Lemma \ref{lemma:traceineq}, and since $\tilde v = v$ on $\gamma_\setminussign$, we obtain
\begin{align*}
\tilde{\RN{2}}_\mathrm p \lesssim &\left( \left|\gamma_{\setminussign}\right|^\frac{1}{n-1} \left\|d_{\gamma_{\setminussign}}^h - \overline{d_{\gamma_{\setminussign}}^h}^{\gamma_{\setminussign}}\right\|_{0,\gamma_{\setminussign}}^2 + c_{\gamma_{\setminussign}}^2 \left|\gamma_{\setminussign}\right|^\frac{n}{n-1} \left| \overline{d_{\gamma_{\setminussign}}}^{\gamma_{\setminussign}} \right|^2\right)^\frac{1}{2}\|\nabla v\|_{0,\Omega} \nonumber \\
&+ \left( h^2_{G_\mathrm p} \|r\|^2_{0,G_\mathrm p} + h_{G_\mathrm p}\|j\|^2_{0,\tilde\gamma} \right)^\frac{1}{2}\|\nabla v\|_{0,\Omega}. % \label{eq:est2ptilde}
\end{align*}
Finally, since 
$$d_{\gamma_{0,\mathrm p}}^h - \frac{\partial \left(\tilde u_0-\tilde u_0^h\right)}{\partial \mathbf n_F} = d_{\gamma_{0,\mathrm p}}$$
and since $\tilde v_\star = v$ on $\gamma_{0,\mathrm p}$ from (\ref{eq:traces}), we can again apply the same steps to estimate $\RN{2}_\mathrm p$. To do so, we replace $F$, $\Omega$, $\gamma$ and $\gamma_0$ by $\tilde F_\mathrm p$, $\Omega_\star$, $\gamma_{0,\mathrm p}$ and $\tilde\Gamma_N$, respectively, in the estimation of $\RN{2}$ from (\ref{eq:IIvovoh}). Therefore, using integration by parts in $\tilde F_\mathrm p$, using \cite[Theorem~5.1]{paper1defeaturing}, Friedrichs' inequality of Lemma \ref{lemma:friedrichs} and the trace inequality of Lemma \ref{lemma:traceineq}, we obtain
\begin{align*}
\RN{2}_\mathrm p \lesssim &\left( \left|\gamma_{0, \mathrm p}\right|^\frac{1}{n-1} \left\|d_{\gamma_{0, \mathrm p}}^h - \overline{d_{\gamma_{0, \mathrm p}}^h}^{\gamma_{0, \mathrm p}}\right\|_{0,\gamma_{0, \mathrm p}}^2 + c_{\gamma_{0, \mathrm p}}^2 \left|\gamma_{0, \mathrm p}\right|^\frac{n}{n-1} \left| \overline{d_{\gamma_{0, \mathrm p}}}^{\gamma_{0, \mathrm p}} \right|^2\right)^\frac{1}{2}\|\nabla v\|_{0,\Omega_\star} \nonumber \\
&+ \left( h^2_{\tilde F_\mathrm p} \|r\|^2_{0,\tilde F_\mathrm p} + h_{\tilde F_\mathrm p}\|j\|^2_{0,\gamma_{0, \mathrm p}} \right)^\frac{1}{2}\|\nabla v\|_{0,\Omega_\star}. %\label{eq:est2p}
\end{align*}

Consequently, plugging in the last five inequalities into (\ref{eq:threetermscomplex}), since $\Omega = \text{int}\left(\overline{\Omega_\star} \cup \overline{F_\mathrm p}\right)$, using the discrete Cauchy-Schwarz inequality, and recalling the definition of $\mathscr{E}\big(u_\mathrm d^h\big)$ in (\ref{eq:totalerrestcomplex}), we get
\begin{equation}
\int_\Omega \nabla e \cdot \nabla v \,\mathrm dx \lesssim \left[\mathscr{E}\big(u_\mathrm d^h\big)^2 + \RN{3}^2 \right]^\frac{1}{2} \|\nabla v\|_{0,\Omega}, \label{eq:almostdone}
\end{equation}
where the terms of $\RN{1}_0$ and $\tilde{\RN{1}}$ contribute to the numerical error part $\mathscr E_N$ of the estimator, the first terms in $\RN{2}_\mathrm n$, $\tilde{\RN{2}}_\mathrm p$ and $\RN{2}_\mathrm p$ contribute to the defeaturing error part $\mathscr E_D$ (and thus \changes{including} $\mathscr E_C$) of the estimator, while their last terms are collected in $\RN{3}$, which is defined as
\begin{equation}
\RN{3}^2 := h_{F_\mathrm n}^2 \|r\|^2_{0,F_\mathrm n} + h_{F_\mathrm n}\|j\|^2_{0,\gamma_{0,\mathrm n}} + h_{G_\mathrm p}^2 \|r\|^2_{0,G_\mathrm p} + h_{G_\mathrm p}\|j\|^2_{0,\tilde\gamma} + h_{\tilde F_\mathrm p}^2 \|r\|^2_{0,\tilde F_\mathrm p} + h_{\tilde F_\mathrm p}\|j\|^2_{0,\gamma_{0,\mathrm p}}. \nonumber
\end{equation}
From Assumption \ref{as:respectivesizescomp}, $h_{S}\lesssim \hQSmin$ for all $S\in\left\{F_\mathrm n, G_\mathrm p, \tilde F_\mathrm p\right\}$. Thus for all $K\in \mathcal Q_0$ such that $K\cap F_\mathrm n \neq \emptyset$, $h_{F_\mathrm n}\lesssim h_K$, for all $K\in \tilde{\mathcal Q}$ such that $K\cap G_\mathrm p \neq \emptyset$, $h_{G_\mathrm p}\lesssim h_K$, and for all $K\in \tilde{\mathcal Q}$ such that $K\cap \tilde F_\mathrm p \neq \emptyset$, $h_{\tilde F_\mathrm p}\lesssim h_K$. Consequently, term $\RN{3}$ from (\ref{eq:almostdone}) can be rewritten as (\ref{eq:II1opt}) for the negative feature case, leading to 
\begin{equation}\label{eq:term3}
	\RN{3} \lesssim \mathscr{E}_N\big(u_\mathrm d^h\big).
\end{equation}
Therefore, combining (\ref{eq:almostdone}) and(\ref{eq:term3}), 
\begin{equation}\label{eq:done}
\int_\Omega \nabla e \cdot \nabla v \,\mathrm dx \lesssim \left[\mathscr{E}\big(u_\mathrm d^h\big)^2 +  \mathscr{E}_N\big(u_\mathrm d^h\big)^2 \right]^\frac{1}{2} \|\nabla v\|_{0,\Omega} \simeq \mathscr{E}\big(u_\mathrm d^h\big)\|\nabla v\|_{0,\Omega}.
\end{equation}
To conclude, we choose $v=e\in H_{0,\Gamma_D}^1(\Omega)$ in (\ref{eq:done}), and we simplify $\|\nabla e\|_{0,\Omega}$ on both sides.
\end{proof}

\subsection{Reliability of the discrete defeaturing error estimator: multiple features} \label{ss:multiest}
In this section, we further extend the result of Section \ref{sec:estcompl} by proving the reliability of the proposed \textit{a posteriori} estimator of the discrete defeaturing error in a geometry with multiple complex features. So let $\mathcal F:= \left\{F^k\right\}_{k=1}^{N_f}$ be the set of $N_f\geq 1$ complex features of $\Omega$, and let us use the notation introduced in Section~\ref{ss:multifeature}. In particular, recall that in the multiple feature framework, $u_k^h$ is the discrete solution of the Dirichlet extension of the defeatured solution $u_0^h$ in $\tilde F_\mathrm p^k$ for all $k=1,\ldots,N_f$, where $\tilde F_\mathrm p^k$ is a simple extension of the feature's positive component (such as its bounding box). 
Moreover, recall definition (\ref{eq:defQhmulti}) of the mesh $\mathcal Q$, recall definitions (\ref{eq:defdsigmah}) of $d_\sigma\in L^2(\sigma)$ and $d_{\sigma}^h\in L^2(\sigma)$ for all $\sigma \in \Sigma$, where $\Sigma$ is defined in (\ref{eq:sigmamulti}), and recall definitions (\ref{eq:notationcomplex}) of the interior and boundary residuals of $u_0^h$, $r\in L^2(\Omega_0)$ and $j\in L^2\big(\Gamma_N^0\big)$, and of the interior and boundary residuals of $u_k^h$, $r\in L^2\Big(\tilde F_\mathrm p^k\Big)$ and $j\in L^2\Big(\tilde \Gamma_N^k\Big)$ for all $k=1,\ldots,N_f$.

In this context, recalling definition (\ref{eq:defudhmultifeat}) of $u_\mathrm d^h$ from $u_0^h$ and $u_k^h$, $k=1,\ldots,N_f$, the discrete defeaturing error estimator defined in (\ref{eq:overallestimator}) writes as follows:
\begin{align}
\mathscr{E}\big(u_\mathrm d^h\big) &:= \left[\alpha_D^2\mathscr{E}_D\big(u_\mathrm d^h\big)^2 + \alpha_N^2 \mathscr{E}_N\big(u_\mathrm d^h\big)^2\right]^\frac{1}{2}, \label{eq:totalerrestmulti}
\end{align}
where $\mathscr{E}_D\big(u_\mathrm d^h\big)$ and $\mathscr{E}_N\big(u_\mathrm d^h\big)$ are defined as in~(\ref{eq:defEDENcompl}), 
%\begin{align}
%\mathscr{E}_D\big(u_\mathrm d^h\big)^2 & := \sum_{\sigma \in \Sigma} \left|\sigma\right|^\frac{1}{n-1} \left\|d_\sigma^h - \overline{d_\sigma^h}^\sigma\right\|_{0,\sigma}^2 + \mathscr{E}_C^2 \qquad \text{ with } \quad \mathscr{E}_C^2 := \sum_{\sigma \in \Sigma} c_\sigma^2 |\sigma|^\frac{n}{n-1} \left| \overline{d_\sigma}^\sigma \right|^2 , \nonumber \\
%\mathscr{E}_N\big(u_\mathrm d^h\big)^2 & := \sum_{K\in \mathcal Q} h_K^2\|r\|^2_{0,K} + \sum_{E\in \mathcal E} h_{E}\|j\|^2_{0,E}, \nonumber
%\end{align}
and $\alpha_D$ and $\alpha_N$ are parameters to be tuned as in the single feature case.

Let us now state and prove the main theorem of this section, in the case in which every feature in $\mathcal F$ verifies Assumption~\ref{as:respectivesizescomp} \changes{(see also Remarks~\ref{rmk:discussionassumption} and~\ref{rmk:whereAssumptionAppears})}.

%Let us now state and prove the main theorem of this section under the following hypothesis.
%\begin{assumption} \label{as:respectivesizesmulti}
%	For $S\in\left\{F_\mathrm n^k, G_\mathrm p^k, \tilde F_\mathrm p^k\right\}_{k=1}^{N_f}$, let $h_S := \mathrm{diam}(S)$, let 
%	\begin{align*}
%		\mathcal Q_S = \begin{cases} \mathcal Q_0 &\text{ if } S = F_\mathrm n^k \\
%		\tilde{\mathcal Q}^k &\text{ if } S = G_\mathrm p^k \text{ or } S = \tilde F_\mathrm p^k \qquad \text{ for some } k=1,\ldots,N_f,
%		\end{cases}
%	\end{align*}
%	and let $\hQSmin := \min\left\{h_K: K\in \mathcal Q_S, \, K\cap S \neq \emptyset\right\}$. 
%%	{\color{green!50!black}and let
%%		$$\gamma_S := \begin{cases}
%%		\gamma_\mathrm n^k & \text{if } S = F_\mathrm n' k, \\
%%		\gamma_\setminussign^k & \text{if } S = G_\mathrm p^k, \\
%%		\gamma_{0,\mathrm p}^k & \text{if } S = \tilde F_\mathrm p^k \qquad \text{ for some } k=1,\ldots,N_f.
%%		\end{cases}$$}
%	Then assume that 
%%	\begin{itemize}
%%		\item 
%		$h_S\lesssim \hQSmin$, that is, each feature is either smaller or about the same size as the mesh that covers it.
%%		\item {\color{green!50!black}$S$ is isotropic and $h_S \simeq |\gamma_S|^\frac{1}{n-1}$, that is, the measures of $S$ and $\gamma_S$ are comparable. }
%%	\end{itemize}
%\end{assumption}
%As in the single feature case, and as suggested by some numerical experiments presented in Section \ref{sec:numexp}, we will see that 
%%{\color{green!50!black}the first point of} 
%this assumption can be removed in practice. 

\begin{theorem}\label{thm:uppermultitoterror}
	In the framework presented in Section \ref{ss:igadefeat}, let $u$ be the weak solution of problem (\ref{eq:weakoriginalpb}), and let $u_\mathrm d^h$  be the discrete defeatured solution defined in (\ref{eq:defudhmultifeat}), where $\Omega$ is a geometry containing $N_f\geq 1$ complex features satisfying Assumptions~\ref{def:separated} and~\ref{as:respectivesizescomp}. Then the energy norm of the discrete defeaturing error is bounded in terms of the estimator $\mathscr E\big(u_\mathrm d^h\big)$ introduced in (\ref{eq:totalerrestmulti}) as follows:
	\begin{equation}\label{eq:forhiddenconstant}
	\left\| \nabla\left(u-u_\mathrm d^h\right) \right\|_{0,\Omega} \lesssim \mathscr E\big(u_\mathrm d^h\big).
	\end{equation}
\end{theorem}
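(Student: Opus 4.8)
The plan is to localise each feature to its sub-domain $\Omega^k$ provided by the separability Assumption~\ref{def:separated}, apply the single complex-feature analysis of Theorem~\ref{thm:uppercomplextoterror} feature-by-feature, and recombine the local bounds through the bounded-overlap property of the $\Omega^k$. First I would reproduce the derivation of the error equation~(\ref{eq:errrepcomplex}) for $e:=u-u_\mathrm d^h$: restrict $u$ to $\Omega_\star$ and to each $F_\mathrm p^k$, integrate $u_0^h$ and each $u_k^h$ by parts, and match the fluxes on every interface $\gamma_{0,\mathrm p}^k$ exactly as in~(\ref{eq:g0terms}). This yields, for all $v\in H^1_{0,\Gamma_D}(\Omega)$,
\begin{equation*}
\int_\Omega \nabla e\cdot\nabla v\,\mathrm dx = \int_{\Omega_\star} r v\,\mathrm dx + \sum_{k=1}^{N_f}\int_{F_\mathrm p^k} r v\,\mathrm dx + \int_{\Gamma_N^0\setminus\gamma_{0,\mathrm n}} j v\,\mathrm ds + \sum_{k=1}^{N_f}\int_{\gamma_\intersign^k} j v\,\mathrm ds + \sum_{\sigma\in\Sigma}\int_\sigma d_\sigma^h v\,\mathrm ds.
\end{equation*}

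Next I would introduce the liftings. For the numerical residual on $\Omega_0$ I would assemble a single global extension $v_0\in H^1_{0,\Gamma_D}(\Omega_0)$ by gluing $v$ on $\Omega_\star$ with the local Stein extensions $\mathsf E_{\Omega_\star^k\to\Omega_0^k}(v\vert_{\Omega_\star^k})$ on the pairwise disjoint negative components $F_\mathrm n^k$; the traces agree on each $\gamma_{0,\mathrm n}^k$, so $v_0$ is globally $H^1$, and~(\ref{eq:extensionppty0}) together with the finite overlap gives $\|\nabla v_0\|_{0,\Omega_0}^2\lesssim \|\nabla v\|_{0,\Omega}^2 + \sum_k\|\nabla v\|_{0,\Omega_\star^k}^2\lesssim N_s\,\|\nabla v\|_{0,\Omega}^2$. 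For each positive component I would keep the local liftings $\tilde v^k:=\mathsf E_{\Omega^k\to\tilde\Omega^k}(v)$ and $\tilde v_\star^k:=\mathsf E_{\Omega_\star^k\to\tilde\Omega^k}(v\vert_{\Omega_\star^k})$, and set $w^k:=\tilde v^k\vert_{\tilde F_\mathrm p^k}-\tilde v_\star^k\vert_{\tilde F_\mathrm p^k}\in H^1_{0,\gamma_{0,\mathrm p}^k}(\tilde F_\mathrm p^k)$, exactly as in the single-feature proof.

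Then I would estimate the resulting terms following~(\ref{eq:threetermscomplex})--(\ref{eq:term3}). The bulk term $\int_{\Omega_0} r v_0 + \int_{\Gamma_N^0} j v_0$ is controlled, via the Scott--Zhang operator $I_0^h$, Galerkin orthogonality for~(\ref{eq:weaksimplpb}) and the bound on $v_0$, by $\mathscr E_N\big(u_\mathrm d^h\big)\|\nabla v\|_{0,\Omega}$; the $\tilde F_\mathrm p^k$ numerical terms and the three defeaturing contributions of each feature are treated verbatim as in Theorem~\ref{thm:uppercomplextoterror}, giving
\begin{equation*}
\RN{2}_\mathrm n^k + \tilde{\RN{2}}_\mathrm p^k + \RN{2}_\mathrm p^k \lesssim \Big(\mathscr E_D^k\big(u_\mathrm d^h\big)^2 + \big(\RN{3}^k\big)^2\Big)^{1/2}\,\|\nabla v\|_{0,\Omega^k},
\end{equation*}
where $\RN{3}^k$ collects the feature-local residual remainders, absorbed into the numerical estimator by Assumption~\ref{as:respectivesizescomp} (which provides $h_{F_\mathrm n^k},h_{G_\mathrm p^k},h_{\tilde F_\mathrm p^k}\lesssim h_K$ on the covering elements). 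The decisive point is that each feature bound retains only the \emph{local} energy $\|\nabla v\|_{0,\Omega^k}$. Summing over $k$ with the discrete Cauchy--Schwarz inequality and the overlap bound $\sum_k\|\nabla v\|_{0,\Omega^k}^2\le N_s\,\|\nabla v\|_{0,\Omega}^2$ then gives $\int_\Omega\nabla e\cdot\nabla v\,\mathrm dx\lesssim \mathscr E\big(u_\mathrm d^h\big)\|\nabla v\|_{0,\Omega}$ with a constant depending on $N_s$ but not on $N_f$; taking $v=e$ and dividing by $\|\nabla e\|_{0,\Omega}$ closes the argument.

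The main obstacle is the uniformity in $N_f$: everything hinges on carrying the local factor $\|\nabla v\|_{0,\Omega^k}$ through every per-feature estimate, since only then does the bounded-overlap property convert the sum of local energies into a single $\|\nabla v\|_{0,\Omega}^2$ without accumulating a factor that grows with the number of features. A secondary technical point, again resolved by the separation and bounded overlap of the $\Omega^k$, is to verify that the glued extension $v_0$ is genuinely in $H^1(\Omega_0)$ across the scattered interfaces $\gamma_{0,\mathrm n}^k$ and that its energy is controlled uniformly.
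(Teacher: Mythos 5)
Your proposal is correct and follows essentially the same route as the paper: the paper's proof of this theorem simply repeats the decomposition of Theorem~\ref{thm:uppercomplextoterror} term by term for each feature and invokes Assumptions~\ref{def:separated} and~\ref{as:controlsize} together with the discrete Cauchy--Schwarz inequality to keep the hidden constant independent of $N_f$. You have merely made explicit the details the paper leaves implicit (the gluing of the local Stein extensions into a global $v_0$ and the bounded-overlap bound $\sum_k\|\nabla v\|_{0,\Omega^k}^2\le N_s\|\nabla v\|_{0,\Omega}^2$), and these are exactly the intended steps.
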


\begin{proof} 
	The proof is very similar to the one of Theorem~\ref{thm:uppercomplextoterror}. That is, following the same steps and letting $e:=u-u_\mathrm d^h$, we can write for all $v\in H_{0,\Gamma_D}^1(\Omega)$, 
	\begin{equation} \label{eq:threetermsmulti}
	\int_\Omega \nabla e \cdot \nabla v \,\mathrm dx = \RN{1}_0 + \sum_{k=1}^{N_f} \left(\tilde{\RN{1}}^k + \RN{2}^k_\mathrm n + \tilde{\RN{2}}^k_\mathrm p + \RN{2}^k_\mathrm p \right),
	\end{equation}
	where $\RN{1}_0$ is defined as in~(\ref{eq:threetermscomplex}), and for all $k=1,\ldots, N_f$, $\tilde{\RN{1}}^k$, $\RN{2}^k_\mathrm n$, $\tilde{\RN{2}}^k_\mathrm p$ and $\RN{2}^k_\mathrm p$ are defined as $\tilde{\RN{1}}$, $\RN{2}_\mathrm n$, $\tilde{\RN{2}}_\mathrm p$ and $\RN{2}_\mathrm p$ from~(\ref{eq:threetermscomplex}), but for feature $F^k$. Then, these terms can be estimated as in the proof of Theorem~\ref{thm:uppercomplextoterror}. In particular, for the hidden constant in~(\ref{eq:forhiddenconstant}) to be independent of $N_f$, we use Assumptions~\ref{def:separated} and~\ref{as:controlsize} and the discrete Cauchy-Schwarz inequality.
\end{proof}

\changes{
\subsection{Reliability of the error estimator with the standard finite element method}
Let us now discuss how the presented proof of the reliability of the discrete defeaturing error estimator can be tailored to the case in which the standard $C^0$-continuous FE method is used instead of IGA. The required adaptations are the following ones:
\begin{itemize}
	\item consider a mesh $\mathcal Q$ which is made of triangles/tetrahedra or of quadrilaterals/hexahedra;
	\item extend the set $\mathcal E$ of Neumann edges defined in~(\ref{eq:edgesdef}) with $\mathcal E_{\mathrm{int}}$, the set of all internal edges of $\tilde{\mathcal Q}^k$ for all $k=1,\ldots,N_f$ and all internal edges of $\mathcal Q_0$;
	\item extend as follows the definition~(\ref{eq:notationcomplex}) of $j$ to $\mathcal E_{\mathrm{int}}$: on every $\gamma_e\in \mathcal E_{\mathrm{int}}$, 
	$j := \displaystyle\frac{1}{2}\left[\frac{\partial u_\mathrm d^h}{\partial \mathbf{n}_e}\right],$ where the square brackets denote the jump of the quantity, and $\mathbf{n}_e$ represents the normal vectors to the edge $\gamma_e$;
	\item consider the same numerical error estimator $\mathscr{E}_N\big(u_\mathrm d^h\big)$ as in~(\ref{eq:ENiga})--(\ref{eq:localEN}), with the previously introduced extended definitions of $\mathcal E$ and $j$. Note that the contributions on the internal edges $\mathcal E_\textrm{int}$ are analogous to the jump contributions on the interfaces between patches appearing in the multipatch case that will be analyzed in Section~\ref{sec:gen};
	\item remove Assumption~\ref{as:admissibility} specific to IGA with THB-splines, and assume that the current meshes $\mathcal Q_0$ and $\tilde{\mathcal Q}^k$, $k=1,\ldots,N_f$ are obtained from a suitable refinement strategy of initial meshes, for instance the newest vertex bisection algorithm, while preserving the trace compatibility Assumption~\ref{as:discrspacescompat}.
\end{itemize}

After these changes, the result of Theorem~\ref{thm:uppermultitoterror} extends to finite elements. In the proof, it is necessary to add the jump contributions $j$ on the internal edges $\mathcal E_\textrm{int}$. This step does not require additional conceptual ideas with respect to the proof in the IGA framework.

} 
%\section{Combined defeaturing and numerical error estimator} 

\section{The adaptive strategy on complex spline geometries}\label{sec:gen}
In the previous section, we assumed that the considered mesh was fitting the boundary of the simplified domain $\Omega_0$. In a geometric adaptive setting as presented in Section~\ref{sec:adaptive}, adding a new feature would require re-meshing the domain in order to satisfy this assumption. To avoid this, the REFINE step presented in Section~\ref{sec:refine} is designed to be used with a mesh-preserving method, which allows to take advantage of the efforts made by standard $h$-refinement in the previous iterations. 
In this section, we study how the adaptive analysis-aware defeaturing strategy presented in Section~\ref{sec:adaptive} can be performed in the special case of IGA with THB-splines, using trimming and multipatch geometry techniques. % : first trimming, to deal with the negative component of the added features, and then multipatch geometry techniques, to deal with their positive component. In the following, we explain the specificities of the SOLVE, ESTIMATE and REFINE building blocks of the adaptive loop in this context. 

To illustrate this section, let us take the example of an exact geometry $\Omega$ that contains at least two complex features (with both a positive and a negative component), and such that at some iteration of the adaptive strategy presented in Section~\ref{sec:adaptive}, one feature is required to be added to the defeatured geometrical model $\Omega_0$. Then, the negative component of this feature is added by trimming, while its positive component is added with an extra (possibly trimmed) patch. Therefore, $\Omega_0$ is a trimmed multipatch domain at the next iteration, and the adaptive strategy needs to be precised in this case.

Considering trimmed multipatch domains also considerably extends the range of fully defeatured geometries that one can treat in the presented adaptive IGA defeaturing framework. 
Indeed, the image of a single isogeometric mapping $\mathbf F$ (see Section~\ref{ss:igarest}), called patch, limits the definition of $\Omega_0$: it only allows for geometries that are images of the unit square if $n=2$ or the unit cube if $n=3$. However, the previously introduced setting can easily be generalized to open connected domains defined by $N_p\geq1$ trimmed patches, glued together with $C^0$-continuity. 
Consequently, after quickly reviewing trimming and multipatch techniques, we explain in this section the required modifications of the SOLVE and ESTIMATE steps in the case of IGA in trimmed domains, and then we discuss its generalization to trimmed multipatch domains. To finish this section, we provide details of the REFINE step, in the most general context of trimmed multipatch defeatured geometries $\Omega_0$ for which the exact geometry $\Omega$ contains multiple features. 

\subsection{Isogeometric analysis in trimmed domains} \label{ss:trimmed}
Trimmed domains are obtained from a basic Boolean operation between standard domains, and they are nowadays a standard in most commercial CAD software. More precisely, 
suppose that $D^\mathrm u\subset \mathbb{R}^n$ is a domain defined as the image of an isogeometric mapping $\mathbf F: (0,1)^n \rightarrow D^\mathrm u$, generated by a THB-spline basis $\hat{\mathcal T}$ and satisfying Assumption \ref{as:isomap}, as in Section \ref{sec:iga}. Moreover, let $\left\{\omega_i\right\}_{i=1}^{N_t}$ be a set of bounded open domains in $\mathbb R^n$ that are trimmed (i.e., cut) from $D^\mathrm u$ to obtain $D$, the computational domain. That is,
\begin{equation} \label{eq:trimmingdef}
D := D^\mathrm u\setminus \overline{\omega}, \quad \text{ with } \quad \omega:= \text{int}\left(\bigcup_{i=1}^{N_t} \overline{\omega_i}\right).
\end{equation}

In this case, we can generalize the isogeometric paradigm introduced in Section \ref{ss:igarest} to trimmed geometries. That is, let $\mathcal{T}(D^\mathrm u)$ be defined as in (\ref{eq:Hdiscrspace}), and let us consider the basis $\mathcal{T}(D)$ of THB-spline basis functions whose support intersects $D$, i.e.,
\begin{align*}
\mathcal{T}(D) &:= \left\{B\in \mathcal{T}(D^\mathrm u) :  \mathrm{supp}(B)\cap {D} \neq \emptyset\right\}.
\end{align*}
Then, the IGA numerical solution of a PDE defined in a trimmed domain $D$ is sought in the finite dimensional space spanned by the THB-spline basis functions restricted to $D$, that is,
\begin{align}
V^h(D) := \spn{B\vert_{D}: B\in\mathcal T(D)}. \label{eq:VhDtrim}
\end{align} 

\begin{remark}
	In an adaptive mesh refinement framework and from the algorithmic point of view, we need to guarantee that a function $B\in \mathcal T(D)$ is deactivated when all the elements in $\support{B} \cap {D}$ are refined (see the construction of (T)HB-splines in (\ref{eq:defHBsplines})). To do so, we need to add the so-called ghost elements to the set of elements to refine \cite{trimmedshells}. That is, when a trimmed element $K$ is marked for refinement (see Section \ref{sec:mark}), all the elements in
	$$\left\{K' \in \mathcal Q(D^\mathrm u) : K'\cap D = \emptyset, \, \text{lev}(K)=\text{lev}(K') \text{ and } \exists B\in\mathcal T(D^\mathrm u) \text{ such that } K \cup K' \subset \support{B} \right\}$$
	also need to be marked for refinement. Note that this is only needed for algorithmic reasons, while it does not change the active refined basis determined by the marking strategy.
\end{remark}
For more details about isogeometric methods in trimmed domains, the reader is referred to \cite{antolinvreps,puppistabilization}.

\subsection{Multipatch isogeometric analysis} \label{ss:mp}
Multipatch domains are defined as
\begin{equation} \label{eq:multipatchdef}
D := \text{int}\left(\bigcup_{j=1}^{N_p} \overline{D^j}\right),
\end{equation}
with $N_p\geq 1$, where each domain $D^j$ is defined by an isogeometric mapping $\mathbf F^j: (0,1)^n \to D^j$ satisfying Assumption \ref{as:isomap}. {Let us assume that the patches do not overlap in the physical domain, that is, $D^i \cap D^j = \emptyset$ for all $i,j=1,\ldots,N_p$ such that $i\neq j$.} In the case of a multipatch isogeometric domain $D$, we define the corresponding multipatch mesh as
\begin{equation}\label{eq:mpmesh}
\mathcal Q(D) := \bigcup_{j=1}^{N_p} \mathcal Q \left(D^j\right),
\end{equation}
where each $\mathcal Q\left(D^j\right)$ is defined as in (\ref{eq:mesh}). Moreover, let $\hat{\mathcal B}^j$ denote the corresponding B-spline basis associated with each mesh $\mathcal Q\left(D^j\right)$, $j=1,\ldots,N_p$. 

To be able to build a suitable discrete space in a multipatch domain, we require the meshes to be conforming at the interfaces between patches, and we need to impose a $C^0$-continuity at those interfaces. To do so, let
$$\Gamma^{i,j} := \partial D^i \cap \partial D^j, \quad \forall i,j=1,\ldots,N_p, \,i\neq j$$
denote the interfaces between patches, and assume that they satisfy the following assumption.
\begin{assumption} \label{as:interpatch} For all $i,j=1,\ldots,N_p$ such that $i\neq j$, 
	\begin{itemize}
		\item $\Gamma^{i,j}$ is either empty, or a vertex, or the image of a full edge or a full face of $(0,1)^n$ for both parametrizations $\mathbf{F}^i$ and $\mathbf{F}^j$;
		\item for every B-spline basis function $\hat B^i\in \hat{\mathcal B}^i$ such that $\hat B^i\circ (\mathbf{F}^i)^{-1} \neq 0$ on $\Gamma^{i,j}$, there exists a unique B-spline basis function $\hat B^j\in \hat{\mathcal B}^j$ such that
		$\hat B^i\circ (\mathbf{F}^i)^{-1} = \hat B^j\circ (\mathbf{F}^j)^{-1}$ on $\Gamma^{i,j}$;
		\item the control points associated to the interface functions of adjacent patches coincide. 
	\end{itemize}
\end{assumption}
This assumption allows us to ensure the $C^0$-continuity of the discrete functions at the patch interfaces, by associating the corresponding degrees of freedom on each side of the interface. For more details, the reader is referred to \cite[Section~3.2.2]{reviewadaptiveiga}. In this case, the IGA numerical solution of a PDE defined in a multipatch domain $D$ is sought in the finite dimensional space 
$$V^h(D) := \left\{ v^h\in C^0(D): v^h\vert_{D^j} \in \spn{\mathcal B^j}, \,\forall j=1,\ldots,N_p\right\},$$
where for all $j=1,\ldots,N_p$,
$$
\mathcal B^j := \left\{B:=\hat B\circ \left(\mathbf{F}^j\right)^{-1} :  \hat B\in\hat{\mathcal{B}}^j\right\}.
$$ 
Note that $C^0$-continuity is imposed here between patches. The construction of spaces with higher continuity is currently a very active area of research, see \cite{toshniwal2017smooth,kapl2019isogeometric,bracco2020isogeometric} for instance.

Finally, one can easily generalize to multipatch domains the definition of (T)HB-splines basis functions of Sections \ref{ss:hbs} and \ref{ss:thb}. The corresponding discrete isogeometric spaces defined in Section \ref{ss:igarest} can also easily be generalized to this setting, by considering on each hierarchical level a multi-patch space satisfying Assumption~\ref{as:interpatch}. For more details, the reader is referred to \cite{BUCHEGGER2016159} and \cite[Section~3.4]{garau2018algorithms}. 

\subsection{Defeaturing trimmed multipatch domains} \label{sso:trimmeddomains}
In this section, we first discuss the generalization of Section~\ref{sec:combinedest} and of the discrete defeaturing error estimator~(\ref{eq:totalerrestmulti}) to a single patch trimmed geometry $\Omega_0$, relying on \cite{trimmingest}. The main differences come from the fact that the discrete spline space contains functions whose support are cut by the trimming boundary. In the numerical contribution of the error estimator $\mathscr E(u_\mathrm d^h)$, one therefore needs to adapt the mesh-dependent scaling factors in front of the residuals.

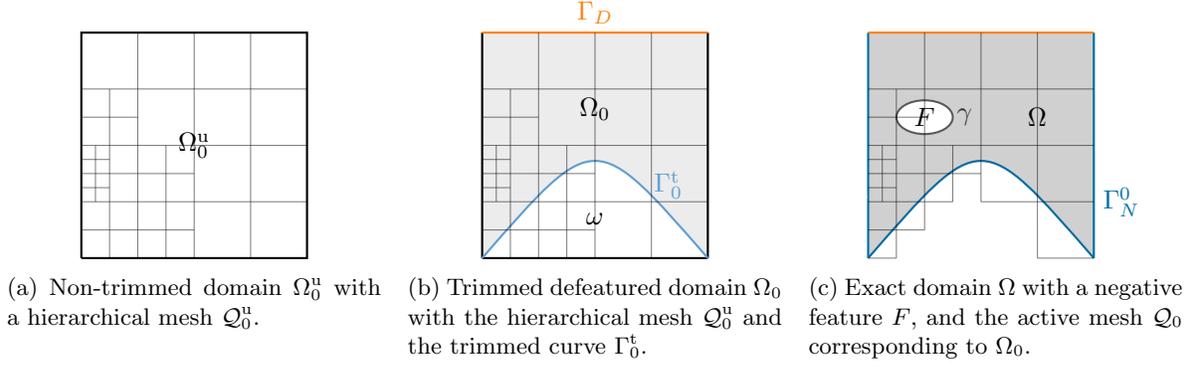
\begin{figure}
	\centering
	\begin{subfigure}[t]{0.3\textwidth}
		\begin{center}
			\begin{tikzpicture}[scale=0.75]
			\draw[thick] (-2,-2) -- (-2,2) -- (2,2) -- (2,-2) -- (-2,-2);
			\draw[opacity=0.5] (-1,-2) -- (-1,2);
			\draw[opacity=0.5] (0,-2) -- (0,2);
			\draw[opacity=0.5] (1,-2) -- (1,2);
			\draw[opacity=0.5] (-2,-1) -- (2,-1);
			\draw[opacity=0.5] (-2,0) -- (2,0);
			\draw[opacity=0.5] (-2,1) -- (2,1);
			\draw[opacity=0.5] (-2,0.5) -- (-1,0.5);
			\draw[opacity=0.5] (-2,-0.25) -- (-1.5,-0.25);
			\draw[opacity=0.5] (-2,-0.75) -- (-1.5,-0.75);
			\draw[opacity=0.5] (-1.75,0) -- (-1.75,-1);
			\draw[opacity=0.5]  (-1.5,-2) -- (-1.5,1);
			\draw[opacity=0.5]  (-0.5,-2) -- (-0.5,0);
			\draw[opacity=0.5]  (-2,-1.5) -- (0,-1.5);
			\draw[opacity=0.5]  (-2,-0.5) -- (0,-0.5);
			\draw (0,0) node{$\Omega_0^\mathrm u$};
			\end{tikzpicture}
			\caption{Non-trimmed domain $\Omega_{0}^\mathrm u$ with a hierarchical mesh $\mathcal Q_{0}^{\mathrm u}$.}
		\end{center}
	\end{subfigure}
	~
	\begin{subfigure}[t]{0.3\textwidth}
		\begin{center}
			\begin{tikzpicture}[scale=0.75]
			\fill[c3!95!black,opacity=0.2] (-2,-2) .. controls (0,0.3) .. (2,-2) -- (2,2) -- (-2,2) -- (-2,-2);
			\draw[thick] (-2,-2) -- (-2,2);
			\draw[thick] (2,-2) -- (2,2);
			\draw[thick] (2,-2) -- (-2,-2);
			\draw[c4,thick] (-2,-2) .. controls (0,0.3) .. (2,-2);
			\draw[c4] (1.3,-0.7) node{$\Gamma^\mathrm t_0$};
			\draw (0,0.65) node{$\Omega_0$} ;
			\draw (0,-1.3) node{$\omega$} ;
			\draw[c2,thick] (-2,2) -- (2,2);
			\draw[c2] (0,2) node[above]{$\Gamma_D$};
			\draw[opacity=0.5] (-1,-2) -- (-1,2);
			\draw[opacity=0.5] (0,-2) -- (0,2);
			\draw[opacity=0.5] (1,-2) -- (1,2);
			\draw[opacity=0.5] (-2,-1) -- (2,-1);
			\draw[opacity=0.5] (-2,0) -- (2,0);
			\draw[opacity=0.5] (-2,1) -- (2,1);
			\draw[opacity=0.5] (-2,0.5) -- (-1,0.5);
			\draw[opacity=0.5] (-2,-0.25) -- (-1.5,-0.25);
			\draw[opacity=0.5] (-2,-0.75) -- (-1.5,-0.75);
			\draw[opacity=0.5] (-1.75,0) -- (-1.75,-1);
			\draw[opacity=0.5]  (-1.5,-2) -- (-1.5,1);
			\draw[opacity=0.5]  (-0.5,-2) -- (-0.5,0);
			\draw[opacity=0.5]  (-2,-1.5) -- (0,-1.5);
			\draw[opacity=0.5]  (-2,-0.5) -- (0,-0.5);
			\end{tikzpicture}
			\caption{Trimmed defeatured domain $\Omega_0$ with the hierarchical mesh $\mathcal Q_{0}^{\mathrm u}$ and the trimmed curve $\Gamma_0^\mathrm t$.}
		\end{center}
	\end{subfigure}
	~
	\begin{subfigure}[t]{0.3\textwidth}
		\begin{center}
			\begin{tikzpicture}[scale=0.75]
			\fill[c3!95!black,opacity=0.5] (-2,-2) .. controls (0,0.3) .. (2,-2) -- (2,2) -- (-2,2) -- (-2,-2);
			\draw[thick,c1] (-2,-2) -- (-2,2);
			\draw[thick,c1] (2,-2) -- (2,2);
			\draw[c1,thick] (-2,-2) .. controls (0,0.3) .. (2,-2);
			\draw[fill,white,thick] (-1,0.5) ellipse (0.5 and 0.3);
			\draw[c,thick] (-1,0.5) ellipse (0.5 and 0.3);
			\draw (1,0.5) node{$\Omega$} ;
			\draw (-1,0.5) node{$F$} ;
			\draw[c!50!black,thick] (-0.6,0.5) node[right]{$\gamma$} ;
			\draw[c2,thick] (-2,2) -- (2,2);
			\draw[opacity=0.5] (-1,-1.5) -- (-1,2);
			\draw[opacity=0.5] (0,-1) -- (0,2);
			\draw[opacity=0.5] (1,-2) -- (1,2);
			\draw[opacity=0.5] (-2,-1) -- (-0.5,-1);
			\draw[opacity=0.5] (0,-1) -- (2,-1);
			\draw[opacity=0.5] (-2,0) -- (2,0);
			\draw[opacity=0.5] (-2,1) -- (2,1);
			\draw[opacity=0.5] (-2,0.5) -- (-1,0.5);
			\draw[opacity=0.5] (-2,-0.25) -- (-1.5,-0.25);
			\draw[opacity=0.5] (-2,-0.75) -- (-1.5,-0.75);
			\draw[opacity=0.5] (-1.75,0) -- (-1.75,-1);
			\draw[opacity=0.5]  (-1.5,-2) -- (-1.5,1);
			\draw[opacity=0.5]  (-0.5,-1) -- (-0.5,0);
			\draw[opacity=0.5]  (-2,-1.5) -- (-1,-1.5);
			\draw[opacity=0.5]  (-2,-0.5) -- (0,-0.5);
			\draw[opacity=0.5] (-2,-2) -- (-1.5,-2);
			\draw[opacity=0.5] (1,-2) -- (2,-2);
			\draw[c1] (2,-1) node[right]{$\Gamma_N^0$};
			\draw[white] (-1.7,-1) node[left]{$\Gamma\;$};
			\end{tikzpicture}
			\caption{Exact domain $\Omega$ with a negative feature $F$, and the active mesh $\mathcal Q_0$ corresponding to $\Omega_0$.}
		\end{center}
	\end{subfigure}
	\caption{Example of a trimmed defeatured geometry with the corresponding notation.} \label{fig:mptrdefeatgeom}
\end{figure}

To do so, let us assume that $\Omega$ contains $N_f\geq 1$ complex features, and that its corresponding defeatured geometry $\Omega_0$ is a domain trimmed from $\Omega_0^\mathrm u\subset \mathbb{R}^n$, as illustrated in Figure~\ref{fig:mptrdefeatgeom}. %, and assume that $\tilde F_\mathrm p$, the feature's positive component extension, is also a domain trimmed from some $\tilde F_\mathrm p^\mathrm u\subset \mathbb{R}^n$, for which the trimmed boundary is part of the Neumann boundary $\tilde \Gamma_N$ of $\tilde F_\mathrm p$.  
That is, 
\begin{equation} \label{eq:Omega0trim}
	\Omega_0 := \Omega_0^\mathrm u \setminus \overline{\omega} \subset \mathbb R^n, % \quad \text{ and } \quad \tilde F_\mathrm p := \tilde F_\mathrm p^\mathrm u \setminus \overline{\tilde \omega} \subset \mathbb R^n,
\end{equation}
where $\omega$ % and $\tilde \omega$ 
is a union of bounded open domains in $\mathbb R^n$. For simplicity, assume that Neumann boundary conditions are imposed on the trimmed boundary, i.e.,
\begin{equation} \label{eq:Gamma}
	\Gamma_0^\mathrm t := \partial \Omega_0 \setminus \overline{\partial \Omega_{0}^\mathrm u} \subset \Gamma_N^0. %  \quad \text{ and } \quad \tilde \Gamma^\mathrm t := \partial \tilde F_\mathrm p \setminus \overline{\partial \tilde F_\mathrm p^\mathrm u} \subset \tilde \Gamma_N.
\end{equation}
Otherwise, the imposition of Dirichlet boundary conditions on the trimmed boundary must be performed in a weak sense, and it would require stabilization techniques, see e.g., \cite{puppistabilization}. 

Moreover, let $\mathcal Q_{0}^{\mathrm u} := \mathcal Q(\Omega_{0}^\mathrm u)$ from~(\ref{eq:mesh}) be (a refinement of) the hierarchical physical mesh on $\Omega_{0}^\mathrm u$, and let $\mathcal E_{0}^{\mathrm u}$ be the set of faces $E$ of $\mathcal Q_{0}^{\mathrm u}$ such that $\left|E\cap\Gamma_N^0\right|>0$. 
Furthermore, for all $k=1,\ldots,N_f$, let the positive component extension $\tilde F_\mathrm p^k$ of feature $F^k$ be a standard THB-spline domain as in Section~\ref{sec:iga}, let $\tilde{\mathcal Q}^k := \mathcal Q\Big(\tilde F_\mathrm p^k\Big)$ be (a refinement of) the hierarchical mesh on $\tilde F_\mathrm p^k$, and let $\tilde{\mathcal E}^k$ be the set of faces of $\tilde{\mathcal Q}^k$ that are part of $\tilde\Gamma_N^k$ for all $k=1,\ldots,N_f$, as in~(\ref{eq:edgesdef}). 
%Similarly, let $\tilde{\mathcal Q}^{\mathrm u} := \mathcal Q(\tilde F_\mathrm p^\mathrm u)$ be (a refinement of) the hierarchical physical mesh on $\tilde F_\mathrm p^\mathrm u$, and let $\tilde{\mathcal E}^{\mathrm u}$ be the set of faces $E$ of ${\mathcal Q}^{\mathrm u}$ such that $\left|E\cap\tilde\Gamma_N\right|>0$. 
We suppose that $\mathcal Q_{0}^{\mathrm u}$ and $\tilde{\mathcal Q}^{k}$, $k=1,\ldots,N_f$, 
satisfy Assumptions~\ref{as:shapereg} and~\ref{as:admissibility}. 
Then we redefine 
\begin{equation} 
\mathcal Q := \mathcal Q_\trim \cup \mathcal Q_\untr \label{eq:Qhtrim}
\end{equation}
to be the union of the active mesh elements (i.e., the elements intersecting $\Omega_0$ and all $\tilde F_\mathrm p^k$), where
$$\mathcal Q_\trim:= \left\{K\in \mathcal Q^{\mathrm u}_{0} : K\cap \Omega_0 \neq K, |K\cap\Omega_0| > 0 \right\}$$ %  \cup \left\{K\in \tilde{\mathcal Q}^{\mathrm u} : K\cap \tilde F_\mathrm p \neq K, \Big|K\cap\tilde F_\mathrm p\Big| > 0 \right\}$$
is the set of cut (trimmed) elements, and
$$\mathcal Q_\untr := \left\{K\in \mathcal Q^{\mathrm u}_{0} : K\subset \Omega_0\right\} \cup \tilde{\mathcal Q}, \quad \text{ with } \quad \tilde{\mathcal Q} := \bigcup_{k=1}^{N_f} \tilde{\mathcal Q}^k,$$ % \left\{K\in \tilde{\mathcal Q}^{\mathrm u} : K\subset \tilde F_\mathrm p\right\}$$
is the set composed of the other active (non-trimmed) elements in $\Omega_0$ and in all $\tilde F_\mathrm p^k$. Similarly, let $$\mathcal E := \mathcal E_\trim \cup \mathcal E_\untr,$$ where
\begin{align}
&\mathcal E_\trim := \left\{ E\in \mathcal E^{\mathrm u}_{0} : E\cap\Gamma_N^0\neq E \right\} \nonumber \\ %\cup \left\{ E\in \tilde{\mathcal E}^{\mathrm u} : E\cap\tilde\Gamma_N\neq E \right\} \nonumber\\
\text{and} \qquad &\mathcal E_\untr := \left\{ E \in \mathcal E^{\mathrm u}_{0} : E\subset \Gamma_N^0 \right\} \cup \tilde{\mathcal E} \quad \text{ with } \quad \tilde{\mathcal E} := \bigcup_{k=1}^{N_f} \tilde{\mathcal E}^k. \label{eq:Ehtrim} % \cup \left\{ E \in \tilde{\mathcal E}^{\mathrm u} : E\subset \tilde\Gamma_N \right\}.
\end{align}
Finally, for all $K\in\mathcal Q$, let 
\begin{equation} \label{eq:OmegaGammatK}
\Gamma^\mathrm t_K := \Gamma^\mathrm t_0 \cap \text{int}(K) %\begin{cases} 
%\Gamma^\mathrm t_0 \cap \text{int}(K) &\text{ if } K\in\mathcal Q_0 \\ \tilde\Gamma^\mathrm t \cap \text{int}(K)&\text{ if } K\in \tilde{\mathcal Q}, 
%\end{cases} 
\qquad \text{ and } \qquad \Omega_K := \begin{cases}
\Omega_0 &\text{ if } K\in\mathcal Q_0 \\
\tilde F_\mathrm p &\text{ if } K\in \tilde{\mathcal Q},
\end{cases}
\end{equation}
and for all $E\in\mathcal E$, let
\begin{equation} \label{eq:GammaE}
\Gamma_N^E := \begin{cases} 
\Gamma_N^0 &\text{ if } E\in\mathcal E_0 \\ 
\tilde\Gamma_N &\text{ if } E\in \tilde{\mathcal E}. 
\end{cases}
\end{equation}

In this framework, one can SOLVE the defeaturing problem as presented in Section~\ref{sec:solve}. Then, to ESTIMATE the error, the numerical error contribution $\mathscr{E}_N\big(u_\mathrm d^h\big)$ of the discrete defeaturing error estimator $\mathscr E\big(u_\mathrm d^h\big)$ defined in~(\ref{eq:totalerrestmulti}) needs to be redefined as follows:
\begin{align}\label{eq:ENtrim}
\mathscr{E}_N\big(u_\mathrm d^h\big)^2 := \sum_{K\in \mathcal Q} \delta_K^2\|r\|^2_{0,K\cap\Omega_K} + \sum_{E\in \mathcal E} \delta_{E}^2\|j\|^2_{0,E\cap\Gamma_N^E} + \sum_{K\in\mathcal Q_\trim} h_K\|j\|^2_{0,\Gamma^\mathrm t_K},
\end{align}
where
\begin{align} 
\delta_K &:= \begin{cases}
h_K & \text{ if } K\in\mathcal Q_\untr \\
c_{K\cap\Omega_K} \left|K\cap\Omega_K\right|^\frac{1}{n} & \text{ if } K\in\mathcal Q_\trim,
\end{cases}\nonumber \\
\text{ and } \quad
\delta_E &:= \begin{cases}
h_E^\frac{1}{2} & \text{ if } K\in\mathcal E_\untr \\
c_{E\cap\Gamma_N^E} \left|E\cap\Gamma_N^E\right|^\frac{1}{2(n-1)} & \text{ if } E\in\mathcal E_\trim,
\end{cases}\label{eqo:deltas}
\end{align}
and $c_{K\cap\Omega_K}$ and $c_{E\cap\Gamma_N^E}$ are defined as in~(\ref{eq:cgamma}). The term $\mathscr{E}_D\big(u_\mathrm d^h\big)$ of the estimator~(\ref{eq:totalerrestmulti}) remains unchanged.\\

Then, with the help of the numerical error estimator on trimmed (T)HB-spline geometries from \cite{trimmingest}, the proof of Theorem~\ref{thm:uppermultitoterror} can straightforwardly be generalized to this framework under the following technical assumption replacing Assumption~\ref{as:respectivesizescomp}:
\begin{assumption} \label{as:respectivesizescomptrim}
	For $S\in\left\{F_\mathrm n^k, G_\mathrm p^k, \tilde F_\mathrm p^k\right\}_{k=1}^{N_f}$, let $h_S := \mathrm{diam}(S)$, let 
	\begin{align*}
	\mathcal Q_S = \begin{cases} \mathcal Q_0 &\text{ if } S = F_\mathrm n^k \text{ for some } k=1,\ldots,N_f \\
	\tilde{\mathcal Q}^k &\text{ if } S = G_\mathrm p^k \text{ or } S = \tilde F_\mathrm p^k \text{ for some } k=1,\ldots,N_f,
	\end{cases}
	\end{align*}
	and let 
	$$\deltaQSmin := \min\left\{\delta_K: K\in \mathcal Q_S, \, K\cap S \neq \emptyset\right\}.$$
	Assume that $h_S\lesssim \deltaQSmin$, that is, each feature is either smaller or about the same size as the trimmed mesh that covers it.
\end{assumption}
Unfortunately, in the trimmed case, this technical assumption affects the generality of the result since $\deltaQSmin$ depends on the trimming boundary. As in the non-trimmed case, and as suggested by some numerical experiments presented in Section~\ref{sec:numexp}, we will see that this assumption can be removed in practice. 
%More precisely, terms $\RN{1}_0$ and $\tilde{\RN{1}}^k$, $k=1,\ldots,N_f$, in~(\ref{eq:threetermsmulti}) (or similarly, terms $\RN{1}_0$ and $\tilde{\RN{1}}$ in~(\ref{eq:threetermscomplex}), or term $\RN{1}$ in~(\ref{eq:termI})) can be estimated as in Theorem~\ref{thmt:upperbound}, and it can be easily seen that in~(\ref{eq:almostdonemulti}) (or equivalently in~(\ref{eq:almostdone})),
%\begin{equation*}
%\RN{3} \lesssim \mathscr E_N\left(u_\mathrm d^h\right) \qquad \big( \text{or similarly } \RN{2}_1 \lesssim \mathscr E_N\big(u_\mathrm d^h\big) \|\nabla v\|_{0,\Omega} \text{ in~(\ref{eq:IIvovoh})}\big)
%\end{equation*}
%with the adapted definition of $\mathscr{E}_N\big(u_\mathrm d^h\big)$ in~(\ref{eq:ENtrim}) and under Assumption~\ref{as:respectivesizescomptrim}.

\begin{remark} \label{rmk:trimFpk}
	The feature extension $\tilde F_\mathrm p^k$ could also be a trimmed domain for some (or all) $k=1,\ldots,N_f$, and this section easily extends to this case. However, since $\tilde F_\mathrm p^k$ is chosen to be a simple domain containing $F_\mathrm p^k$, then it is more naturally thought as a non-trimmed domain. 
\end{remark}

Now, assume that $\Omega_0$ is a trimmed multipatch geometry, for which patches are glued together with $C^0$-continuity. Then we need to include the jumps between patches of the normal derivative of $u_\mathrm d^h$, in the numerical contribution~(\ref{eq:ENtrim}) of the discrete defeaturing error estimator~(\ref{eq:totalerrestmulti}). 
Under Assumption~\ref{as:respectivesizescomptrim}, and if we assume that the trimmed boundary does not intersect the interfaces between patches, the proof of Theorem~\ref{thm:uppermultitoterror} could easily be adapted to this framework. 
Indeed, its generalization to trimmed geometries has just been discussed. And if one is able to build a Scott-Zhang type operator as in~(\ref{eq:scottzhang}) but on a multipatch domain, then it would be enough to add the normal derivative jump contributions to all terms in~(\ref{eq:threetermsmulti}) (or equivalently in~(\ref{eq:threetermscomplex})), and the proof of Theorem~5.3 from~\cite{trimmingest} would readily be extended in the same way. Even if details are not given, the construction of such a Scott-Zhang type operator for (T)HB-splines on multipatch domains is discussed in \cite[Section~6.1.5]{reviewadaptiveiga}.

\begin{figure}
\begin{subfigure}[t]{0.48\textwidth}
\begin{center}
	\begin{tikzpicture}[scale=3.5]
	\fill[c3, fill, opacity=0.3] (0.4,0.8) rectangle (0.5,1);
	\fill[c3, fill, opacity=0.3] (0.7,1.2) -- (0.7,1) -- (0.4,1) -- (0.4,1.05) -- cycle;
	\fill[pattern=north west lines, pattern color=c, opacity=0.5](0.3,0.7) rectangle (0.6,1);
	\draw[thick] (0,0.3) -- (1,0.3) ;
	\draw[thick] (0,0.3) -- (0,1) ;
	\draw[thick] (1,0.3) -- (1,1) ;
	\draw[thick] (0,1) -- (0.3,1) ;
	\draw[thick] (0.7,1) -- (1,1) ;
	\draw[c1,thick] (0.3,1) -- (0.3,0.7) ;
	\draw[c1,thick,dash dot] (0.4,1.05) -- (0.4,0.8); 
	\draw[c1,thick,dash dot] (0.5,1) -- (0.5,0.8); 
	\draw[c1,thick,dash dot] (0.4,0.8) -- (0.5,0.8); 
	\draw[c1,thick,dash dot] (0.5,1) -- (0.6,1); 
	\draw[c1,thick](0.6,1) -- (0.6,0.7) ;
	\draw[c1,thick,dash dot] (0.7,1) -- (0.7,1.2) ;
	\draw[c1,thick] (0.45,0.7) node[below]{$\gamma_\mathrm n$} ;
	\draw[c1,thick] (0.58,1.15) node[above]{$\gamma_\mathrm p$} ;
	\draw[c1,thick,dash dot] (0.4,1.05) -- (0.7,1.2) ;
	\draw[c1,thick] (0.3,0.7) -- (0.6,0.7) ;
	\draw (0.15,0.45) node{$\Omega$} ;
	\draw[c] (0.6,1.06)node{\small$F_\mathrm p$};
	\draw[c] (0.15,0.92)node{\small $F_\mathrm n$};
	\draw[c] (0.35,0.92) -- (0.23,0.92);
	\end{tikzpicture}
	\caption{Domain with a general complex feature.}
\end{center}
\end{subfigure}
~
\begin{subfigure}[t]{0.48\textwidth}
	\begin{center}
		\begin{tikzpicture}[scale=3.5]
		\fill[c3, fill, opacity=0.3] (0.4,0.8) rectangle (0.5,1);
		\fill[c3, fill, opacity=0.3] (0.7,1.2) -- (0.7,1) -- (0.4,1) -- (0.4,1.2) -- cycle;
%		\fill[pattern=north west lines, pattern color=c, opacity=0.5](0.3,0.7) rectangle (0.6,1);
		\draw[thick] (0,0.3) -- (1,0.3) ;
		\draw[thick] (0,0.3) -- (0,1) ;
		\draw[thick] (1,0.3) -- (1,1) ;
		\draw[thick] (0,1) -- (0.3,1) ;
		\draw[thick] (0.7,1) -- (1,1) ;
		\draw[thick] (0.3,1) -- (0.3,0.7) ;
		\draw[c1,dash dot] (0.4,1.05) -- (0.7,1.2); 
		\draw[c1,thick] (0.4,1.2) -- (0.4,0.8); 
		\draw[c1,thick] (0.5,1) -- (0.5,0.8); 
		\draw[c1,thick] (0.4,0.8) -- (0.5,0.8); 
		\draw[c1,thick] (0.5,1) -- (0.6,1); 
		\draw[thick] (0.6,1) -- (0.6,0.7) ;
		\draw[c1,thick] (0.7,1) -- (0.7,1.2) ;
		\draw[c2,thick] (0.6,1) -- (0.7,1);
%		\draw[c1,thick] (0.45,0.7) node[below]{$\gamma_\mathrm n$} ;
%		\draw[c1,thick] (0.58,1.15) node[above]{$\gamma_\mathrm p$} ;
		\draw[c2,thick] (0.7,1) node[below]{$\gamma_{0,\mathrm p}$} ;
		\draw[c1,thick] (0.4,1.2) -- (0.7,1.2) ;
		\draw[thick] (0.3,0.7) -- (0.6,0.7);
		\draw (0.85,0.45) node{$\Omega_\star$} ;
		\draw (0.55,1.1)node{\small$\tilde F_\mathrm p$};
%		\draw[c] (0.15,0.92)node{\small $F_\mathrm n$};
%		\draw[c] (0.35,0.92) -- (0.23,0.92);
		\end{tikzpicture}
		\caption{Domain $\Omega_\star$ with the simplified extension $\tilde F_\mathrm p$ of the positive component $F_\mathrm p$.}
	\end{center}
\end{subfigure}
\caption{Example of domain containing one feature whose simplified positive component $\tilde F_\mathrm p$ cannot be represented by a single patch.}
\label{fig:mpftilde}
\end{figure}
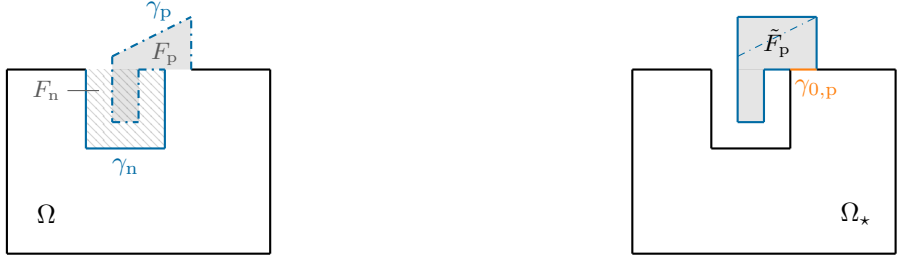

\begin{remark}
	Not only $\Omega_0$, but also the feature extension $\tilde F_\mathrm p^k$ could be a (possibly trimmed) multipatch domain for some $k=1,\ldots,N_f$. In fact, since $\tilde F_\mathrm p^k$ is chosen to be a simple domain containing $F_\mathrm p^k$, then it is more naturally thought as a single patch domain. But since its definition requires $\gamma_{0, \mathrm p} \subset \partial \tilde F_\mathrm p^k$ and $F_\mathrm p^k \subset \tilde F_\mathrm p^k$, then one sometimes has to build it as a multipatch geometry; an example is given in Figure~\ref{fig:mpftilde}. 
\end{remark}
\subsection{Refine by preserving meshes and discrete spaces properties} \label{ss:refineiga}
In this section, we make more precise the REFINE step of the adaptive analysis-aware defeaturing strategy presented in Section \ref{sec:refine} in the context of IGA on multipatch trimmed geometries. 
Note that one could equally choose to update the (partially) defeatured geometrical model first and then refine the mesh, or inversely. 

At each iteration, we need to make sure that the refined meshes and their corresponding discrete spaces satisfy Assumptions~\ref{as:discrspacescompat} and~\ref{as:admissibility}. That is, for all $k=1,\ldots,N_f$, we need to enforce the compatibility of traces on $\gamma_{0, \mathrm p}^k$ between $V^h(\Omega_0)$ and $V^h\big(\tilde F_\mathrm p^k\big)$, and we need to preserve the class of $\mathcal T$-admissibility of all meshes, $\mathcal Q_0$ and $\tilde{\mathcal Q}^k$. To do so, we extend the mesh refinement strategy presented in \cite{buffagiannelli1,Bracco:263433,bracco} that ensures the refined mesh to be of the same class of $\mathcal T$-admissibility $\mu$ as the original mesh. A similar procedure can also be found in \cite[Algorithm~3.7]{3dbem}. For the full details, the interested reader is referred to \cite{phdthesis}. 

Then, if we let $\Omega_0^{(i)}$ be the partially defeatured domain at iteration $i$ of the adaptive procedure, and from the set of marked features $\left\{ F^{k} \right\}_{k\in I_\mathrm m}$, we follow (\ref{eq:Omega0ip12abs}) and (\ref{eq:Omega0p1abs}) to build the partially defeatured domain $\Omega_0^{(i+1)}$ at iteration $i+1$. 
That is, in the first half step (\ref{eq:Omega0ip12abs}), domain $\Omega_0^{\left(i+\frac{1}{2}\right)}$ is built by trimming as in (\ref{eq:trimmingdef}): the negative part of marked features $\bigcup_{k\in I_\mathrm m} F_\mathrm  n^k$ is trimmed from $\Omega_0^{(i)}$ to obtain $\Omega_0^{\left(i+\frac{1}{2}\right)}$. Then from definition (\ref{eq:defGpk}) of $G_\mathrm p^k$, for all $k\in I_\mathrm m$, 
$$F_\mathrm p^k = {\tilde F_\mathrm p^k} \setminus \overline{G_\mathrm p^k}.$$
So $F_\mathrm p^k$ is built by trimming as in (\ref{eq:trimmingdef}), that is, $G_\mathrm p^k$ is trimmed from $\tilde{F_\mathrm p^k}$ to obtain $F_\mathrm p^k$. Finally, in the second half step (\ref{eq:Omega0p1abs}), we glue together the patches of ${\Omega_0^{\left(i+\frac{1}{2}\right)}}$ with the patches of $F_\mathrm p^k$ for all $k\in I_\mathrm m$ with $C^0$-continuity as in (\ref{eq:multipatchdef}), and we obtain $\Omega_0^{(i+1)}$.

\section{Numerical  experiments} \label{sec:numexp}
In this section, we perform numerical experiments to illustrate the validity of the proposed \textit{a posteriori} discrete defeaturing error estimator~(\ref{eq:totalerrestmulti}), generalized to trimmed multipatch domains in Section~\ref{sec:gen}. Thanks to these experiments, we also demonstrate that the adaptive procedure presented in Section \ref{sec:adaptive} and made more precise in the IGA framework in Sections~\ref{sec:combinedest} and~\ref{sec:gen} ensures the convergence of the discrete defeaturing error $\left\|\nabla\left(u-u_\mathrm d^h\right)\right\|_{0,\Omega}$. 
All the numerical experiments presented in this section have been implemented with the help of GeoPDEs \cite{geopdes}, \changes{an} open-source and free Octave/Matlab package for the resolution of PDEs specifically designed for IGA. For the local meshing process required for the integration of trimmed elements, an in-house tool presented in \cite{antolinvreps} has been used and linked with GeoPDEs. 
The proposed strategy combining mesh and geometric adaptivity has been implemented on top of the already-existing THB-spline based isogeometric mesh refinement strategy of {GeoPDEs}. In particular, a specific module has been created for defeaturing, that takes care of the estimation of the contribution $\mathscr{E}_D\big(u_\mathrm d^h\big)$, and of the adaptive construction of partially defeatured geometries using multipatch and trimming techniques.

If not otherwise specified, THB-splines of degree $p=2$ are used in this section. Moreover, we call $N_\mathrm{dof}$ the total number of active degrees of freedom of the considered geometrical model $\Omega_0$. More precisely, $N_\mathrm{dof}$ accounts for the number of active degrees of freedom in $\Omega_0$, to which we add the number of active degrees of freedom in $\tilde F^k$ for all $k=1,\ldots,N_f$. If at some point, all features are added to the geometrical model, that is, if the geometrical model is the exact geometry $\Omega$, then $N_\mathrm{dof}$ accounts for the total number of active degrees of freedom in $\Omega$.

\subsection{Convergence of the discrete defeaturing error and estimator} \label{sec:cvtest}
With the two numerical experiments presented in this section, we analyze the convergence of the proposed estimator with respect to the mesh size $h$ and with respect to the size of a given feature. We also compare the convergence of the estimator with the convergence of the discrete defeaturing error. The first experiment is performed in a geometry with one negative feature, while the second one is performed in a geometry containing one positive feature. In both experiments of this section, we only consider global mesh refinements without performing the proposed adaptive strategy yet, and we consider $\alpha_D = \alpha_N = 1$. 

\subsubsection{Negative feature} \label{sec:cvtestneg}
In this experiment, we consider a geometry with one negative feature. More precisely, for $k=-3,-2,\ldots,6$, let $\varepsilon =\displaystyle \frac{10^{-2}}{2^k}$ and let $\Omega_\varepsilon := \Omega_0 \setminus \overline{F_\varepsilon}$, where $\Omega_0$ is the disc centered at $(0,0)^T$ of radius $0.5$, and  $F_\varepsilon$ is the disc centered at $(0,0)^T$ of radius $\varepsilon<0.5$. The geometry is illustrated in Figure~\ref{fig:exdomain}. In other words, $\Omega_0$ is the defeatured geometry obtained from $\Omega_\varepsilon$ by filling the negative feature $F_\varepsilon$. 

\begin{figure}%[htb]
	\centering
	\begin{subfigure}[t]{0.45\textwidth}
		\begin{center}
			\begin{tikzpicture}[scale=4]
			\draw[thick,fill=gray!30] (0,0) circle (0.5) ;
			\draw[thick,fill=white] (0,0) circle (0.1) ;
			\fill (0,0) circle (0.01) ;
%			\fill (0.1,0) circle (0.006) ;
			\draw[->] (0,0) -- (0.1,0) ;
			\draw[->] (0,0) -- (0.353553,-0.353553) ;
			\draw (0.13,-0.25) node{$0.5$};
			\draw (0.05,0.035) node{$\varepsilon$} ;
			\end{tikzpicture}
			\caption{Exact domain $\Omega_\varepsilon$.}\label{fig:exdomain}
		\end{center}
	\end{subfigure}
	~
	\begin{subfigure}[t]{0.5\textwidth}
		\begin{center}
			\begin{tikzpicture}[scale=4]
			\draw[thick] (0,0) circle (0.5) ;
			\draw[thick] (-0.25,-0.25) -- (-0.25,0.25) -- (0.25,0.25) -- (0.25,-0.25) -- cycle;
			\draw[thick] (0.25,0.25) -- (0.353553390593274, 0.353553390593274);
			\draw[thick] (-0.25,-0.25) -- (-0.353553390593274, -0.353553390593274);
			\draw[thick] (-0.25,0.25) -- (-0.353553390593274, 0.353553390593274);
			\draw[thick] (0.25,-0.25) -- (0.353553390593274, -0.353553390593274);
			\draw[gray] (-0.5,0) -- (0.5,0);
			\draw[gray] (0,-0.5) -- (0,0.5);
			%			\draw[gray] (0,0) circle (0.43);
			\draw[gray] (0.301776695296637, 0.301776695296637) --
			(0.296601198941337, 0.304108322256964) --
			(0.291378345216663, 0.306406036144739) --
			(0.286108652750058, 0.308669138165459) --
			(0.280792662718130, 0.310896933879288) --
			(0.275430938843308, 0.313088733675021) --
			(0.270024067370295, 0.315243853247709) --
			(0.264572657021958, 0.317361614079393) --
			(0.259077338934321, 0.319441343922416) --
			(0.253538766570351, 0.321482377284748) --
			(0.247957615612248, 0.323484055916748) --
			(0.242334583832009, 0.325445729298769) --
			(0.236670390940032, 0.327366755129009) --
			(0.230965778411582, 0.329246499810984) --
			(0.225221509290970, 0.331084338940002) --
			(0.219438367973323, 0.332879657787993) --
			(0.213617159963875, 0.334631851786050) --
			(0.207758711614732, 0.336340327004036) --
			(0.201863869839107, 0.338004500626574) --
			(0.195933501803052, 0.339623801424778) --
			(0.189968494594784, 0.341197670223037) --
			(0.183969754871685, 0.342725560360186) --
			(0.177938208485160, 0.344206938144388) --
			(0.171874800083528, 0.345641283301063) --
			(0.165780492693193, 0.347028089413171) --
			(0.159656267278362, 0.348366864353216) --
			(0.153503122279639, 0.349657130706266) --
			(0.147322073131839, 0.350898426183375) --
			(0.141114151761435, 0.352090304024729) --
			(0.134880406064075, 0.353232333391895) --
			(0.128621899362642, 0.354324099748540) --
			(0.122339709846382, 0.355365205229014) --
			(0.116034929991665, 0.356355268994181) --
			(0.109708665964959, 0.357293927573944) --
			(0.103362037008672, 0.358180835195871) --
			(0.0969961748105129, 0.359015664099390) --
			(0.0906122228570895, 0.359798104835032) --
			(0.0842113357724756, 0.360527866548198) --
			(0.0777946786425225, 0.361204677247000) --
			(0.0713634263257146, 0.361828284053687) --
			(0.0649187627514022, 0.362398453439253) --
			(0.0584618802062707, 0.362914971440805) --
			(0.0519939786099316, 0.363377643861329) --
			(0.0455162647805443, 0.363786296451495) --
			(0.0390299516914018, 0.364140775073193) --
			(0.0325362577194318, 0.364440945844517) --
			(0.0260364058865823, 0.364686695265932) --
			(0.0195316230950792, 0.364877930327410) --
			(0.0130231393575560, 0.365014578596350) --
			(0.00651218702306616, 0.365096588286119) --
			(1.49742520266681e-17, 0.365123928305091) --
			(-0.00651218702306613, 0.365096588286119) --
			(-0.0130231393575560, 0.365014578596350) --
			(-0.0195316230950792, 0.364877930327410) --
			(-0.0260364058865823, 0.364686695265932) --
			(-0.0325362577194318, 0.364440945844517) --
			(-0.0390299516914018, 0.364140775073193) --
			(-0.0455162647805442, 0.363786296451495) --
			(-0.0519939786099316, 0.363377643861329) --
			(-0.0584618802062707, 0.362914971440805) --
			(-0.0649187627514022, 0.362398453439253) --
			(-0.0713634263257145, 0.361828284053687) --
			(-0.0777946786425225, 0.361204677247000) --
			(-0.0842113357724756, 0.360527866548198) --
			(-0.0906122228570895, 0.359798104835032) --
			(-0.0969961748105129, 0.359015664099390) --
			(-0.103362037008672, 0.358180835195871) --
			(-0.109708665964959, 0.357293927573944) --
			(-0.116034929991665, 0.356355268994181) --
			(-0.122339709846382, 0.355365205229014) --
			(-0.128621899362642, 0.354324099748540) --
			(-0.134880406064075, 0.353232333391895) --
			(-0.141114151761435, 0.352090304024729) --
			(-0.147322073131839, 0.350898426183375) --
			(-0.153503122279639, 0.349657130706266) --
			(-0.159656267278362, 0.348366864353216) --
			(-0.165780492693193, 0.347028089413171) --
			(-0.171874800083528, 0.345641283301062) --
			(-0.177938208485160, 0.344206938144388) --
			(-0.183969754871685, 0.342725560360186) --
			(-0.189968494594784, 0.341197670223037) --
			(-0.195933501803052, 0.339623801424778) --
			(-0.201863869839107, 0.338004500626574) --
			(-0.207758711614732, 0.336340327004036) --
			(-0.213617159963875, 0.334631851786050) --
			(-0.219438367973323, 0.332879657787993) --
			(-0.225221509290970, 0.331084338940002) --
			(-0.230965778411582, 0.329246499810984) --
			(-0.236670390940032, 0.327366755129009) --
			(-0.242334583832009, 0.325445729298769) --
			(-0.247957615612248, 0.323484055916748) --
			(-0.253538766570351, 0.321482377284748) --
			(-0.259077338934321, 0.319441343922416) --
			(-0.264572657021958, 0.317361614079393) --
			(-0.270024067370295, 0.315243853247709) --
			(-0.275430938843308, 0.313088733675021) --
			(-0.280792662718130, 0.310896933879288) --
			(-0.286108652750058, 0.308669138165459) --
			(-0.291378345216663, 0.306406036144739) --
			(-0.296601198941337, 0.304108322256964) --
			(-0.301776695296637, 0.301776695296637);
			\draw[gray, rotate around={-90:(0,0)}] (0.301776695296637, 0.301776695296637) --
			(0.296601198941337, 0.304108322256964) --
			(0.291378345216663, 0.306406036144739) --
			(0.286108652750058, 0.308669138165459) --
			(0.280792662718130, 0.310896933879288) --
			(0.275430938843308, 0.313088733675021) --
			(0.270024067370295, 0.315243853247709) --
			(0.264572657021958, 0.317361614079393) --
			(0.259077338934321, 0.319441343922416) --
			(0.253538766570351, 0.321482377284748) --
			(0.247957615612248, 0.323484055916748) --
			(0.242334583832009, 0.325445729298769) --
			(0.236670390940032, 0.327366755129009) --
			(0.230965778411582, 0.329246499810984) --
			(0.225221509290970, 0.331084338940002) --
			(0.219438367973323, 0.332879657787993) --
			(0.213617159963875, 0.334631851786050) --
			(0.207758711614732, 0.336340327004036) --
			(0.201863869839107, 0.338004500626574) --
			(0.195933501803052, 0.339623801424778) --
			(0.189968494594784, 0.341197670223037) --
			(0.183969754871685, 0.342725560360186) --
			(0.177938208485160, 0.344206938144388) --
			(0.171874800083528, 0.345641283301063) --
			(0.165780492693193, 0.347028089413171) --
			(0.159656267278362, 0.348366864353216) --
			(0.153503122279639, 0.349657130706266) --
			(0.147322073131839, 0.350898426183375) --
			(0.141114151761435, 0.352090304024729) --
			(0.134880406064075, 0.353232333391895) --
			(0.128621899362642, 0.354324099748540) --
			(0.122339709846382, 0.355365205229014) --
			(0.116034929991665, 0.356355268994181) --
			(0.109708665964959, 0.357293927573944) --
			(0.103362037008672, 0.358180835195871) --
			(0.0969961748105129, 0.359015664099390) --
			(0.0906122228570895, 0.359798104835032) --
			(0.0842113357724756, 0.360527866548198) --
			(0.0777946786425225, 0.361204677247000) --
			(0.0713634263257146, 0.361828284053687) --
			(0.0649187627514022, 0.362398453439253) --
			(0.0584618802062707, 0.362914971440805) --
			(0.0519939786099316, 0.363377643861329) --
			(0.0455162647805443, 0.363786296451495) --
			(0.0390299516914018, 0.364140775073193) --
			(0.0325362577194318, 0.364440945844517) --
			(0.0260364058865823, 0.364686695265932) --
			(0.0195316230950792, 0.364877930327410) --
			(0.0130231393575560, 0.365014578596350) --
			(0.00651218702306616, 0.365096588286119) --
			(1.49742520266681e-17, 0.365123928305091) --
			(-0.00651218702306613, 0.365096588286119) --
			(-0.0130231393575560, 0.365014578596350) --
			(-0.0195316230950792, 0.364877930327410) --
			(-0.0260364058865823, 0.364686695265932) --
			(-0.0325362577194318, 0.364440945844517) --
			(-0.0390299516914018, 0.364140775073193) --
			(-0.0455162647805442, 0.363786296451495) --
			(-0.0519939786099316, 0.363377643861329) --
			(-0.0584618802062707, 0.362914971440805) --
			(-0.0649187627514022, 0.362398453439253) --
			(-0.0713634263257145, 0.361828284053687) --
			(-0.0777946786425225, 0.361204677247000) --
			(-0.0842113357724756, 0.360527866548198) --
			(-0.0906122228570895, 0.359798104835032) --
			(-0.0969961748105129, 0.359015664099390) --
			(-0.103362037008672, 0.358180835195871) --
			(-0.109708665964959, 0.357293927573944) --
			(-0.116034929991665, 0.356355268994181) --
			(-0.122339709846382, 0.355365205229014) --
			(-0.128621899362642, 0.354324099748540) --
			(-0.134880406064075, 0.353232333391895) --
			(-0.141114151761435, 0.352090304024729) --
			(-0.147322073131839, 0.350898426183375) --
			(-0.153503122279639, 0.349657130706266) --
			(-0.159656267278362, 0.348366864353216) --
			(-0.165780492693193, 0.347028089413171) --
			(-0.171874800083528, 0.345641283301062) --
			(-0.177938208485160, 0.344206938144388) --
			(-0.183969754871685, 0.342725560360186) --
			(-0.189968494594784, 0.341197670223037) --
			(-0.195933501803052, 0.339623801424778) --
			(-0.201863869839107, 0.338004500626574) --
			(-0.207758711614732, 0.336340327004036) --
			(-0.213617159963875, 0.334631851786050) --
			(-0.219438367973323, 0.332879657787993) --
			(-0.225221509290970, 0.331084338940002) --
			(-0.230965778411582, 0.329246499810984) --
			(-0.236670390940032, 0.327366755129009) --
			(-0.242334583832009, 0.325445729298769) --
			(-0.247957615612248, 0.323484055916748) --
			(-0.253538766570351, 0.321482377284748) --
			(-0.259077338934321, 0.319441343922416) --
			(-0.264572657021958, 0.317361614079393) --
			(-0.270024067370295, 0.315243853247709) --
			(-0.275430938843308, 0.313088733675021) --
			(-0.280792662718130, 0.310896933879288) --
			(-0.286108652750058, 0.308669138165459) --
			(-0.291378345216663, 0.306406036144739) --
			(-0.296601198941337, 0.304108322256964) --
			(-0.301776695296637, 0.301776695296637);
			\draw[gray, rotate around={180:(0,0)}] (0.301776695296637, 0.301776695296637) --
			(0.296601198941337, 0.304108322256964) --
			(0.291378345216663, 0.306406036144739) --
			(0.286108652750058, 0.308669138165459) --
			(0.280792662718130, 0.310896933879288) --
			(0.275430938843308, 0.313088733675021) --
			(0.270024067370295, 0.315243853247709) --
			(0.264572657021958, 0.317361614079393) --
			(0.259077338934321, 0.319441343922416) --
			(0.253538766570351, 0.321482377284748) --
			(0.247957615612248, 0.323484055916748) --
			(0.242334583832009, 0.325445729298769) --
			(0.236670390940032, 0.327366755129009) --
			(0.230965778411582, 0.329246499810984) --
			(0.225221509290970, 0.331084338940002) --
			(0.219438367973323, 0.332879657787993) --
			(0.213617159963875, 0.334631851786050) --
			(0.207758711614732, 0.336340327004036) --
			(0.201863869839107, 0.338004500626574) --
			(0.195933501803052, 0.339623801424778) --
			(0.189968494594784, 0.341197670223037) --
			(0.183969754871685, 0.342725560360186) --
			(0.177938208485160, 0.344206938144388) --
			(0.171874800083528, 0.345641283301063) --
			(0.165780492693193, 0.347028089413171) --
			(0.159656267278362, 0.348366864353216) --
			(0.153503122279639, 0.349657130706266) --
			(0.147322073131839, 0.350898426183375) --
			(0.141114151761435, 0.352090304024729) --
			(0.134880406064075, 0.353232333391895) --
			(0.128621899362642, 0.354324099748540) --
			(0.122339709846382, 0.355365205229014) --
			(0.116034929991665, 0.356355268994181) --
			(0.109708665964959, 0.357293927573944) --
			(0.103362037008672, 0.358180835195871) --
			(0.0969961748105129, 0.359015664099390) --
			(0.0906122228570895, 0.359798104835032) --
			(0.0842113357724756, 0.360527866548198) --
			(0.0777946786425225, 0.361204677247000) --
			(0.0713634263257146, 0.361828284053687) --
			(0.0649187627514022, 0.362398453439253) --
			(0.0584618802062707, 0.362914971440805) --
			(0.0519939786099316, 0.363377643861329) --
			(0.0455162647805443, 0.363786296451495) --
			(0.0390299516914018, 0.364140775073193) --
			(0.0325362577194318, 0.364440945844517) --
			(0.0260364058865823, 0.364686695265932) --
			(0.0195316230950792, 0.364877930327410) --
			(0.0130231393575560, 0.365014578596350) --
			(0.00651218702306616, 0.365096588286119) --
			(1.49742520266681e-17, 0.365123928305091) --
			(-0.00651218702306613, 0.365096588286119) --
			(-0.0130231393575560, 0.365014578596350) --
			(-0.0195316230950792, 0.364877930327410) --
			(-0.0260364058865823, 0.364686695265932) --
			(-0.0325362577194318, 0.364440945844517) --
			(-0.0390299516914018, 0.364140775073193) --
			(-0.0455162647805442, 0.363786296451495) --
			(-0.0519939786099316, 0.363377643861329) --
			(-0.0584618802062707, 0.362914971440805) --
			(-0.0649187627514022, 0.362398453439253) --
			(-0.0713634263257145, 0.361828284053687) --
			(-0.0777946786425225, 0.361204677247000) --
			(-0.0842113357724756, 0.360527866548198) --
			(-0.0906122228570895, 0.359798104835032) --
			(-0.0969961748105129, 0.359015664099390) --
			(-0.103362037008672, 0.358180835195871) --
			(-0.109708665964959, 0.357293927573944) --
			(-0.116034929991665, 0.356355268994181) --
			(-0.122339709846382, 0.355365205229014) --
			(-0.128621899362642, 0.354324099748540) --
			(-0.134880406064075, 0.353232333391895) --
			(-0.141114151761435, 0.352090304024729) --
			(-0.147322073131839, 0.350898426183375) --
			(-0.153503122279639, 0.349657130706266) --
			(-0.159656267278362, 0.348366864353216) --
			(-0.165780492693193, 0.347028089413171) --
			(-0.171874800083528, 0.345641283301062) --
			(-0.177938208485160, 0.344206938144388) --
			(-0.183969754871685, 0.342725560360186) --
			(-0.189968494594784, 0.341197670223037) --
			(-0.195933501803052, 0.339623801424778) --
			(-0.201863869839107, 0.338004500626574) --
			(-0.207758711614732, 0.336340327004036) --
			(-0.213617159963875, 0.334631851786050) --
			(-0.219438367973323, 0.332879657787993) --
			(-0.225221509290970, 0.331084338940002) --
			(-0.230965778411582, 0.329246499810984) --
			(-0.236670390940032, 0.327366755129009) --
			(-0.242334583832009, 0.325445729298769) --
			(-0.247957615612248, 0.323484055916748) --
			(-0.253538766570351, 0.321482377284748) --
			(-0.259077338934321, 0.319441343922416) --
			(-0.264572657021958, 0.317361614079393) --
			(-0.270024067370295, 0.315243853247709) --
			(-0.275430938843308, 0.313088733675021) --
			(-0.280792662718130, 0.310896933879288) --
			(-0.286108652750058, 0.308669138165459) --
			(-0.291378345216663, 0.306406036144739) --
			(-0.296601198941337, 0.304108322256964) --
			(-0.301776695296637, 0.301776695296637);
			\draw[gray, rotate around={90:(0,0)}] (0.301776695296637, 0.301776695296637) --
			(0.296601198941337, 0.304108322256964) --
			(0.291378345216663, 0.306406036144739) --
			(0.286108652750058, 0.308669138165459) --
			(0.280792662718130, 0.310896933879288) --
			(0.275430938843308, 0.313088733675021) --
			(0.270024067370295, 0.315243853247709) --
			(0.264572657021958, 0.317361614079393) --
			(0.259077338934321, 0.319441343922416) --
			(0.253538766570351, 0.321482377284748) --
			(0.247957615612248, 0.323484055916748) --
			(0.242334583832009, 0.325445729298769) --
			(0.236670390940032, 0.327366755129009) --
			(0.230965778411582, 0.329246499810984) --
			(0.225221509290970, 0.331084338940002) --
			(0.219438367973323, 0.332879657787993) --
			(0.213617159963875, 0.334631851786050) --
			(0.207758711614732, 0.336340327004036) --
			(0.201863869839107, 0.338004500626574) --
			(0.195933501803052, 0.339623801424778) --
			(0.189968494594784, 0.341197670223037) --
			(0.183969754871685, 0.342725560360186) --
			(0.177938208485160, 0.344206938144388) --
			(0.171874800083528, 0.345641283301063) --
			(0.165780492693193, 0.347028089413171) --
			(0.159656267278362, 0.348366864353216) --
			(0.153503122279639, 0.349657130706266) --
			(0.147322073131839, 0.350898426183375) --
			(0.141114151761435, 0.352090304024729) --
			(0.134880406064075, 0.353232333391895) --
			(0.128621899362642, 0.354324099748540) --
			(0.122339709846382, 0.355365205229014) --
			(0.116034929991665, 0.356355268994181) --
			(0.109708665964959, 0.357293927573944) --
			(0.103362037008672, 0.358180835195871) --
			(0.0969961748105129, 0.359015664099390) --
			(0.0906122228570895, 0.359798104835032) --
			(0.0842113357724756, 0.360527866548198) --
			(0.0777946786425225, 0.361204677247000) --
			(0.0713634263257146, 0.361828284053687) --
			(0.0649187627514022, 0.362398453439253) --
			(0.0584618802062707, 0.362914971440805) --
			(0.0519939786099316, 0.363377643861329) --
			(0.0455162647805443, 0.363786296451495) --
			(0.0390299516914018, 0.364140775073193) --
			(0.0325362577194318, 0.364440945844517) --
			(0.0260364058865823, 0.364686695265932) --
			(0.0195316230950792, 0.364877930327410) --
			(0.0130231393575560, 0.365014578596350) --
			(0.00651218702306616, 0.365096588286119) --
			(1.49742520266681e-17, 0.365123928305091) --
			(-0.00651218702306613, 0.365096588286119) --
			(-0.0130231393575560, 0.365014578596350) --
			(-0.0195316230950792, 0.364877930327410) --
			(-0.0260364058865823, 0.364686695265932) --
			(-0.0325362577194318, 0.364440945844517) --
			(-0.0390299516914018, 0.364140775073193) --
			(-0.0455162647805442, 0.363786296451495) --
			(-0.0519939786099316, 0.363377643861329) --
			(-0.0584618802062707, 0.362914971440805) --
			(-0.0649187627514022, 0.362398453439253) --
			(-0.0713634263257145, 0.361828284053687) --
			(-0.0777946786425225, 0.361204677247000) --
			(-0.0842113357724756, 0.360527866548198) --
			(-0.0906122228570895, 0.359798104835032) --
			(-0.0969961748105129, 0.359015664099390) --
			(-0.103362037008672, 0.358180835195871) --
			(-0.109708665964959, 0.357293927573944) --
			(-0.116034929991665, 0.356355268994181) --
			(-0.122339709846382, 0.355365205229014) --
			(-0.128621899362642, 0.354324099748540) --
			(-0.134880406064075, 0.353232333391895) --
			(-0.141114151761435, 0.352090304024729) --
			(-0.147322073131839, 0.350898426183375) --
			(-0.153503122279639, 0.349657130706266) --
			(-0.159656267278362, 0.348366864353216) --
			(-0.165780492693193, 0.347028089413171) --
			(-0.171874800083528, 0.345641283301062) --
			(-0.177938208485160, 0.344206938144388) --
			(-0.183969754871685, 0.342725560360186) --
			(-0.189968494594784, 0.341197670223037) --
			(-0.195933501803052, 0.339623801424778) --
			(-0.201863869839107, 0.338004500626574) --
			(-0.207758711614732, 0.336340327004036) --
			(-0.213617159963875, 0.334631851786050) --
			(-0.219438367973323, 0.332879657787993) --
			(-0.225221509290970, 0.331084338940002) --
			(-0.230965778411582, 0.329246499810984) --
			(-0.236670390940032, 0.327366755129009) --
			(-0.242334583832009, 0.325445729298769) --
			(-0.247957615612248, 0.323484055916748) --
			(-0.253538766570351, 0.321482377284748) --
			(-0.259077338934321, 0.319441343922416) --
			(-0.264572657021958, 0.317361614079393) --
			(-0.270024067370295, 0.315243853247709) --
			(-0.275430938843308, 0.313088733675021) --
			(-0.280792662718130, 0.310896933879288) --
			(-0.286108652750058, 0.308669138165459) --
			(-0.291378345216663, 0.306406036144739) --
			(-0.296601198941337, 0.304108322256964) --
			(-0.301776695296637, 0.301776695296637);
			\end{tikzpicture}
			\caption{Patches (in black) and initial elements (in gray) used for the geometrical description of $\Omega_0$.} \label{fig:patchesOmega0}
		\end{center}
	\end{subfigure}
	\caption{Numerical test \ref{sec:cvtestneg} -- Exact and defeatured domains used for the convergence analysis.}
	% \label{fig:discwithhole}
\end{figure}
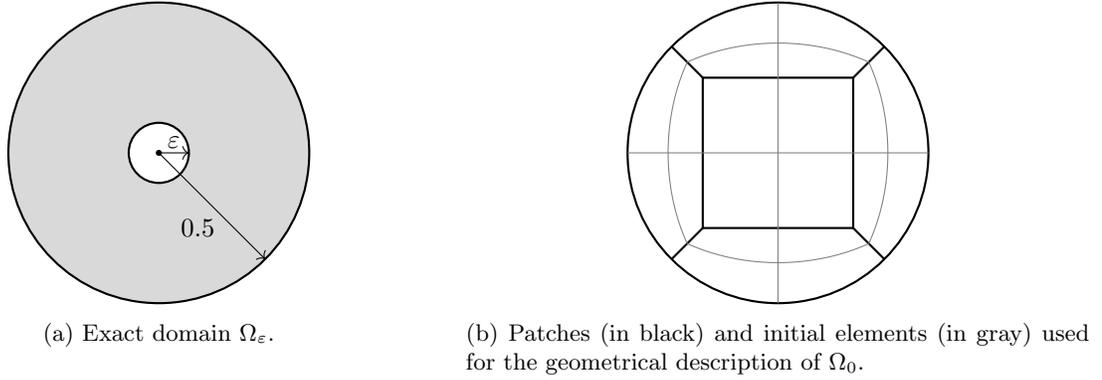

	\begin{figure}[h!]
		\begin{subfigure}{0.5\textwidth}
\begin{tikzpicture}[scale=0.85]
\begin{axis}[
xmode=log, ymode=log, zmode=log, grid=both, view={-20}{20}, xlabel = $\varepsilon$, ylabel = $h$, zlabel = Error/Estimator, ymin=0.002, ymax=0.15,colormap/RdYlBu]
\addplot3[surf,
] file {
	data/test_fourier_data.txt
};
\addplot3[surf,
] file {
	data/test_fourier_data2.txt
};
\end{axis}
\end{tikzpicture}
\caption{A view on the error (lower surface) and \\estimator (upper surface).}\label{fig:a}
\end{subfigure}
 ~
\begin{subfigure}{0.5\textwidth}
	\begin{tikzpicture}[scale=0.85]
	\begin{axis}[
	xmode=log, ymode=log, zmode=log, grid=both, view={-20}{-20}, xlabel = $\varepsilon$, ylabel = $h$, zlabel = Error/Estimator, ymin=0.002, ymax=0.15,colormap/RdYlBu]
	\addplot3[surf,
	] file {
		data/test_fourier_data.txt
	};
	\addplot3[surf,
	] file {
		data/test_fourier_data2.txt
	};
	\end{axis}
	\end{tikzpicture}
	\caption{Another view on the error (lower surface) and estimator (upper surface).}\label{fig:b}
\end{subfigure}
~
	\begin{subfigure}{0.48\textwidth}
		\vspace{0.15cm}
		\begin{center}
			\begin{tikzpicture}[scale=0.85]
			\begin{axis}[xmode=log, ymode=log, legend style={at={(0.5,-0.17)}, legend columns=3, anchor=north, draw=none}, xlabel=$h$, ymax=8e-2, ymin=3e-7, ylabel=Error/Estimator, grid style=dotted,grid]
			\addlegendimage{black,dashed, thick, mark options=solid}
			\addlegendentry{E}
			\addlegendimage{black,thick}
			\addlegendentry{E}
			\addplot[mark=none, black, densely dotted, thick] table [x=h, y=h2, col sep=comma] {data/test_fourier_wrt_h.csv};
			\addplot[mark=o, c4, thick] table [x=h, y=errsmall, col sep=comma] {data/test_fourier_wrt_h.csv};
			\addplot[mark=square, c, thick] table [x=h, y=errintermediate, col sep=comma] {data/test_fourier_wrt_h.csv};
			\addplot[mark=triangle, c2, thick, mark options={solid,scale=1.5}] table [x=h, y=errlarge, col sep=comma] {data/test_fourier_wrt_h.csv};
			\addplot[mark=*, c4, dashed, thick, mark options={solid,fill=c4}] table [x=h, y=estsmall, col sep=comma] {data/test_fourier_wrt_h.csv};
			\addplot[mark=square*, c, dashed, thick, mark options=solid] table [x=h, y=estintermediate, col sep=comma] {data/test_fourier_wrt_h.csv};
			\addplot[mark=triangle*, c2, dashed, thick, mark options={solid,scale=1.5}] table [x=h, y=estlarge, col sep=comma] {data/test_fourier_wrt_h.csv};
 			\legend{$\mathscr E\big(u_\mathrm d^h\big)$, $\left\|\nabla\left(u-u_\mathrm d^h\right)\right\|_{0,\Omega_\varepsilon}$, $h^2$};
% 			\logLogSlopeTriangle{0.2}{0.07}{0.09}{2}{black}
			\end{axis}
			\end{tikzpicture}
			\caption{In {\color{c4}blue circles}, $\varepsilon = 1.5625\cdot 10^{-4}$; in {\color{c}gray squares}, $\varepsilon = 2.5\cdot 10^{-3}$; in {\color{c2}orange triangles}, $\varepsilon = 8\cdot 10^{-2}$.} \label{fig:c}
		\end{center}
	\end{subfigure}
	~
	\begin{subfigure}{0.48\textwidth}
		\begin{center}
			\vspace{0.15cm}
			\begin{tikzpicture}[scale=0.85]
			\begin{axis}[xmode=log, ymode=log, legend style={at={(0.5,-0.17)}, legend columns=3, anchor=north, draw=none}, xlabel=$\varepsilon$, ymax=8e-2, ymin=3e-7, ylabel=Error/Estimator, grid style=dotted,grid]
			\addlegendimage{black,dashed, thick, mark options=solid}
			\addlegendentry{E}
			\addlegendimage{black,thick}
			\addlegendentry{E}
			\addplot[mark=none, densely dotted, black, thick] table [x=eps, y=eps2, col sep=comma] {data/test_fourier_wrt_eps_bis.csv};
			\addplot[mark=*, c4, dashed, thick, mark options={solid,fill=c4}] table [x=eps, y=errsmall, col sep=comma] {data/test_fourier_wrt_eps.csv};
			\addplot[mark=square*, c, dashed, thick, mark options=solid] table [x=eps, y=errintermediate, col sep=comma] {data/test_fourier_wrt_eps.csv};
			\addplot[mark=triangle*, c2, dashed, thick, mark options={solid,scale=1.5}] table [x=eps, y=errlarge, col sep=comma] {data/test_fourier_wrt_eps.csv};
			\addplot[mark=o, c4, thick] table [x=eps, y=estsmall, col sep=comma] {data/test_fourier_wrt_eps.csv};
			\addplot[mark=square, c, thick] table [x=eps, y=estintermediate, col sep=comma] {data/test_fourier_wrt_eps.csv};
			\addplot[mark=triangle, c2, thick, mark options={solid,scale=1.5}] table [x=eps, y=estlarge, col sep=comma] {data/test_fourier_wrt_eps.csv};
			\legend{$\mathscr E\big(u_\mathrm d^h\big)$, $\left\|\nabla\left(u-u_\mathrm d^h\right)\right\|_{0,\Omega_\varepsilon}$,$\varepsilon^2 \left|\log{\varepsilon}\right|^\frac{1}{2}$};
			\end{axis}
			\end{tikzpicture}
			\caption{In {\color{c4}blue circles}, $h = 2.7621\cdot 10^{-3}$; in {\color{c}gray squares}, $h = 1.1049\cdot 10^{-2}$; in {\color{c2}orange triangles}, $h = 4.4194\cdot 10^{-2}$.} \label{fig:d}
		\end{center}
	\end{subfigure}
	\caption{Numerical test~\ref{sec:cvtestneg} -- Convergence of the discrete defeaturing error and estimator with respect to the mesh size $h$ under global $h$-refinement, and with respect to the feature size $\varepsilon$. In~(\ref{sub@fig:a}) and~(\ref{sub@fig:b}) are two different views on the surface error and on the surface estimator (the first one being below the second one). In~(\ref{sub@fig:c}), convergence with respect to $h$ for three fixed values of $\varepsilon$. In~(\ref{sub@fig:d}), convergence with respect to $\varepsilon$ for three fixed values of $h$.} \label{fig:doublecv}
\end{figure}

We consider Poisson's problem~(\ref{eq:originalpb}) solved in $\Omega_\varepsilon$, we take $f \equiv -1$ in $\Omega_\varepsilon$, $g_D \equiv 0$ on $\Gamma_D := \partial \Omega_0$, and $g\equiv 0$ on $\Gamma_N = \gamma := \partial F_\varepsilon = \partial \Omega_\varepsilon \setminus \partial \Omega_0$. The exact solution of problem~(\ref{eq:originalpb}) in $\Omega_\varepsilon$ is given by 
$$u(x,y) = \frac{\varepsilon^2}{2}\left[\log\left(\frac{1}{2}\right)-\log\left(\sqrt{x^2+y^2}\right)\right] + \frac{x^2+y^2}{4} -\frac{1}{16}.$$
Then, we solve the defeatured Poisson's problem~(\ref{eq:simplpb}) in $\Omega_0$, where $f$ is extended by $-1$ in $F_\varepsilon = \Omega_0\setminus \overline{\Omega_\varepsilon}$. The exact defeatured solution $u_0$ is given in $\Omega_0$ by
$$u_0(x,y) = \frac{x^2+y^2}{4}- \frac{1}{16}.$$
So in particular, the exact defeaturing error, i.e., without discretization error, is given by
\begin{equation} \label{eq:exactdefeaterror}
\left\| \nabla\left(u-u_0\right) \right\|_{0,\Omega} = \sqrt{\frac{\pi}{2}} \,\varepsilon^2 \left[ \log\left(\frac{1}{2}\right)-\log\left(\varepsilon\right)\right]^\frac{1}{2} \sim \varepsilon^2 \left|\log(\varepsilon)\right|^\frac{1}{2}.
\end{equation}

Now, $\Omega_0$ is divided into five conforming patches as illustrated in Figure~\ref{fig:patchesOmega0}, and we consider $4^j$ elements in each patch, for $j=0,\ldots,5$. Thus in this experiment, $h=\displaystyle\frac{\sqrt{2}}{2^{j+1}}$, and we solve the Galerkin formulation of the defeatured problem~(\ref{eq:weaksimplpb}) using B-spline based IGA. 
Therefore, we look at the convergence of the discrete defeaturing error and estimator with respect to the size of the mesh, subject to global dyadic refinement, and with respect to the size of the feature~$\varepsilon$. Note that we never need to numerically create and mesh the exact geometry $\Omega_\varepsilon$. Indeed, we do not perform adaptivity in this experiment, thus $\Omega_0$ remains the geometrical model in which the PDE is solved, and for the computation of the error, the exact solution $u$ is already known.

The results are presented in Figure~\ref{fig:doublecv}. When $\varepsilon$ is fixed and small, the numerical component of the error dominates over its defeaturing component, and thus the overall error converges as $h^p= h^2$, as expected. When $\varepsilon$ is fixed and large, the defeaturing error dominates and thus the overall error does not converge with respect to $h$, and we observe a plateau. Similarly, when $h$ is fixed and small, the numerical component of the error is negligible with respect to the defeaturing component, and thus the overall error converges as $\varepsilon^2 \left|\log(\varepsilon)\right|^\frac{1}{2}$, also as expected from~(\ref{eq:exactdefeaterror}). But when $h$ is fixed and large, the numerical error dominates and thus the overall error does not converge with respect to $\varepsilon$, and we observe a plateau. 

The exact same behavior and convergence rates are observed for the discrete defeaturing error estimator, confirming its reliability proven in Theorem~\ref{thm:uppernegtoterror}, and showing also its efficiency. The effectivity index in the numerical-error-dominant regime is larger (around~$5.8$) than the one in the defeaturing-error-dominant regime (around~$1.5$), \changes{which is consistent with previous results in the literature} (see e.g., \cite{paper1defeaturing,paper3multifeature}). Finally, the change of behavior between the two different regimes happens when $\varepsilon \approx h$, both for the overall error and for the proposed estimator. 

\subsubsection{Positive feature} \label{sec:cvtestpos}
In this experiment, we consider a geometry with one positive feature.
That is, for $k=-5,-4,\ldots,4$, let $\varepsilon =\displaystyle \frac{10^{-2}}{2^k}$ and let $\Omega_0 := \Omega_\varepsilon \setminus \overline{F_\varepsilon}$, where $\Omega_0$ is an L-shaped domain %given by
%$$\Omega_0 := \left(0,\frac{1}{2}\right]^2 \cup \Bigg( \left[\frac{1}{2},1\right)\times \left(0,\frac{1}{2}\right) \Bigg) \cup \Bigg( \left(0,\frac{1}{2}\right) \times \left[\frac{1}{2},1\right) \Bigg),$$ 
and $F_\varepsilon$ is a fillet of radius $\varepsilon$ rounding the corner of the L-shaped domain, as illustrated in~Figure \ref{fig:Lshapefillet}. %i.e. 
%\begin{align*}
%\partial F_\varepsilon := &\Bigg( \left\{\frac{1}{2}\right\}\times \left[\frac{1}{2},\frac{1}{2}+\varepsilon\right]\Bigg) \cup \Bigg(\left[\frac{1}{2},\frac{1}{2}+\varepsilon\right] \times \left\{\frac{1}{2}\right\}\Bigg) \\
%&\cup \left\{\boldsymbol x=(x,y)^T\in\mathbb R^2: \left\|\boldsymbol x - \left(\frac{1}{2}+\varepsilon, \frac{1}{2}+\varepsilon\right)\right\|_{\ell^2} = \varepsilon,\; x\leq \frac{1}{2}+\varepsilon, \; y \leq \frac{1}{2}+\varepsilon\right\}. %\left\{  (0.5+\varepsilon,0.5+\varepsilon) + \varepsilon(\cos(\theta), \sin(\theta)) : \pi<\theta<\frac{3\pi}{2} \right\}.
%\end{align*}
Or in other words, $\Omega_0$ is the defeatured geometry obtained from $\Omega_\varepsilon$ by removing the positive feature $F_\varepsilon$. 

\begin{figure}
	\centering
%		\begin{center}
\savebox{\imagebox}{
			\begin{tikzpicture}[scale=3.5]
			\draw (-0.1,0) -- (-0.1,1);
			\draw (-0.08,0) -- (-0.12,0);
			\draw (-0.08,1) -- (-0.12,1);
			\draw (-0.1,0.5) node[left]{$1$};
			\draw (1.1,0) -- (1.1,0.5);
			\draw (1.08,0) -- (1.12,0);
			\draw (1.08,0.5) -- (1.12,0.5);
			\draw (1.1,0.25) node[right]{$0.5$};
			\draw[thick,c4] (1,0) -- (0,0) -- (0,1);
			\draw[c4] (0.5,0) node[below]{$\Gamma_D$};
			\draw[thick] (0,1) -- (0.5, 1) -- (0.5,0.7);
			\draw[thick] (0.7,0.5) -- (1,0.5) -- (1,0);
			\draw[thick, c2, domain=-180:-90] plot ({0.7+0.2*cos(\x)}, {0.7+0.2*sin(\x)});
			\draw (0.35,0.35) node{$\Omega_\varepsilon$};
			\draw[c2] (0.35,0.6) node{$\gamma=\gamma_\setminussign$};
			\draw (0,-0.2) -- (1,-0.2);
			\draw (0,-0.18) -- (0,-0.22);
			\draw (1,-0.18) -- (1,-0.22);
			\draw (0.5,-0.2) node[below]{$1$};
			\draw (0,1.1) -- (0.5,1.1);
			\draw (0,1.08) -- (0,1.12);
			\draw (0.5,1.08) -- (0.5,1.12);
			\draw (0.25,1.1) node[above]{$0.5$};
			\draw[fill] (0,0) circle (0.015cm);
			\draw[] (0.05,0) node[below]{\footnotesize $(0,0)^T$};
			\draw[->] (0.7,0.7) -- (0.5585786,0.5585786);
			\draw (0.66,0.6) node{$\varepsilon$}; % 629289
			\draw[fill] (0.7,0.7) circle (0.015cm);
			\draw[] (0.73,0.7) node[above]{\footnotesize $\left(\begin{matrix}0.5+\varepsilon\\0.5+\varepsilon\end{matrix}\right)$};
			\end{tikzpicture}}
			\begin{subfigure}[t]{0.39\textwidth}
				\centering\usebox{\imagebox}
%		\end{center}
	\caption{Exact domain $\Omega_\varepsilon$.}
	\end{subfigure}
	~
	\begin{subfigure}[t]{0.28\textwidth}
		\centering\raisebox{\dimexpr.5\ht\imagebox-.5\height}{
			\begin{tikzpicture}[scale=3.5]
			\draw[thick] (0,0) -- (0,1) -- (0.5, 1) -- (0.5,0.5) -- (1,0.5) -- (1,0) -- (0, 0);
			\draw [thick,domain=-180:-90] plot ({0.7+0.2*cos(\x)}, {0.7+0.2*sin(\x)});
			\draw[thick] (0.5,0.7) -- (0.7,0.7) -- (0.7,0.5);
			\draw[->] (0.59,0.43) -- (0.54,0.54);
%			\draw[white] (0.5,-0.17) node[below]{$1$};
			\draw (0.35,0.35) node{$\Omega_0$};
			\draw (0.61,0.37) node{$F_\varepsilon$};
			\draw (0.62,0.62) node{$G_\varepsilon$};
			\end{tikzpicture}}
	\caption{Simplified domain $\Omega_0$, feature $F_\varepsilon$ and feature extension $G_\varepsilon$.}
	\end{subfigure}
	~
	\begin{subfigure}[t]{0.28\textwidth}
		\centering\raisebox{\dimexpr.5\ht\imagebox-.5\height}{
			\begin{tikzpicture}[scale=3.5]
			\draw[thick] (0,0) -- (0,1) -- (0.5, 1) -- (0.5,0.5) -- (1,0.5) -- (1,0) -- (0, 0);
			\draw[thick,c2] (0.5,0.7) -- (0.7,0.7) -- (0.7,0.5);
			\draw[c2] (0.7,0.6) node[right]{$\tilde \gamma$};
			\draw[thick,c4] (0.7,0.5) -- (0.5,0.5) -- (0.5,0.7);
			\draw[c4] (0.6,0.5) node[below]{$\gamma_{0}$};
			\draw[gray,thick] (0,0) -- (0.5,0.5);
			\draw[gray,thick] (0,0.7) -- (0.5,0.7);
			\draw[gray,thick] (0.7,0) -- (0.7,0.5);
%			\draw[white] (0.5,-0.17) node[below]{$1$};
%			\draw[->] (0.59,0.43) -- (0.54,0.54);
%			\draw (0.35,0.35) node{$\Omega_0$};
			\draw (0.6,0.6) node{$\tilde F_\varepsilon$};
%			\draw (0.62,0.62) node{$G_\varepsilon$};
			\end{tikzpicture}}
%		\end{center}
		\caption{Four patches of $\Omega_0$, and extended feature $\tilde F_\varepsilon$ as a single patch.} \label{fig:patchesfillet}
	\end{subfigure}
	\caption{Numerical test \ref{sec:cvtestpos} -- L-shaped domain with a positive fillet feature and its extension.}\label{fig:Lshapefillet}
\end{figure}
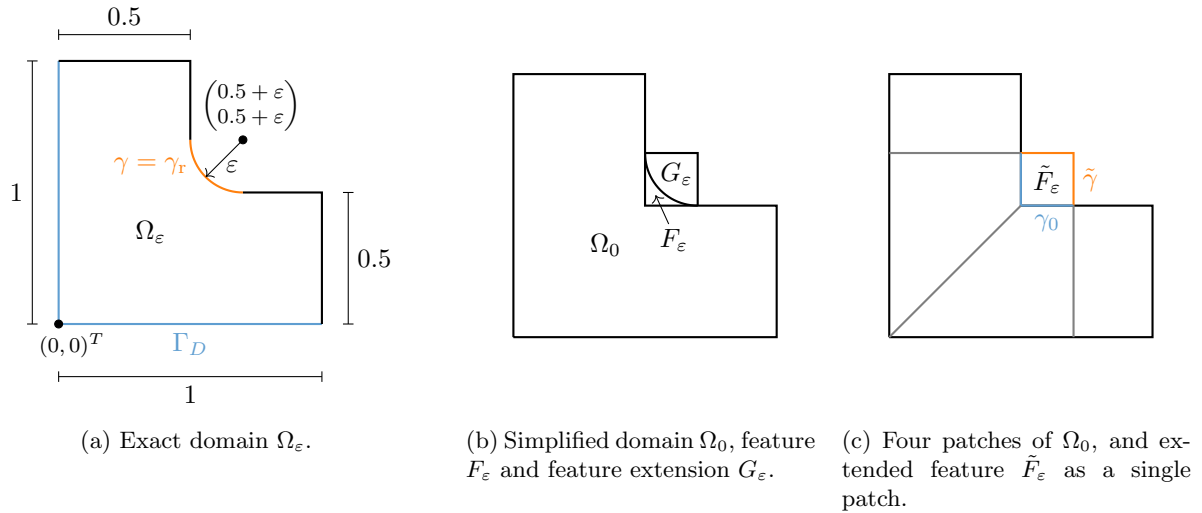

We consider Poisson's problem~(\ref{eq:originalpb}) solved in $\Omega_\varepsilon$ with 
$$\Gamma_D := \big( (0,1)\times \{0\} \big) \cup \big( \{0\} \times (0,1)\big)$$
as illustrated in Figure~\ref{fig:Lshapefillet}, and $\Gamma_N := \partial \Omega_\varepsilon\setminus \overline{\Gamma_D}$. We choose the data $f$, $g$ and $g_D$ such that the exact solution is given by $u(x,y) = \sin(2\pi x)\sin(2\pi y)$ in $\Omega_\varepsilon$. Then, we solve the defeatured Poisson problem~(\ref{eq:simplpb}) in $\Omega_0$, where $g_0$ is chosen on $\gamma_0$ so that the exact defeatured solution is also given by $u_0(x,y) = \sin(2\pi x)\sin(2\pi y)$ in $\Omega_0$. That is, we do not introduce any error coming from defeaturing in $\Omega_0$. However, we solve the extension problem~(\ref{eq:featurepb}) in $\tilde F_\varepsilon$, the bounding box of $F_\varepsilon$, choosing $\tilde g\equiv 0$ on $\tilde \gamma$, and naturally extending $f$ in $\tilde F_\varepsilon$. That is, we consider $f(x,y) = 8\pi^2\sin(2\pi x)\sin(2\pi y)$ in $\Omega_0$ and in $\tilde F_\varepsilon$. Therefore, the defeaturing component of the overall error comes from the bad choice of Neumann boundary condition $\tilde g$ in $\tilde \gamma$, which does not correspond to the exact solution $u$.

Then, $\Omega_0$ is divided into four conforming patches as illustrated in Figure~\ref{fig:patchesfillet}, while $\tilde F_\mathrm p$ is a single patch. Moreover, we consider $4^j$ elements in each patch, for $j=2,\ldots,7$, so that in this experiment, $h=\displaystyle\frac{\sqrt{2}}{2^{j+1}}$. We solve the Galerkin formulation of problems~(\ref{eq:weaksimplpb}) and~(\ref{eq:weakfeaturepb}), respectively corresponding to the defeatured and extension problems, using B-spline based IGA. 
Therefore, we look at the convergence of the discrete defeaturing error and estimator both under global dyadic refinement, and with respect to the size of the feature~$\varepsilon$. 

	\begin{figure}[h!]
	\begin{subfigure}{0.5\textwidth}
		\begin{tikzpicture}[scale=0.85]
		\begin{axis}[
		xmode=log, ymode=log, zmode=log, grid=both, view={-20}{20}, xlabel = $\varepsilon$, ylabel = $h$, zlabel = Error/Estimator, colormap/RdYlBu] % ymin=0.002, ymax=0.15,
		\addplot3[surf,
		] file {
			data/test_fillet_data2.txt
		};
		\addplot3[surf,
		] file {
			data/test_fillet_data.txt
		};
		\end{axis}
		\end{tikzpicture}
		\caption{A view on the error (lower surface) and \\estimator (upper surface).}\label{fig:afillet}
	\end{subfigure}
	~
	\begin{subfigure}{0.5\textwidth}
		\begin{tikzpicture}[scale=0.85]
		\begin{axis}[
		xmode=log, ymode=log, zmode=log, grid=both, view={-20}{-20}, xlabel = $\varepsilon$, ylabel = $h$, zlabel = Error/Estimator, colormap/RdYlBu] % ymin=0.002, ymax=0.15,
		\addplot3[surf,
		] file {
			data/test_fillet_data2.txt
		};
		\addplot3[surf,
		] file {
			data/test_fillet_data.txt
		};
		\end{axis}
		\end{tikzpicture}
		\caption{Another view on the error (lower surface) and estimator (upper surface).}\label{fig:bfillet}
	\end{subfigure}
	~
	\begin{subfigure}{0.48\textwidth}
		\vspace{0.15cm}
		\begin{center}
			\begin{tikzpicture}[scale=0.85]
			\begin{axis}[xmode=log, ymode=log, legend style={at={(0.5,-0.17)}, legend columns=3, anchor=north, draw=none}, xlabel=$h$, ymax=10, ymin=3e-5, ylabel=Error/Estimator, grid style=dotted,grid]
			\addlegendimage{black,dashed, thick, mark options=solid}
			\addlegendentry{E}
			\addlegendimage{black,thick}
			\addlegendentry{E}
			\addplot[mark=none, black, densely dotted, thick] table [x=h, y=h2, col sep=comma] {data/test_fillet_wrt_h.csv};
			\addplot[mark=o, c4, thick] table [x=h, y=errsmall, col sep=comma] {data/test_fillet_wrt_h.csv};
			\addplot[mark=square, c, thick] table [x=h, y=errintermediate, col sep=comma] {data/test_fillet_wrt_h.csv};
			\addplot[mark=triangle, c2, thick, mark options={solid,scale=1.5}] table [x=h, y=errlarge, col sep=comma] {data/test_fillet_wrt_h.csv};
			\addplot[mark=*, c4, dashed, thick, mark options={solid,fill=c4}] table [x=h, y=estsmall, col sep=comma] {data/test_fillet_wrt_h.csv};
			\addplot[mark=square*, c, dashed, thick, mark options=solid] table [x=h, y=estintermediate, col sep=comma] {data/test_fillet_wrt_h.csv};
			\addplot[mark=triangle*, c2, dashed, thick, mark options={solid,scale=1.5}] table [x=h, y=estlarge, col sep=comma] {data/test_fillet_wrt_h.csv};
			\legend{$\mathscr E\big(u_\mathrm d^h\big)$, $\left\|\nabla\left(u-u_\mathrm d^h\right)\right\|_{0,\Omega_\varepsilon}$, $h^2$};
			% 			\logLogSlopeTriangle{0.2}{0.07}{0.09}{2}{black}
			\end{axis}
			\end{tikzpicture}
			\caption{In {\color{c4}blue circles}, $\varepsilon = 6.25\cdot 10^{-4}$; in {\color{c}gray squares}, $\varepsilon = 2\cdot 10^{-2}$; in {\color{c2}orange triangles}, \hbox{$\varepsilon = 1.6\cdot 10^{-1}$}.} \label{fig:cfillet}
		\end{center}
	\end{subfigure}
	~
	\begin{subfigure}{0.48\textwidth}
		\begin{center}
			\vspace{0.15cm}
			\begin{tikzpicture}[scale=0.85]
			\begin{axis}[xmode=log, ymode=log, legend style={at={(0.5,-0.17)}, legend columns=3, anchor=north, draw=none}, xlabel=$\varepsilon$, ymax=10, ymin=3e-5, ylabel=Error/Estimator, grid style=dotted,grid]
			\addlegendimage{black,dashed, thick, mark options=solid}
			\addlegendentry{E}
			\addlegendimage{black,thick}
			\addlegendentry{E}
			\addplot[mark=none, densely dotted, black, thick] table [x=eps, y=eps2, col sep=comma] {data/test_fillet_wrt_eps_bis.csv};
			\addplot[mark=*, c4, dashed, thick, mark options={solid,fill=c4}] table [x=eps, y=errsmall, col sep=comma] {data/test_fillet_wrt_eps.csv};
			\addplot[mark=square*, c, dashed, thick, mark options=solid] table [x=eps, y=errintermediate, col sep=comma] {data/test_fillet_wrt_eps.csv};
			\addplot[mark=triangle*, c2, dashed, thick, mark options={solid,scale=1.5}] table [x=eps, y=errlarge, col sep=comma] {data/test_fillet_wrt_eps.csv};
			\addplot[mark=o, c4, thick] table [x=eps, y=estsmall, col sep=comma] {data/test_fillet_wrt_eps.csv};
			\addplot[mark=square, c, thick] table [x=eps, y=estintermediate, col sep=comma] {data/test_fillet_wrt_eps.csv};
			\addplot[mark=triangle, c2, thick, mark options={solid,scale=1.5}] table [x=eps, y=estlarge, col sep=comma] {data/test_fillet_wrt_eps.csv};
			\legend{$\mathscr E\big(u_\mathrm d^h\big)$, $\left\|\nabla\left(u-u_\mathrm d^h\right)\right\|_{0,\Omega_\varepsilon}$,$\varepsilon^2 \left|\log{\varepsilon}\right|^\frac{1}{2}$};
			\end{axis}
			\end{tikzpicture}
			\caption{In {\color{c4}blue circles}, $h = 5.5243\cdot 10^{-3}$; in {\color{c}gray squares}, $h = 2.2097\cdot 10^{-2}$; in {\color{c2}orange triangles}, $h = 1.7678\cdot 10^{-1}$.} \label{fig:dfillet}
		\end{center}
	\end{subfigure}
	\caption{Numerical test \ref{sec:cvtestpos} -- Convergence of the discrete defeaturing error and estimator with respect to the mesh size $h$ under global $h$-refinement, and with respect to the feature size $\varepsilon$. In~(\ref{sub@fig:afillet}) and~(\ref{sub@fig:bfillet}) are two different views on the surface error and on the surface estimator (the first one being below the second one). In~(\ref{sub@fig:cfillet}), convergence with respect to $h$ for three fixed values of $\varepsilon$. In~(\ref{sub@fig:dfillet}), convergence with respect to $\varepsilon$ for three fixed values of $h$.}  \label{fig:doublecvfillet}
\end{figure}
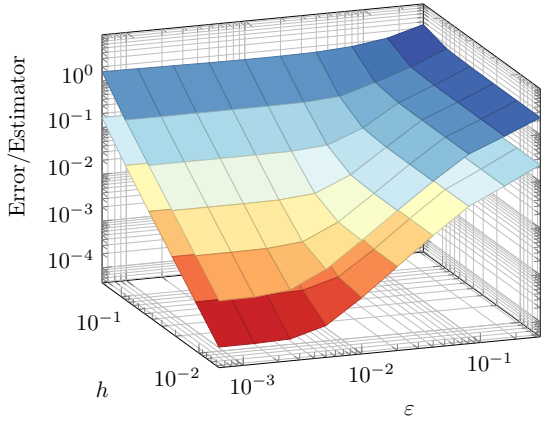
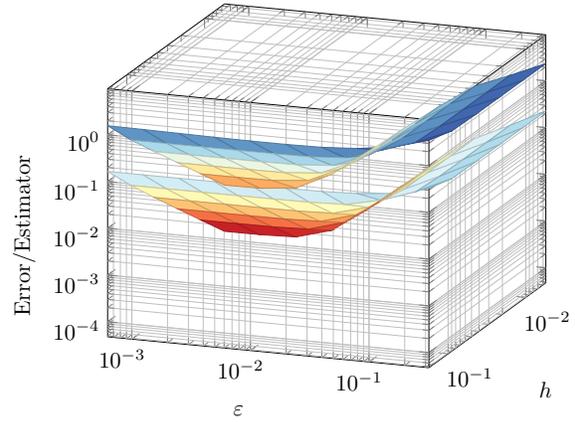
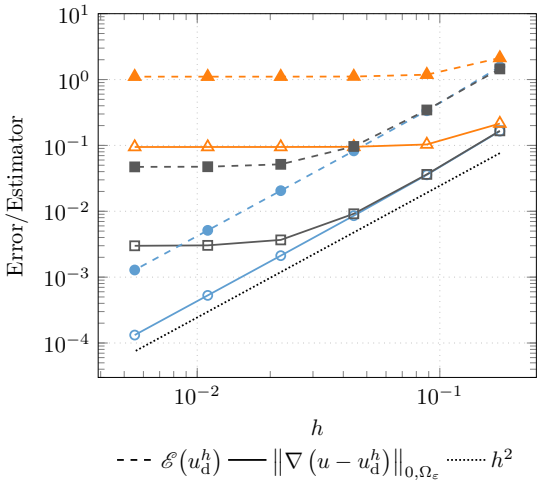
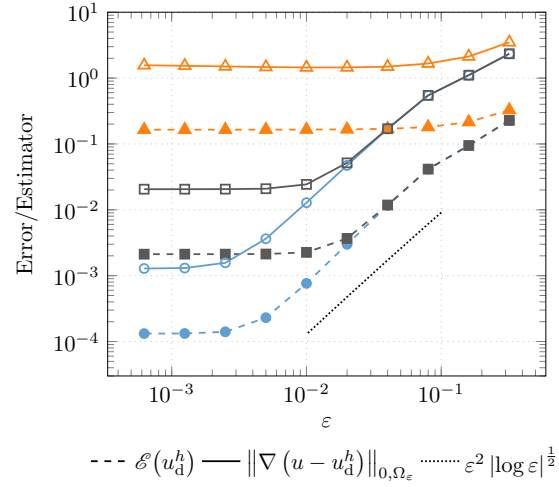

The results are presented in Figure~\ref{fig:doublecvfillet}. As for a negative feature, when $\varepsilon$ is fixed and small, the numerical component of the error dominates over its defeaturing component, and thus the overall error converges as $h^p=h^2$, as expected. When $\varepsilon$ is fixed and large, the defeaturing component of the error dominates and thus the overall error does not converge with respect to $h$, and we observe a plateau. Similarly, when $h$ is fixed and small, the numerical error is negligible with respect to the defeaturing component of the error, and thus the overall error converges as $\varepsilon^2 \left|\log(\varepsilon)\right|^\frac{1}{2}$, also as expected from the previous numerical experiment and from \cite{paper1defeaturing}. But again, when $h$ is fixed and large, the numerical error dominates and thus the overall error does not converge with respect to $\varepsilon$, and we observe a plateau. 

The overall estimator $\mathscr{E}\big(u_\mathrm d^h\big)$ follows the exact same behavior and convergence rates as the error. This numerical test shows that the reliability of the estimator proven in Theorem~\ref{thm:uppercomplextoterror} can be extended to non-Lipschitz features such as $F_\varepsilon$, and it also shows the efficiency of the estimator.
In this case, the effectivity index in the numerical-error-dominant regime is slightly larger (around~$10.7$) than the one in the defeaturing-error-dominant regime (around~$9.7$), \changes{which is consistent with previous results in the literature.} Indeed, it is observed in \cite{paper1defeaturing} that the effectivity index coming from the defeaturing component of the error estimator is larger in the case of extended positive features, as the extension $G_\varepsilon$ of $F_\varepsilon$ can itself be seen as a negative feature whose simplified geometry is $\tilde F_\varepsilon$.

\subsection{Convergence of the adaptive strategy} \label{sec:adaptstrategytests}
In the following experiments, we analyze the convergence of the adaptive strategy proposed in Section~\ref{sec:adaptive} and specialized to the isogeometric framework. The analysis is first performed in a geometry containing a negative feature, then in a geometry containing a positive feature. Moreover, we compare the proposed strategy with the standard adaptive algorithm which only performs mesh refinement, and which does not consider the defeaturing error contribution. The latter algorithm is indeed widely used nowadays, because of the lack of a sound discrete defeaturing error estimator as the one proposed in this work. 

\subsubsection{Negative feature}\label{sec:halfdiscwithhole}
Let us first consider a half disc with a circular hole, i.e., a geometry containing a negative feature. More precisely, and as illustrated in Figure~\ref{fig:halfdiscgeom}, let 
\begin{align*}
\Omega_0 &:= \left\{ \boldsymbol{x}=(x,y)^T\in\mathbb R^2 : \|\boldsymbol x\|_{\ell^2}<\frac{1}{2}, \,y<0 \right\}, \\ %\left\{ \big(r\cos(\theta), r\sin(\theta)\big)\in \mathbb R^2 : 0 \leq r < \frac{1}{2}, -\pi < \theta < 0 \right\}, \\
F &:= \left\{ \boldsymbol{x}=(x,y)^T\in\mathbb R^2 : \|\boldsymbol x\|_{\ell^2}<5\cdot 10^{-3}, \,y<0 \right\}, \\ %\left\{ \big(r\cos(\theta), r\sin(\theta)\big)\in \mathbb R^2 : 0 \leq r < 5\cdot 10^{-3}, -\pi < \theta < 0 \right\}, \\
\Omega &:= \Omega_0 \setminus \overline{F}.
\end{align*}
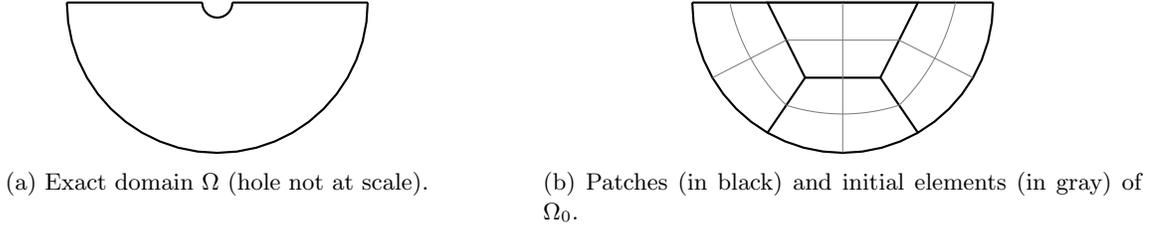
\begin{figure}%[htb]
	\centering
	\begin{subfigure}[t]{0.48\textwidth}
		\begin{center}
			\begin{tikzpicture}[scale=2]
			\draw [black,thick,domain=-180:0] plot ({cos(\x)}, {sin(\x)});
			\draw [black,thick,domain=-180:0] plot ({0.1*cos(\x)}, {0.1*sin(\x)});
			\draw[black, thick] (0.1, 0) -- (1,0);
			\draw[black, thick] (-0.1, 0) -- (-1,0);
			%			\draw (0,0) -- (0.086602540378444,-0.05) ;
			%			\draw (0.05,0.05) node{$\varepsilon$} ;
%			\draw[] (0.05,0) node[above]{\footnotesize $(0,0)^T$};
%			\draw[->] (0,0) -- ();
%			\draw (0.66,0.6) node{$\varepsilon$}; % 629289
%			\draw[fill] (0,0) circle (0.015cm);
			\end{tikzpicture}
			\caption{Exact domain $\Omega$ (hole not at scale).}
			% \label{fig:exdomainhalfdisc}
		\end{center}
	\end{subfigure}
	~
	\begin{subfigure}[t]{0.48\textwidth}
		\begin{center}
			\begin{tikzpicture}[scale=4]
			\draw [black,thick,domain=-180:0] plot ({0.5*cos(\x)}, {0.5*sin(\x)});
			\draw[thick] (-0.125,-0.25) -- (-0.25,0) -- (0.25,0) -- (0.125,-0.25) -- cycle;
			%			\draw[thick] (0.25,0.25) -- (0.353553390593274, 0.353553390593274);
			\draw[thick] (-0.125,-0.25) -- (-0.25, -0.433012701892219);
			%			\draw[thick] (-0.25,0.25) -- (-0.353553390593274, 0.353553390593274);
			\draw[thick] (0.125,-0.25) -- (0.25, -0.433012701892219);
			\draw[black,thick] (-0.5,0) -- (0.5,0);
			\draw[gray] (0,-0.5) -- (0,0);
			\draw[gray] (-0.1875, -0.125) -- (0.1875, -0.125);
			\draw[gray] (-0.1875, -0.125) -- (-0.433012701892219, -0.25);
			\draw[gray] (0.1875, -0.125) -- (0.433012701892219, -0.25);
			\draw[gray] (-0.375000000000000,	2.77555756156289e-17) --
			(-0.373333166757099,	-0.00765962670967697) --
			(-0.371567082743171,	-0.0153307349990486) --
			(-0.369701573631691, -0.0230108257722329) --
			(-0.367736530577345,	-0.0306973685023467) --
			(-0.365671911152847,	-0.0383878037000088) --
			(-0.363507740186621,	-0.0460795454670884) --
			(-0.361244110497723,	-0.0537699841310397) --
			(-0.358881183524584,	-0.0614564889548132) --
			(-0.356419189844499,	-0.0691364109170090) --
			(-0.353858429581006,	-0.0768070855566216) --
			(-0.351199272696692,	-0.0844658358764268) --
			(-0.348442159169221,	-0.0921099752987869) --
			(-0.345587599048781,	-0.0997368106673905) --
			(-0.342636172395458,	-0.107343645288212) --
			(-0.339588529095453,	-0.114927782002764) --
			(-0.336445388555389,	-0.122486526286538) --
			(-0.333207539274400,	-0.130017189365354) --
			(-0.329875838294021,	-0.137517091342252) --
			(-0.326451210526340,	-0.144983564327397) --
			(-0.322934647961253,	-0.152413955563473) --
			(-0.319327208754045,	-0.159805630538941) --
			(-0.315630016194951,	-0.167155976081567) --
			(-0.311844257562695,	-0.174462403424636) --
			(-0.307971182864434,	-0.181722351238342) --
			(-0.304012103464865,	-0.188933288618890) --
			(-0.299968390607657,	-0.196092718028031) --
			(-0.295841473832697,	-0.203198178175837) --
			(-0.291632839292987,	-0.210247246839739) --
			(-0.287344027975343,	-0.217237543613062) --
			(-0.282976633829376,	-0.224166732576502) --
			(-0.278532301809477,	-0.231032524886277) --
			(-0.274012725834846,	-0.237832681272969) --
			(-0.269419646672790,	-0.244565014445375) --
			(-0.264754849750806,	-0.251227391394026) --
			(-0.260020162903074,	-0.257817735589403) --
			(-0.255217454057250,	-0.264334029070209) --
			(-0.250348628867510,	-0.270774314417501) --
			(-0.245415628299982,	-0.277136696610834) --
			(-0.240420426176750,	-0.283419344763020) --
			(-0.235365026684717,	-0.289620493730513) --
			(-0.230251461855617,	-0.295738445596854) --
			(-0.225081789023525,	-0.301771571027062) --
			(-0.219858088266156,	-0.307718310491286) --
			(-0.214582459836243,	-0.313577175356453) --
			(-0.209257021589206,	-0.319346748845124) --
			(-0.203883906413233,	-0.325025686861150) --
			(-0.198465259667801,	-0.330612718682190) --
			(-0.193003236636513,	-0.336106647519535) --
			(-0.187500000000000,	-0.341506350946110);
			\begin{scope}[yscale=1,xscale=-1]
			\draw[gray] (-0.375000000000000,	2.77555756156289e-17) --
			(-0.373333166757099,	-0.00765962670967697) --
			(-0.371567082743171,	-0.0153307349990486) --
			(-0.369701573631691, -0.0230108257722329) --
			(-0.367736530577345,	-0.0306973685023467) --
			(-0.365671911152847,	-0.0383878037000088) --
			(-0.363507740186621,	-0.0460795454670884) --
			(-0.361244110497723,	-0.0537699841310397) --
			(-0.358881183524584,	-0.0614564889548132) --
			(-0.356419189844499,	-0.0691364109170090) --
			(-0.353858429581006,	-0.0768070855566216) --
			(-0.351199272696692,	-0.0844658358764268) --
			(-0.348442159169221,	-0.0921099752987869) --
			(-0.345587599048781,	-0.0997368106673905) --
			(-0.342636172395458,	-0.107343645288212) --
			(-0.339588529095453,	-0.114927782002764) --
			(-0.336445388555389,	-0.122486526286538) --
			(-0.333207539274400,	-0.130017189365354) --
			(-0.329875838294021,	-0.137517091342252) --
			(-0.326451210526340,	-0.144983564327397) --
			(-0.322934647961253,	-0.152413955563473) --
			(-0.319327208754045,	-0.159805630538941) --
			(-0.315630016194951,	-0.167155976081567) --
			(-0.311844257562695,	-0.174462403424636) --
			(-0.307971182864434,	-0.181722351238342) --
			(-0.304012103464865,	-0.188933288618890) --
			(-0.299968390607657,	-0.196092718028031) --
			(-0.295841473832697,	-0.203198178175837) --
			(-0.291632839292987,	-0.210247246839739) --
			(-0.287344027975343,	-0.217237543613062) --
			(-0.282976633829376,	-0.224166732576502) --
			(-0.278532301809477,	-0.231032524886277) --
			(-0.274012725834846,	-0.237832681272969) --
			(-0.269419646672790,	-0.244565014445375) --
			(-0.264754849750806,	-0.251227391394026) --
			(-0.260020162903074,	-0.257817735589403) --
			(-0.255217454057250,	-0.264334029070209) --
			(-0.250348628867510,	-0.270774314417501) --
			(-0.245415628299982,	-0.277136696610834) --
			(-0.240420426176750,	-0.283419344763020) --
			(-0.235365026684717,	-0.289620493730513) --
			(-0.230251461855617,	-0.295738445596854) --
			(-0.225081789023525,	-0.301771571027062) --
			(-0.219858088266156,	-0.307718310491286) --
			(-0.214582459836243,	-0.313577175356453) --
			(-0.209257021589206,	-0.319346748845124) --
			(-0.203883906413233,	-0.325025686861150) --
			(-0.198465259667801,	-0.330612718682190) --
			(-0.193003236636513,	-0.336106647519535) --
			(-0.187500000000000,	-0.341506350946110);
			\end{scope}
			\draw[gray] (-0.187500000000000,	-0.341506350946110) --
			(-0.180329930120586,	-0.343766274229212) --
			(-0.173101823075371,	-0.345943453681239) --
			(-0.165817667218458,	-0.348036512633383) --
			(-0.158479508988139,	-0.350044117347471) --
			(-0.151089451316604,	-0.351964979056462) --
			(-0.143649651920466,	-0.353797855958375) --
			(-0.136162321474197,	-0.355541555158102) --
			(-0.128629721668967,	-0.357194934551667) --
			(-0.121054163159782,	-0.358756904647522) --
			(-0.113438003404256,	-0.360226430319642) --
			(-0.105783644396710,	-0.361602532487261) --
			(-0.0980935303017108,	-0.362884289716288) --
			(-0.0903701449915307,	-0.364070839737600) --
			(-0.0826160094923844,	-0.365161380877615) --
			(-0.0748336793446469,	-0.366155173396791) --
			(-0.0670257418825982,	-0.367051540731913) --
			(-0.0591948134395540,	-0.367849870638324) --
			(-0.0513435364845431,	-0.368549616228529) --
			(-0.0434745766969648,	-0.369150296903899) --
			(-0.0355906199859104,	-0.369651499176535) --
			(-0.0276943694610588,	-0.370052877378680) --
			(-0.0197885423622534,	-0.370354154257403) --
			(-0.0118758669550381,	-0.370555121452668) --
			(-0.00395907939956922,	-0.370655639857232) --
			(0.00395907939956913,	-0.370655639857232) --
			(0.0118758669550381,	-0.370555121452668) --
			(0.0197885423622533,	-0.370354154257403) --
			(0.0276943694610587,	-0.370052877378680) --
			(0.0355906199859103,	-0.369651499176535) --
			(0.0434745766969647, -0.369150296903898) --
			(0.0513435364845430,	-0.368549616228529) --
			(0.0591948134395539,	-0.367849870638324) --
			(0.0670257418825981,	-0.367051540731913) --
			(0.0748336793446468,	-0.366155173396791) --
			(0.0826160094923843,	-0.365161380877615) --
			(0.0903701449915306,	-0.364070839737600) --
			(0.0980935303017107,	-0.362884289716288) --
			(0.105783644396710,	-0.361602532487261) --
			(0.113438003404256,	-0.360226430319642) --
			(0.121054163159782,	-0.358756904647522) --
			(0.128629721668967,	-0.357194934551667) --
			(0.136162321474197,	-0.355541555158102) --
			(0.143649651920466,	-0.353797855958374) --
			(0.151089451316604,	-0.351964979056462) --
			(0.158479508988139,	-0.350044117347471) --
			(0.165817667218458,	-0.348036512633383) --
			(0.173101823075371,	-0.345943453681239) --
			(0.180329930120586,	-0.343766274229212) --
			(0.187500000000000,	-0.341506350946110);
			\end{tikzpicture}
			\caption{Patches (in black) and initial elements (in gray) of $\Omega_0$.}
			\label{fig:patchesOmega0halfdisc}
		\end{center}
	\end{subfigure}
	\caption{Numerical test \ref{sec:halfdiscwithhole} -- Exact and initial defeatured domains used for the adaptive strategy analysis.}
	\label{fig:halfdiscgeom}
\end{figure}
We consider Poisson's problem~(\ref{eq:originalpb}) in $\Omega$, with $f\equiv -1$, $g_D\equiv 0$ on $$\Gamma_D := \left\{ \boldsymbol x=(x,y)^T\in\mathbb R^2: \|\boldsymbol x \|_{\ell^2}=\frac{1}{2}, \, y<0 \right\}, $$ %\left(\displaystyle\frac{1}{2}\cos(\theta), \displaystyle\frac{1}{2}\sin(\theta)\right) \in \mathbb R^2 : -\pi < \theta < 0 \right\}, $$
and $g\equiv 0$ on $\Gamma_N := \partial \Omega \setminus \overline{\Gamma_D}$. The exact solution of this problem is given by
$$u(x,y) = -\frac{5\cdot 10^{-5}}{4}\log\left(2\sqrt{x^2+y^2}\right) + \frac{x^2+y^2}{4} -\frac{1}{16}, \quad \text{ for all } (x,y)\in\Omega.$$
Then, we consider the defeatured problem~(\ref{eq:simplpb}) in $\Omega_0$, where $f$ is extended by $-1$ in $F$, and $g_0 \equiv 0$ on $\gamma_0 = \partial F \setminus \overline{\partial \Omega}$. We solve it using THB-spline based IGA, with $\Omega_0$ being divided into $4$ conforming patches, each of which is initially divided into $4$ elements, as illustrated in Figure~\ref{fig:patchesOmega0halfdisc}. % The THB-splines are chosen to be of degree $p=2$ in both directions. 

We first perform the adaptive strategy of Section~\ref{sec:adaptive}. Then, we perform the same adaptive strategy but we never add the feature $F=:F^1$ to the geometrical model. This is done by not taking into account the contribution $\mathscr{E}_D\big(u_\mathrm d^h\big)=\mathscr{E}_D^1\big(u_\mathrm d^h\big)$ in the MARK module (see Section~\ref{sec:mark}). That is, we only perform standard mesh refinement steps by neglecting the defeaturing error contribution, while still computing the overall error and the proposed estimator. 
In this experiment, we use $\alpha_N = \alpha_D = 1$, that is, we give the same weight to the contribution of the numerical part of the error estimate as to the one of the defeaturing part of the error estimate. Moreover, we choose the marking parameter $\theta = 0.5$. 
When performing the REFINE module presented in Section~\ref{sec:refine}, we impose the mesh to be $\mathcal T$-admissible of class~$2$, and the elements are dyadically refined. Moreover, when the feature $F$ is marked for refinement, it is added to the geometrical model by trimming as explained in Section~\ref{sec:gen}. Both adaptive strategies are stopped whenever the number of degrees of freedom exceeds $10^4$. 

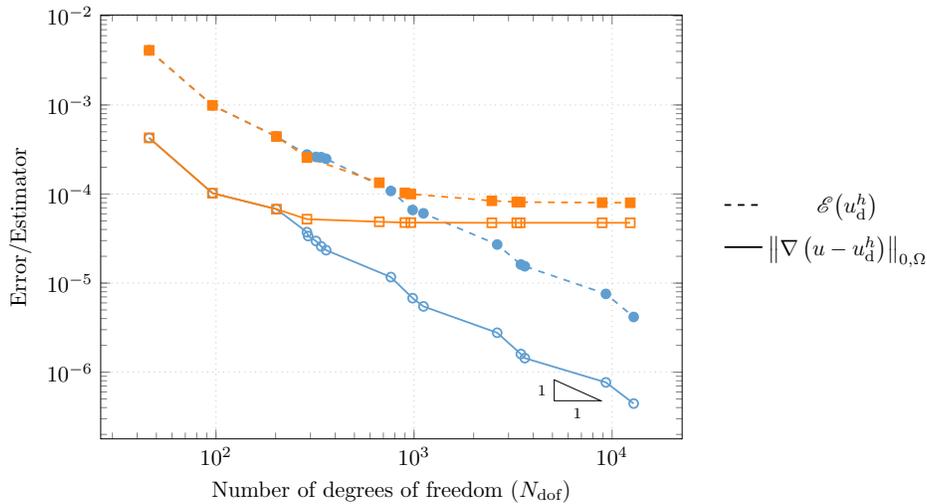
\begin{figure}
%		\centering
%		\begin{subfigure}[b]{0.5\textwidth}
				\begin{center}
		\begin{tikzpicture} [scale=0.84]
		\begin{axis}[xmode=log, ymode=log, legend style={row sep=5pt, at={(1.25,0.6)}, legend columns=1, anchor=north, draw=none}, xlabel=Number of degrees of freedom ($N_\text{dof}$), ylabel=Error/Estimator, width=0.65\textwidth, height = 0.5\textwidth, grid style=dotted,grid]
		\addlegendimage{black,dashed, thick, mark options=solid}
		\addlegendentry{E}
		\addlegendimage{,black,thick}
		\addlegendentry{E}
%		\addplot[mark=none, densely dotted, black, thick] table [x=ndof, y=ndofm1, col sep=comma] {\dataPath/test_half_disc_std.csv};
		\addplot[mark=*, c4, dashed, thick, mark options=solid] table [x=ndof, y=est, col sep=comma] {data/test_half_disc_std2.csv};
		\addplot[mark=o, c4, thick] table [x=ndof, y=err, col sep=comma] {data/test_half_disc_std2.csv};
		\addplot[mark=square*, c2, dashed, thick, mark options=solid] table [x=ndof, y=est, col sep=comma] {data/test_half_disc_no_hole2.csv};
		\addplot[mark=square, c2, thick] table [x=ndof, y=err, col sep=comma] {data/test_half_disc_no_hole2.csv};
		\logLogSlopeReverseTriangle{0.78}{0.08}{0.09}{1}{black}
		\legend{$\mathscr E\big(u_\mathrm d^h\big)$, $\left\|\nabla\left(u-u_\mathrm d^h\right)\right\|_{0,\Omega}$}; % ,$N_\text{dof}^{-1}$
		\end{axis}
		\end{tikzpicture}
		\caption{Numerical test \ref{sec:halfdiscwithhole} -- Convergence of the discrete defeaturing error and estimator with respect to the number of degrees of freedom. In {\color{c4}blue circles}, we consider the adaptive strategy presented in Section~\ref{sec:adaptive} and specialized to IGA, for which the feature is added after iteration $4$. In {\color{c2}orange squares}, we only consider mesh refinements, i.e., the feature is never added to the geometry.}\label{fig:halfdisc}
	\end{center} 
\end{figure}
%\begin{figure}
%	\begin{subfigure}[t]{0.48\textwidth}
%		\begin{center}
%			\includegraphics[width=225pt, height=112.5pt]{\dataPath/test_half_disk_final_mesh.eps}
%			\caption{Final mesh obtained with the adaptive defeaturing strategy presented in Chapter~\ref{ch:adaptivedefeaturing}, where the feature has been added by trimming.} \label{fig:meshadapt}
%		\end{center}
%	\end{subfigure}
%~
%	\begin{subfigure}[t]{0.48\textwidth}
%	\begin{center}
%		\includegraphics[width=225pt, height=112.5pt]{\dataPath/test_half_disk_hole_never_added_final_mesh.eps}
%		\caption{Final mesh obtained with a standard IGA mesh refinement strategy.} \label{fig:meshwrongadapt}
%	\end{center}
%	\end{subfigure}
%\caption{Numerical test \ref{sec:halfdiscwithhole} -- Final meshes obtained with adaptive defeaturing and with a standard mesh refinement strategy. \TODO} \label{fig:halfdiscmeshes}
%\end{figure}

The results are presented in Figure~\ref{fig:halfdisc}, and the final meshes obtained with each refinement strategy are drawn in Figure~\ref{fig:halfdiscmeshes}. As it is to be expected, the $C^0$-lines influence the refined mesh as the approximation is more accurate in the central bi-linear patch than in the bi-quadratic patches. 
The orange curves in Figure~\ref{fig:halfdisc} correspond to the mesh in Figure~\ref{fig:halfdiscmeshes} (right) and represent the behavior of the total error if the feature is never added. \changes{We can see that both the error and estimator present a plateau after the first iterations. This is expected, since after a few iterations in which the numerical component of the total error is reduced, its defeaturing component begins to dominate.}
Let us now analyze the results obtained with the adaptive strategy of Section~\ref{sec:adaptive}, represented as blue lines with circles in Figure~\ref{fig:halfdisc}. 
In this case, the feature is added after the third iteration, and the overall error converges as the inverse of the number of degrees of freedom $N_\text{dof}$, that is, as $N_\text{dof}^{-\frac{p}{2}}$, as expected. We verify that the geometrical feature is not added too late, otherwise we would first observe a plateau in the overall error followed by a large drop, followed again by a normal convergence. At the opposite, if the feature was added too early in the geometrical model, the convergence would not be affected, but it would be computationally more costly. Moreover, the discrete defeaturing error estimator follows very well the behavior of the error with a relatively low effectivity index of $9.41$ on average, confirming once again its efficiency and reliability. 

\begin{figure}
	\begin{subfigure}[t]{0.48\textwidth}
		\begin{center}
			\includegraphics[width=200pt, height=100pt]{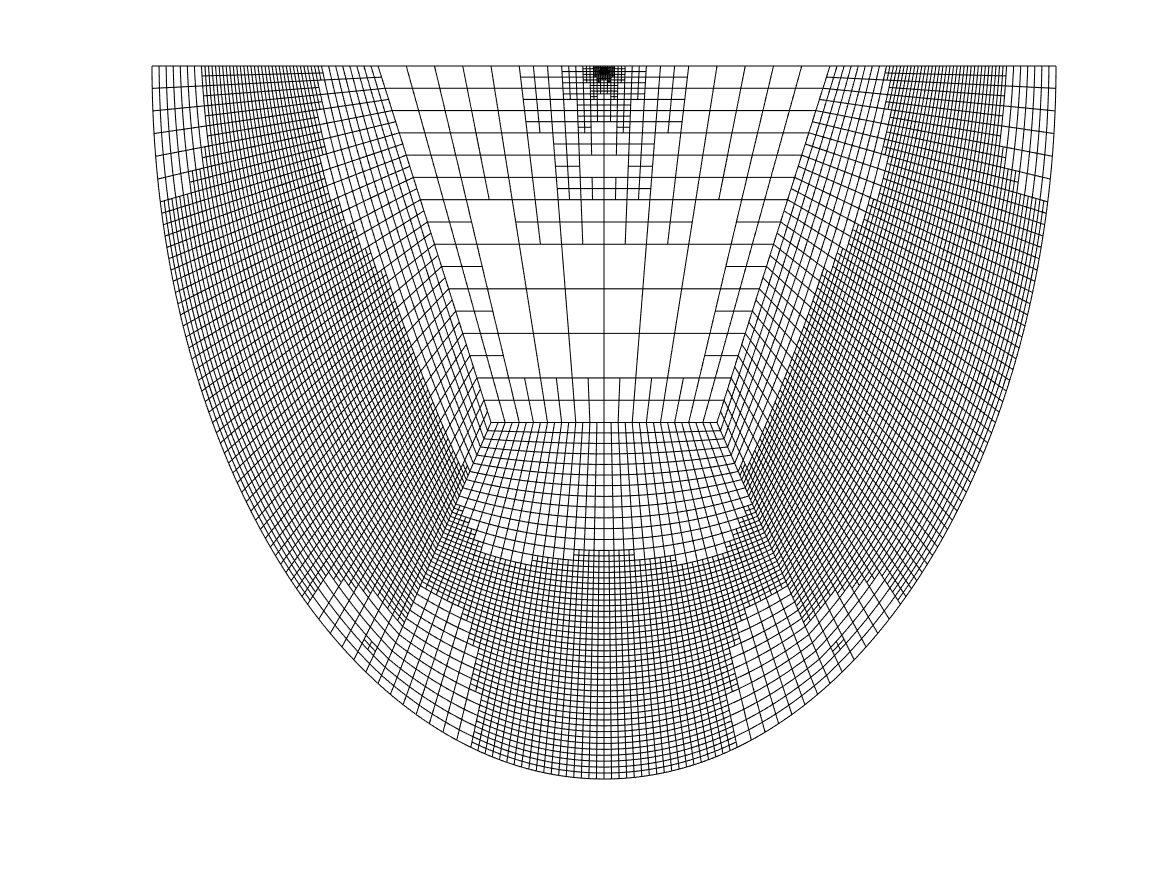}
			\caption{Final mesh obtained with the adaptive defeaturing strategy presented in Section~\ref{sec:adaptive}, where the feature has been added by trimming.} \label{fig:meshadapt}
		\end{center}
	\end{subfigure}
	~
	\begin{subfigure}[t]{0.48\textwidth}
		\begin{center}
			\includegraphics[width=200pt, height=100pt]{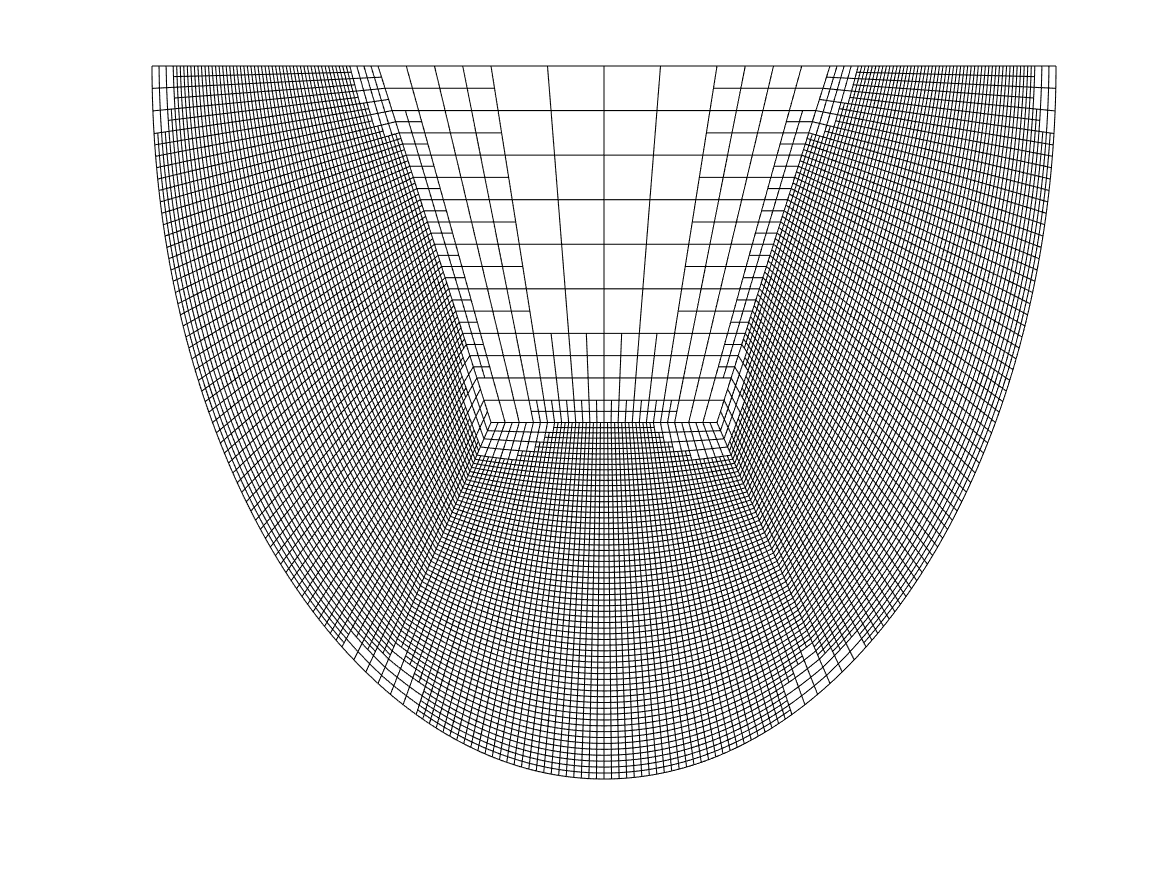}
			\caption{Final mesh obtained with a standard IGA mesh refinement strategy, without geometric refinement.} \label{fig:meshwrongadapt}
		\end{center}
	\end{subfigure}
	\caption{Numerical test \ref{sec:halfdiscwithhole} -- Final meshes obtained with adaptive defeaturing, and with a standard mesh refinement strategy without geometric refinement.} \label{fig:halfdiscmeshes}
\end{figure}

%Then, we compare the results with the ones obtained with the adaptive strategy in which the feature is never added to the geometrical model, represented as orange lines with squares in Figure~\ref{fig:halfdisc}. We observe a plateau of the overall error with respect to the number of degrees of freedom beginning after the third iteration. This indicates that the feature should be added to the geometrical model after the third iteration, since the plateau would otherwise begin later. Thus the proposed combined adaptive strategy does not add the feature too early nor too late. Indeed, starting from iteration~$4$, the defeaturing component of the error dominates over the numerical component, and even if one continues to refine the mesh, the overall error cannot decrease any further if the geometrical model is not refined. Furthermore, the discrete defeaturing error estimator follows the behavior of the error, and in particular, the effectivity index in the last iterations is smaller than the one in the first iterations. This is coherent with what has been observed in the previous numerical experiments of Section~\ref{sec:cvtest}, since the defeaturing contribution of the error dominates in the last iterations. We finally remark that the classical mesh adaptive strategy without geometric refinement does not see the feature, and therefore, it cannot be used to estimate the error in the presence of defeaturing. 

\subsubsection{Positive feature}\label{sec:positiveadaptivestrategy}
Let us now consider the same problem setting as in Section \ref{sec:cvtestpos}, i.e., a defeaturing problem on an L-shaped domain containing a positive fillet feature, and let us fix $\varepsilon=0.1$. As in the previous Section~\ref{sec:halfdiscwithhole}, we first perform the adaptive strategy proposed in Section~\ref{sec:adaptive}; then, we perform the classical mesh refinement adaptive strategy without geometric adaptivity, that is, the feature $F:=F_\varepsilon$ is never added to the geometrical model. In both cases, we start with a mesh containing one element per direction and per patch (see Figure~\ref{fig:patchesfillet} for the patch decomposition of~$\Omega_0$), and the algorithm is stopped whenever the total number of degrees of freedom $N_\mathrm{dof}$ exceeds $10^4$. If $F$ is not in the geometrical model, recall that the total number of degrees of freedom $N_\mathrm{dof}$ accounts for the number of degrees of freedom of the Galerkin approximation of problem~(\ref{eq:weaksimplpb}) in $\Omega_0$, to which we add the number of degrees of freedom of the Galerkin approximation of problem~(\ref{eq:weakfeaturepb}) in $\tilde F:=\tilde F_\varepsilon$. If the positive fillet $F$ is marked for refinement, then it is added to the geometrical model as a trimmed patch, the considered patch being its bounding box $\tilde F$ as in Figure~\ref{fig:Lshapefillet}. In this case, we recall that $N_\mathrm{dof}$ accounts for the number of active degrees of freedom of the discrete problem in the exact (multipatch trimmed) domain $\Omega := \Omega_\varepsilon$. 
In this experiment, we use $\alpha_N = \alpha_D = 1$, and we choose the marking parameter $\theta = 0.5$. During the refining phase, we impose the mesh to be $\mathcal T$-admissible of class~$2$, and the marked elements are dyadically refined at each iteration. 

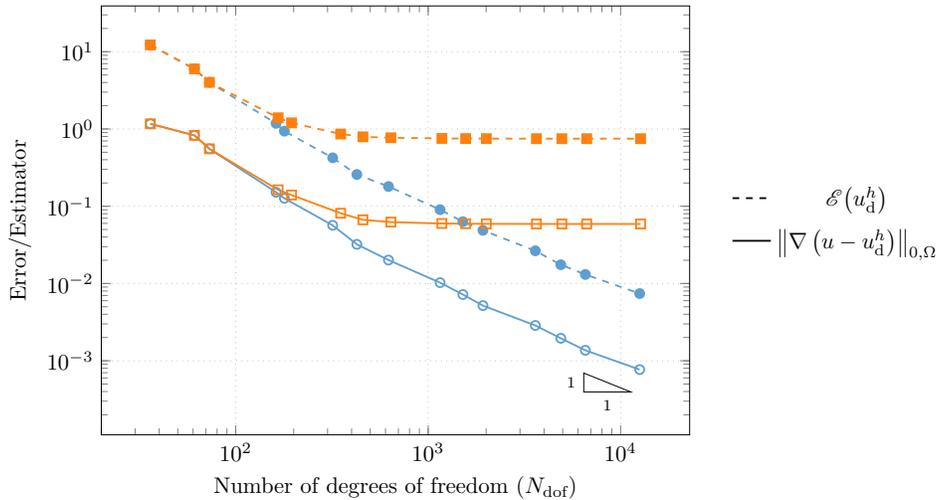
\begin{figure}
	%		\centering
	%		\begin{subfigure}[b]{0.5\textwidth}
	\begin{center}
		\begin{tikzpicture} [scale=0.85]
		\begin{axis}[xmode=log, ymode=log, legend style={row sep=5pt, at={(1.25,0.6)}, legend columns=1, anchor=north, draw=none}, xlabel=Number of degrees of freedom ($N_\text{dof}$), ymin=1.1e-4, ylabel=Error/Estimator, width=0.65\textwidth, height = 0.5\textwidth, grid style=dotted,grid]
		\addlegendimage{black,dashed, thick, mark options=solid}
		\addlegendentry{E}
		\addlegendimage{,black,thick}
		\addlegendentry{E}
		%		\addplot[mark=none, densely dotted, black, thick] table [x=ndof, y=ndofm1, col sep=comma] {\dataPath/test_half_disc_std.csv};
		\addplot[mark=*, c4, dashed, thick, mark options=solid] table [x=ndof, y=est, col sep=comma] {data/test_fillet_adapt_std.csv};
		\addplot[mark=o, c4, thick] table [x=ndof, y=err, col sep=comma] {data/test_fillet_adapt_std.csv};
		\addplot[mark=square*, c2, dashed, thick, mark options=solid] table [x=ndof, y=est, col sep=comma] {data/test_fillet_adapt_no_hole.csv};
		\addplot[mark=square, c2, thick] table [x=ndof, y=err, col sep=comma] {data/test_fillet_adapt_no_hole.csv};
		\logLogSlopeReverseTriangle{0.82}{0.08}{0.1}{1}{black}
		\legend{$\mathscr E\big(u_\mathrm d^h\big)$, $\left\|\nabla\left(u-u_\mathrm d^h\right)\right\|_{0,\Omega}$}; % ,$N_\text{dof}^{-1}$
		\end{axis}
		\end{tikzpicture}
		\caption{Numerical test \ref{sec:positiveadaptivestrategy} -- Convergence of the discrete defeaturing error and estimator with respect to the number of degrees of freedom. In {\color{c4}blue circles}, we consider the adaptive strategy of Section~\ref{sec:adaptive} and specialized to IGA, for which the feature is added after iteration~$4$. In {\color{c2}orange squares}, we only consider mesh refinements, i.e., the feature is never added to the geometry.}\label{fig:filletadapt}
	\end{center} 
\end{figure}

The results are presented in Figure~\ref{fig:filletadapt}. %, and the final meshes obtained with both adaptive strategies can be found in Figure~\ref{fig:filletmeshes}. 
%Since the exact solution is regular, the two final meshes are expected to be almost uniformly refined and very similar, up to trimming; this can indeed be observed. 
%Moreover, t
The fillet is added after the third iteration of the adaptive strategy of Section~\ref{sec:adaptive}, whose results are represented as blue lines with circles in Figure~\ref{fig:filletadapt}. As for the previous numerical experiment of Section~\ref{sec:halfdiscwithhole}, we can see that the feature is not added too late, as there is no plateau nor a large drop in the convergence of the error and estimator. The overall error converges as the inverse of the number of degrees of freedom $N_\text{dof}$, that is, as $N_\text{dof}^{-\frac{p}{2}}$, as expected. Moreover, the discrete defeaturing error estimator follows very well the behavior of the overall error with a relatively low effectivity index of $8.6$ on average, confirming once again its efficiency and reliability. 

Then, we compare the results with the ones obtained with the adaptive strategy in which the feature is never added to the geometrical model, represented as orange lines with squares in Figure~\ref{fig:filletadapt}. Beginning from iteration~$4$, we observe a plateau in the overall error with respect to the number of degrees of freedom, as the defeaturing component of the error starts to dominate over the numerical component. The overall error cannot decrease any further if the geometrical model is not refined, i.e., the fillet should be added. 
The discrete defeaturing error estimator follows the behavior of the error, and in particular, the effectivity index in the last eight iterations is equal to $12.6$ in average. This is coherent with what has been observed in the numerical experiment of Section~\ref{sec:cvtestpos}. 

\changes{\begin{remark}
Note that the L-shaped domain $\Omega_0$ suggests the case in which the problem at hand displays a singularity at the re-entrant corner of $\Omega_0$. This case is of great importance, as the presence of the positive feature $F$ in the exact geometry $\Omega$ should remove the singularity. However, it is not trivial to design such a numerical experiment in which one can compare the exact and the defeatured solutions, when dealing with Neumann boundary conditions around the feature. This will thus be the subject of a further study in which Dirichlet boundary conditions are also allowed on the feature's boundary.
\end{remark}}

%\begin{figure}
%	\begin{subfigure}[t]{0.48\textwidth}
%		\centering
%		\begin{center}
%			\includegraphics[scale = 0.5,trim=70 0 0 0,clip]{\dataPath/test_fillet_final_mesh.eps}
%			\caption{Final mesh obtained with the adaptive defeaturing strategy of Chapter~\ref{ch:adaptivedefeaturing}, where the feature has been added by trimming.} \label{fig:meshadaptfillet}
%		\end{center}
%	\end{subfigure}
%	~
%	\begin{subfigure}[t]{0.48\textwidth}
%		\centering
%		\begin{center}
%			\includegraphics[scale = 0.5,trim=70 0 0 0,clip]{\dataPath/test_fillet_hole_never_added_final_mesh.eps}
%			\caption{Final mesh obtained with a standard IGA mesh refinement strategy.} \label{fig:meshwrongadaptfillet}
%		\end{center}
%	\end{subfigure}
%	\caption{Numerical test \ref{sec:positiveadaptivestrategy} -- Final meshes obtained with adaptive defeaturing and with a standard mesh refinement strategy. The trimming curve representing the fillet's boundary $\gamma$ is drawn in orange in~(\ref{sub@fig:meshadaptfillet}).} \label{fig:filletmeshes}
%\end{figure}

\subsection{Fully adaptive strategy in a geometry with many features} \label{sso:multi}
In this last numerical experiment \changes{inspired from \cite[Section~6.1.3]{paper3multifeature}, we analyze the proposed fully adaptive strategy in a geometry with many features, and we study the impact of the size of the features on the discrete defeaturing error and estimator. In particular, since the defeaturing contribution of the estimator depends upon the size of the features and the size of the solution gradients ``around'' the feature, we will show that the proposed adaptive strategy is able to tell when small features count more than big ones, even in presence of numerical approximation errors.}

More precisely, let $\Omega_0:= (0,1)^2$ be the fully defeatured domain, and let $\Omega := \Omega_0 \setminus \bigcup_{k=1}^{N_f} \overline{F^k}$, where $N_f=27$ and the features $F^k$ are some circular holes of radii in the interval $(0,\,5\cdot 10^{-3})$, distributed with some randomness in $\Omega_0$, as illustrated in Figure~\ref{fig:geom27holes}. For the sake of reproducibility, the values of the centers \changes{and diameters} of the features are reported in Table~\ref{tbl:radiusescenters}. Note in particular that this geometry satisfies Assumption~\ref{def:separated}. 

\begin{table}
	\centering
	{\def\arraystretch{1.2}
		\begin{tabular}{@{}cccccccccc@{}}
			\hline
			Feature index $k$ & $1$ & $2$ & $3$ & $4$ & $5$ & $6$ & $7$ & $8$ & $9$ \\
			\hline
			Diameter $[\cdot 10^{-2}]$ & $8.13$ & $6.64$ & $3.89$ & $7.40$ & $8.18$ & $6.00$ & $0.85$ & $9.22$ & $0.54$ \\
			Center $[\cdot 10^{-2}]$ & $0.98$ & $2.84$ & $5.46$ & $7.16$ & $8.99$ & $0.67$ & $3.12$ & $4.95$ & $7.06$ \\
			& $0.93$ & $1.24$ & $0.57$ & $0.93$ & $1.04$ & $3.40$ & $3.03$ & $3.08$ & $2.48$ \\
			\hline\hline
			Feature index $k$ & $10$  & $11$  & $12$  & $13$  & $14$ & $15$  & $16$  & $17$  & $18$ \\
			\hline
			Diameter $[\cdot 10^{-2}]$ & $5.27$ & $1.19$ & $3.80$ & $8.13$ & $2.44$ & $8.84$ & $7.13$ & $3.78$ & $2.49$ \\
			Center $[\cdot 10^{-2}]$ & $8.86$ & $0.67$ & $3.28$ & $5.01$ & $7.44$ & $8.93$ & $1.10$ & $2.44$ & $5.45$ \\
			& $2.90$ & $5.35$ & $4.46$ & $5.09$ & $4.88$ & $5.07$ & $6.93$ & $6.78$ & $7.73$ \\
			\hline\hline
			Feature index $k$ & $19$  & $20$  & $21$  & $22$  & $23$  & $24$  & $25$ & $26$  & $27$ \\
			\hline
			Diameter $[\cdot 10^{-2}]$ & $2.53$ & $6.67$ & $0.50$ & $6.85$ & $6.20$ & $7.47$ & $8.77$ & $2.00$ & $1.00$ \\
			Center $[\cdot 10^{-2}]$ & $7.27$ & $9.21$ & $0.22$ & $3.26$ & $5.01$ & $7.06$ & $8.99$ & $4.00$ & $1.00$ \\
			& $7.33$ & $6.96$ & $8.24$ & $9.15$ & $9.10$ & $8.78$ & $8.98$ & $7.00$ & $9.00$ \\
			\hline
	\end{tabular}}
	\caption{Numerical test~\ref{sso:multi} -- Data of the $27$ circular features.} \label{tbl:radiusescenters}
\end{table}

We are interested in the solution of problem~(\ref{eq:originalpb}) defined in $\Omega$, and we solve the Galerkin formulation of the defeatured problem~(\ref{eq:weaksimplpb}) in $\Omega_0$. We consider Poisson's problem with $f(x,y) := -128e^{-8(x+y)}$ in $\Omega_0$, $g_D(x,y) := e^{-8(x+y)}$ on 
$$\Gamma_D := \big( [0,1)\times\{0\} \big) \cup \big( \{0\}\times[0,1) \big),$$
the bottom and left sides, $g(x,y):= -8e^{-8(x+y)}$ on $\partial \Omega_0 \setminus \overline{\Gamma_D}$, and $g \equiv 0$ on $\partial F^k$ for all $k=1,\ldots,N_f$. That is, as illustrated in Figure~\ref{fig:exactsolfinal}, the exact solution $u$ has a high gradient close to the bottom left corner, and it is almost constantly zero in the top right area of the domain. \changes{Therefore, one can expect: 
\begin{itemize}
	\item the mesh to be refined around the bottom left angle,
	\item and the presence of small features around the bottom left angle to be more important than the one of large features around the top right angle, with respect to the solution's accuracy.
\end{itemize}}

\begin{figure}
	\begin{subfigure}[t]{0.48\textwidth}
		\begin{center}
			\begin{tikzpicture}[scale=5.8]
			\draw[thick] (0,0) -- (0,1) -- (1,1) -- (1,0) -- cycle;
			\draw[c1,thick] (0.097984328699318,0.092663690863534) circle (0.040818234590090);
			\draw[c1,thick] (0.283884293965162,0.123563491365580) circle (0.033396825383881);
			\draw[c1,thick] (0.546112220339815,0.056574896706227) circle (0.019919282299261);
			\draw[c1,thick] (0.715916527014347,0.093265457598229) circle (0.037000758141615);
			\draw[c1,thick] (0.898581968051930,0.104463986172862) circle (0.040763485970873);
			\draw[c1,thick] (0.067305141757723,0.339769841694328) circle (0.030034479317649);
			\draw[c1,thick] (0.311761232757188,0.303173986566693) circle (0.004299706450270);
			\draw[c1,thick] (0.495145556524009,0.307616854246403) circle (0.046235800109005);
			\draw[c1,thick] (0.705998218664498,0.248238323585443) circle (0.002659781531656);
			
			\draw[c1,thick] (0.886286875701112,0.290351799644088) circle (0.026702495267737);
			\draw[c1,thick] (0.067359418419866,0.534641171336466) circle (0.005885327661975);
			\draw[c1,thick] (0.328109457773039,0.446027904596783) circle (0.019014299024100);
			\draw[c1,thick] (0.500590433180793,0.509270838208646) circle (0.040283253301511);
			\draw[c1,thick] (0.743939933090058,0.487966072170030) circle (0.012409591557169);
			\draw[c1,thick] (0.893169207667069,0.507196247921792) circle (0.044442265909357);
			\draw[c1,thick] (0.110235870451937,0.693080838594426) circle (0.035264679475915);
			\draw[c1,thick] (0.244342487141727,0.677497408280121) circle (0.018814843808214);
			\draw[c1,thick] (0.545240931016100,0.773012684854573) circle (0.012891961099593);
			
			\draw[c1,thick] (0.726683377942314,0.732602609237550) circle (0.012785374523927);
			\draw[c1,thick] (0.920762287773320,0.695958574121097) circle (0.033724359452694);
			\draw[c1,thick] (0.022384764760657,0.823728658042989) circle (0.002586188504724);
			\draw[c1,thick] (0.325708044929628,0.914763687652304) circle (0.034528856760891);
			\draw[c1,thick] (0.500755877184029,0.910427640482701) circle (0.031027812215100);
			\draw[c1,thick] (0.705821367824273,0.878409519462713) circle (0.037668462996873);
			\draw[c1,thick] (0.899163331742718,0.898273740857340) circle (0.043725564834567);
			\draw[c1,thick] (0.4,0.7) circle (0.010000000000000);
			\draw[c1,thick] (0.1,0.9) circle (0.005000000000000);
			
			\draw[c1,thick] (0.097984328699318,0.092663690863534) node{\tiny $1$};
			\draw[c1,thick] (0.283884293965162,0.123563491365580) node{\tiny $2$};
			\draw[c1,thick] (0.546112220339815,0.056574896706227) node{\tiny $3$};
			\draw[c1,thick] (0.715916527014347,0.093265457598229) node{\tiny $4$};
			\draw[c1,thick] (0.898581968051930,0.104463986172862) node{\tiny $5$};
			\draw[c1,thick] (0.067305141757723,0.339769841694328) node{\tiny $6$};
			\draw[c1,thick] (0.3,0.303173986566693) node[right]{\tiny $7$}; % 11761232757188
			\draw[c1,thick] (0.495145556524009,0.307616854246403) node{\tiny $8$};
			\draw[c1,thick] (0.715,0.248238323585443) node[left]{\tiny $9$}; % 05998218664498
			
			\draw[c1,thick] (0.886286875701112,0.290351799644088) node{\tiny $10$};
			\draw[c1,thick] (0.067,0.534641171336466) node[right]{\tiny $11$};
			\draw[c1,thick] (0.32,0.446027904596783) node[left]{\tiny $12$};
			\draw[c1,thick] (0.500590433180793,0.509270838208646) node{\tiny $13$};
			\draw[c1,thick] (0.743939933090058,0.487966072170030) node[left]{\tiny $14$};
			\draw[c1,thick] (0.893169207667069,0.507196247921792) node{\tiny $15$};
			\draw[c1,thick] (0.110235870451937,0.693080838594426) node{\tiny $16$};
			\draw[c1,thick] (0.244342487141727,0.677497408280121) node[right]{\tiny $17$};
			\draw[c1,thick] (0.545240931016100,0.773012684854573) node[left]{\tiny $18$};
			
			\draw[c1,thick] (0.726683377942314,0.732602609237550) node[left]{\tiny $19$};
			\draw[c1,thick] (0.920762287773320,0.695958574121097) node{\tiny $20$};
			\draw[c1,thick] (0.01,0.823728658042989) node[right]{\tiny $21$}; % 22384764760657
			\draw[c1,thick] (0.325708044929628,0.914763687652304) node{\tiny $22$};
			\draw[c1,thick] (0.500755877184029,0.910427640482701) node{\tiny $23$};
			\draw[c1,thick] (0.705821367824273,0.878409519462713) node{\tiny $24$};
			\draw[c1,thick] (0.899163331742718,0.898273740857340) node{\tiny $25$};
			\draw[c1,thick] (0.4,0.7) node[right]{\tiny $26$};
			\draw[c1,thick] (0.1,0.9) node[right]{\tiny $27$};
			\draw[white] (0,-0.012) -- (1,-0.012);
			\end{tikzpicture}
			\caption{Exact domain $\Omega$ with $27$ features.} \label{fig:geom27holes}
		\end{center}
	\end{subfigure}
	~
	\begin{subfigure}[t]{0.48\textwidth}
		\begin{center}
			\includegraphics[scale = 0.115]{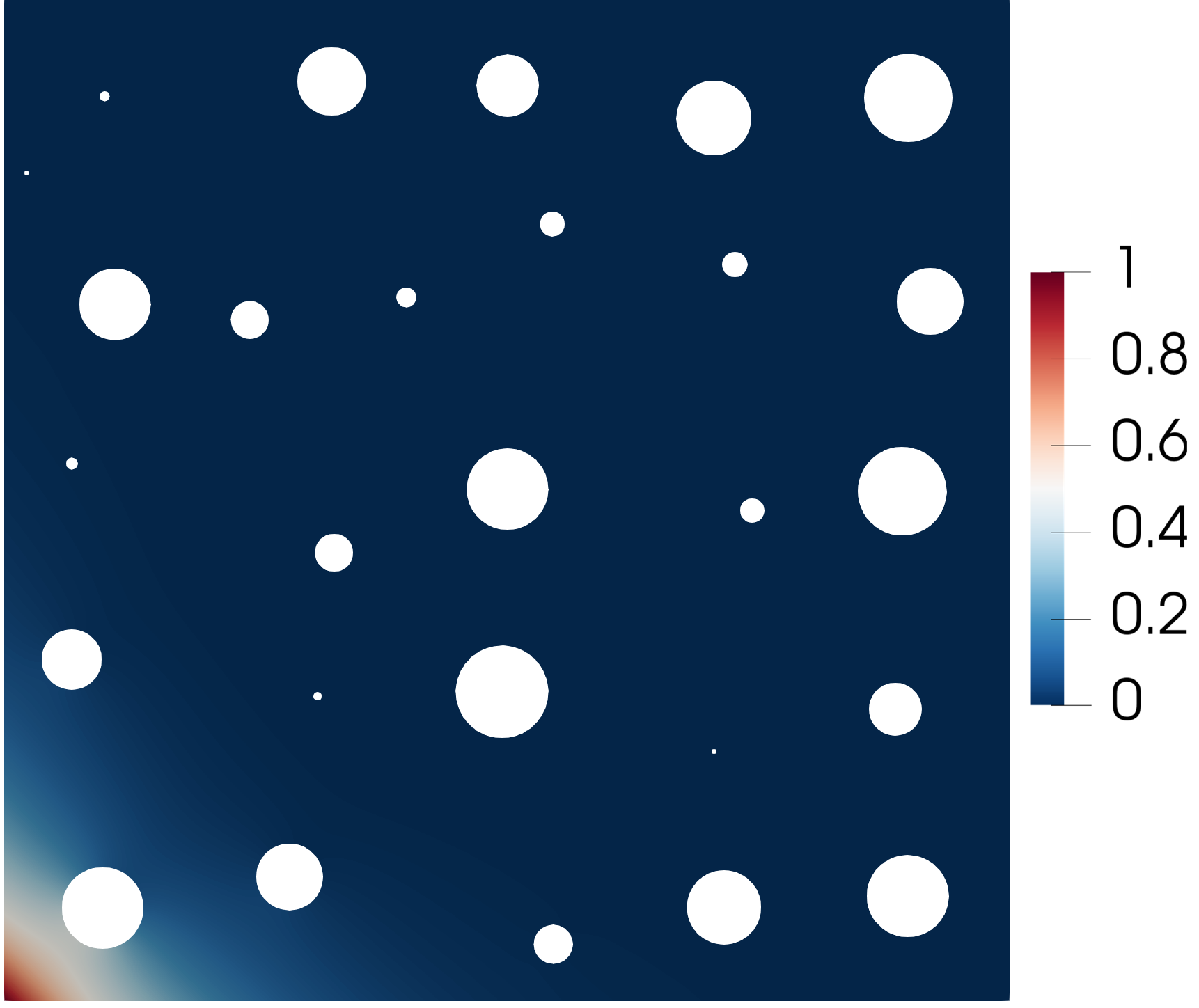}
			\caption{Exact solution in the exact domain.} \label{fig:exactsolfinal}
		\end{center}
	\end{subfigure}
	~
	\begin{subfigure}[t]{0.48\textwidth}
	\begin{center}
		\includegraphics[scale = 0.305,trim=55 70 30 40, clip]{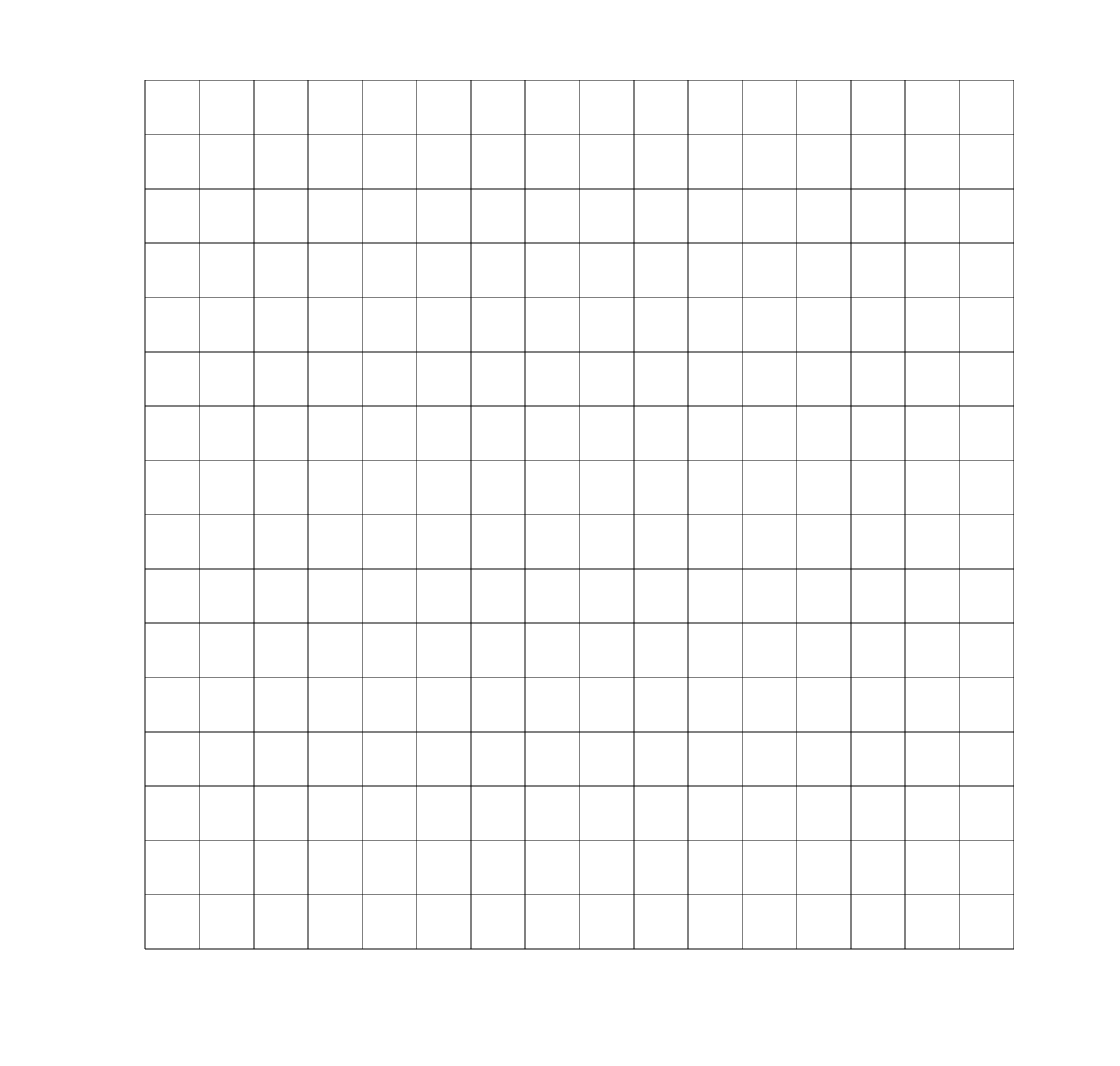} %{}
		\caption{Initial mesh.} \label{fig:initialmeshfinal}
	\end{center}
	\end{subfigure}
	~
	\begin{subfigure}[t]{0.48\textwidth}
	\begin{center}
		\includegraphics[scale = 0.305,trim=55 70 30 40, clip]{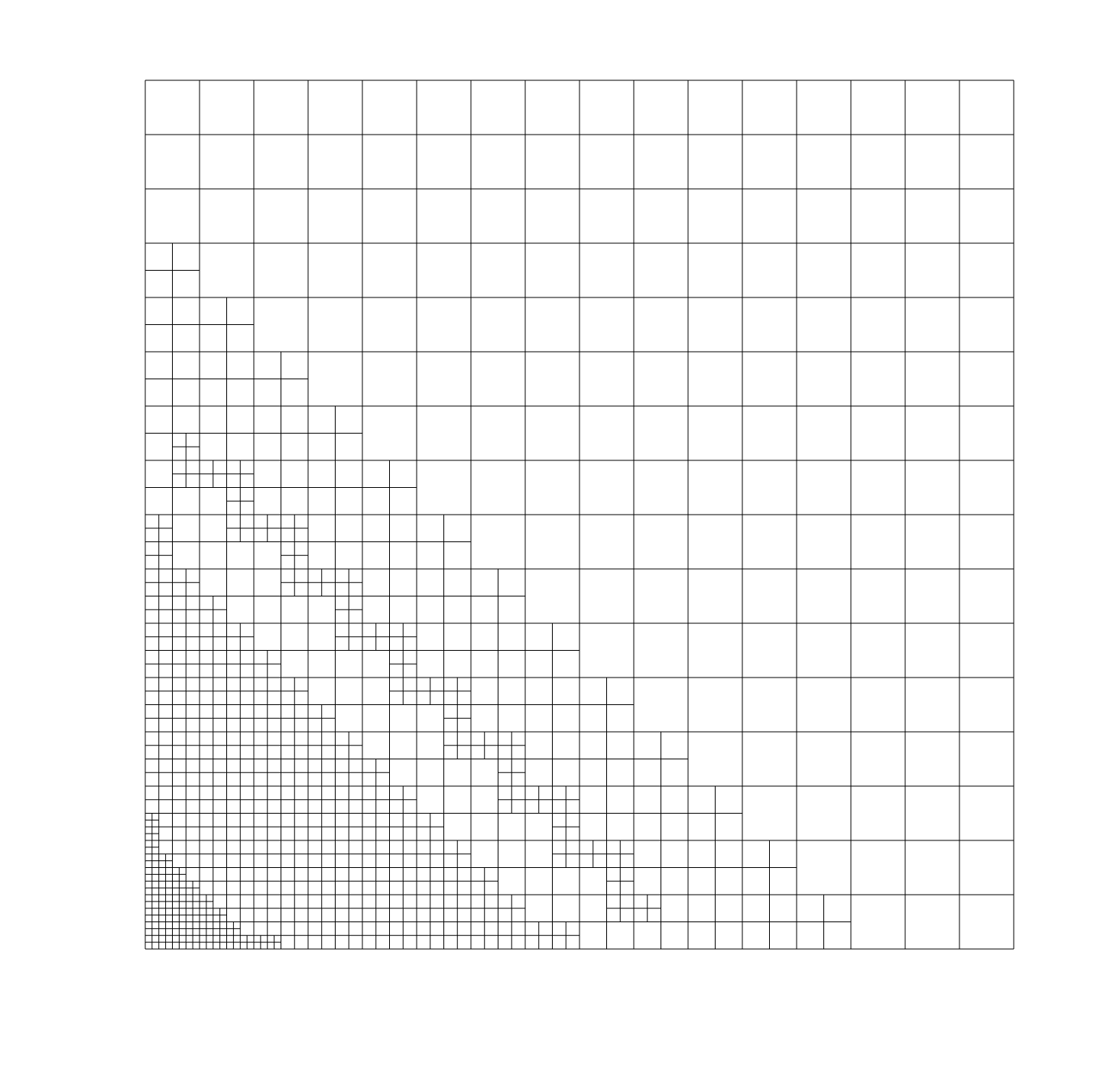} %{}
		\caption{Final mesh obtained without geometric refinement.} \label{fig:finalmeshbadfinal}
	\end{center}
	\end{subfigure}
	~
	\begin{subfigure}[t]{0.48\textwidth}
		\begin{center}
			\includegraphics[scale = 0.305,trim=55 70 30 40, clip]{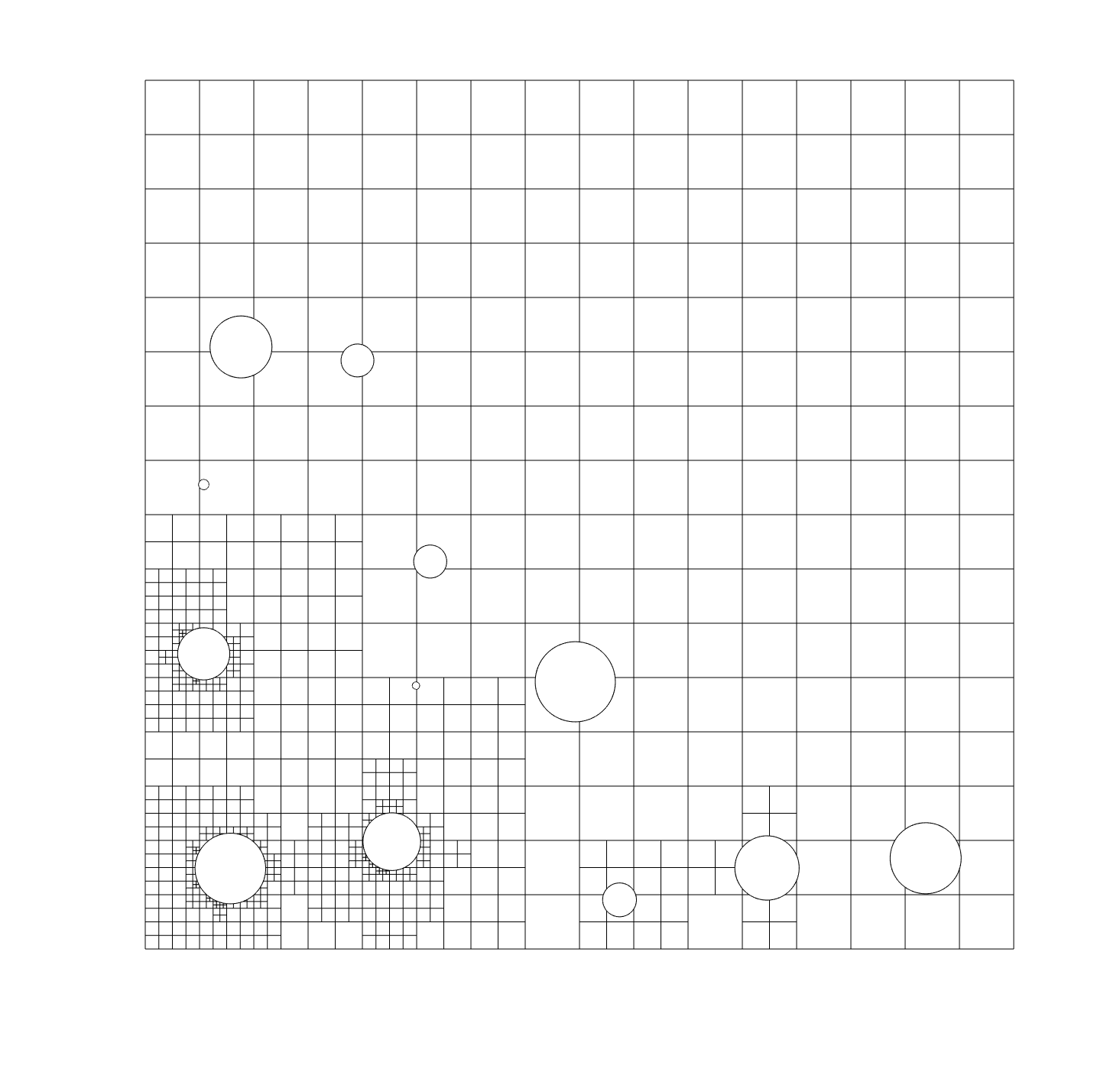}%{}
			\caption{Mesh and geometry obtained with the combined refinement strategy at iteration $9$.} \label{fig:iter10meshfinal}
		\end{center}
	\end{subfigure}
	~
	\begin{subfigure}[t]{0.48\textwidth}
		\begin{center}
			\hspace{0.3cm}
			\includegraphics[scale = 0.31,trim=41 87 30 38, clip]{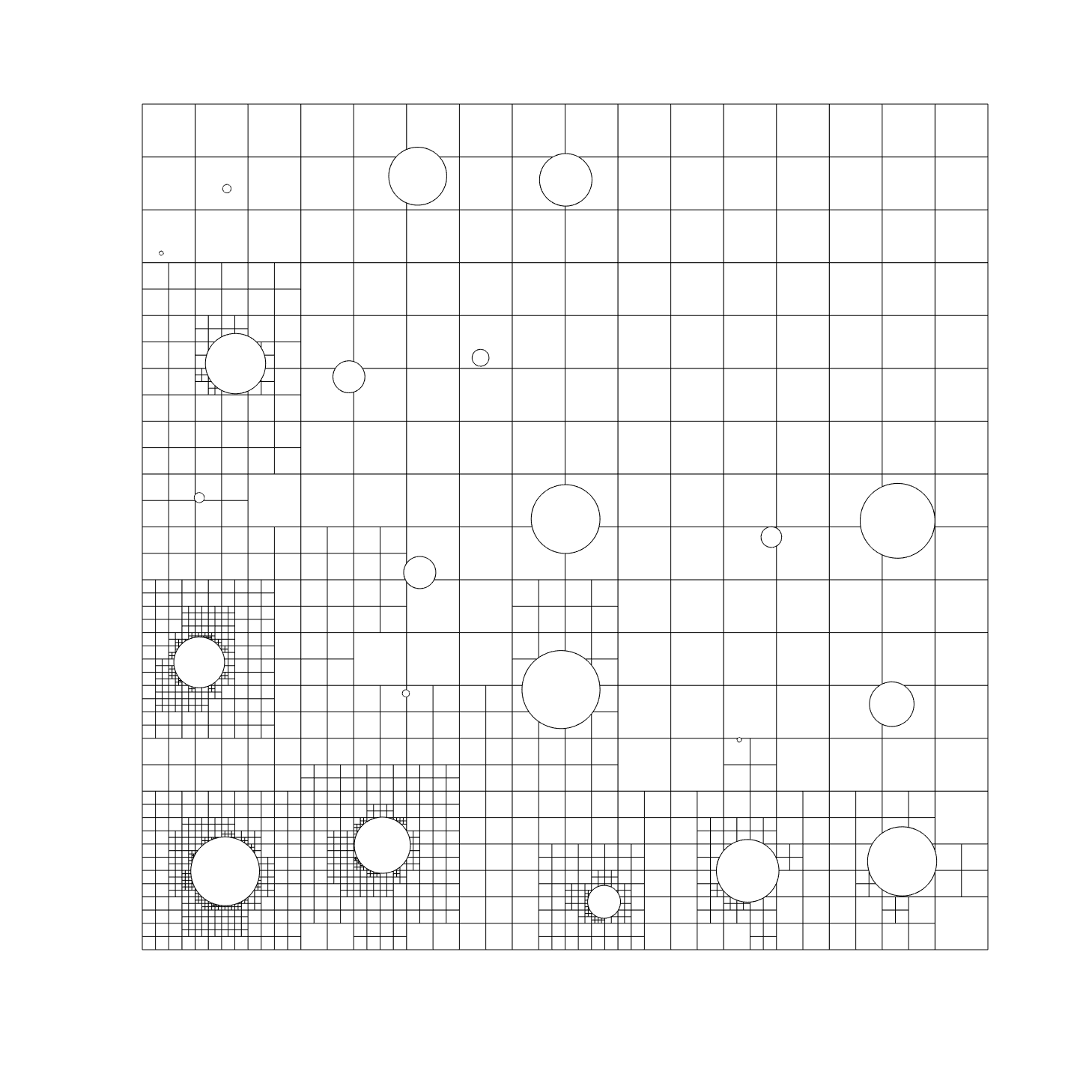} %{}
			\caption{Final mesh and geometry obtained with the combined refinement strategy.} \label{fig:finalmeshfinal}
		\end{center}
	\end{subfigure}
	\caption{Numerical test \ref{sso:multi} -- Considered geometry, initial mesh, and intermediate and final meshes and geometries obtained with adaptive strategies, with and without geometric refinement.} \label{fig:cvmfomeshes}
\end{figure}

\changes{As for the numerical experiments of Section~\ref{sec:adaptstrategytests}, we} first perform the adaptive strategy described in Section~\ref{sec:adaptive}, starting from the fully defeatured domain $\Omega_0$ and from a uniform mesh of $16\times 16$ elements in $\Omega_0$, as illustrated in Figure~\ref{fig:initialmeshfinal}. Then, we perform the same adaptive strategy but without geometric refinement, that is, with this second strategy, holes are never added to the geometrical model. \changes{To do so, we do not take into account the defeaturing contributions $\mathscr{E}_D^k\big(u_\mathrm d^h\big)$ in the MARK module (see Section~\ref{sec:mark}).} In this experiment, we use THB-splines of degree $p=3$, we consider $\alpha_N = 1$ and $\alpha_D = 4$, and we choose $\theta = 0.3$ as marking parameter. 
For the REFINE module precised for IGA in Section~\ref{ss:refineiga}, we impose the mesh to be $\mathcal T$-admissible of class $3$, and the elements are dyadically refined. Moreover, when a feature is marked for refinement, it is added to the geometrical model by trimming, as explained in Section~\ref{sec:gen}. 

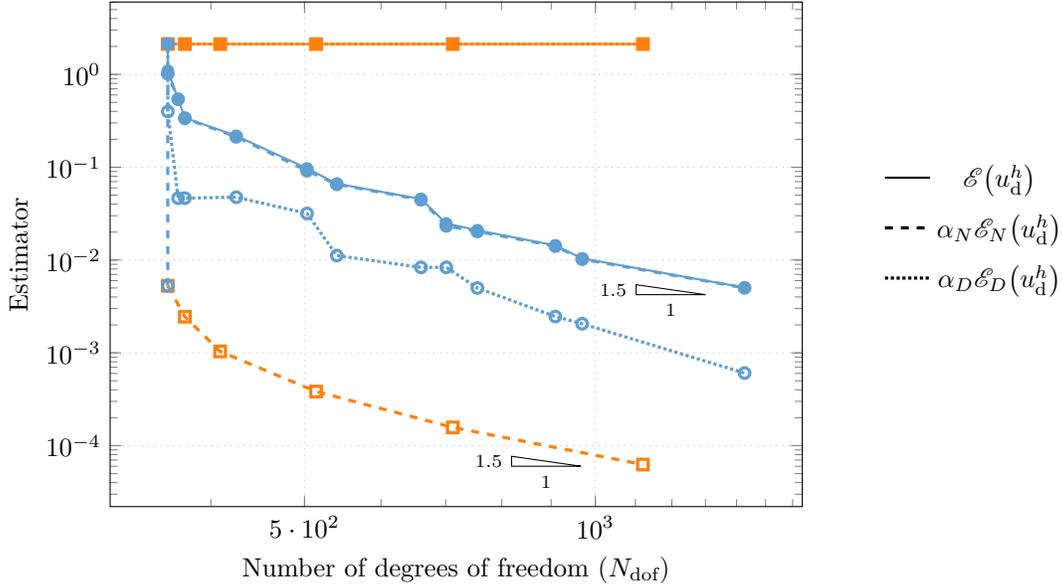
\begin{figure}
	\begin{center}
		\begin{tikzpicture}[scale=1]
		\begin{axis}[xmode=log, ymode=log, legend style={row sep=5pt, at={(1.25,0.7)}, legend columns=1, anchor=north, draw=none}, xlabel=Number of degrees of freedom ($N_\text{dof}$), ylabel=Estimator, width=0.65\textwidth, height = 0.5\textwidth, grid style=dotted,grid,
		xtick = {500,1000}, 
		minor xtick = {300, 400, 500, 600,700,800,900,1000, 1100, 1200, 1300, 1400, 1500, 1600},
		xticklabels={$5\cdot 10^2$, ${10}^3$}] 
		\addlegendimage{black, thick, mark options=solid}
		\addlegendentry{E}
		\addlegendimage{dashed,black,very thick,mark options=solid}
		\addlegendentry{E}
		\addlegendimage{densely dotted,black,very thick,mark options=solid}
		\addlegendentry{E}
		\addplot[mark=square*, c2, thick, mark options=solid] table [x=ndof, y=est2, col sep=comma] {data/final_simulation_no_holes2.csv};
		\addplot[mark=square, c2, densely dotted, very thick, mark options=solid] table [x=ndof, y=est_mod4, col sep=comma] {data/final_simulation_no_holes2.csv};
		\addplot[mark=square, c2, dashed, very thick, mark options=solid] table [x=ndof, y=est_num, col sep=comma] {data/final_simulation_no_holes2.csv};
		\addplot[mark=*, c4, thick, mark options=solid] table [x=ndof, y=est2, col sep=comma] {data/final_simulation2.csv};
		\addplot[mark=o, c4, densely dotted, very thick, mark options=solid] table [x=ndof, y=est_mod4, col sep=comma] {data/final_simulation2.csv};
		\addplot[mark=o, c4, dashed, very thick, mark options=solid] table [x=ndof, y=est_num, col sep=comma] {data/final_simulation2.csv};
		\logLogSlopeReverseTriangle{0.58}{0.1}{0.08}{1.5}{black}
		\logLogSlopeReverseTriangle{0.76}{0.1}{0.42}{1.5}{black}
		\legend{$\mathscr E\big(u_\mathrm d^h\big)$, $\alpha_N\mathscr E_N\big(u_\mathrm d^h\big)$, $\alpha_D\mathscr E_D\big(u_\mathrm d^h\big)$};
		\end{axis}
		\end{tikzpicture}
		\caption{Numerical test \ref{sso:multi} -- Convergence of the \changes{overall error estimator and of its numerical error component,} with respect to the number of degrees of freedom. In {\color{c4}blue circles}, we consider the adaptive strategy of Section~\ref{sec:adaptive} \changes{specialized to IGA,} in which features are iteratively added to the geometric model. Note that the dashed and solid blue curves are basically superposed. In {\color{c2}orange squares}, we only consider mesh refinements, \changes{that is, the adaptive process is only steered by the numerical contribution of the overall error estimator, and features are never added to the geometry.} In this case, note that the dotted and solid orange curves are basically superposed.} \label{fig:cvmfo}
	\end{center} 
\end{figure}

The mesh and geometry obtained at iteration $9$, and the final mesh and geometry obtained with both adaptive strategies when the total number of degrees of freedom exceeds $10^3$ are represented in Figure~\ref{fig:cvmfomeshes}. Results are reported in Figure~\ref{fig:cvmfo}. The blue lines with circles correspond to the adaptive strategy of Section~\ref{sec:adaptive}, and the sets of marked features at each iteration are the following: 
$\{1\}$, $\{2,6\}$, $\emptyset$, $\emptyset$, $\{3\}$, $\{4,8,11,16\}$, $\{5\}$, $\emptyset$, $\{7,12,17\}$, $\{10,13,22\}$, $\{15\}$, $\{9,14,21,23,26,27\}$. For instance, the error estimator is divided by $10$ with the addition of only $4$ out of the $27$ features in the geometrical model, and with a number of degrees of freedom increased by slightly more than a third. \changes{Note for instance that the small feature~$F^7$ is added a few iterations before the large feature $F^{25}$, even if $F^{25}$ is an order of magnitude larger than $F^7$. This confirms our intuition on the problem and this is coherent with the results obtained in \cite{paper1defeaturing}.} Moreover, the overall estimator and its numerical and defeaturing contributions converge as $N_\mathrm{dof}^{-\frac{3}{2}}=N_\mathrm{dof}^{-\frac{p}{2}}$, as expected. 
Furthermore, the final mesh, represented in Figure~\ref{fig:finalmeshfinal}, is refined towards the lower left corner of the domain, and the first selected features are also the ones closer to that corner. This is indeed expected as the exact solution has a high gradient around that corner. 

The orange lines with squares in Figure~\ref{fig:cvmfo} correspond to the results of the adaptive strategy without geometric refinement, i.e., when the defeaturing component of the error is not considered. We can observe that convergence is lost, because the defeaturing error contribution of the estimator is and remains very high, even if the numerical error contribution keeps converging as $N_\mathrm{dof}^{-\frac{p}{2}}$. The obtained final mesh is refined around the lower left corner, reflecting the high gradient of the solution in this area, but if we do not add any feature to the geometry, one cannot obtain a more accurate solution. This is reflected by the proposed discrete defeaturing error estimator, validating the developed theory.

\section{Conclusions} \label{sec:ccl}
In this work, we have designed an adaptive analysis-aware defeaturing strategy that both performs standard mesh refinement and geometric refinement. 
The latter means that features are added to the simplified geometry when their absence is responsible for most of the solution accuracy loss. 
To steer this adaptive strategy, we have introduced a novel \textit{a posteriori} error estimator of the energy norm of the discrete defeaturing error, i.e., the error between the exact solution $u$ computed in the exact domain $\Omega$, and the numerical solution $u_0^h$ computed in the defeatured domain $\Omega_0$. 

Taking Poisson's equation as model problem, we have then considered IGA with THB-splines as a well-suited numerical method to perform the proposed algorithm. In this framework, and building upon the results of \cite{paper1defeaturing,paper3multifeature} obtained in continuous spaces, we have demonstrated the reliability of the proposed discrete defeaturing error estimator for a wide range of geometries in $\mathbb R^n$, $n=2,3$, containing $N_f\geq 1$ general complex features. To perform the adaptive strategy, we have considered trimmed and multipatch domains as mesh-preserving methods in the IGA context. 

The proposed estimator and the derived adaptive strategy have been tested on a wide range of numerical experiments. In all of them, the estimator acts as an excellent approximation of the discrete defeaturing error, and it correctly drives the adaptive strategy. In particular, it is able to correctly weight the impact of defeaturing with respect to the numerical approximation of the defeatured solution. \changes{However, the proposed estimator relies on the heuristic evaluation of the constants $\alpha_D$ and $\alpha_N$. To avoid this limitation, one could use \textit{a posteriori} error estimators based on equilibrated fluxes instead of residual ones. This is the subject of a subsequent work, see \cite{defeaturingequilibratedflux}.}

\appendix
\section{Appendix} \label{sec:appendix}
In this section, we state and give the (reference to the) proof of some results that are used throughout the paper. The symbol $\lesssim$ will be used to mean any inequality which does not depend on the mesh size nor on the size of the considered domains, but which can depend on their shape.

\begin{lemma}[Trace inequalities] \label{lemma:traceineq}
	\begin{itemize}
		\item Let $D\subset \mathbb R^n$ be a bounded Lipschitz domain, and let $h_D := \mathrm{diam}(D)$. Then for all $v\in H^1(D)$, % {\color{green!50!black}an isotropic}
		\begin{equation*} 
		\|v\|_{0,\partial D} \lesssim h_D^{-\frac{1}{2}} \|v\|_{0,D} + h_D^{\frac{1}{2}} \left\|\nabla v\right\|_{0,D}.
		\end{equation*}
		\item Let $\mathcal Q$ be a shape regular FE mesh. Then for all $K\in \mathcal Q$ and for all $v\in H^1(K)$, 
		\begin{equation*} 
		\|v\|_{0,\partial K} \lesssim h_K^{-\frac{1}{2}} \|v\|_{0,K} + h_K^{\frac{1}{2}} \left\|\nabla v\right\|_{0,K}.
		\end{equation*}
	\end{itemize}
\end{lemma}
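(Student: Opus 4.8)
The plan is to establish the estimate first on a domain of unit size and then recover the stated scaling by a dilation argument; the second bullet will follow from the first together with shape regularity.

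First I would treat the general Lipschitz domain. Introduce the rescaled domain $\hat D := h_D^{-1} D$, which has unit diameter and the same Lipschitz character as $D$, and set $\hat v(\hat x) := v(h_D \hat x)$ for $\hat x \in \hat D$. On $\hat D$ the classical trace theorem $H^1(\hat D) \hookrightarrow L^2(\partial \hat D)$ yields
\begin{equation*}
\|\hat v\|_{0,\partial \hat D} \lesssim \|\hat v\|_{0,\hat D} + \|\nabla \hat v\|_{0,\hat D},
\end{equation*}
where the hidden constant depends only on the shape of $D$ (i.e.\ on the Lipschitz character of $\hat D$, which coincides with that of $D$), and crucially not on $h_D$.

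Next I would undo the scaling. Using $\hat x = x / h_D$, the change of variables gives $\|\hat v\|_{0,\partial \hat D} = h_D^{-(n-1)/2}\|v\|_{0,\partial D}$, $\|\hat v\|_{0,\hat D} = h_D^{-n/2}\|v\|_{0,D}$, and, since $\nabla_{\hat x}\hat v = h_D \nabla_x v$, also $\|\nabla \hat v\|_{0,\hat D} = h_D^{1-n/2}\|\nabla v\|_{0,D}$. Substituting these into the unit-domain estimate and multiplying through by $h_D^{(n-1)/2}$ produces exactly the claimed inequality $\|v\|_{0,\partial D} \lesssim h_D^{-1/2}\|v\|_{0,D} + h_D^{1/2}\|\nabla v\|_{0,D}$, the exponents $-\tfrac12$ and $\tfrac12$ arising from $-\tfrac{n}{2}+\tfrac{n-1}{2}$ and $1-\tfrac{n}{2}+\tfrac{n-1}{2}$ respectively. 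For the second bullet I would apply this result with $D = K$ and $h_D = h_K$; the only point requiring care is that the hidden constant must be uniform over all $K\in\mathcal Q$, which is guaranteed by the shape regularity of Assumption~\ref{as:shapereg} (together with the bi-Lipschitz isogeometric map of Assumption~\ref{as:isomap}), since these bound the Lipschitz character of every element uniformly, so that each $K$ is the image of a fixed reference element under a map of uniformly controlled distortion.

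The main obstacle — indeed essentially the only nontrivial point — is tracking the $h_D$-dependence of the constant in the trace theorem: one must verify that, after rescaling to unit diameter, the remaining constant is shape-dependent only. This is precisely what the dilation to $\hat D$ achieves, and it is consistent with the convention recalled at the start of the appendix that $\lesssim$ may hide a shape-dependent factor but never a size- or mesh-dependent one.
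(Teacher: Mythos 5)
Your proof is correct. The paper itself does not give a self-contained argument: for the first bullet it simply invokes Grisvard's $\varepsilon$-dependent trace inequality (Theorem~1.5.1.10 of that reference), which reads $\|v\|_{0,\partial D}^2 \lesssim \varepsilon\,\|\nabla v\|_{0,D}^2 + \varepsilon^{-1}\|v\|_{0,D}^2$, and chooses $\varepsilon = h_D$; for the second bullet it cites the standard discrete trace inequality for shape-regular meshes from Di Pietro--Ern. Your dilation argument reaches the same conclusion by a different, more self-contained route: you reduce to the classical trace theorem on the unit-diameter rescaling $\hat D$ and recover the powers $h_D^{\pm 1/2}$ from the change of variables. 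The advantage of your approach is that it makes explicit the one point the citation leaves implicit, namely that the hidden constant depends only on the shape of $D$ and not on its size --- Grisvard's constant a priori depends on the domain itself, so strictly speaking a rescaling step of the kind you perform is needed anyway to justify the paper's convention on $\lesssim$. The advantage of the paper's route is brevity, and, for the element-wise bound, the Di Pietro--Ern reference packages the uniformity over $K\in\mathcal Q$ for you; your remark that shape regularity together with the bi-Lipschitz isogeometric map controls the distortion of every element uniformly is the right justification for that uniformity and matches what the cited result requires.
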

\begin{proof}
	For the first part, use \cite[Theorem~1.5.1.10]{grisvard} and take $\varepsilon = h_D$. For the second part, see \cite[Section~1.4.3]{dipietroern}.
\end{proof}

\begin{lemma}[Friedrichs' inequality] \label{lemma:friedrichs}
	Let $D\subset \mathbb R^n$ be a bounded Lipschitz domain, % {\color{green!50!black}an isotropic} 
	let $h_D := \mathrm{diam}(D)$, and let $\omega \subset \partial D$ such that $|\omega|>0$ and $|\omega|^\frac{1}{n-1}\simeq h_D$. Then for all $v\in H^1(D)$, 
	\begin{equation*} 
	\|v-\overline{v}^{\omega}\|_{0,D} \lesssim h_D \|\nabla v\|_{0,D}.
	\end{equation*}
\end{lemma}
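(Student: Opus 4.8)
The plan is to prove this Poincar\'e--Wirtinger type inequality with a boundary average by a standard scaling-plus-compactness argument, isolating all of the $h_D$-dependence through a change of variables to a reference configuration of unit diameter. The only subtlety is to ensure that the resulting constant depends on the shape of $D$ alone, which is exactly where the hypothesis $|\omega|^\frac{1}{n-1}\simeq h_D$ enters.

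First I would normalize the size. Fix a reference domain $\hat D$ obtained from $D$ by the affine map $x = h_D\,\hat x$ (composed with a translation so that $\hat D$ has diameter of order one), and let $\hat\omega\subset\partial\hat D$ be the image of $\omega$. For $v\in H^1(D)$ set $\hat v(\hat x):=v(x)$. Then $\mathrm dx = h_D^n\,\mathrm d\hat x$ and $\nabla_x v = h_D^{-1}\nabla_{\hat x}\hat v$, so that $\|v-\overline v^\omega\|_{0,D}^2 = h_D^n\|\hat v - \overline{\hat v}^{\hat\omega}\|_{0,\hat D}^2$ and $\|\nabla v\|_{0,D}^2 = h_D^{n-2}\|\nabla\hat v\|_{0,\hat D}^2$, using that the boundary average is scale invariant, $\overline v^\omega = \overline{\hat v}^{\hat\omega}$. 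Crucially, the assumption $|\omega|^\frac{1}{n-1}\simeq h_D$ translates into $|\hat\omega|\simeq 1$, so the averaging set on the reference boundary neither degenerates nor blows up; this is precisely what is needed later to bound the trace-average functional uniformly.

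Next I would establish the inequality on the reference domain, namely $\|\hat v - \overline{\hat v}^{\hat\omega}\|_{0,\hat D}\lesssim \|\nabla\hat v\|_{0,\hat D}$ for all $\hat v\in H^1(\hat D)$, by a contradiction argument. Suppose it failed; then there would exist a sequence $\hat v_m$ with $\overline{\hat v_m}^{\hat\omega}=0$, $\|\hat v_m\|_{0,\hat D}=1$, and $\|\nabla\hat v_m\|_{0,\hat D}\to 0$. This sequence is bounded in $H^1(\hat D)$, so by the Rellich--Kondrachov theorem a subsequence converges strongly in $L^2(\hat D)$ to some $\hat v$ with $\|\hat v\|_{0,\hat D}=1$; since $\|\nabla\hat v_m\|_{0,\hat D}\to 0$, the limit $\hat v$ has vanishing gradient and is therefore constant on each connected component. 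Continuity of the trace operator together with $|\hat\omega|\simeq 1$ gives $\overline{\hat v}^{\hat\omega}=0$, forcing that constant to be zero and contradicting $\|\hat v\|_{0,\hat D}=1$. Inserting this reference inequality into the scaling identities of the previous paragraph yields $\|v-\overline v^\omega\|_{0,D}^2\lesssim h_D^n\|\nabla\hat v\|_{0,\hat D}^2 = h_D^2\|\nabla v\|_{0,D}^2$, which is the claim.

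The main obstacle is uniformity of the hidden constant: the compactness step only delivers a constant attached to the fixed reference pair $(\hat D,\hat\omega)$, so one must verify that this constant depends solely on the shape of $D$ and on the normalized averaging set, and not on $h_D$. This is exactly why the two-sided comparison $|\omega|^\frac{1}{n-1}\simeq h_D$ is imposed rather than a bare $|\omega|>0$: it guarantees $|\hat\omega|\simeq 1$ after scaling, so the averaging functional $\hat v\mapsto \overline{\hat v}^{\hat\omega}$ is bounded on $H^1(\hat D)$ with a shape-only constant, and the gradient in the right-hand side controls $\hat v$ up to that single admissible constant. In keeping with the expository role of this appendix, one may alternatively invoke the estimate directly from a standard reference on Poincar\'e-type inequalities, with the scaling bookkeeping above supplying the explicit $h_D$ factor.
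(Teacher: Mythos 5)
Your proposal is correct and follows essentially the same route as the paper, which simply cites a standard Poincar\'e-type inequality (Ern--Guermond, Lemma~3.30) ``together with a standard rescaling argument'': your explicit change of variables $x = h_D\hat x$ is exactly that rescaling bookkeeping, and your Rellich--Kondrachov contradiction argument is the standard compactness proof underlying the cited reference lemma. The one point worth noting is that the compactness step pins the constant to the fixed reference pair $(\hat D,\hat\omega)$ (and for a disconnected $D$ the limit constant is only forced to vanish on components meeting $\omega$), but this is harmless here since the paper's convention for $\lesssim$ in the appendix explicitly permits shape dependence and a domain is understood to be connected.
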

\begin{proof}
	See \cite[Lemma~3.30]{ernguermond1}, together with a standard rescaling argument. 
\end{proof}

\section*{Acknowledgment}
The authors gratefully acknowledge the support of the European Research Council, via the ERC AdG project CHANGE n.694515. 
R. V\'azquez and O. Chanon also thank the support of the Swiss National Science Foundation via the project HOGAEMS n.200021\_188589, and the project n.P500PT\_210974.

%\vfill 

%Référence et bibliographie :
%\newpage{\tiny {\tiny }}
%\nocite{*}
\addcontentsline{toc}{chapter}{Bibliography}
\bibliography{bib2}
\bibliographystyle{ieeetr}
%\bibliographystyle{unsrt}

%\bibliographystyle{model1-num-names}
%\bibliography{sample.bib}

%% Authors are advised to submit their bibtex database files. They are
%% requested to list a bibtex style file in the manuscript if they do
%% not want to use model1-num-names.bst.

%% References without bibTeX database:

% \begin{thebibliography}{00}

%% \bibitem must have the following form:
%%   \bibitem{key}...
%%

% \bibitem{}

% \end{thebibliography}

\end{document}

%%
%% End of file `elsarticle-template-1-num.tex'.